\newtheorem{thm}{Theorem}[section]
\newtheorem{prop}[thm]{Proposition}
\newtheorem{lem}[thm]{Lemma}
\newtheorem{lem-def}[thm]{Lemma-Definition}
\newtheorem{cor}[thm]{Corollary}
\theoremstyle{remark}
\newtheorem{rmk}[thm]{Remark}
\theoremstyle{definition}
\newtheorem{dfn}[thm]{Definition}
\numberwithin{equation}{subsection}
\newcommand{\quash}[1]{}  
\newcommand{\nc}{\newcommand}
\nc{\on}{\operatorname}
\newcommand{\frakg}{{\mathfrak g}}
\newcommand{\frakn}{{\mathfrak n}}
\newcommand{\frakX}{{\mathfrak X}}
\newcommand{\frakY}{{\mathfrak Y}}
\newcommand{\frakZ}{{\mathfrak Z}}
\newcommand{\bA}{{\mathbb A}}
\newcommand{\bB}{{\mathbb B}}
\newcommand{\bC}{{\mathbb C}}
\newcommand{\bD}{{\mathbb D}}
\newcommand{\bF}{{\mathbb F}}
\newcommand{\bG}{{\mathbb G}}
\newcommand{\bP}{{\mathbb P}}
\newcommand{\bQ}{{\mathbb Q}}
\newcommand{\bS}{{\mathbb S}}
\newcommand{\bX}{{\mathbb X}}
\newcommand{\bZ}{{\mathbb Z}}
\newcommand{\mA}{{\mathcal A}}
\newcommand{\mC}{{\mathcal C}}
\newcommand{\mE}{{\mathcal E}}
\newcommand{\mF}{{\mathcal F}}
\newcommand{\mG}{{\mathcal G}}
\newcommand{\mH}{{\mathcal H}}
\newcommand{\mL}{{\mathcal L}}
\newcommand{\mM}{{\mathcal M}}
\newcommand{\mO}{{\mathcal O}}
\newcommand{\mQ}{{\mathcal Q}}
\newcommand{\mT}{{\mathcal T}}
\newcommand{\mX}{{\mathcal X}}
\newcommand{\mY}{{\mathcal Y}}
\nc{\al}{{\alpha}} \nc{\be}{{\beta}} \nc{\ga}{{\gamma}}
\nc{\ve}{{\varepsilon}} \nc{\Ga}{{\Gamma}} \nc{\la}{{\lambda}}
\nc{\La}{{\Lambda}}
\nc{\ad}{{\on{ad}}}
\nc{\aff}{{\on{aff}}}
\nc{\Aff}{{\mathbf{Aff}}}
\newcommand{\Aut}{{\on{Aut}}}
\nc{\Bun}{{\on{Bun}}}
\nc{\der}{{\on{der}}}
\nc{\diag}{{\on{diag}}}
\newcommand{\End}{{\on{End}}}
\nc{\Fl}{{\mF\ell}}
\newcommand{\Gal}{{\on{Gal}}}
\newcommand{\Gr}{{\on{Gr}}}
\newcommand{\Hom}{{\on{Hom}}}
\nc{\IC}{{\on{IC}}}
\newcommand{\id}{{\on{id}}}
\nc{\Id}{{\on{Id}}}
\nc{\Ind}{{\on{Ind}}}
\nc{\inv}{{\on{Inv}}}
\nc{\Iso}{{\on{Isom}}}
\newcommand{\Lie}{{\on{Lie}}}
\newcommand{\Pic}{{\on{Pic}}}
\newcommand{\pr}{{\on{pr}}}
\nc{\Ql}{{\overline{\bQ}_\ell}}
\nc{\res}{{\on{res}}}
\newcommand{\Res}{{\on{Res}}}
\newcommand{\Spec}{{\on{Spec}\ \!}}
\nc{\tr}{{\on{tr}}}
\nc{\mmu}{\mu_\bullet}
\newcommand{\GL}{{\on{GL}}}
\nc{\GSp}{{\on{GSp}}} \nc{\GU}{{\on{GU}}} \nc{\SL}{{\on{SL}}}
\nc{\SU}{{\on{SU}}} \nc{\SO}{{\on{SO}}}
\nc{\Hk}{{\on{Hk}}}
\nc{\lhk}{\Hk^{\on{loc}}}
\nc{\pf}{{p^{-\infty}}}
\nc{\bb}{{\mathbf{b}}}
\newcommand{\bGr}{{\overline{\Gr}}}
\nc{\wGr}{{\widetilde{\Gr}}}
\nc{\Sat}{{\on{Sat}}}
\def\xcoch{\mathbb{X}_\bullet}
\def\xch{\mathbb{X}^\bullet}
\title{Affine Grassmannians and the geometric Satake in mixed characteristic}
\author{Xinwen Zhu}  \email{xzhu@caltech.edu} 
\begin{document}
\maketitle{}
\begin{abstract}We endow the set of lattices in $\bQ_p^n$ with a reasonable algebro-geometric structure. As a result, we prove the representability of affine Grassmannians and establish the geometric Satake equivalence in mixed characteristic. We also give an application of our theory to the study of Rapoport-Zink spaces. 
\end{abstract}

\tableofcontents

\section*{Introduction}
\subsection{Mixed characteristic affine Grassmannians}
\subsubsection{}
Let $F$ be a non-archimedean local field, i.e., $F$ is either $\bF_q((t))$ or a finite extension of $\bQ_p$, with $\mO\subset F$ its ring of integers. Let $V=F^n$ denote an $n$-dimensional $F$-vector space. A lattice of $V$ is a finitely generated $\mO$-submodule $\La$ of $V$ such that $\La\otimes F=V$. For example $\La_0=\mO^n$ is a lattice in $V$, and every other lattice in $V$ can be translated to $\La_0$ by a linear automorphism of $V$. Therefore the set of lattices in $V$ can be identified with the set $\GL_n(F)/\GL_n(\mO)$. 

For various applications in number theory, representation theory and algebraic geometry, it is highly desirable to realize this set as the set of ($k$-)points of some (reasonable, infinite dimensional) algebraic variety defined over $k$, where $k$ denotes the residue field of $F$. If $F=k((t))$, such algebro-geometric object, called the affine Grassmannian, does exist, and plays a fundamental role in geometric representation theory and in the study of moduli spaces of vector bundles on algebraic curves. However, a reasonable algebro-geometric structure on the set $\GL_n(\bQ_p)/\GL_n(\bZ_p)$ is not available for many years, although some attempts have been made (\cite{Ha,Kr,CKV}). The first goal of this paper is to give a solution of this problem to some extent. We will call this new algebro-geometric object the mixed characteristic affine Grassmannian.

\subsubsection{}
To explain the ideas, let us first recall the equal  characteristic story (see, e.g. \cite{BL} for details). First one can make sense of the notion of a family of lattices in $k((t))^n$: for a $k$-algebra $R$, a lattice in $R((t))^n$ is a finitely generated projective $R[[t]]$-submodule $\La$ of $R((t))^n$ such that $\La\otimes_{R[[t]]}R((t))=R((t))^n$. The affine Grassmannian $\Gr^\flat$\footnote{In this paper, objects defined in the equal characteristic setting are usually written with $\flat$ in their supscripts. But this does not mean that they are the tilts of the corresponding mixed characteristic objects as in Scholze's theory of perfectoid spaces.} is defined as the moduli space that assigns every $k$-algebra $R$ the set of lattices in $R((t))^n$. In particular, the set of $k$-points of $\Gr^\flat$ is just $\GL_n(k((t)))/\GL_n(k[[t]])$. 

Given a lattice $\La$ in $R((t))^n$, there always exists some big integer $N$ such that 
\begin{equation}\label{subfun}
t^NR[[t]]^n\subset \La\subset t^{-N}R[[t]]^n.
\end{equation}
So $\Gr^{\flat}$ is the union of subfunctors $\Gr^{\flat,(N)}$ consisting of those lattices satisfying \eqref{subfun}.
The key fact is that via the map
$$\La\mapsto t^{-N}R[[t]]^n/\La,$$ $\Gr^{\flat,(N)}(R)$ is identified with the set of quotient $R[[t]]$-modules of $t^{-N}R[[t]]^n/t^NR[[t]]^n$ that are projective as $R$-modules. Then it is not hard to see that $\Gr^{\flat,(N)}$ is represented by a closed subscheme of the usual Grassmannian variety.

\subsubsection{}
There is an obvious guess of the moduli problem that the mixed characteristic affine Grassmannian $\Gr$ should represent: it should associate to every $k$-algebra $R$ the set 
\begin{equation}\label{intro:Gr}
\big\{\mbox{finite projective } W(R)\mbox{-submodules } \La \mbox{ of } W(R)[1/p]^n \mbox{ such that } \La[1/p]=W(R)[1/p]^n \big\},
\end{equation} 
where $W(R)$ is the ring of Witt vectors for $R$. 
Unfortunately, this definition is unreasonable\footnote{Alternatively, one could try to define $\Gr(R)$ as the set of pairs $(\La,\beta)$, where $\La$ is a finite projective $W(R)$-module and $\beta: \La\otimes W(R)[1/p]\simeq W(R)[1/p]^n$ is an isomorphism. But we still do not know whether this is reasonable.}, as the ring of the Witt vectors for a non-perfect ring $R$ is not well-behaved. E.g. $p$ could be a zero divisor of $W(R)$ if $R$ is non-reduced, so $\La_{0R}=W(R)^n$ may not be a submodule of $W(R)[1/p]^n$. On the other hand, note that
\begin{enumerate}
\item if $R$ is a perfect $k$-algebra, then $W(R)$ is well-behaved.
\item The values of a scheme $X$ at perfect rings $R$ determine $X$ up to perfection\footnote{The category of perfect $k$-schemes is a full subcategory of the category of presheaves on the category of perfect $k$-algebras. See Lemma \ref{descent}.}. 
\item The (\'{e}tale) topology of a scheme (e.g. the $\ell$-adic cohomology) does not change when passing to the perfection.
\end{enumerate}

Therefore, we restrict the naive moduli problem \eqref{intro:Gr} to the category of perfect $k$-algebras. This defines a presheaf on this category\footnote{In fact, Kreidl \cite{Kr} proved that this is an fpqc sheaf.}, and the best question one can ask is: whether this functor is represented by a(n inductive limit of) perfect $k$-scheme(s). Our first main theorem gives a positive answer to a slightly weaker version of the question.

\begin{thm}\label{intro:main}
The above functor can be written as an increasing union of subfunctors $\Gr=\underrightarrow\lim X_i $, where each $X_i$ is the perfection of some proper algebraic space defined over $k$ and $X_i\to X_{i+1}$ is a closed embedding. 
\end{thm}

Perfect $k$-schemes/algebraic spaces are almost never of finite type over $k$. But as stated in Theorem \ref{intro:main}, each $X_i$ appearing above is  in fact the perfection of some proper algebraic space over $k$\footnote{It is expected that these $X_i$s are the perfections of projective varieties over $k$. See Appendix \ref{rep as sch} for further discussions. But knowing that they are algebraic spaces is sufficient for all the applications we have in mind.}\footnote{That $X_i$s are the perfections of projective varieties was proved very recently by Bhatt-Scholze \cite{BS}.}. We do not know how to canonically construct these algebraic spaces without passing to the perfection. But this does not bother us. We can still study their topological properties. In particular, we can define the $\ell$-adic derived category on $X_i$, the notion of perverse sheaves, and etc. 

As soon as the representability of $\Gr$ is known, the representability of mixed characteristic affine Grassmannians and affine flag varieties for general reductive groups follows by the same argument as in equal characteristic situation. See \S~\ref{gen aff flag}.

\subsubsection{} Now we explain some ideas behind the proof of Theorem \ref{intro:main}. As in the equal  characteristic situation, it is enough to prove the representability of the subfunctor $\Gr^{(N)}$ of $\Gr$ defined by a condition similar to \eqref{subfun}. With a little further work, one then reduces to prove the representability of the following functor
\[\bGr_N=\left\{\La\in \Gr \mid \La\subset \La_{0R} \mbox{ such that } \wedge^n\La= p^N\wedge^n\La_{0R}\right\},\]
where $\La_{0R}=W(R)^n$, and $\wedge^n(-)$ denotes the $n$th wedge product.
Now, the essential difficulty is that the quotient $\La_{0R}/\La$ is not an $R$-module so the previous strategy to embed this functor into the Grassmannian fails. In fact, a basic question is whether there exists a non-trivial line bundle on $\bGr_N$. Note that in equal characteristic, such a line bundle exists, known as the determinant line bundle $\mL^\flat_{\det}$ on $\bGr^\flat_N$.  Its fiber over a lattice $\La\subset R[[t]]^n$ is the top exterior wedge of $R[[t]]^n/\La$ regarded as an $R$-module. This construction certainly fails in mixed characteristic (see Appendix \ref{det line} for more discussions).

Therefore, we proceed in another way. Our observation is that after adding level structures, $\bGr_N$ is represented by an affine scheme defined by matrix equations. More precisely, for each $h$, let $W_h(R)$ denote the ring of truncated Witt vectors of length $h$, and define
\[\bGr_{N,h}=\left\{(\La,\bar\epsilon)\mid \La\in \bGr_N, \bar\epsilon: W_{h}(R)^n\simeq \La/p^h\La\right\}.\]
This is an $L^h\GL_n$-torsor over $\bGr_{N}$, where $L^h\GL_n$ denotes the affine $k$-group scheme which is the perfection of  the Greenberg realization of $\GL_n$ over $\mO/p^h$ and which acts on $\bGr_{N,h}$ by changing the isomorphism $\bar\epsilon$. We will show that when $h>N$, $\bGr_{N,h}$ can be (non-canonically) identified  with the following scheme of pairs of matrices
$$\left\{(A,\gamma)\mid \gamma\in L^h\GL_n, A\in L^hM_{n}, \det A\in p^N(\mO/p^h\mO)^\times, A\gamma=A\right\},$$
where $M_n$ denotes the scheme of all $n\times n$ matrices, and $L^hM_n$ denotes the perfection of its Greenberg realization over $\mO/p^h$. In fact, $A$ is the matrix representing the map 
$$W_h(R)^n\stackrel{\bar \epsilon}{\simeq} \La/p^h\La\to \La_{0R}/p^h\La_{0R}=W_h(R)^n.$$

Therefore, $\bGr_N$ can be expressed as a quotient of an affine scheme by a free action of an affine group scheme. One can expect that such a quotient should exist, at least as an algebraic space over $k$. This is indeed the case here, but cannot follow directly from the general theory, because neither $\bGr_{N,h}$ nor $L^h\GL_n$ is of finite type. However, we manage to prove the
following result, which is enough to conclude Theorem \ref{intro:main}. 

\begin{thm}(See Theorem \ref{quotient}) Let $G$ be the perfection of an algebraic group over $k$ and let $X$ be the perfection of an affine scheme of finite type over $k$. Assume that $G$ acts on $X$ and that the action map $G\times X\to X\times X,\ (g,x)\mapsto (gx,x)$ is a closed embedding. Then the quotient $[X/G]$ (as a stack) is represented by the perfection of an algebraic space separated and of finite type over $k$. 
\end{thm}

\subsection{The geometric Satake}
There are a lot of applications of mixed characteristic affine Grassmannians. The most fundamental one is to establish the geometric Satake equivalence in mixed characteristic. Its equal characteristic counterpart is a result of works of Lusztig, Drinfeld, Ginzburg, Mirkovi\'{c}-Vilonen (cf. \cite{Lu,Gi1,BD,MV}).
In a forthcoming joint work with Liang Xiao \cite{XZ}, we will apply the mixed characteristic geometric Satake to the study of some arithmetic geometry of Shimura varieties\footnote{We refer to \cite{Laf} for some amazing applications of the equal  characteristic geometric Satake to the Langlands correspondence over function fields.}.

Let $G$ be a reductive group over $\mO$, the ring of integers of a $p$-adic field $F$, and let $\Gr_G$ denote its affine Grassmannian. As explain above, it makes sense to define the category of $L^+G$-equivariant perverse sheaves (with coefficients in $\Ql$) on $\Gr_G$, denoted by $\on{P}_{L^+G}(\Gr_G)$. As in the equal characteristic situation, this is a semisimple monoidal category with the monoidal structure given by Lusztig's convolution product of sheaves. In addition, one can still endow the hypercohomology functor $\on{H}^*(\Gr_G,-):\on{P}_{L^+G}(\Gr_G)\to \on{Vect}_{\Ql}$ with a canonical monoidal structure (although the methods of \cite{Gi1,BD,MV} do not work directly in our setting). 
Our second main theorem is the geometric Satake equivalence in this setting.

\begin{thm}\label{intro:geomSat}
The monoidal functor $\on{H}^*$ factors as the composition of an equivalence of monoidal categories from $\on{P}_{L^+G}(\Gr_G)$ to the category $\on{Rep}_{\Ql}(\hat{G})$ of finite dimensional representations of the Langlands dual group $\hat{G}$ over  $\Ql$ and the forgetful functor from $\on{Rep}_{\Ql}(\hat{G})$ to the category $\on{Vect}_{\Ql}$ of finite dimensional $\Ql$-vector spaces.
\end{thm}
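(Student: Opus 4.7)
The plan is to follow the template of the equal characteristic proof \cite{MV}, flagging at each step where the mixed characteristic setting costs us something and how to pay the bill using the representability results of the first half of the paper. Throughout I work with the category $\on{P}_{L^+G}(\Gr_G)$ of $L^+G$-equivariant perverse $\Ql$-sheaves on the ind-perfect algebraic space $\Gr_G=\varinjlim X_i$; because perversity, purity and the six-functor formalism are insensitive to perfection, all topological inputs of the equal characteristic case transport to the $X_i$ unchanged.

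First, I would set up the convolution product. Let $\Gr_G\,\tilde{\times}\,\Gr_G$ denote the twisted product classifying pairs $(\mE_1,\be_1,\mE_2,\be_2)$ of successive modifications of $G$-bundles on $\Spec W(R)$; its representability (as an ind-perfect algebraic space) is deduced from Theorem~\ref{intro:main} by the standard twisted product construction. The multiplication map $m:\Gr_G\,\tilde{\times}\,\Gr_G\to\Gr_G$ is stratified semi-small by a purely combinatorial calculation on cocharacters, so $\mF_1\ast\mF_2:=m_!(\mF_1\,\tilde{\boxtimes}\,\mF_2)$ is perverse. Semisimplicity of $\on{P}_{L^+G}(\Gr_G)$, purity of IC sheaves, and associativity of $\ast$ then follow from parity-vanishing and Kazhdan--Lusztig arguments on affine Schubert varieties, which survive perfection; monoidality of $\on{H}^*$ with respect to $\ast$ comes from proper base change and the K\"unneth formula applied to $m$.

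The main obstacle is the commutativity constraint, since the usual proof of $\mF_1\ast\mF_2\simeq\mF_2\ast\mF_1$ invokes the Beilinson--Drinfeld fusion Grassmannian over a smooth curve, an object with no literal analogue over $\Spec\mO$. To handle this I would build a mixed characteristic fusion Grassmannian $\Gr_{G,2}$ parametrizing $G$-bundles trivialized away from two ``moving points'' (realized through two Frobenius twists of $\Spec W(R)$, or equivalently two untilts in the style of Fargues--Fontaine), whose generic fiber is $\Gr_G\times\Gr_G$ and whose diagonal fiber is $\Gr_G$, with the twisted product recovered as the nearby limit. Representability of $\Gr_{G,2}$ follows from the same Witt-vector modification formalism used for $\Gr_G$; then Gaitsgory's nearby cycles argument identifies the fusion product with $\ast$ and extracts the commutativity constraint from the $\frakS_2$-symmetry of the two points. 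The perfect-schematic nature of the Grassmannian simplifies the analysis by removing any infinitesimal deformation issues, but constructing the nearby cycles functor and proving that it preserves perversity and factors through $L^+G$-equivariance is where I expect the real work to lie.

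Once a symmetric monoidal structure and the fiber functor $\on{H}^*$ are in place, Tannakian reconstruction produces an affine group scheme $\tilde G$ over $\Ql$ with an equivalence $\on{P}_{L^+G}(\Gr_G)\simeq\on{Rep}_{\Ql}(\tilde G)$ intertwining $\on{H}^*$ with the forgetful functor. To identify $\tilde G$ with $\hat G$, I would use the semi-infinite orbits $S_\nu$ attached to a Borel: their intersections with affine Schubert varieties are the Mirkovi\'c--Vilonen cycles, whose top-degree cohomology classes decompose $\on{H}^*$ into weight spaces indexed by $\xcoch(T)=\xch(\hat T)$, showing that $\tilde G$ is reductive with the expected maximal torus and weight multiplicities. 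The root data comparison is then reduced by hyperbolic localization along Levi subgroups to the rank-one cases $G=\SL_2$ and $\on{PGL}_2$, each handled by direct computation on the perfection of $\bP^1$, yielding $\tilde G=\hat G$ and the theorem.
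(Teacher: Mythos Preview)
Your outline tracks the paper through the setup of convolution, semisimplicity, and the Mirkovi\'c--Vilonen weight functor, but it diverges at the two places that are genuinely new in mixed characteristic, and at the more serious of these your proposal has a real gap.

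For the commutativity constraint you propose to build a mixed characteristic Beilinson--Drinfeld Grassmannian over a two-point base, realized via ``two Frobenius twists of $\Spec W(R)$'' or ``two untilts in the style of Fargues--Fontaine,'' and then to run the nearby cycles fusion argument. The paper explicitly rejects this route: there is no curve over $\mO$ along which points can collide, Frobenius twists of $\Spec W(R)$ do not furnish a family degenerating $\Gr_G\times\Gr_G$ to $\Gr_G$, and invoking untilts would require a full perfectoid/diamond formalism that is neither developed nor used here. What the paper actually does is entirely different. It constructs the constraint $c_{\mA_1,\mA_2}$ by a categorified Gelfand trick: the pinned anti-involution $\theta(g)=\theta'(g)^{-1}$ induces an anti-equivalence $\theta^*:\Sat_G\to\Sat_G^{\on{op}}$, there is a separate monoidal equivalence $\Id':\Sat_G^{\on{op}}\to\Sat_G$ built from the diagram $\Gr_{\leq\mu}\leftarrow\Gr_{\leq\mu}^{(m)}\to\Gr_{\leq\mu}^{\on{op}}$, and $c'_{\mA_1,\mA_2}$ is defined by comparing $\Id'\circ\theta^*$ with the identity (with a sign normalization $(-1)^{(\rho,\mu)}$ and a parity correction). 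The hard content is then Lemma~\ref{involution action}, that the resulting involution $\Theta_\mu$ on $\on{IH}^{2j}(\Gr_{\leq\mu})$ acts by $(-1)^j$; this is reduced to the combinatorial identity $P^{\sigma,\diamond}_{d_\la,d_\mu}(q)=P_{d_\la,d_\mu}(-q)$ of Lusztig--Vogan polynomials for the affine Weyl group, which was proved by Lusztig--Yun using the \emph{equal} characteristic geometric Satake. So the logic is: equal characteristic Satake $\Rightarrow$ combinatorial formula $\Rightarrow$ mixed characteristic commutativity constraint. Your proposal misses this mechanism entirely.

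A smaller point: your derivation of the monoidal structure on $\on{H}^*$ from ``proper base change and K\"unneth applied to $m$'' is also not what the paper does, for the same reason (no global curve, no fusion). The paper instead passes through $L^+G$-equivariant cohomology, realizes $\on{H}^*_{L^+G}(\mA)$ as an $R_{\bar G,\ell}$-bimodule in the spirit of Soergel, proves the two module structures coincide (Lemma~\ref{left=right}), and obtains the monoidal structure on $\on{H}^*$ by specializing along the augmentation $R_{\bar G,\ell}\to\Ql$.
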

The theorem in particular implies that there exist the commutativity constraints of $\on{P}_{L^+G}(\Gr_G)$ such that $\on{H}^*$ is a tensor functor. In equal characteristic, such constraints come from the interpretation of the convolution product as the fusion product (cf. \cite[\S 5]{MV} or \cite[\S 5.3.17]{BD}). As the fusion product currently does not exist in mixed characteristic\footnote{But we note that the recent work Scholze on diamonds opens a door to this direction. See \cite{SW}.}, it is probably surprising that we can still establish these constraints in the current setting. 

In fact, a construction of the commutativity constraints using a categorical version of the classical Gelfand's trick already appeared in \cite{Gi1}. It was then claimed in \cite[\S 5.3.8, \S 5.3.9]{BD} (but without proof) that (a modification of) Ginzburg's construction coincides with the construction via the fusion product. Therefore, we do have candidates of the commutativity constraints even in mixed characteristic. The problem is that it is not clear how to verify the properties they suppose to satisfy (e.g. the hexagon axiom), without using the fusion interpretation.

Our new observation is that the validity of these properties is equivalent to a numerical result for the affine Hecke algebra. Namely, in \cite{LV,Lu2} Lusztig and Vogan introduced, for a Coxeter system $(W,S)$ with an involution, certain polynomials $P^\sigma_{y,w}(q)$ similar to the usual Kazhdan-Lusztig polynomials $P_{y,w}(q)$ (\cite{KaLu}). Then it was conjectured in \cite{Lu2} that if $(W,S)$ is an affine Weyl group and $y,w$ are certain elements in $W$,
\[P_{y,w}^\sigma(q)=P_{y,w}(-q).\]
See \emph{loc. cit.} or \S \ref{comb formula} for more details. This conjecture is purely combinatoric, but its proof by Lusztig and Yun \cite{LY} is geometric, which in fact uses the equal characteristic geometric Satake! We then go in the opposite direction by showing that this formula implies that the above mentioned commutativity constraints are the correct ones. 

So our proof of Theorem \ref{intro:geomSat} uses the geometric Satake in equal characteristic. It is an interesting question to find a direct proof of the above combinatoric formula, which will yield a purely local proof of the geometric Satake, in both equal and mixed characteristic.

Along the way of our proof, we also establish the Mirkovi\'{c}-Vilonen theory in mixed characteristic. This is very useful to the study of affine Deligne-Lusztig varieties. See below.

\subsection{Dimension of affine Deligne-Lusztig varieties}
One motivation to introduce mixed characteristic affine Grassmannians is their relation to Rapoport-Zink spaces. Let $G$ be a connected reductive group over $\bQ_p$, and assume (for simplicity) that there is an extension of $G$ to a reductive group scheme over $\bZ_p$, still denoted by $G$. Let $k=\bar\bF_p$ and let $L=W(k)[1/p]$ denote the completion of the maximal unramified extension of $\bQ_p$. Let $\sigma$ denote the Frobenius automorphism of $L$. Let $b$ be a $\sigma$-conjugacy class of $G(L)$ and $\mu$ a geometric conjugacy class of one parameter subgroups of $G$ (a.k.a. a dominant coweight of $G$ with respect to some chosen Borel). When the triple $(G,b,\mu)$ (sometimes called a Rapoport-Zink datum) comes from a PEL-datum, Rapoport-Zink (cf. \cite{RZ}) defined a formal scheme $\breve\mM(G,b,\mu)$, locally of finite type over $W=W(k)$, as certain moduli problem of $p$-divisible groups. Recently, Kim (cf. \cite{Ki}) and Howard-Pappas (cf. \cite{HP}) generalized the definition of $\breve\mM(G,b,\mu)$ to include those Rapoport-Zink data of Hodge type and proved the representability of $\breve\mM(G,b,\mu)$ in the case $p>2$. In any case, a serious restriction is that $\mu$  must be \emph{minuscule}. Under this assumption, by the Dieudonn\'{e} theory one identifies the set of $k$-points of $\breve\mM(G,b,\mu)$ with the set
\begin{equation}
X_\mu(b)= \{g\in G(L)/G(W)\mid g^{-1}b\sigma(g)\in G(W)p^\mu G(W)\}.
\end{equation}
This identification endows $X_\mu(b)$ with an algebro-geometric structure, and therefore $X_\mu(b)$ is  sometimes called an affine Deligne-Lusztig variety (cf. \cite{R}). Note that the definition of $X_\mu(b)$ as a set makes sense for any triple $(G,b,\mu)$, but only for minuscule $\mu$, $X_\mu(b)$ may relate to the moduli of $p$-divisible groups. 

It has been hoped to endow $X_\mu(b)$ with an algebro-geometric structure without using $p$-divisible groups and the Dieudonn\'{e} theory. Now, the existence of the mixed characteristic affine Grassmannian $\Gr_G$ (for $G$ over a $p$-adic field $F$) allows us to realize $X_\mu(b)$ as a (locally) closed subset of $\Gr_G$\footnote{Note that \eqref{ADL2} shows that $X_\mu(b)$ itself is a kind of moduli space of mixed characteristic local Shtukas over $k$.} and therefore to give $X_\mu(b)$ a structure as an ind-perfect algebraic space. Note that in this new definition, there is no restriction on the cocharacter $\mu$. But when $(G,b,\mu)$ arises as a(n unramified) Rapoport-Zink datum of Hodge type as above, we have the following proposition, as a simple application of the equivalence of categories between $p$-divisible groups and $F$-crystals over a perfect ring in characteristic $p>0$ (a theorem of Gabber, see also \cite[\S 6]{La}).
\begin{prop}\label{intro:RZ}
Let $\overline\mM^\pf_\mu(b)$ denote the perfection of the special fiber of $\breve\mM(G,b,\mu)$. Then
there is a canonical isomorphism of spaces
$\overline\mM^\pf_\mu(b)\simeq X_\mu(b)$.
\end{prop}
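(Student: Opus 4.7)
The plan is to identify both sides as sheaves on perfect $k$-algebras and produce the isomorphism on $R$-points for an arbitrary perfect $k$-algebra $R$. Since $X_\mu(b)$ is defined as a locally closed subspace of $\Gr_G$, which by construction is a presheaf on perfect $k$-algebras, and since the perfection of an algebraic space is determined by its values on perfect rings (by the earlier lemma on full faithfulness of perfection), it suffices to exhibit a natural bijection between $\overline\mM^\pf_\mu(b)(R)$ and $X_\mu(b)(R)$ compatible with the evident action of $G(W(R)[1/p])$.

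First, I would unwind the definition of the Rapoport-Zink space on a perfect $R$ of characteristic $p$: an $R$-point of $\overline\mM_\mu(b)$ is a $p$-divisible group $\mX$ over $R$ equipped with a collection of crystalline Tate tensors cutting out the subgroup $G \subset \GL$, together with a quasi-isogeny $\rho: \mX \dashrightarrow \bX_b \otimes R$ respecting the tensors, satisfying the usual determinant/Hodge condition governed by $\mu$. Next I would invoke Gabber's theorem (and its refinement by Lau) that for perfect $R$ of characteristic $p$, the category of $p$-divisible groups over $R$ is canonically equivalent to the category of Dieudonn\'e modules (equivalently, $F$-crystals of the appropriate type) over $W(R)$. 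Under this equivalence, $\mX$ corresponds to a finite projective $W(R)$-module $M$ with a $\sigma$-linear Frobenius $F$, the Tate tensors on $\mX$ correspond to Frobenius-invariant tensors on $M$, and the quasi-isogeny $\rho$ gives an identification $M[1/p] \simeq W(R)[1/p]^n$ respecting these tensors and intertwining $F$ with $b\sigma$.

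Given this data, the $W(R)$-lattice $M \subset W(R)[1/p]^n$ defines, via the identification of $\Gr(R)$ from \eqref{intro:Gr} restricted to perfect algebras, an element $g \in \Gr(R)$; the compatibility with tensors lifts $g$ to an $R$-point of $\Gr_G$. The relation $F = b\sigma$ on $M$ says precisely that $b\sigma(g)$ is another lattice-point with the same underlying $M$, and the Hodge condition governed by $\mu$ translates into the requirement $g^{-1}b\sigma(g) \in G(W(R))\, p^\mu\, G(W(R))$, i.e.\ $g \in X_\mu(b)(R)$. This assignment $(\mX,\rho) \mapsto g$ is functorial in $R$, hence defines a morphism $\overline\mM^\pf_\mu(b) \to X_\mu(b)$, and the inverse is obtained by reading the Dieudonn\'e equivalence backwards: from $g \in X_\mu(b)(R)$ build the lattice $M$ with Frobenius induced by $b\sigma$, produce $\mX$ via Gabber/Lau, and recover $\rho$ from the trivialization of $M[1/p]$. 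These constructions are mutually inverse by the fully faithfulness of the Dieudonn\'e functor.

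The main obstacle is the passage through the Hodge-type tensors. One must verify that crystalline Tate tensors on $\mX$, which a priori live in the crystalline site and are preserved by Frobenius, correspond under Gabber/Lau to honest tensors on $M$ that cut out a $G$-structure on the lattice, so that $g$ actually lifts from $\Gr$ to $\Gr_G$; equivalently, one needs that an element of $\GL_n(W(R)[1/p])$ preserving a family of tensors defining $G$ lies in $G(W(R)[1/p])$. Over the generic fiber this is Kisin's/Deligne's tensor-theoretic description of reductive subgroups, and one reduces the integral statement to the Siegel case (where $G = \GSp$ and the tensor is a symplectic form, so the claim is classical) via the Hodge-type embedding $G \hookrightarrow \GSp$ in the Rapoport-Zink datum. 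Once this compatibility is in place, the rest of the argument is bookkeeping.
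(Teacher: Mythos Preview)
Your overall strategy---translate via Gabber/Lau between $p$-divisible groups and Dieudonn\'e modules over perfect rings---is the right idea, and for $G=\GL_n$ it matches the paper's argument closely. But for general $G$ there is a genuine gap in your first step.

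You write: ``unwind the definition of the Rapoport-Zink space on a perfect $R$'' and then proceed to describe an $R$-point of $\overline\mM_\mu(b)$ as a $p$-divisible group with crystalline Tate tensors and a quasi-isogeny. The problem is that Kim's moduli functor $\breve\mM(G,b,\mu)$ is only defined on $\on{Nilp}_W^{\on{sm}}$, the category of \emph{formally smooth, formally finitely generated} $W/p^m$-algebras; a general perfect $k$-algebra $R$ is neither. What you actually have for such $R$ is only the value of the \emph{representing} formal scheme on $R$, and there is no a priori reason this agrees with the naive tensor-theoretic description you wrote down. The paper is explicit about this: ``it is not clear how to give the description of $\Spec R$-points of $\breve\mM(G,b,\mu)$ for general $(G,b,\mu)$ and general $R\in\on{Nilp}_W$.'' In fact, that description on perfect rings is a \emph{consequence} of the proposition, not an input to it.

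The paper avoids this circularity as follows. For $(\GL_h,b,\omega_n)$ the moduli problem $\breve\mM_{n,h}$ \emph{is} defined on all of $\on{Nilp}_W$, so your Gabber/Lau argument goes through and yields $\overline\mM_{n,h}^\pf\simeq X_{\omega_n}(\rho(b))$. For general $(G,b,\mu)$, the chosen representation $\rho:G\to\GL(\La)$ induces closed embeddings $\overline\mM_\mu(b)^\pf\hookrightarrow \overline\mM_{n,h}^\pf$ and $X_\mu(b)\hookrightarrow X_{\omega_n}(\rho(b))$. Both sides are now closed subspaces of the same ambient space, so to identify them it suffices to check they have the same $k$-points, and that bijection is exactly Kim's Proposition 3.8.4. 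Your attempt to handle the tensors directly over arbitrary perfect $R$ (and the suggested reduction to $\GSp$) is not needed and, as formulated, not available.
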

Even if the primary interests are the study of the Rapoport-Zink spaces, having another definition of $X_\mu(b)$ gives us extra flexibility. For example, the new definition is group theoretical, so allows us to study $\breve\mM(G,b,\mu)$ by using root subgroups or Levi subgroups $G$, or passing to central isogenies of $G$.

In a forthcoming work \cite{XZ}, these extra flexibilities allow us to understand the irreducible components of certain RZ spaces. Here, we illustrate this idea by one simple example\footnote{An earlier example is the study of the connected components of $X_\mu(b)$ in \cite{CKV}, although the notion of connected components of $X_\mu(b)$ was defined in \emph{loc. cit.} in an ad hoc way, due to the lack of the representability of $X_\mu(b)$ at the time.}:
we prove the dimension formula of $X_\mu(b)$ as conjectured by Rapoport. 
\begin{thm}\label{intro:dim}
The ind-perfect algebraic space $X_\mu(b)$ is finite dimensional, and
\[\dim X_\mu(b)=\langle\rho,\mu-\nu_b\rangle-\frac{1}{2}\on{def}_G(b).\]
\end{thm}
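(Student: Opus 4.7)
The plan is to follow the classical two-step strategy for Rapoport's conjecture: reduce to the basic case via a Hodge--Newton decomposition, and then handle the basic case by a direct calculation using the Mirkovi\'{c}--Vilonen theory established earlier in the paper.

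Step one is the Hodge--Newton reduction. If $\nu_b$ is not central in $G$, there is a unique standard Levi subgroup $M\subsetneq G$ in whose connected center $\nu_b$ factors, and after $\sigma$-conjugation we may assume $b\in M(L)$. Writing $P=MU$ for the corresponding parabolic, the retraction $\Gr_G \hookleftarrow \Gr_P \twoheadrightarrow \Gr_M$ (which makes sense here because each of these spaces is now representable) induces a morphism
\[
f\colon X^G_\mu(b)\longrightarrow \bigsqcup_{\mu_M} X^M_{\mu_M}(b),
\]
where the disjoint union ranges over $M$-dominant coweights $\mu_M$ in the Weyl orbit of $\mu$ compatible with the central character of $b$, and whose fibers are locally closed subsets of perfections of iterated affine bundles of rank $\langle\rho_G-\rho_M,\mu+\nu_b\rangle$. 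A dimension count on the fibers, combined with the identity $\on{def}_G(b)=\on{def}_M(b)$ when $b$ is $M$-basic together with induction on semisimple rank, reduces the theorem to the case where $b$ is basic.

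In the basic case one first reduces to $G$ adjoint or semisimple simply connected by a central isogeny --- a step that is legitimate precisely because $X_\mu(b)$ is defined group-theoretically via $\Gr_G$, as emphasized in the introduction. The Mirkovi\'{c}--Vilonen decomposition established earlier in the paper provides semi-infinite orbits $S_\nu\subset\Gr_G$, and the top-dimensional irreducible components of $\overline{\Gr}_\mu\cap S_\nu$ --- the MV cycles --- have dimension $\langle\rho,\mu+\nu\rangle$. Intersecting $X_\mu(b)$ with $S_\nu$, using centrality of $\nu_b$ to pin down which cocharacters $\nu$ can contribute non-trivially, and tracking the $\pi_1(G)_\Gamma$-contribution via the Kottwitz map, then produces the claimed formula $\langle\rho,\mu-\nu_b\rangle-\tfrac{1}{2}\on{def}_G(b)$. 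Finite-dimensionality is automatic from the containment $X_\mu(b)\subset\overline{\Gr}_\mu$.

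The principal obstacle will be the Hodge--Newton step. In equal characteristic the corresponding decomposition is due to Mantovan--Viehmann and relies on finite-dimensionality of the relevant $LU$-orbits; in our setting one must verify that the $\sigma$-twisted $L^+P$-action on the preimage of a fixed $M$-stratum yields a perfect iterated affine bundle of the expected rank. The representability theorem of this paper, together with descent along $\sigma$-linear affine bundles, should make the argument go through, but the bookkeeping is delicate: one must control the dimension contribution at each level in a setting where the relevant orbits are infinite-dimensional as ordinary schemes yet carry a well-defined dimension as perfect schemes.
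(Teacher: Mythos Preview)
Your Step one (the reduction to basic $b$ via the Levi $M=Z_G(\nu_b)$) matches the paper's approach, which follows \cite{GHKR}. But there are two genuine gaps.

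First, the sentence ``Finite-dimensionality is automatic from the containment $X_\mu(b)\subset\overline{\Gr}_\mu$'' is simply false: $X_\mu(b)$ consists of cosets $gL^+G$ with $g^{-1}b\sigma(g)\in L^+G\varpi^\mu L^+G$, which imposes no bound on the Schubert stratum containing $g$ itself. In general $X_\mu(b)$ meets infinitely many Schubert cells (e.g.\ for $b=1$, $\mu=0$ it is a discrete infinite set), so finite-dimensionality is part of what must be proved, not a triviality.

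Second, and more seriously, your treatment of the basic case is not an argument. Knowing $\dim(S_\nu\cap\Gr_{\leq\mu})=\langle\rho,\mu+\nu\rangle$ does not compute $\dim(X_\mu(b)\cap S_\nu)$ for basic $b$; MV theory controls the intersection of semi-infinite orbits with \emph{Schubert varieties}, not with affine Deligne--Lusztig varieties. In \cite{GHKR} (and in this paper) the $S_\nu$-type decomposition is used only as the mechanism of the \emph{reduction step} $G\rightsquigarrow M$, relating $\dim X^G_\mu(b)$ to various $\dim X^M_{\mu_M}(b)$. When $b$ is basic one iterates this with ever smaller Levi subgroups until $b$ becomes \emph{superbasic} (basic and not contained in any proper $F$-Levi). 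At that point no further reduction is possible and one must compute directly. The paper does this by: (i) observing (after \cite{GHKR,CKV}) that for adjoint $G$ superbasic classes exist only when $G=\Res_{E/F}\on{PGL}_n$; (ii) reducing $\Res_{E/F}\on{PGL}_n$ to $\on{PGL}_n$ via an identification $X^G_{\leq\mu}(b)\simeq X^H_{\leq\mu_\bullet}(\on{Nm}b)$ with a \emph{convolution} affine Deligne--Lusztig variety and then using semismallness of the convolution map (Proposition~\ref{unramified to split}); and (iii) invoking Viehmann's explicit computation \cite{V} for $\on{PGL}_n$. Your proposal omits this entire endpoint; the phrase ``using centrality of $\nu_b$ to pin down which $\nu$ contribute'' does not substitute for it.
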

We refer to \S~\ref{dim:ADLV} for unexplained notations. Thanks to Proposition~\ref{intro:RZ}, when $(G,b,\mu)$ is of Hodge type, we obtain the dimension formula of the corresponding Rapoport-Zink space\footnote{When the first draft version of the paper was completed, Hamacher released his preprint  \cite{Ham1} where Rapoport's conjecture for PEL type Rapoport-Zink spaces was solved.}.

Not surprisingly, after the machinery is set up, we can imitate the methods used in the equal characteristic situation (with one innovation): we can apply the arguments of \cite{GHKR} in the current setting and reduce to prove Theorem~\ref{intro:dim} for those $b$ that are the so-called superbasic $\sigma$-conjugacy classes. It was shown in \cite{GHKR,CKV} that if $G$ is of adjoint type, superbasic $\sigma$-conjugacy classes exist only when $G=\on{PGL}_n$ or $G=\Res_{E/F}\on{PGL}_n$, where $E/F$ is an unramified extension. The case when $G=\on{PGL}_n$ was treated in \cite{V} (in equal  characteristic but the same arguments apply to mixed characteristic as well). The case when $G=\Res_{E/F}\on{PGL}_n$ was treated in \cite{Ham2} in the equal  characteristic situation and then was adapted in \cite{Ham1} to deal with the corresponding Rapoport-Zink spaces. Our innovation here is a reduction of the $\Res_{E/F}\on{PGL}_n$ case to the $\on{PGL}_n$ case so one can invoke the results of \cite{V}  directly (see Proposition \ref{unramified to split}). It in particular gives a much shorter proof of the main result of \cite{Ham2} (assuming \cite{V}). We note that this reduction step uses the representability of $X_\mu(b)$ for non-minuscule $\mu$ (even we are just interested in proving Theorem \ref{intro:dim} for minuscule $\mu$) and also the semismallness of convolution maps of affine Grassmannians.

\subsection{Plan of the paper} We quickly discuss the organization of this paper. In \S~\ref{aff Grass}, we prove the representability of affine Grassmannians and affine flag varieties and discuss their first properties, first for $\GL_n$ in \S~\ref{aff Grass for GLn} and \S~\ref{Dem Res}, and then for general groups in \S~\ref{gen aff flag}. We establish the geometric Satake equivalence in \S~\ref{GS}. In particular, the semi-infinite geometry of the affine Grassmannian is discussed in \S~\ref{semiinfinite geometry} and the Tannakian structure on the category is established in \S~\ref{mon str of H} and \S~\ref{comm constr}. In \S~\ref{dim of RZ}, we prove the dimension formula for Rapoport-Zink spaces conjectured by Rapoport, as an application of our theory. The paper contains two appendices. Appendix \ref{generality of perf} discusses perfect schemes and perfect algebraic spaces in some generality, which is the setting we will work with in the paper. Appendix \ref{rep as sch} discusses some further questions on affine Grassmannians, including conjectures related to the representability of affine Grassmannians as \emph{schemes} and the deperfection of ``Schubert varieties" inside them. Finally, we discuss an example of these ``Schubert varieties" in our setting.
 
\subsection{Notations}\label{not}
We fix  a perfect field $k$ of characteristic $p>0$.
For a $k$-algebra $R$, let 
$$W(R)=\{ (a_0,a_1,\ldots)\mid a_i \in R\}$$ 
denote its ring of Witt vectors, and let $R\to W(R),\ x\mapsto [x]=(x,0,0,\ldots)$ be the Teichm\"{u}ller lifting.  We denote by $W_h(R)$ the ring of truncated Witt vectors of length $h$. If $R$ is perfect, $W_h(R)=W(R)/p^{h}W(R)$.

Let us write $\mO_0=W(k)$, $F_0=W(k)[1/p]$. Except in \S~\ref{ADLV==RZ}, $F$ denotes a totally ramified finite extension of $F_0$ and $\mO$ denotes the ring of integers of $F$. Let $\varpi$ denote a uniformizer of $\mO$. For a $k$-algebra $R$, let $W_{\mO}(R)=W(R)\otimes_{W(k)}\mO$ and $W_{\mO,n}(R)=W(R)\otimes_{W(k)}\mO/\varpi^n$. We write $W_{\mO}(\bar{k})=\mO_L$ and $L=\mO_L[1/p]$, which is the completion of the maximal unramified extension of $F$. We write
\[D_{F,R}:=\Spec W_\mO(R),\quad D_{F,R}^*:=\Spec W_\mO(R)[1/p].\]
Informally, $D_{F,R}$ (resp. $D_{F,R}^*$) can be thought as a family of discs (resp. punctured discs) parameterized by $\Spec R$. In the above notations, we will omit the subscript $F$ if $F=F_0$, and will omit the subscript $R$ if $R=k$.

\medskip

Unless otherwise stated, we will assume that $G$ is a smooth affine group scheme over $\mO$ with connected geometric fibers. We will denote by $\mE_0$ the trivial $G$-torsor. 

When $G$ is a split reductive group,  we will choose a Borel subgroup $B\subset G$ over $\mO$ and a split maximal torus $T\subset B$. Let $U\subset B$ denote the unipotent radical of $B$.
Sometimes, we denote by $\bar G,\bar B, \bar U, \bar T$ their reductions modulo $\varpi$.

Let $\xcoch$ denote the coweight lattice of $T$ and $\xch$ the weight lattice. Let $\xcoch^+$ denote the semi-group of dominant coweights with respect to the chosen $B$.  We denote by $2\rho\in\xch$ the sum of positive roots. Let $``\leq"$ be the following partial order on $\xcoch$: $\la\leq \mu$ if $\mu-\la$ is a linear combination of positive roots with coefficients in $\bZ_{\geq 0}$. For $\la\in\xcoch$, the image of $\varpi\in F^\times=\bG_m(F)$ under the map $\la :\bG_m\to T\to G$ is denoted by $\varpi^\la$.

The dual group of $G$ (over a field of characteristic zero) is denoted by $\hat{G}$. We equip it with a dual Borel $\hat B$ and a maximal torus $\hat T$ dual to $T$\footnote{In fact, if $\hat G$ is the Tannakian group constructed from the geometric Satake, it is automatically equipped with $\hat B$ and $\hat T$.}. For $\mu\in \xcoch^+$, let $V_\mu$ denote the irreducible representation of $\hat{G}$ with the highest weight $\mu$. For $\la\in\xcoch$, let $V_\mu(\la)$ denote the $\la$-weight subspace of $V_\mu$. 

\subsection{Acknowledgement}
An ongoing project with L. Xiao is the main motivation of the current paper. The author thanks him cordially for the collaboration. The author also thanks B. Bhatt, B. Conrad, V. Drinfeld, B. Elias, A. de Jong, X. He, J. Kamnitzer, L. Moret-Bailly, G. Pappas, P. Scholze and Z. Yun  for useful discussions and comments. He in particular would like to thank J. Kamnitzer for pointing out a serious mistake in an early draft of the paper. The author is partially supported by NSF grant DMS-1001280/1313894 and DMS-1303296/1535464 and the AMS Centennial Fellowship.

\section{Affine Grassmannians}\label{aff Grass}
In this section, we construct affine Grassmannians and affine flag varieties in mixed characteristic. We will work with a class of geometric objects called perfect algebraic spaces. We refer to Appendix \ref{generality of perf} for the necessary background for these objects.

\subsection{$p$-adic loop groups and affine Grassmannians}
In this subsection, we will define affine Grassmannians and state our first main theorem. We refer to \S~\ref{not} for the notations.
\subsubsection{} Let $\mX$ be a finite type $\mO$-scheme. According to Greenberg (\cite{Grb}), there are the following two presheaves on the category of affine $k$-schemes defined as
\[L_p^+\mX(R)=\mX(W_{\mO}(R)),\quad L_p^h\mX(R)=\mX(W_{\mO,h}(R)),\]
which are represented by schemes over $k$. In addition, $L_p^h\mX$ is of finite type over $k$, and $L_p^+\mX=\underleftarrow\lim L_p^h\mX$. If $\mX\subset \mY$ is open, then $L_p^+\mX\subset L_p^+\mY$ is open.
We denote their perfection by
\[L^{+}\mX= (L_p^+\mX)^\pf,\quad L^{h}\mX=(L_p^h\mX)^\pf,\]
and call them $p$-adic jet spaces.
The justification of the choice of the notations is that perfect objects  behave better and are more similar to their equal characteristic analogues. If $f:\mX\to\mY$ is an $\mO$-morphism, we denote by $L_p^+f:L^+_p\mX\to L^+_p\mY$ and $L^+f: L^+\mX\to L^+\mY$ the induced maps.

Let $X$ be an affine scheme over $F$. We define the $p$-adic loop space  $LX$ of $X$ as a perfect space by assigning a perfect $k$-algebra $R$ the set
\[L X(R)=X(W_{\mO}(R)[1/p]).\]
Every $F$-morphism $f:X\to Y$ induces a morphism $Lf:LX\to LY$. We do not define the object $L_pX$ as a presheaf on the category of all affine $k$-schemes. According to Lemma \ref{descent}, the following statement makes sense.

\begin{prop}\label{p-adic loop}
If $X$ is affine of finite type, then $LX$ is represented by an ind perfect schemes.
\end{prop}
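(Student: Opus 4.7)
The plan is first to treat $X=\bA^n$ and then deduce the general case by closed embedding. Since $F/F_0$ is finite totally ramified with uniformizer $\varpi$, one has $W_{\mO}(R)[1/p]=W_{\mO}(R)[1/\varpi]$. For perfect $R$, $\varpi$ is a nonzerodivisor on $W_{\mO}(R)$, so
\[
L\bA^n(R)\;=\;W_{\mO}(R)[1/\varpi]^n\;=\;\bigcup_{N\geq 0}\varpi^{-N}W_{\mO}(R)^n,
\]
and multiplication by $\varpi^N$ identifies the $N$-th term with $L^+\bA^n(R)$. Under these identifications, the transition map from the $N$-th to the $(N{+}1)$-st stage becomes multiplication by $\varpi$ on $L^+\bA^n$. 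The key point is that this is a closed embedding of perfect schemes: its image consists of Witt vectors whose reduction mod $\varpi$ vanishes, and on perfect rings the polynomial formula for multiplication by $\varpi$ has an inverse on this image involving $p$-th roots, which exists after perfection. Granted this, $L\bA^n$ is exhibited as a filtered colimit of closed embeddings of perfect schemes, hence an ind perfect scheme.

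For general $X$, I would choose a closed embedding $X\hookrightarrow\bA^n_F$ cut out by polynomials $f_1,\ldots,f_m\in F[x_1,\ldots,x_n]$, which after clearing denominators may be assumed to lie in $\mO[x_1,\ldots,x_n]$. Put
\[
(LX)_N\;=\;LX\,\cap\,\varpi^{-N}L^+\bA^n.
\]
Writing a test point as $x=\varpi^{-N}y$ with $y\in W_{\mO}(R)^n$, and letting $d_i=\deg f_i$, the equation $f_i(x)=0$ is equivalent to $\varpi^{Nd_i}f_i(\varpi^{-N}y)=0$, which expands into a polynomial equation in $y$ with coefficients in $\mO$. In terms of the coordinate functions on $L^+\bA^n$ (which are precisely the Witt-vector components of $y$), these are finitely many closed conditions defined by universal Witt polynomials. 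Hence $(LX)_N$ is the perfection of a closed subscheme of $L_p^+\bA^n$; the transition maps remain closed embeddings; and $LX=\varinjlim_N(LX)_N$ is an ind perfect scheme, as required.

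The one step that requires genuine care is the claim that multiplication by $\varpi$ defines a closed embedding $L^+\bA^n\hookrightarrow L^+\bA^n$ of perfect schemes. Before perfection, this map has image defined by the vanishing of the initial Witt coordinate, but its inverse on the image is only a polynomial after one is allowed to extract $p$-th roots; passage to the perfection is exactly what makes it an isomorphism onto that closed subscheme. This is the only place in the argument where the perfect structure of the base does real work; the rest is formal manipulation of polynomial equations in Witt coordinates, completely parallel to the equal characteristic reasoning for $LX$ when $X$ is affine of finite type over $k(\!(t)\!)$.
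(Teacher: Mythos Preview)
Your proof is correct. It follows essentially the same strategy as the paper---filter $L\bA^n$ by $\varpi$-adic valuation, show each stage is a perfect scheme with closed transition maps, then cut out $LX$ by polynomial equations---but the packaging differs in two useful ways.

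First, the paper reduces at the outset to $F=F_0$ and then uses the explicit Teichm\"uller expansion $x=\sum_{i\geq -N}p^i[x_i]$ to coordinatize each stage as $(\bA^\infty)^{\pf}$; the transition maps simply add one more Teichm\"uller coordinate. You instead identify each stage with $L^+\bA^n$ via multiplication by $\varpi^N$, so that the transition map becomes multiplication by $\varpi$. This lets you work directly over general $\mO$ without reducing to $W(k)$, at the cost of having to check that $\mu_\varpi$ is a closed embedding after perfection---which you do, and your explanation (image cut out by vanishing mod $\varpi$; inverse given by $p$-th roots on Witt components) is exactly right.

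Second, for a closed embedding $X\hookrightarrow\bA^n_F$, the paper writes out $f\bigl(\sum p^i[x_{1i}],\ldots\bigr)=\sum p^j[f^{(j)}(x_{ml})^{1/p^j}]$ explicitly and reads off the defining equations of $LZ$ in $L\bA^n$. Your rescaling trick---substitute $x=\varpi^{-N}y$ and clear denominators so that each equation becomes $\tilde f_i(y)=0$ with $\tilde f_i\in\mO[y]$---is cleaner: it identifies $(LX)_N$ directly with $L^+Z_N$ for the $\mO$-scheme $Z_N=V(\tilde f_1,\ldots,\tilde f_m)\subset\bA^n_{\mO}$, and then one only needs that Greenberg realization takes closed embeddings to closed embeddings (and perfection preserves this). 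One small imprecision: you write ``finitely many closed conditions,'' but of course each $\tilde f_i(y)=0$ unfolds into infinitely many equations on the Witt components of $y$; what is finite is the number of polynomials $\tilde f_i$, and each defines a single closed subscheme of $L^+\bA^n$.
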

\begin{proof}As soon as we go to the perfection, the proof is similar to the representability of the usual loop groups in the equal characteristic setting.

First, it is enough to consider the case $F=F_0=W(k)[1/p]$. If $X=\bA^1$, then $LX=\underrightarrow\lim (\bA^{\infty})^{\pf}$. This follows from the fact that every element in $W(R)[1/p]$ can be uniquely written as
\[x=\sum_{i\geq -N}p^i[x_i].\]
Second, $X=X_1\times X_2$, then $L(X_1\times X_2)=LX_1\times LX_2$ so $L\bA^n$ is representable. Finally, if $Z\subset \bA^n$ is a closed embedding, then $LZ\to L\bA^n$ is a closed embedding. Indeed we can write
\[[x]+[y]=\sum p^j[\Sigma_j(x,y)^{1/p^j}],\]
where $\Sigma_j(X,Y)$s are certain polynomials with two variables $X,Y$, of homogeneous degree $p^j$.
Now assume that
\[\mO_Z= F[t_1,\ldots,t_n]/(f_1,\ldots,f_m).\]
It is easy to see that if $f(t_1,\ldots,t_n)$ is a polynomial with coefficients in $F$, then
\[f(\sum p^i[x_{1i}],\ldots, \sum p^i[x_{ni}]) = \sum p^j [f^{(j)}(x_{ml})^{1/p^j}].\]
where $f^{(j)}$s are some polynomials in terms of the variables $X_{ml}, m=1,\ldots,n,l\in\bZ$. Then $L_pZ$ is defined in $L_p\bA^n$ by the equations $f_{r}^{(j)}, (f_{r}^{(j)})^{1/p},(f_{r}^{(j)})^{1/p^2},\ldots$.
\end{proof}

The above arguments also give the following lemma.
\begin{lem}(i) Let $\mX$ be an affine scheme of finite type over $\mO$ and let $X=\mX\otimes_\mO F$. Then $L^+\mX\subset LX$ is a closed subscheme.

(ii) If $X\to Y$ is a closed embedding, then $LX\to LY$ is a closed embedding.
\end{lem}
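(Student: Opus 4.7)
My plan is to bootstrap both parts from the machinery already set up in the proof of Proposition \ref{p-adic loop}: we recall from there that $L\bA^1=\varinjlim_N L^{(N)}\bA^1$, where each $L^{(N)}\bA^1\cong(\bA^\infty)^\pf$ parametrises expansions $x=\sum_{i\geq -N}p^i[x_i]$, and that for a closed embedding $Z\hookrightarrow\bA^n_F$ the subfunctor $LZ\subset L\bA^n$ was cut out by the explicit Witt-polynomial equations $f_r^{(j)},(f_r^{(j)})^{1/p},\ldots$.

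I would dispatch (ii) first, since the statement is essentially the final step of that proof. Pick a closed embedding $Y\hookrightarrow\bA^n_F$ (possible since $Y$ is affine of finite type) and compose with $X\hookrightarrow Y$ to get a closed embedding $X\hookrightarrow\bA^n_F$. The argument just recalled shows that both $LX$ and $LY$ are closed sub-ind-perfect-schemes of $L\bA^n$. Since $LX\hookrightarrow LY\hookrightarrow L\bA^n$ with $LX$ closed in $L\bA^n$, it is closed in $LY$ as well.

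For part (i), I would choose a closed embedding $\mX\hookrightarrow\bA^n_\mO$ and consider the square
\[
\begin{CD}
L^+\mX @>>> LX \\
@VVV @VVV \\
L^+\bA^n @>>> L\bA^n.
\end{CD}
\]
The key point to verify is that this square is Cartesian: on $R$-points for a perfect $k$-algebra $R$, this is the assertion that a point of $\mX(W_\mO(R)[1/p])$ whose image in $\bA^n(W_\mO(R)[1/p])$ already lies in $W_\mO(R)^n$ automatically comes from $\mX(W_\mO(R))$. This holds because the defining equations of $\mX\subset\bA^n_\mO$ have coefficients in $\mO$, and $W_\mO(R)\hookrightarrow W_\mO(R)[1/p]$ is injective --- it is here that perfectness of $R$ enters, guaranteeing that $p$ is a non-zero-divisor in $W_\mO(R)$. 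Granting this, (ii) says the right vertical is a closed embedding, so the left vertical is one as soon as the bottom is.

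It remains to show $L^+\bA^n\hookrightarrow L\bA^n$ is a closed embedding, and factoring as $(L^+\bA^1)^n\to(L\bA^1)^n$ reduces this to $n=1$. Inside each $L^{(N)}\bA^1$, the subfunctor $L^+\bA^1$ corresponds to the vanishing locus of the Teichm\"{u}ller coordinates $x_{-N},\ldots,x_{-1}$, a manifestly closed sub-perfect-scheme, and these vanishing loci are compatible with the transition maps $L^{(N)}\bA^1\hookrightarrow L^{(N+1)}\bA^1$. Hence $L^+\bA^1\hookrightarrow L\bA^1$ is a closed embedding of ind-perfect-schemes, completing the argument. I expect the only moment requiring any care to be the Cartesianity in part (i), which rests entirely on the injectivity $W_\mO(R)\hookrightarrow W_\mO(R)[1/p]$ for perfect $R$; everything else is bookkeeping on top of the proof of Proposition \ref{p-adic loop}.
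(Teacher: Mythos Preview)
Your proposal is correct and is precisely the unpacking the paper has in mind: the paper gives no separate argument, simply asserting that both parts are ``clear from the above argument'' (i.e., the proof of Proposition~\ref{p-adic loop}), and your reduction to $L^+\bA^1\hookrightarrow L\bA^1$ via a Cartesian square together with the closed-in-closed observation for part~(ii) is exactly how one extracts the lemma from that proof. The one small point you leave implicit is the passage from general $F$ to $F_0$ (as in the first line of the proof of Proposition~\ref{p-adic loop}), but this is immediate since $\mO$ is finite free over $W(k)$.
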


Now, let $\mX=G$ be a smooth affine group scheme over $\mO$. We write $G^{(0)}=G$ and define the $h$th congruence group scheme of $G$ over $\mO$ , denoted by $G^{(h)}$, as the dilatation of $G^{(h-1)}$ along the unit. There is a natural map $G^{(h)}\to G$ which identifies
\begin{equation}\label{congruent}
L^+G^{(h)}=\ker (L^+G\to L^hG). 
\end{equation}
Note, however, that $L_p^+G^{(h)}\neq \ker(L_p^+G\to L_p^hG)$.

\subsubsection{}
Let $G$ be a smooth affine group scheme over $\mO$. We define the affine Grassmannian of $G$ as the perfect space
\[\Gr_G:= [LG/L^+G].\]
See \S~\ref{nonsense pf sp} for the notation. Explicitly, for a perfect $k$-algebra $R$, $\Gr_G(R)$ is the set of pairs $(P,\phi)$, where $P$ is an $L^+G$-torsor on $\Spec R$ and $\phi: P\to LG$ is an $L^+G$-equivariant morphism.
Similar to the equal characteristic situation, there is the following interpretation of $\Gr_G$. Recall that we denote by $\mE_0$ the trivial $G$-torsor on $\mO$.
\begin{lem}\label{moduli interpretation of Gr}
We have
\[\Gr_G(R)=\left\{(\mE,\beta)\ \left|\ \begin{split}&\mE \mbox{ is a } G\mbox{-torsor on } D_{F,R} , \mbox{ and }\\
& \beta:\mE|_{D^*_{F,R}}\simeq  \mE_0|_{D^*_{F,R}}\mbox{ is a trivialization }\end{split}\right.\right\}.\]
\end{lem}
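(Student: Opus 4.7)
The plan is to realize both functors as descent data for the same torsor problem, once we have the right dictionary between covers of $\Spec R$ and covers of $\Spec W_{\mO}(R)$. Both sides are fpqc sheaves on the category of perfect $k$-algebras, so it suffices to write down mutually inverse natural transformations and check they respect the descent.

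The forward direction will go as follows. Given $(P,\phi) \in [LG/L^+G](R)$, pick an fpqc (in fact \'etale, by smoothness of $G$) cover $R \to R'$ trivializing $P$, so that $\phi$ is represented by a class $g \in LG(R')=G(W_{\mO}(R')[1/p])$ together with an $L^+G$-cocycle on $R'\otimes_R R'$, i.e.\ a class in $G(W_{\mO}(R'\otimes_R R'))$. I would then interpret this cocycle as gluing data: use $g$ to twist the trivial $G$-torsor on $W_{\mO}(R')[1/p]$ to get a $G$-torsor whose generic fiber is canonically identified with $\mE_0[1/p]$ (giving $\beta$), while the cocycle in $L^+G$ is precisely what is needed to descend the integral torsor to a $G$-torsor $\mE$ on $W_{\mO}(R)$. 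For the reverse direction, given $(\mE,\beta)$, I would use smoothness of $G$ to trivialize $\mE$ \'etale locally on $\Spec W_{\mO}(R)$; the key step is to lift this to an \'etale cover of $\Spec R$, from which $\beta$ furnishes an element of $LG$ on the cover with $L^+G$-cocycle, i.e.\ the desired pair $(P,\phi)$.

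The technical input that makes both directions work is the ``torsor dictionary'': for a perfect $k$-algebra $R$, the assignment $R' \mapsto W_{\mO}(R')$ defines an equivalence between the \'etale site of $\Spec R$ and the \'etale site of $\Spec W_{\mO}(R)$. This rests on two facts: (i) every \'etale $W_{\mO}(R)$-algebra is $\varpi$-adically complete and comes by unique lifting from an \'etale algebra over the special fiber $R = W_{\mO}(R)/\varpi$, because $W_{\mO}(R)$ is $\varpi$-adically henselian; and (ii) for perfect $R$, any \'etale $R \to R'$ has $R'$ again perfect (relative Frobenius is an isomorphism for \'etale morphisms), and the unique lift is $W_{\mO}(R')$. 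Inverting $p$ then transports $LG$-torsors on $\Spec R$ to $G$-torsors on $\Spec W_{\mO}(R)[1/p]$.

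The main obstacle I expect is not in the cocycle bookkeeping, which is formally identical to the equal characteristic proof, but in verifying the torsor dictionary carefully: one must check that \'etale descent of $G$-torsors across $W_{\mO}(R) \to W_{\mO}(R')$ is effective, and that the naive presheaf $R \mapsto LG(R)/L^+G(R)$ and its torsor-theoretic incarnation $[LG/L^+G]$ both sheafify to the functor of pairs $(\mE,\beta)$. Once the dictionary is in place, the equivalence of the two descriptions is a direct translation, and functoriality in $R$ is automatic because everything is constructed from $W_{\mO}$, which is a functor of perfect $k$-algebras.
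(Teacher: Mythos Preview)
The paper does not actually prove this lemma: it is stated and immediately followed by a remark, with the justification being the parenthetical reference to the equal characteristic situation and to Kreidl's paper. Your proposal is the correct standard argument and supplies exactly what the paper leaves implicit. The key technical input you isolate---the equivalence of \'etale sites between $\Spec R$ and $\Spec W_{\mO}(R)$ for perfect $R$, via $\varpi$-adic henselianity and unique lifting of \'etale algebras---is precisely the point, and your two directions are the right ones.

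One small refinement: when you invoke ``smoothness of $G$'' to trivialize the $L^+G$-torsor $P$ \'etale locally on $\Spec R$, note that $L^+G$ itself is not of finite type, so this is not literally the usual smoothness argument. The cleanest way to phrase it is to use your dictionary in the other direction first: an $L^+G$-torsor on $\Spec R$ is the same datum as a $G$-torsor on $\Spec W_{\mO}(R)$, and the latter is \'etale locally trivial because $G$ is smooth over $\mO$; transporting that \'etale cover back to $\Spec R$ gives the desired trivialization of $P$. Alternatively, one can use that $L^+G$ is the perfection of the pro-smooth group $L^+_pG$ together with the paper's Lemma on equivalence of torsors under a group and its perfection. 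Either way your outline goes through.
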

\begin{proof}Let us temporarily denote the functor defined by the right hand side by $\Gr'_G$. We define a new presheaf $L'G$ as
\[L'G(R)\simeq\left\{(\mE,\beta, \epsilon)\ \left|\ \begin{split}&(\mE,\beta)\in \Gr'_G(R) \\
& \epsilon:\mE_0\simeq \mE \mbox{ is a trivialization}\end{split}\right.\right\}.\]
We claim that: (a) $L'G$ is an $L^+G$-torsor over $\Gr'_G$; and (b) there is an $L^+G$-equivariant isomorphism $LG=L'G$. Then it follows that $\Gr_G=\Gr'_G$.

Let $\mE$ be a $G$-torsor on $D_{F,R}$. Since $G$ is smooth, after replacing $R$ by its an \'etale cover, we may assume that $\mE$ is trivial when restricted to $\Spec W_\mO(R)/\varpi$. Then it is trivial on $D_{F,R}$, again by the smoothness of $G$. In other words, \'etale locally on $R$ a trivialization $\epsilon$ as in the definition of $L'G$ always exists. Claim (a) follows. The isomorphism in Claim (b) is given by $A\mapsto (\mE_0,A,\on{id})$ with the inverse map given by $(\mE,\beta,\epsilon)\mapsto A:=\beta\epsilon$.
\end{proof}

According to Lemma \ref{descent}, one can ask whether $\Gr_G$ is represented by a(n ind)-perfect scheme.
Our main theorem of this section gives a positive answer to a slightly weaker version of this question.
\begin{thm}\label{aff: main}
The affine Grassmannian $\Gr_G$ is represented by a separated ind-pfp ind-perfect algebraic space. If $G$ is reductive over $\mO$, then $\Gr_G$ is ind-perfectly proper.
\end{thm}

Again, similar to the equal characteristic situation, one can reduce the proof of this theorem to the case $G=\GL_n$ and $F=F_0$ (see Proposition \ref{gen G}).
So in the next two subsections, we will focus on the $\GL_n$ case first. 

\subsection{The affine Grassmannian for $\GL_n$}\label{aff Grass for GLn}
We denote $\Gr_{\GL_n}$ by $\Gr$ in this subsection for simplicity. We will introduce some closed subspace $\bGr_N\subset \Gr$, which can be realized as a quotient of a finite dimensional affine scheme by an action of a finite dimensional affine group scheme. It then follows from Theorem \ref{quotient} that $\bGr_N$ (and therefore $\Gr$) is representable.
\subsubsection{}\label{rel pos latt}
Let $R$ be a perfect $k$-algebra. As usual we will identify $\GL_n$-torsors on $D_{R}=\Spec W(R)$ with finite projective $W(R)$-modules. 
So we can rewrite the moduli problem $\Gr$ as follows. Let $\mE_0=W(R)^n$ denote the rank $n$ free $W(R)$-module. Then for a perfect $k$-algebra $R$, we have
\[\Gr(R)=\left\{(\mE,\beta)\ \left|\ \begin{split}&\mE \mbox{ is a rank } n \mbox{ projective } W(R)\mbox{-module} , \\
& \beta:\mE[1/p]\to  \mE_0[1/p]\mbox{ is an isomorphism }\end{split}\right.\right\}.\]
Note that via the inclusion $\mE\subset \mE[1/p]\stackrel{\beta}{\simeq} \mE_0[1/p]=W(R)[1/p]^n$, we can think of $\mE$ as a lattice in $W(R)[1/p]^n$. Therefore the above moduli interpretation coincides with the one given by \eqref{intro:Gr}. We will use these two points of views interchangeably.

Recall that a finite projective $W(R)$-module is the same as a finite rank locally free crystal on $\Spec R$.  Due to this reason, for two finite projective $W(R)$-modules $\mE_1$ and $\mE_2$,
an isomorphism $\beta: \mE_1[1/p]\simeq \mE_2[1/p]$ will be called a quasi-isogeny. Sometimes, we write it as
\begin{equation*}\label{qiso}
\beta: \mE_1\dasharrow \mE_2
\end{equation*}
for simplicity. If $\beta$ extends to a genuine map $\mE_1\to \mE_2$, it is called an isogeny. 
Now we recall some basic facts of quasi-isogenies.

Let $k$ be a perfect field, and let $\beta: E_1\dasharrow E_2$ be a quasi-isogeny of finite projective $W(k)$-modules\footnote{Finite projective modules over $W(k)$ are usually denoted by $E_0,E_1,\ldots$ instead of $\mE_0,\mE_1,\ldots$ in the sequel.}. The relative position of $\beta$, denoted by $\inv(\beta)$,  is defined as an element in 
$$\xcoch(D_n)^+=\{\mu=(m_1,\ldots,m_n)\in \bZ^n\mid m_1\geq m_2\geq\cdots\geq m_n\}$$
as follows.  There always exist a basis $(e_1,\ldots,e_n)$ of $E_1$ and a basis $(f_1,\ldots,f_n)$ of $E_2$ such that $\beta$ is given by 
$$\beta(e_i)=p^{m_i}f_i$$ 
and $m_1\geq m_2\geq\cdots\geq m_n$. In addition, this sequence $(m_1,\ldots,m_n)$ is independent of the choice of the basis. Then we define
\begin{equation}\label{rel pos}
\inv(\beta)=(m_1,\ldots,m_n).
\end{equation}
Note that $\beta$ is an isogeny if and only if $m_n\geq 0$\footnote{In this case, the relative position of $\beta$ is sometimes also called the Hodge polygon of $\beta$ and denoted by $HP(\beta)$ in literature.}. 

For $0\leq i\leq n$, we denote by $\omega_i=(1,\ldots,1,0,\ldots,0)$ with the first $i$ entries $1$ and the last $n-i$ entries $0$. Let $\omega_i^*=\omega_{n-i}-\omega_{n}$. Note that  $\inv(\beta)=\omega_i$ if and only if $\beta$ extends to a genuine map $E_1\to E_2$  such that $pE_2\subset E_1$ and $E_2/E_1$ is a $k$-vector space of dimension $i$. Similarly, $\inv(\beta)=\omega_i^*$ if and only if $\beta^{-1}$ induces the inclusions $pE_1\subset E_2\subset E_1$ such that $E_1/E_2$ is of dimension $i$.

Note that $\xcoch(D_n)^+$ can be identified with the set of dominant coweights of $\GL_n$ in the usual way. The partial order ``$\leq$" from \S\ \ref{not} then can be explicitly described as follows: $\mu_1=(m_1,\ldots,m_n)\leq \mu_2=(l_1,\ldots,l_n)$ if 
$$m_1+\cdots+m_j\leq l_1+\cdots+l_j, \ \ \ j=1,\ldots, n, \quad \mbox{ and }\ \ m_1+\cdots+m_n=l_1+\cdots+l_n.$$ 
Note that $\omega_0$ is a minimal element.

Now let $R$ be a perfect $k$-algebra, and let $\beta: \mE_1\dasharrow \mE_2$ be a quasi-isogeny of finite projective $W(R)$-modules. For $x\in \Spec R$, we denote by
$$\beta_x: \mE_1\otimes_{W(R)}W(k(x))[1/p]\to \mE_2\otimes_{W(R)}W(k(x))[1/p]$$
the base change of $\beta$ to $x$. Let
\[(\Spec R)_{\mu}=\{x\in \Spec R \mid \inv(\beta_x)=\mu\}\subset (\Spec R)_{\leq \mu}=\{x\in \Spec R\mid \inv(\beta_x)\leq \mu\}.\]
If $(\Spec R)_{\mu}=\Spec R$, we say that the quasi-isogeny $\beta$ is of relative position $\mu$. 

\begin{lem}\label{minu}
Let $R$ be perfect $k$-algebra and
let $\beta: \mE_1\dasharrow \mE_2$ be a quasi-isogeny as above. If it is of relative position $\omega_i$, then $\beta$ induces a chain of inclusions
$p\mE_2\subset \mE_1\subset \mE_2$ and the quotient $\mE_2/\mE_1$ is a finite projective $W(R)/p=R$ module of rank $i$. A similar statement holds for $\omega_i^*$.
\end{lem}
\begin{proof}We claim that if $\beta_x$ is a genuine map at every point $x\in \Spec R$, then $\beta$ is a genuine map. Indeed, there is an open cover $\Spec W(R)=\cup \Spec W(R)_{f_i}$ such that both $\mE_1$ and $\mE_2$ are free so that we can represent $\beta$ by an element in $A_i\in M_{n\times n}(W(R)_{f_i}[1/p])$. We need to show that $A_i\in M_{n\times n}(W(R)_{f_i})$. Let $\bar f_i=f_i \mod p$. Then there is a natural map $j:W(R)_{f_i}\to W(R_{\bar f_i})$ and it is enough to show that $j(A_i)\in M_{n\times n}(W(R_{\bar f_i}))$. But this can be checked at every point of $\Spec R_{\bar f_i}$. This proves that $\beta$ induces an inclusion $\mE_1\subset \mE_2$. By applying the same argument to the quasi-isogeny 
$$\frac{1}{p}\beta^{-1}: p\mE_2\dasharrow \mE_1,$$ 
one shows that $p\mE_2\subset \mE_1$.

To show that $\mE_2/\mE_1$ is locally free, first note that for every homomorphism $R\to R'$ of perfect $k$-algebras, there is a natural isomorphism
\begin{equation}\label{obs}
(\mE_2/\mE_1)\otimes_RR'\simeq (\mE_2\otimes_{W(R)}W(R'))/(\mE_1\otimes_{W(R)}W(R')).
\end{equation}
So we can assume that both $\mE_1,\mE_2$ are free, and the dimension of the fibers of $\mE_2/\mE_1$ is constant on $\Spec R$. Note that $\mE_1/\mE_2=\on{coker}(\mE_1/p\to \mE_2/p)$. So we reduce to show that on a reduced affine scheme $\Spec R$, if $N$ is finitely presented, and the fiber dimension of $N$ is constant, then $N$ is locally free. But as $N$ is finitely presented, it is isomorphic to an $R$-module of the form $(\on{coker}(A^m\to A^n))\otimes_AR$, where $A\subset R$ is a subring, of finite type over $\bZ$. Then we reduce to the noetherian situation, in which case the statement is well-known.
\end{proof}

Recall the following basic fact (\cite[\S 2.3]{Ka}).
\begin{lem}\label{Hodge stra}
$(\Spec R)_{\leq \mu}$ is closed in $\Spec R$, and $(\Spec R)_\mu$ is open in $(\Spec R)_{\leq \mu}$. In particular, $(\Spec R)_{\omega_0}$ is closed.
\end{lem}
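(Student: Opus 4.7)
The plan is to express the condition $\inv(\beta_x)\le\la$ as a conjunction of finitely many closed ``integrality at $x$'' conditions on entries of certain exterior-power matrices, and then to turn each such integrality condition into a closed subscheme of $\Spec R$ via the Witt-coordinate description of the ideals $p^N W(R)$ that is available because $R$ is perfect.

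First, the statement is Zariski local on $\Spec R$, so I would localize to trivialize $\mE_1$ and $\mE_2$ and represent $\beta$ by a matrix $B\in M_n(W(R)[1/p])$. For $\GL_n$ the dominance order reads $\inv(\beta_x)\le\la$ iff $a_i(x):=\sum_{j\le i}m_j(x)\le b_i:=\sum_{j\le i}\la_j$ for $i=1,\ldots,n-1$ together with the equality $a_n(x)=b_n$. From Smith normal form, $a_i(x)$ equals the largest $p$-adic valuation among the elementary divisors of $\wedge^i\beta_x$; equivalently $-a_i(x)$ is the smallest such valuation for $\wedge^i(\beta^{-1})_x$. Hence $a_i(x)\le b_i$ is equivalent to the assertion that every entry of the matrix $p^{b_i}\wedge^i B^{-1}$ lies in $W(k(x))$, and the equality for $i=n$ is the conjunction of two such integrality conditions, one for $p^{b_n}(\det B)^{-1}$ and one for $p^{-b_n}\det B$.

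The second ingredient is the Witt-coordinate computation: for perfect $R$ one has $p^N W(R)=V^N W(R)$, so in Witt coordinates $w=(w_0,w_1,\ldots)$ lies in $p^N W(R)$ iff $w_0=\cdots=w_{N-1}=0$. Given any $w\in W(R)$, the locus $\{x\in\Spec R:w_x\in p^N W(k(x))\}$ is therefore the closed subscheme cut out by $w_0,\ldots,w_{N-1}$. Writing each entry of $p^{b_i}\wedge^i B^{-1}$ in the form $p^{-N}w$ with $w\in W(R)$ turns each of the integrality conditions above into an explicit closed condition, and intersecting the finite collection yields that $(\Spec R)_{\le\la}$ is closed.

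Finally, $(\Spec R)_\la$ is open in $(\Spec R)_{\le\la}$ because its complement equals $\bigcup_{\mu<\la}(\Spec R)_{\le\mu}$, a finite union (the set of $\mu\in\xcoch(D_n)^+$ with $\mu\le\la$ is finite) of closed subschemes by what was just established. The only place the argument could genuinely fail is the Witt-coordinate description of $p^N W(R)$, which breaks for non-perfect $R$ where $p$ can be a zero divisor; so the perfectness hypothesis on $R$ is essential, but with it in hand everything else reduces to linear algebra over the DVR $W(k(x))$ at each point.
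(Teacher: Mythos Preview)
Your argument is correct. The paper itself does not give a proof of this lemma; it simply cites \cite[\S 2.3]{Ka} (Katz's paper on slope filtrations) as a known fact. Your proof is essentially the standard argument behind that citation, specialized to the perfect setting: express $\inv(\beta_x)\le\la$ via valuation conditions on $i\times i$ minors (equivalently, entries of $\wedge^i B^{-1}$), and then observe that these are closed conditions. The one genuinely new ingredient you supply is the explicit translation of ``$w_x\in p^N W(k(x))$'' into the vanishing of the first $N$ Witt coordinates of $w$, which is exactly where the perfectness of $R$ enters and which makes the closedness transparent at the level of $\Spec R$ rather than $\Spec W(R)$.

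One small point worth tightening: when you localize on $\Spec R$ to trivialize $\mE_1$ and $\mE_2$, you are implicitly using that a finite projective $W(R)$-module becomes free after inverting Teichm\"uller lifts $[f]$ for suitable $f\in R$. This is fine because $W(R)$ is $p$-adically complete for perfect $R$, so freeness of $\mE_i$ over $W(R_f)=W(R)_{[f]}$ can be checked modulo $p$; and $\mE_i/p$ is finite projective over $R$, hence trivializes on a standard open cover of $\Spec R$. You might add a sentence to this effect. Otherwise the proof is complete and faithful to the intended argument.
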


Here is a direct corollary of this lemma. See \S~\ref{nonsense pf sp} for the definition of closed embedding between two perfect spaces.

\begin{cor}\label{diagonal}
The diagonal map $\Delta:\Gr\to \Gr\times \Gr$ is a closed embedding.
\end{cor}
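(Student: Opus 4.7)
The plan is to reduce the corollary directly to Lemma~\ref{Hodge stra} applied to the comparison quasi-isogeny of two $R$-points of $\Gr$.

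First I will unwind what it means for the diagonal $\Gr\to \Gr\times \Gr$ to be a closed embedding in the setting of perfect spaces (see \S\ref{nonsense pf sp}): for any perfect $k$-algebra $R$ and any pair of $R$-points $(\mE_i,\beta_i)\in \Gr(R)$ for $i=1,2$, the subfunctor of $\Spec R$ on which these two points become equal must be representable by a closed subscheme of $\Spec R$.

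Next, I form the composite quasi-isogeny
$$\gamma\;:=\;\beta_1^{-1}\circ \beta_2\;:\;\mE_2[1/p]\;\xrightarrow{\sim}\;\mE_0[1/p]\;\xrightarrow{\sim}\;\mE_1[1/p]$$
between finite projective $W(R)$-modules. For a perfect $R$-algebra $R'$, the two pullback points of $\Gr(R')$ coincide if and only if $\gamma$ induces an honest isomorphism of $W(R')$-modules $\mE_2\otimes W(R')\xrightarrow{\sim}\mE_1\otimes W(R')$. By the first paragraph of the proof of Lemma~\ref{minu}, the property of being an isomorphism can be checked fibrewise, so this condition is equivalent to $\inv(\gamma_x)=\underline 0$ for every geometric point $x\in \Spec R'$.

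Consequently the preimage of the diagonal under $\Spec R\to \Gr\times \Gr$ is represented by $(\Spec R)_{\underline 0}$. Since $\underline 0$ is a minimal element of $\xcoch(D_n)^+$ under $\leq$, we have $(\Spec R)_{\underline 0}=(\Spec R)_{\leq \underline 0}$, and Lemma~\ref{Hodge stra} provides the required closed subscheme structure on this locus. Because the genuine content has already been absorbed into Lemmas~\ref{minu} and~\ref{Hodge stra}, I do not foresee any real obstacle; the only thing that requires a moment's care is checking that the closed-subscheme structure on $(\Spec R)_{\leq \underline 0}$ from Katz (\cite{Ka}) indeed represents the ``two lattices coincide'' subfunctor after arbitrary base change, which follows from the fact that the formation of $\inv$ commutes with base change along perfect $R\to R'$.
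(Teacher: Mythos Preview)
Your proof is correct and follows essentially the same approach as the paper's: form the comparison quasi-isogeny between the two lattices and observe that the locus where it is an isomorphism is $(\Spec R)_{\underline 0}$, which is closed by Lemma~\ref{Hodge stra}. The paper's version is terser, but the extra care you take in justifying that $(\Spec R)_{\underline 0}$ actually represents the fiber-product subfunctor (via the fibrewise criterion from Lemma~\ref{minu}) is a reasonable expansion of what the paper leaves implicit.
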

\begin{proof}
Let $\Spec R\to \Gr\times \Gr$ be a map, given by two pairs $(\mE,\beta)$ and $(\mE',\beta')$. We consider the quasi-isogeny $(\beta')^{-1}\beta:\mE\dasharrow \mE'$. Then $\Spec R \times_{\Gr\times\Gr, \Delta}\Gr$ is represented by $(\Spec R)_{\omega_0}$.
\end{proof}

For every $\mu\in\xcoch(D_n)^+$, let
\[\Gr_{\leq \mu}(R)=\{ (\mE,\beta)\in \Gr(R)\mid (\Spec R)_{\leq \mu}=\Spec R\}.\]
If $\la\leq \mu$, we have the closed embedding $\Gr_{\leq \la}\subset \Gr_{\leq \mu}$ by Lemma \ref{Hodge stra}. Define
$$\Gr_\mu=\Gr_{\leq \mu}-\cup_{\la<\mu}\Gr_{\leq \la},$$ 
which is an open subspace of $\Gr_{\leq \mu}$.

We record the following fact for later use.
For $\mu=(m_1,\ldots,m_n) \in\xcoch(D_n)^+$, let
\begin{equation}\label{pmu}
\La_\mu=W(k)\{p^{m_1}e_1,\ldots,p^{m_n}e_n\}\subset W(k)[1/p]^n
\end{equation} be the lattice generated by $\{p^{m_1}e_1,\ldots,p^{m_n}e_n\}$. Then $\La_\mu$ defines a $k$-point $\Gr$, denoted by $p^\mu$. 
The following lemma is a reformulation of the Cartan decomposition.
\begin{lem}\label{decomp}
(i) The group $\GL_n(W(k))$ acts transitively on the set $\Gr_\mu(k)$. In fact, $\Gr_\mu(k)=\GL_n(W(k)) p^\mu$. 
(ii) $\Gr(k)=\sqcup_{\mu\in \xcoch(D_n)^+}\Gr_{\mu}(k)$.
\end{lem}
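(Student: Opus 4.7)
The plan is to reduce both parts of the lemma to the Smith normal form theorem for finitely generated modules over the complete discrete valuation ring $W(k)$, whose uniformizer is $p$ and whose residue field is $k$. In the case at hand $F=F_0=W(k)[1/p]$, so $\mO=W(k)$.

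A $k$-point of $\Gr$ is, by the lattice reformulation recalled just before the lemma, a finitely generated $W(k)$-submodule $\La\subset W(k)[1/p]^n=\mE_0[1/p]$ satisfying $\La[1/p]=W(k)[1/p]^n$, and the quasi-isogeny $\beta$ is simply the inclusion. Since $W(k)$ is a PID and $\La$ is finitely generated torsion-free, $\La$ is free of rank $n$. First I would invoke the elementary divisors theorem over $W(k)$: for any such $\La$ there exist a $W(k)$-basis $(f_1,\ldots,f_n)$ of $\La_0=W(k)^n$ and a unique sequence $m_1\geq m_2\geq\cdots\geq m_n$ of integers such that $(p^{m_1}f_1,\ldots,p^{m_n}f_n)$ is a $W(k)$-basis of $\La$. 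By definition this sequence is precisely $\inv(\beta)\in\xcoch(D_n)^+$.

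For part~(1), suppose $(\mE,\beta)\in \Gr_\mu(k)$, i.e.\ the invariant of the associated lattice $\La$ equals $\mu=(m_1,\ldots,m_n)$. The change-of-basis matrix between $(f_1,\ldots,f_n)$ and the standard basis of $\La_0$ is an element $g\in\GL_n(W(k))=\GL_n(\mO)$, and translating via $g$ gives $\La=g\cdot \La_\mu$, where $\La_\mu$ is the span of $(p^{m_i}e_i)$ in the standard basis; this is exactly $g\cdot p^\mu$ in the notation of the paper. Conversely, the natural action of $\GL_n(\mO)$ on lattices preserves the invariant (since applying $g$ replaces the basis $(f_i)$ by $(gf_i)$ but leaves the exponents $m_i$ unchanged), so $\GL_n(\mO)\cdot p^\mu\subset \Gr_\mu(k)$ and the two sets agree.

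For part~(2), existence of the invariant (Smith normal form) shows that every $\La\in\Gr(k)$ lies in some $\Gr_\la(k)$. Disjointness is the uniqueness statement in the elementary divisors theorem: the sequence $(m_i)$ is determined by $\La$ (for example it can be read off from the invariant factor decomposition of the finite-length $W(k)$-module $p^{-N}\La_0/(\La\cap p^{-N}\La_0\cdot p^{-N}\La_0)$ for $N\gg 0$, which is basis-independent), so distinct $\la$'s give disjoint strata. The whole lemma is thus a direct translation of a classical PID result, and I do not anticipate any real obstacle beyond keeping the dictionary between lattices, quasi-isogenies, and the $\GL_n(\mO)$-action straight.
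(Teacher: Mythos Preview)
Your proposal is correct and is exactly what the paper has in mind: the paper simply states that the lemma ``is the reformulation of the Cartan decomposition'' and gives no further proof, and the Cartan decomposition $\GL_n(F)=\bigsqcup_{\mu}\GL_n(\mO)p^{\mu}\GL_n(\mO)$ is precisely the elementary divisors (Smith normal form) theorem over the PID $W(k)$ that you invoke. One small remark: the parenthetical expression you wrote for the module from which the $m_i$ can be read off is garbled (presumably you meant something like $p^{-N}\La_0/\La$ for $N\gg 0$), but this is inessential since uniqueness of the elementary divisors is standard.
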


The above discussions can be generalized to general split reductive groups over $\mO$. See \S~\ref{Sch in Gr}.

\subsubsection{}\label{representability}
We write $\bGr_N$ instead of $\Gr_{\leq N\omega_1}$, and $\Gr_N$ instead of $\Gr_{N\omega_1}$.
Note that the group $L\GL_n$ acts on $\Gr$, and every $\Gr_{\leq \la}$ is contained in $g\bGr_N$ as a closed subspace for some $g\in \GL_n(F)$ and some $N$ big enough.  Therefore, it enough to prove the representability of $\bGr_N$.
We make the moduli interpretation of $\bGr_N$ more explicit. Namely, for a perfect $k$-algebra $R$,
\[\bGr_N(R)=\left\{(\mE,\beta)\ \left|\ \begin{split}&\mE \mbox{ is a rank } n \mbox{ projective } W(R)\mbox{-module}, \\ 
& \mbox{and }\beta:\mE\to  \mE_0 \mbox{ is an isogeny, which}\\
& \mbox{induces }\wedge^n\beta:\wedge^n\mE\simeq p^NW(R)\subset \wedge^n\mE_0\end{split}\right.\right\}.\]

Let $M_n$ denote the scheme of $n\times n$ matrices.  Define the following morphisms of $\bZ_p$-schemes
$$\pi: M:=M_n\times\bG_m\to \bA^1,\quad \pi(A,t)=t\det A,\quad i_N:\Spec\bZ_p\to \bA^1=\Spec\bZ_p[u],\ u\mapsto p^N,$$ and define a scheme of finite type over $\bZ_p$ as
$$V_N=\bZ_p\times_{i_N,\bA^1,\pi}M.$$
By definition, $L_p^+V_N(R)$ is the set of pairs $(A,t)$ consisting of an $n\times n$-matrix $A$ with entries in $W(R)$ and $t\in W(R)^\times$ such that $t\det A=p^N$. Note that $L_p^+\GL_n$ acts on $L_p^+V_{N}$ by left and right multiplications. Passing to the perfection, both actions become free. By the same proof of Lemma \ref{moduli interpretation of Gr}, we obtain the following statement.

\begin{lem}There is a canonical isomorphism
\[L^+V_N/L^+\GL_n=\bGr_N.\]
\end{lem}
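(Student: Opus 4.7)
I will construct maps in both directions and check they are mutually inverse.

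First I would define a natural morphism $\Phi: L^+V_N\to \bGr_N$. Given $(A,t)\in L^+V_N(R)$, so $A\in M_n(W(R))$, $t\in W(R)^\times$ with $t\det A=p^N$, let $\Phi(A,t)=(\mE,\beta)$ with $\mE=W(R)^n$ and $\beta:\mE\to\mE_0=W(R)^n$ the $W(R)$-linear map given by the matrix $A$. Then $\wedge^n\beta:W(R)\to W(R)$ is multiplication by $\det A=t^{-1}p^N$, which is an isomorphism onto $p^NW(R)\subset W(R)$; hence $(\mE,\beta)\in\bGr_N(R)$. Under the right $L^+\GL_n$-action $(A,t)\cdot g=(Ag,\,t\det(g)^{-1})$ (which preserves $t\det A=p^N$), the pair $(W(R)^n,Ag)$ is carried to $(W(R)^n,A)$ by the $W(R)$-linear automorphism $g:W(R)^n\to W(R)^n$, so $\Phi(Ag,t\det(g)^{-1})=\Phi(A,t)$ in $\bGr_N(R)$. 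Therefore $\Phi$ descends to $\bar\Phi:L^+V_N/L^+\GL_n\to\bGr_N$.

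Next I would construct the inverse. Given $(\mE,\beta)\in\bGr_N(R)$, let $\mathrm{Triv}(\mE)$ assign to a perfect $R$-algebra $R'$ the set of $W(R')$-module isomorphisms $\epsilon:W(R')^n\xrightarrow{\sim}\mE\otimes_{W(R)}W(R')$. This carries a free right $L^+\GL_n$-action by precomposition, and any $\epsilon$ produces the matrix $A=\beta\circ\epsilon:W(R')^n\to W(R')^n$ whose determinant generates $p^NW(R')$ by the Hodge condition on $\beta$, so $\det A=t^{-1}p^N$ for a unique $t\in W(R')^\times$ and $(A,t)\in L^+V_N(R')$. This yields an $L^+\GL_n$-equivariant morphism $\mathrm{Triv}(\mE)\to L^+V_N$, and, once $\mathrm{Triv}(\mE)$ is known to be a local $L^+\GL_n$-torsor over $\Spec R$, the induced map on quotients gives an inverse $\bGr_N\to L^+V_N/L^+\GL_n$ by descent. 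Mutual inversion is then immediate: starting from $(A,t)$ the tautological trivialization of $W(R)^n$ recovers $(A,t)$; starting from $(\mE,\beta)$, the construction via a local trivialization $\epsilon$ returns $(\mE,\beta)$.

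The main technical point is local triviality of $\mathrm{Triv}(\mE)$. The module $\mE$ is finite projective of rank $n$ over $W(R)$ and hence Zariski-locally free on $\Spec W(R)$; I would reduce this to a Zariski cover of $\Spec R$ using that for perfect $R$, the ring $W(R)$ is $p$-adically complete, so $p$ lies in the Jacobson radical and reduction modulo $p$ induces a bijection between Zariski covers of $W(R)$ and of $R=W(R)/p$. In particular, any element of $W(R)$ whose reduction generates the unit ideal in $R$ is already a unit in $W(R)$, so a Zariski cover of $\Spec W(R)$ trivializing $\mE$ can be taken to be the pullback of a Zariski cover of $\Spec R$. This ensures $\mathrm{Triv}(\mE)$ is a Zariski-local (hence étale-local) $L^+\GL_n$-torsor, which is the hardest step and the only nontrivial check in the plan; the remaining verifications are purely formal.
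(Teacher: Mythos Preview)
Your proof is correct and takes essentially the same approach as the paper: both identify $L^+V_N$ with the moduli of triples $(\mE,\beta,\epsilon)$ via $(A,t)\mapsto(\mE_0,A,\id)$ and $(\mE,\beta,\epsilon)\mapsto A=\beta\epsilon$, so that quotienting by the $L^+\GL_n$-action (which changes $\epsilon$) recovers $\bGr_N$. The paper states this moduli reinterpretation and declares the claim ``clear'', whereas you additionally spell out the local triviality of the torsor of trivializations using $p$-adic completeness of $W(R)$ for perfect $R$; this extra care is sound and fills in what the paper leaves implicit.
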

This lemma expresses $\bGr_N$ as a quotient of an affine scheme by an affine group scheme. But it is not very useful since both $L^+V_N$ and $L^+\GL_n$ are infinite dimensional. We need to work at the finite level.

Recall that we have the affine group scheme $L^+\GL_n^{(h)}=\ker (L^+\GL_n\to L^h\GL_n)$. Define
\[\bGr_{N,h}=L^+V_N/L^+\GL_n^{(h)}.\]
In terms of the moduli interpretation,
\[\bGr_{N,h}(R)=\left\{(\mE,\beta, \bar\epsilon)\ \left|\ \begin{split}&(\mE,\beta)\in \bGr_N(R) \\
& \bar\epsilon:\mE_0|_{W_{h}(R)}\simeq \mE|_{W_{h}(R)} \end{split}\right.\right\}.\]
This is an $L^h\GL_n$-torsor over $\bGr_N$ on which $L^h\GL_n$ acts by changing the isomorphism $\bar\epsilon$. Our main observation is that $\bGr_{N,h}$ is already represented by an affine scheme when $h$ is large. To prove this, we need to introduce certain affine schemes defined by matrix equations.

We assume that $h>N$. Via the Greenberg realization, the determinant map $\det: M_n\to \bA^1$ induces a morphism
$$(\on{det}_0,\ldots,\on{det}_{h-1}): L^h_p M_n=\bA^{n^2h}\to L^h_p\bA^1=\bA^h.$$
Define
\begin{equation}\label{notation}
V'_{N,h}:=\{A \in L^h_pM_n\mid \det{\! _0}A=\cdots=\det{\! _{N-1}}A=0, \det{\! _N}A\in \bG_m\}, \quad V_{N,h}:= (V'_{N,h})^\pf.
\end{equation}
Note that there is an $L^h_p\GL_n\times L^h_p\GL_n$-action on $V'_{N,h}$ by left and right multiplications. Passing to the perfection, we obtain an action of $L^h\GL_n\times L^h\GL_n$ on $V_{N,h}$. Let $J$ be the stabilizer group scheme over $V_{N,h}$ with respect to the \emph{right} multiplication by $L^h\GL_n$, i.e. $J$ is defined by the Cartesian product
\begin{equation}\label{stab}
\begin{CD}
J@>>>  V_{N,h}\times L^h\GL_n\\
@VVV@VVV\\
V_{N,h}@>\Delta >>V_{N,h}\times V_{N,h}.
\end{CD}\end{equation}
Or explicitly
\[J=\{(A,\ga)\in V_{N,h}\times L^h\GL_n\mid A\ga=A\}.\]
Likewise, let $J'$ denote the stabilizer group scheme over $V'_{N,h}$ with respect to the right multiplication by $L^h_p\GL_n$. Then $J'$ is an affine scheme of finite type over $k$, which is a deperfection of $J$.

There is a natural map
\[\bGr_{N,h}\to V_{N,h},\]
given by $(\mE,\beta,\bar\epsilon)\mapsto (\beta|_{W_{h}(R)})\bar\epsilon$. 

The key lemma is the following. 
\begin{lem}\label{key}
Assume that $h> N$. There is an isomorphism
\[J\simeq \bGr_{N,h}.\]
In particular, $\bGr_{N,h}$ is represented by a perfect affine scheme, perfectly of finite type.
\end{lem}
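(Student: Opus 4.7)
The strategy is to exhibit both $\bGr_{N,h}$ and $J$ as trivial torsors over $V_{N,h}$ under the stabilizer group scheme, and to identify them via a canonical section coming from Teichm\"uller lifts.

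First, the tautological map $\pi\colon \bGr_{N,h}\to V_{N,h}$, $(\mE,\beta,\bar\epsilon)\mapsto A:=\beta_h\bar\epsilon$, is surjective on $R$-points. Indeed, given $A\in V_{N,h}(R)$, any entrywise lift $\tilde A\in M_n(W(R))$ of $A$ satisfies $\det\tilde A\equiv p^N u\pmod{p^h}$ with $u$ a unit; since $h>N$, the actual determinant $\det\tilde A=p^N(u+p^{h-N}(\cdots))$ is again $p^N$ times a unit, so $\tilde A\in L^+V_N(R)$ provides a preimage in $\bGr_{N,h}=L^+V_N/L^+\GL_n^{(h)}$. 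I next claim the fiber of $\pi$ over $A$ is a torsor under the stabilizer $J_A=\{\gamma\in L^h\GL_n : A\gamma=A\}$, the fiber of $J\to V_{N,h}$ at $A$. For two lifts $A',A''\in L^+V_N(R)$ of $A$, write $A''-A'=p^hM$; using $(A')^{-1}\in p^{-N}M_n(W(R))$ (from $\det A'=p^N\cdot(\text{unit})$), we compute
\[
\tilde\gamma:=(A')^{-1}A''=I+(A')^{-1}(p^hM)\in I+p^{h-N}M_n(W(R))\subseteq I+pM_n(W(R))\subset L^+\GL_n(R),
\]
so $\tilde\gamma$ is integral and its reduction $\gamma\in L^h\GL_n(R)$ satisfies $A\gamma=A$. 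The assignment $\gamma\cdot[A']:=[A'\tilde\gamma]$ gives a $J_A$-action on the fiber which is transitive by the formula and free because left multiplication by $A'$ is injective on $L^+M_n(R)$ (using that $W(R)$ is $p$-torsion-free for perfect $R$).

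To trivialize this torsor, I use the Teichm\"uller section. For any perfect $k$-algebra $R$, the surjection $W(R)\twoheadrightarrow W_h(R)$ admits a functorial set-theoretic section $s$ given by truncated Teichm\"uller expansion $\sum_{i<h}p^i[a_i]$, which extends entrywise to a morphism of perfect $k$-schemes $s\colon L^hM_n\to L^+M_n$. By the surjectivity argument above, $s$ restricts to a map $V_{N,h}\to L^+V_N$, and composing with $L^+V_N\to\bGr_{N,h}$ yields a section of $\pi$. Since $J\to V_{N,h}$ is itself a trivial $J$-torsor via the identity section $A\mapsto(A,I)$, both $\bGr_{N,h}$ and $J$ are trivialized $J$-torsors over $V_{N,h}$, hence isomorphic. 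Explicitly, the isomorphism $\bGr_{N,h}\to J$ sends $[A']$ lying over $A_h=A'\bmod p^h$ to $(A_h,\gamma)$ with $\gamma=(s(A_h)^{-1}A')\bmod p^h\in J_{A_h}$, with inverse $(A,\gamma)\mapsto[s(A)\cdot s(\gamma)]$; the condition $A\gamma=A$ ensures both are well-defined.

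The main obstacle is the fiber calculation: showing that $\tilde\gamma=(A')^{-1}A''$ automatically lies in $L^+\GL_n$ and encodes the full torsor structure. This rests on the integrality of $p^NA^{-1}$ (from the determinant condition) combined crucially with the hypothesis $h>N$; without this one only obtains $\tilde\gamma\in p^{-N}M_n$, which is insufficient. Once this calculation is in hand, the functoriality of the Teichm\"uller section on perfect rings and the resulting triviality of both torsors make the identification formal, and the representability of $\bGr_{N,h}$ as a perfect affine scheme perfectly of finite type follows at once from that of $J\subseteq V_{N,h}\times L^h\GL_n$.
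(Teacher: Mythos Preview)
Your proof is correct and follows essentially the same strategy as the paper: both arguments hinge on the surjectivity of $L^+V_N\to V_{N,h}$ when $h>N$ (via the determinant calculation), the resulting section built from Teichm\"uller lifts, and the identification of the fiber of $\bGr_{N,h}\to V_{N,h}$ over $A$ with $J_A$. The paper phrases the last step in moduli-theoretic terms---constructing the action $((\mE,\beta,\bar\epsilon),\gamma)\mapsto(\mE,\beta,\bar\epsilon\gamma)$ and proving bijectivity by comparing cokernels---whereas you carry out the equivalent computation directly with matrices using $(A')^{-1}\in p^{-N}M_n(W(R))$; these are the same argument in different dress, and the paper even records the same Teichm\"uller section in the remark immediately following its proof.
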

\begin{proof}
Recall that $J=(J')^\pf$. Therefore the second statement follows from the first, which we now prove.

Let $R$ be a perfect $k$-algebra, and let $A\in V_{N,h}(R)$. Then $J_R$ classifies those $\gamma\in L^h\GL_n(R)$ that make the following diagram commutative
\[\begin{CD}
\mE_0|_{W_{h}}@>A>>\mE_0|_{W_{h}}\\
@V\gamma VV@| \\
\mE_0|_{W_{h}}@>A>>\mE_0|_{W_{h}}.
\end{CD}\]
On the other hand, $(\bGr_{N,h})_R$ classifies those $(\mE,\beta,\bar\epsilon)$ such that the following diagram is commutative
\[\begin{CD}
\mE @>\beta>>\mE_0\\
@V\bar\epsilon^{-1} VV@VVV \\
\mE_0|_{W_{h}}@>A>>\mE_0|_{W_{h}},
\end{CD}\]
where the notation $\bar\epsilon^{-1}$ is understood as the composition $\mE\to \mE|_{W_{h}(R)}\stackrel{\bar\epsilon^{-1}}{\to}\mE_0|_{W_h(R)}$.
Therefore, there is a natural action
\[\bGr_{N,h}\times_{V_{N,h}}J\to \bGr_{N,h},\quad ((\mE,\beta,\bar\epsilon),\gamma)\mapsto (\mE,\beta,\bar\epsilon\gamma).\]
Note that the natural map $L^+V_N\to V_{N,h}$ is surjective on $R$-points if $h>N$. Indeed, if $A\in V_{N,h}(R)$, then $\det A \in p^NW_{h}(R)^\times$. Regard $A$ as a matrix in $M_n(W_h(R))$, and let $\tilde A\in M_n(W(R))$ denote a lifting. Then $\det \tilde{A}\in p^NW(R)^\times$, and there is a unique $t\in W(R)^\times$ such that $t\det\tilde{A}=p^N$. Then $(\tilde{A},t)\in L^+V_N(R)$ is a lifting of $A$.  

As a consequence, the map $\bGr_{N,h}\to V_{N,h}$ admits a section. 
Indeed, if $(\tilde A,t)\in L^+V_N$ is a lifting of $A$, then $(\mE_0,\tilde A,\id)\in\bGr_{N,h}$.

Let us fix such a section 
$$s:V_{N,h}\to \bGr_{N,h}, \quad A\mapsto (\mE_A,\beta_A,\bar\epsilon_A).$$ It induces a map 
$$s:J\to \bGr_{N,h},\quad \ga\mapsto (\mE_A,\beta_A,\bar\epsilon_A\ga),$$ 
which is injective on $R$-points since the action of $L^h\GL_{n}$ on $\bGr_{N,h}$ is free. To show that it is also surjective on $R$-points, let $(\mE,\beta,\bar \epsilon)$ be a point of $\bGr_{N,h}$ such that $(\beta|_{W_{h}(R)})\epsilon=A$. Then there exists a unique isomorphism $\alpha:\mE_A\simeq \mE$ such that the following diagram is commutative 
\[\begin{CD}
0@>>> \mE_A@>\beta_A>>\mE_0@>>> \on{coker} A@>>> 0\\
@.@V\alpha VV@|@|@.\\
0@>>>\mE@>\beta>>\mE_0@>>>\on{coker} A@>>> 0.
\end{CD}\]
Let $\ga=\bar\epsilon_A(\alpha|_{W_{h}(R)})^{-1}\bar\epsilon^{-1}$. Then $(A,\ga)\in J$ is the preimage of $(\mE,\beta,\bar \epsilon)$ under the above map $s:J\to \bGr_{N,h}$.
Therefore the first claim of the lemma follows.
\end{proof}

\begin{rmk}\label{set sect}
The above isomorphism depends on a choice of lifting of the projection $L^+V_N\to V_{N,h}$. To fix the ambiguity, we will use the obvious lifting given by $$W_{h}(R)\to W(R),\quad (\sum_{0\leq i<h} p^i[r_i] \mod p^h)\mapsto \sum_{0\leq i<h}p^i[r_i].$$
\end{rmk}

As a corollary of the above lemma and Theorem \ref{quotient}, we have
\begin{prop}
The functor $\bGr_N$ is represented by a separated perfect algebraic space, perfectly of finite type over $k$. In particular, $\Gr$ is representable.
\end{prop}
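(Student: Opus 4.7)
The plan is to apply Proposition \ref{quotient} to the free action of $G:=L^h\GL_n$ on $X:=\bGr_{N,h}$, for any chosen $h>N$. By Lemma \ref{key}, $\bGr_{N,h}\simeq J=(J')^\pf$ is the perfection of an affine scheme of finite type over $k$, while $L^h\GL_n$ is by construction the perfection of a finite-type affine group scheme over $k$. Combining the presentations $\bGr_N=L^+V_N/L^+\GL_n$ and $\bGr_{N,h}=L^+V_N/L^+\GL_n^{(h)}$ with \eqref{congruent}, the forgetful map $\bGr_{N,h}\to\bGr_N$ (which on moduli forgets the trivialization $\bar\epsilon$) is an $L^h\GL_n$-torsor, and in particular the action is free.

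It remains to verify that the action map
\[
a\colon L^h\GL_n\times\bGr_{N,h}\longrightarrow\bGr_{N,h}\times\bGr_{N,h},\qquad (\gamma,x)\longmapsto(\gamma x,\,x),
\]
is a closed embedding. The torsor property supplies a natural isomorphism $L^h\GL_n\times\bGr_{N,h}\xrightarrow{\sim}\bGr_{N,h}\times_{\bGr_N}\bGr_{N,h}$, and $a$ factors as this isomorphism followed by the base change of the diagonal $\bGr_N\to\bGr_N\times\bGr_N$ along $\bGr_{N,h}\times\bGr_{N,h}\to\bGr_N\times\bGr_N$. Corollary \ref{diagonal} shows that $\Gr\to\Gr\times\Gr$ is a closed embedding of functors (i.e. the fibre product with any affine test is a closed subscheme), and the same property is inherited by the subfunctor $\bGr_N\subset\Gr$. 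Hence the base change, and therefore $a$, is a closed embedding.

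Proposition \ref{quotient} then produces a perfect algebraic space, separated and perfectly of finite type over $k$, representing the groupoid quotient $[X/G]$; because the action is free, this coincides with the fpqc sheaf quotient $\bGr_{N,h}/L^h\GL_n$, which is canonically identified with $\bGr_N$. The substantive step is the closed-embedding hypothesis on $a$; all other verifications are immediate from Lemma \ref{key} and the torsor formalism, so the work done earlier in establishing Corollary \ref{diagonal} via the Hodge stratification is exactly what makes this final step go through.
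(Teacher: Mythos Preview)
Your proof is correct and follows exactly the same approach as the paper: apply Proposition \ref{quotient} to the $L^h\GL_n$-action on $\bGr_{N,h}$, using Lemma \ref{key} for the finite-type hypotheses and Corollary \ref{diagonal} to verify that the action map is a closed embedding. The paper's argument is terser but identical in substance; your explicit factorization of $a$ through the base change of the diagonal of $\bGr_N$ is the right way to unpack the appeal to Corollary \ref{diagonal}.
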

\begin{proof}Let $G=L^h\GL_n$, which is the perfection of the smooth algebraic group $G_0=L^{h}_p\GL_n$. To apply Theorem \ref{quotient}, it remains to check that $G\times \bGr_{N,h}\to \bGr_{N,h}\times\bGr_{N,h}$ is a closed embedding. But this follows from Corollary \ref{diagonal}. 
\end{proof}

\subsection{``Demazure resolution"}\label{Dem Res}
The perfect algebraic space $\bGr_N$ is in general ``singular". 
In this subsection, we construct a morphism $\wGr_N\to\bGr_N$, which can be regarded as the ``Demazure resolution" in the current setting. Using it, we show that $\bGr_N$ is irreducible and perfectly proper. Therefore, $\Gr$ is ind-perfectly proper.

\subsubsection{}
As before, let $\mE_0=W(R)^n$ denote the rank $n$ free $W(R)$-module. 

Let $\mu_\bullet=(\mu_1,\ldots,\mu_N)$ be a sequence, where each $\mu_i\in \{\omega_1,\ldots, \omega_n,\omega_1^*,\ldots,\omega_n^*\}$. We consider the following space $\Gr_{\mu_\bullet}$ on $\on{Aff}^{\on{pf}}_k$:  for a perfect $k$-algebra $R$, $\Gr_{\mu_\bullet}(R)$ classifies  chains of quasi-isogenies 
\begin{equation}\label{chain qiso}
\mE_N\stackrel{\beta_N}{\dasharrow} \mE_{N-1}\stackrel{\beta_{N-1}}{\dasharrow}\cdots\stackrel{\beta_2}{\dasharrow} \mE_1\stackrel{\beta_1}{\dasharrow} \mE_0,
\end{equation} 
where all $\mE_i$'s are rank $n$ finite projective $W(R)$-modules, and $\mE_{i}\dasharrow \mE_{i-1}$ is of relative position $\mu_i$.

\begin{prop}\label{rep of dem res}
The space $\Gr_{\mu_\bullet}$ is represented by a perfect $k$-scheme, perfectly proper over $k$. 
\end{prop}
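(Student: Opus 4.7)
The plan is to proceed by induction on the length $N$ of $\mu_\bullet$. The base case $N=0$ is immediate, since $\wGr_{()}$ is a point (the fixed module $\mE_0=p^*E_0$) and is represented by $\Spec k$. For the inductive step, I would analyze the forgetful morphism
\[f: \wGr_{\mu_\bullet}\longrightarrow \wGr_{\mu_\bullet'}, \qquad \mu_\bullet'=(\mu_1,\ldots,\mu_{N-1}),\]
which drops the last module $\mE_N$ and the last quasi-isogeny $\beta_N$, and show that $f$ is a Zariski-locally trivial perfect Grassmannian bundle.

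The crucial step is the fiber description via Lemma~\ref{minu}. Identifying $\mE_N$ and $\mE_{N-1}$ as lattices inside the common $W(R)[1/p]$-module $\mE_N[1/p]=\mE_{N-1}[1/p]$ via $\beta_N$, one distinguishes two cases. If $\mu_N=\omega_j$, then $p\mE_{N-1}\subset \mE_N\subset \mE_{N-1}$ with $\mE_{N-1}/\mE_N$ locally free of rank $j$ over $R$, so $\mE_N$ is determined by a rank-$j$ locally free $R$-module quotient of $\bar{\mE}_{N-1}:=\mE_{N-1}/p\mE_{N-1}$. Dually, if $\mu_N=\omega_j^*$, then $\mE_{N-1}\subset \mE_N\subset p^{-1}\mE_{N-1}$ with $\mE_N/\mE_{N-1}$ locally free of rank $j$, so $\mE_N$ is determined by a rank-$j$ locally free sub-$R$-module of $p^{-1}\mE_{N-1}/\mE_{N-1}\cong\bar{\mE}_{N-1}$. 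In either case, the fibers of $f$ are perfections of the classical Grassmannian $\Gr(j,n)$, parametrized in families by the locally free rank-$n$ sheaf $\bar{\mE}_{N-1}$ over the base.

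By the inductive hypothesis $\wGr_{\mu_\bullet'}$ is represented by a perfectly proper perfect $k$-scheme $Y$. Zariski-locally on $Y$ the sheaf $\bar{\mE}_{N-1}$ is free, so locally $\wGr_{\mu_\bullet}$ is the product $U\times \Gr(j,n)^\pf$; these local models glue along the transition functions of $\bar{\mE}_{N-1}$ to a perfect scheme representing $\wGr_{\mu_\bullet}$. Perfect properness of $f$ follows from that of $\Gr(j,n)^\pf$, and composition with the perfectly proper $Y$ gives perfect properness of $\wGr_{\mu_\bullet}$ over $k$.

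The main technical point is to justify that the Grassmannian bundle construction behaves well in the perfect setting and that the moduli-theoretic identification of the fibers holds uniformly in families; both issues dissolve once one phrases everything in terms of the locally free rank-$n$ sheaf $\bar{\mE}_{N-1}$ on the base. The argument is the perfect-scheme analogue of the Bott--Samelson construction of Demazure resolutions in the equal characteristic setting.
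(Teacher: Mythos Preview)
Your approach is essentially the same as the paper's: induct on $N$ and identify the forgetful map as a perfect Grassmannian bundle in the sheaf $\bar{\mE}_{N-1}$ (this is the paper's Lemma~\ref{aux} combined with Corollary~\ref{perf Grass}).

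However, the step you say ``dissolves'' is precisely where the paper does the real work, and it does not dissolve. Lemma~\ref{minu} only gives the forward map: from an isogeny $\mE_N\to\mE_{N-1}$ with $\inv=\omega_j$ to a rank-$j$ locally free quotient of $\bar{\mE}_{N-1}$. For the moduli identification you need the inverse: given a rank-$j$ locally free $R$-quotient $\mQ$ of $\bar{\mE}_{N-1}$, set $\mE_N:=\ker(\mE_{N-1}\twoheadrightarrow\mQ)$ and show that $\mE_N$ is a \emph{finite projective $W(R)$-module} of rank $n$. This is not automatic: $W(R)$ is non-noetherian, and $\mQ$ is projective over $R$ but has $\on{Tor}_1^{W(R)}(\mQ,R)\simeq\mQ\neq 0$, so naive homological arguments fail. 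The paper (proof of Lemma~\ref{aux}) handles this by showing inductively that each truncation $\mE_N/p^h\mE_N$ is locally free of rank $n$ over $W_h(R)$, using the short exact sequences
\[0\to \on{Tor}_1^{W(R)}(\mQ,R)\to \mE_N/p\mE_N\to \bar{\mE}_{N-1}\to\mQ\to 0\]
and then bootstrapping. You should supply this argument rather than assert that phrasing things in terms of $\bar{\mE}_{N-1}$ makes it go away.
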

\begin{proof}
We will prove the proposition by induction on $N$. First, we show that $\Gr_{\omega_i}$ is represented by $\Gr^\pf(i,n)$, the perfection of the usual Grassmannian variety that classifies $i$-dimensional \emph{quotients} of $k^n$. In fact, we will prove a slightly more general statement.

We make use of the following notations. Let $X$ be a perfect $k$-scheme, and let $\mE$ be a rank $n$ locally free crystal on $X$. Let $R$ be a perfect $k$-algebra and $x\in X(R)$ an $R$-point. We denote the value of $\mE$ at the universal PD thickening $W(R)\to R$ by $x^*\mE$, which is a finite projective $W(R)$-module of rank $n$, and denote the value at the trivial thickening $R\stackrel{\id}{\to}R$ by $x^*\mE/p$. By varying $x$, these $x^*\mE/p$ glue together to form a vector bundle of rank $n$ on $X$, denoted by $\mE/p$.

\begin{lem}\label{aux}
Let $X$ be a perfect $k$-scheme and $\mE$ a locally free crystal of rank $n$ on $X$. 
Let $Y$ be the perfect space over $X$ that assigns to every $x:\Spec R\to X$ the set of isogenies $\mF_x\to x^*\mE$ of finite projective $W(R)$-modules such that $x^*\mE/\mF_x$ is a locally free $W(R)/p$-module of rank $i$. Then $Y$ is represented by the perfect scheme $\Gr^\pf(i,\mE/p)$ introduced in Corollary \ref{perf Grass}.
\end{lem}
\begin{proof}The map $Y\to \Gr^\pf(i,\mE_0/p)$ sends an $R$-point of $Y$ represented by $\mF_x\to x^*\mE$ to an $R$-point of  $\Gr^\pf(i,\mE_0/p)$ represented by $x^*\mE/p\to x^*\mE/\mF_x\to 0$. Conversely, given an $R$-point of $\Gr^\pf(i,\mE_0/p)$ represented by $x^*\mE/p\to \mQ\to 0$, where $\mQ$ is a finite projective $R$-module of rank $i$, 
we define $\mF=\ker (x^*\mE\to x^*\mE/p\to \mQ)$. We need to show that it is a finite projective $W(R)$-module of rank $n$. Then $\mF\to x^*\mE$ is an isogeny and therefore defines a point of $Y$.

 It is enough to show that $\mF/p^i\mF$ is a finite projective $W(R)/p^i$-module of rank $n$ for every $i$.
First by definition,
\[p (x^*\mE)\subset \mF\subset x^*\mE,\]
and $\mF/p(x^*\mE)$ is a direct summand of $x^*\mE/p$. Therefore, $\mF/p(x^*\mE)$ is a finite projective $R$-module. Now, by tensoring the short exact sequence
$$0\to \mF\to x^*\mE\to \mQ\to 0$$ with $-\otimes_{W(R)}R$, we obtain an exact sequence
\[0\to \on{Tor}^{W(R)}(\mQ,W(R)/p)\to \mF/p\to x^*\mE/p\to \mQ\to 0.\]
In addition, there is a canonical isomorphism
\begin{equation}\label{switch}
 \mQ=\on{Tor}^{W(R)}(\mQ,W(R)/p).
\end{equation}
Therefore $\mF/p$, which is an extension of $\mF/p(x^*\mE)$ by $\mQ$, is a finite projective $R$-module of rank $n$.
From the exact sequence $$0\to p(x^*\mE)/p\mF\to \mF/p\to \mF/p(x^*\mE)\to 0,$$ we conclude that $p(x^*\mE)/p\mF$ is a direct summand of $\mF/p$, and therefore is a finite projective $R$-module. Now by induction, we deduce that each $p^i\mF/p^{i+1}\mF$ is a finite projective $R$-module of rank $n$, and that $p^{i+1}(x^*\mE)/p^{i+1}\mF$ is a direct summand of $p^i\mF/p^{i+1}\mF$.
Finally, using the exact sequence
$$0\to p^i\mF/p^{i+1}\mF\to \mF/p^{i+1}\mF\to \mF/p^i\mF\to 0,$$ and by induction again we conclude that each $\mF/p^{i}\mF$ is a finite projective $W(R)/p^i$-module.
This finishes the proof of the lemma. 
\end{proof}

Combining with Lemma \ref{minu}, we see that $\Gr_{\omega_i}$ is represented by $\Gr^\pf(i,n)$. Now assume that $\Gr_{\mu_\bullet}$ is represented by a perfectly projective perfect $k$-scheme. Let $\mu_{N+1}$ be an additional element in $\xcoch(D_n)^+$. Let $U=\Spec R$ be an affine open of $\Gr_{\mu_\bullet}$. Then by definition, there is the tautological chain of isogenies $\mE_N\to \mE_{N-1}\to\cdots\to \mE_0$ of finite projective $W(R)$-modules over $U$, and $\mE_N/p$ is a finite projective $R$-module of rank $n$. Clearly, by varying $U$, we obtain a locally free crystal $\mE_N$ on $\Gr_{\mu_\bullet}$. By Lemma \ref{minu} and Lemma \ref{aux} again, 
\[\Gr_{\mu_\bullet,\mu_{N+1}}\simeq\left\{\begin{array}{ll} \Gr^{\pf}(i,\mE_N/p), & \mu_{N+1}=\omega_i\\  \Gr^{\pf}(i,(\mE_N/p)^*), & \mu_{N+1}=\omega_i^*.\end{array}\right.\]
By Corollary \ref{perf Grass}, $\Gr_{\mu_\bullet}$ is perfectly proper.
\end{proof}

\begin{rmk}\label{wGr2}
One can show that
$$\wGr_1=\bP^{n-1,\pf},\quad \wGr_2=\bP^\pf(\Omega_{\bP^{n-1}}\oplus\mO_{\bP^{n-1}}).$$
See \S~\ref{sample cal} for a sample calculation.
On the other hand one can define the equal characteristic Demazure variety $\wGr_N^{\flat}$ which assigns every (not necessarily perfect) $k$-algebra $R$  the set of chains 
$$\mE_N\subset \mE_{N-1}\subset\cdots\subset \mE_0=R[[t]]^n$$ 
of finite projective $R[[t]]$-modules of rank $n$ such that each $\mE_i/\mE_{i+1}$ is an invertible $R[[t]]/t$-module. Then  
\[\wGr_N=(\wGr_N^\flat)^\pf,\quad N=1,2.\]
We do \emph{not} think this is true for general $N$.

Likewise, one can define the equal  characteristic analogue $\bGr_N^\flat$ of $\bGr_N$ as the moduli space of pairs $(\mE,\beta)$ where $\mE$ is a finite projective $R[[t]]$-module of rank $n$ and $\beta:\mE\to \mE_0$ is a map of $R[[t]]$-modules such that $\wedge^n\beta$ induces $\wedge^n\mE\simeq t^NR[[t]]\subset R[[t]]$. From the example given in \S~\ref{sample cal}, when $n=2$ and $N=2$, $\bGr_2\simeq (\bGr_2^\flat)^\pf$. But we do not think this is true for general $N$. 
\end{rmk}

\begin{rmk}
There is a canonical deperfection of $\wGr_N$, which can be regarded as certain moduli space related to $p$-divisible groups. See Proposition \ref{deperf of Demazure}. 
\end{rmk}

\subsubsection{}
Let $\la_1=(m_1,\ldots,m_n)$ and $\la_2=(l_1,\ldots,l_n)$. We define their sum as
$$\la_1+\la_2=(m_1+l_1,\ldots,m_n+l_n).$$
If we identify $\xcoch(D_n)^+$ with the semi-group of dominant coweights of $\GL_n$, this coincides with the usual addition. 
Let $\mmu=(\mu_1,\ldots,\mu_N)$ be a sequence with $\mu_i\in\{\omega_1,\ldots,\omega_n,\omega_1^*,\ldots,\omega_n^*\}$ as before, and let
$|\mu_\bullet|=\sum\mu_i$. There is a natural map 
\begin{equation}\label{DR}
\pi:\Gr_{\mu_\bullet}\to \Gr_{\leq |\mu_\bullet|},
\end{equation}
which sends $(\mE_\bullet,\beta_\bullet)\in \Gr_{\mu_\bullet}$ to $(\mE_N,\beta_1\cdots\beta_N)$.

\begin{lem}\label{resol}
The morphism $\pi:\Gr_{\mu_\bullet}\to \Gr_{\leq |\mu_\bullet|}$ is representable. It is perfectly proper, and fibers are perfectly proper perfect schemes.
\end{lem}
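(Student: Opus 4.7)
The plan is to verify the three assertions one at a time, leveraging the previous proposition (which shows $\Gr_{\mu_\bullet}$ is a perfectly proper perfect scheme) together with the fact that $\Gr_{\leq|\mu_\bullet|}$ is a \emph{separated} perfect algebraic space (by Corollary \ref{diagonal} combined with the representability of $\bGr_N$). These two inputs make the whole argument formal, with no honest combinatorial work on chains of lattices required at the end.

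First I would establish representability. Given any perfect scheme $T$ with a morphism $T \to \Gr_{\leq|\mu_\bullet|}$, the product $\Gr_{\mu_\bullet} \times_k T$ is a perfect scheme, and the fiber product $\Gr_{\mu_\bullet} \times_{\Gr_{\leq|\mu_\bullet|}} T$ is cut out of it as the preimage of the diagonal of $\Gr_{\leq|\mu_\bullet|}$ along the natural morphism $\Gr_{\mu_\bullet} \times_k T \to \Gr_{\leq|\mu_\bullet|} \times_k \Gr_{\leq|\mu_\bullet|}$. Because $\Gr_{\leq|\mu_\bullet|}$ is separated, its diagonal is a closed embedding, so the fiber product is a closed subscheme of a perfect scheme and is therefore itself a perfect scheme. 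This gives the representability of $\pi$.

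Next I would deduce perfect properness of $\pi$ via the standard graph-factorization trick, adapted to the perfect setting. Factor $\pi$ as
\[
\Gr_{\mu_\bullet} \xrightarrow{\,\Gamma_\pi\,} \Gr_{\mu_\bullet} \times_k \Gr_{\leq|\mu_\bullet|} \xrightarrow{\,\pr_2\,} \Gr_{\leq|\mu_\bullet|}.
\]
The graph $\Gamma_\pi$ is a closed immersion by the same separatedness of the target (pulled back from its diagonal along $(\pi,\id)$). The projection $\pr_2$ is the base change of the perfectly proper structure map $\Gr_{\mu_\bullet}\to\Spec k$ along $\Gr_{\leq|\mu_\bullet|}\to\Spec k$, and perfect properness is preserved under base change. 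A closed immersion followed by a perfectly proper map is perfectly proper, so $\pi$ is perfectly proper.

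Finally, each fiber of $\pi$ is a perfect scheme (by representability) and is perfectly proper (fibers of perfectly proper maps being perfectly proper). Concretely, the fiber over $(\mE,\beta)\in\Gr_{\leq|\mu_\bullet|}(R)$ parametrizes chains $\mE=\mE_N\to\mE_{N-1}\to\cdots\to\mE_0$ whose composition equals $\beta$ and whose $i$-th step has relative position $\mu_i$; this can alternatively be constructed by choosing $\mE_{N-1}, \mE_{N-2},\ldots,\mE_1$ successively, each in a Grassmannian bundle over the previous stage by Lemma \ref{aux}, with the compatibility with $\beta$ imposing closed conditions. I do not expect the truly hard step to be the geometric description of the fibers, but rather the bookkeeping needed to confirm that the classical diagonal/graph formalism for separated and proper morphisms transports cleanly to the perfect setting; this should be the content of the general machinery set up in Appendix \ref{generality of perf}, and once it is in place the lemma follows immediately from the two ingredients named above.
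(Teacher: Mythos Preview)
Your proof is correct and genuinely different from the paper's. You argue abstractly: since $\Gr_{\mu_\bullet}$ is already known to be a perfectly proper perfect $k$-scheme (the preceding proposition) and $\Gr_{\leq|\mu_\bullet|}$ is separated (it sits as a closed subspace in a translate of $\bGr_N$, and Corollary~\ref{diagonal} gives the closed diagonal), representability and perfect properness of $\pi$ follow formally via the diagonal/graph factorization, with no further input needed.

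The paper instead works relatively over an arbitrary affine test scheme $\Spec R\to\Gr_{\leq|\mu_\bullet|}$ and builds the fiber explicitly. Starting from the given $(\mE,\beta)$, it introduces an auxiliary moduli problem $X$ over $R$ parametrizing chains $\mF_0=\mE\to\mF_1\to\cdots\to\mF_N$ with $\inv(\mF_i\to\mF_{i+1})=\mu_i^*$; by iterated application of Lemma~\ref{aux} this $X$ is a perfectly proper perfect scheme over $R$. The actual fiber $(\Gr_{\mu_\bullet})_R$ is then identified with the closed locus $X_{\underline{0}}\subset X$ where the composite quasi-isogeny $\mF_N\to\mE\stackrel{\beta}{\to}\mE_0$ is an isomorphism, closedness coming from Lemma~\ref{Hodge stra}. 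Your approach is shorter and more conceptual; the paper's has the advantage of producing an explicit ambient Grassmannian tower containing each fiber, which feeds directly into the analysis of $\pi^{-1}(x)$ in the next lemma (Lemma~\ref{fiber of pi}) and avoids invoking the absolute representability and properness of $\Gr_{\mu_\bullet}$ over $k$.
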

\begin{proof}
Let $\Spec R\to \Gr_{\leq |\mu_\bullet|}$ be a morphism represented by $(\mE,\beta:\mE\dasharrow\mE_0)$. Then the fiber product $$(\Gr_{\mu_\bullet})_R= \Spec R\times_{\Gr_{\leq |\mu_\bullet|},\pi}\Gr_{\mu_\bullet}$$ classifies all possible chains of quasi-isogenies as in \eqref{chain qiso} such that $\mE_N=\mE$ and $\beta_1\cdots\beta_N=\beta$. We consider another moduli problem $X$ over $\Spec R$ which assigns every homomorphism $R\to R'$ of perfect $k$-algebras the set
\[X(R')=\{\mF_N\dashleftarrow\mF_{N-1}\dashleftarrow\cdots\dashleftarrow\mF_0=\mE\mid \inv(\mF_i\dasharrow\mF_{i+1})=\mu_i^*\}.\]
By Lemma \ref{aux}, $X$ is represented by a perfect scheme over $R$, perfectly proper over $R$. Over $X$ we consider the quasi-isogeny $\mF_N\dasharrow \mF_0=\mE\dasharrow \mE_0$. Then $(\Gr_{\mu_\bullet})_R$ is represented by $X_{\omega_0}$, which is closed in $X$ by Lemma \ref{Hodge stra}. This finishes the proof of the lemma.
\end{proof}

Now we assume $\mu_i=\omega_1$ for all $i$. We denote $\Gr_{\mu_\bullet}$ by $\wGr_N$, which classifies those chains in \eqref{chain qiso} such that all $\beta_i$s are isogenies and   all $\mE_{i-1}/\mE_i$ are invertible $W(R)/p$-modules. Then \eqref{DR} specializes to a map $\pi: \wGr_N\to \bGr_N$.

\begin{lem}\label{fiber of pi}
The restriction of the map $\pi:\wGr_N\to \bGr_N$ to $\pi^{-1}\Gr_N\to \Gr_N$ is an isomorphism. The fiber of $\pi$ over every point $x\in \bGr_N-\Gr_N$ is non-empty, geometrically connected, and has positive dimension.
\end{lem}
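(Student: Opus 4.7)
My plan is to pass to geometric points, reinterpret the fiber as a space of submodule filtrations of $M:=\mE_0/\mE_x$, and conclude by induction on $N$. Fix an algebraically closed extension $\kappa$ of $k$ and $x\in\bGr_N(\kappa)$ with relative position $\la$; then $M$ is a finite-length $W(\kappa)$-module isomorphic to $\bigoplus_{i=1}^n W(\kappa)/p^{\la_i}$. For any perfect $\kappa$-algebra $R$, setting $G_i:=\mE'_i/\mE'_N$ for a chain $\mE_0\otimes_{W(\kappa)}W(R)=\mE'_0\supset\cdots\supset \mE'_N=\mE_x\otimes_{W(\kappa)}W(R)$ identifies $\pi^{-1}(x)(R)$ with the set of $W(R)$-submodule filtrations $M_R=G_0\supset G_1\supset\cdots\supset G_N=0$ of $M_R:=M\otimes_{W(\kappa)}W(R)$ whose successive quotients are invertible $R$-modules.

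For the first assertion, suppose $\la=N\omega_1$, so $M\cong W(\kappa)/p^N$ is uniserial. Since each $G_{i-1}/G_i$ is an $R$-module, multiplication by $p$ annihilates it, forcing $pG_{i-1}\subset G_i$; descending induction starting at $G_0=M_R$ yields $p^iM_R\subset G_i$, and invertibility of the quotients combined with $\sum_i\ell_R(G_{i-1}/G_i)=N$ forces equality. Consequently the unique lift of $\mE_x$ is $\mE_i=p^i\mE_0+\mE_x$, and this uniform formula defines a section $\Gr_N\to\wGr_N$ inverse to $\pi$; hence $\pi$ restricts to an isomorphism over $\Gr_N$.

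Now take $x\in\bGr_N\setminus\Gr_N$. The conditions $\la\leq N\omega_1$, $|\la|=N$, $\la\neq N\omega_1$ force $\la_1<N$ and hence $\la_2\geq 1$, so $M\cong\bigoplus_{i=1}^r W(\kappa)/p^{\la_i}$ has $r\geq 2$ nonzero parts; in particular the socle $M[p]$ has $\kappa$-dimension $r$, and $M_R[p]=M[p]\otimes_\kappa R$. Non-emptiness of $\pi^{-1}(x)$ follows by refining a cyclic composition series of $M$ to a filtration with one-dimensional successive quotients and base-changing. For positive dimension, introduce the morphism
\[\phi\colon\pi^{-1}(x)\longrightarrow\bP(M[p])^\pf,\qquad G_\bullet\longmapsto G_{N-1},\]
which is well-defined because $G_{N-1}$ is an invertible $R$-submodule of $M_R$ killed by $p$. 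The target has dimension $r-1\geq 1$, and $\phi$ is surjective since any $L\in\bP(M[p])(\bar\kappa)$ extends to a filtration of $M/L$ by the non-emptiness argument applied to the shorter module. In particular $\dim\pi^{-1}(x)\geq r-1\geq 1$.

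For geometric connectedness, I would proceed by induction on $N$, the case $N=1$ being trivial. For each $L\in\bP(M[p])^\pf(\bar\kappa)$, let $\mE_{N-1}\subset\mE_0$ be the preimage of $L\subset M$; then $\ell_{W(\kappa)}(\mE_0/\mE_{N-1})=N-1$, so $\mE_{N-1}$ defines a point of $\bGr_{N-1}$, and $\phi^{-1}(L)$ identifies with the fiber of $\wGr_{N-1}\to\bGr_{N-1}$ at $\mE_{N-1}$. By the inductive hypothesis (which uniformly handles both the open stratum and its complement) this fiber is geometrically connected. Since $\phi$ is perfectly proper by Lemma~\ref{resol}, surjective, and has geometrically connected fibers onto the geometrically connected target $\bP(M[p])^\pf$, it follows that $\pi^{-1}(x)$ is geometrically connected---any nontrivial disconnection would project onto two disjoint nonempty closed subsets of the connected base. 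The main technical obstacle is justifying the induction uniformly across all choices of $L$, since the partition of $M/L$ varies with $L$, but the inductive statement is formulated for arbitrary lattices in $\bGr_{N-1}$ and so accommodates each such case.
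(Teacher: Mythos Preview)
Your proof follows essentially the same route as the paper's: both establish connectedness by the factorization through the ``last step'' map to a projective space (your $\phi:\pi^{-1}(x)\to\bP(M[p])^\pf$ is exactly the fiber over $x$ of the paper's $\pi_2:\bP^\pf(\mE/p)\to\bGr_N$, since $(p^{-1}\mE\cap\mE_0)/\mE=M[p]$), and both run the same induction on $N$. Your rephrasing in terms of filtrations of the quotient module $M=\mE_0/\mE_x$ is a harmless and natural change of language. For positive dimension the paper instead uses $L^+\GL_n$-equivariance and exhibits a $\bP^{1,\pf}$ inside the fiber directly, but your surjectivity-onto-$\bP(M[p])^\pf$ argument is equally valid.

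There is one genuine (though easily repaired) gap. In the uniqueness argument over $\Gr_N$ you invoke ``$\sum_i\ell_R(G_{i-1}/G_i)=N$'', but $\ell_R$ has no meaning for a general perfect $\kappa$-algebra $R$. The correct argument is symmetric to the inclusion you already have: from $pG_i\subset G_{i+1}$ one gets $p^{N-i}G_i\subset G_N=0$, hence $G_i\subset M_R[p^{N-i}]=p^iM_R$ (using that $W(R)$ is $p$-torsion-free for perfect $R$), and combined with $p^iM_R\subset G_i$ you are done. You should also verify, as the paper does via a fiberwise check and the argument of Lemma~\ref{aux}, that the section formula $\mE_i=p^i\mE_0+\mE$ actually lands in $\wGr_N$, i.e.\ that each $\mE_i$ is a finite projective $W(R)$-module with $\mE_i/\mE_{i+1}$ invertible over $R$; you assert this but do not justify it.
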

\begin{proof}
First, we show that $\pi: \pi^{-1}(\Gr_N)\to \Gr_N$ is an isomorphism by exhibiting an inverse morphism.
Indeed, given $(\mE,\beta)\in \Gr_N(R)$, there is a chain of finitely generated $W(R)$-modules
\[\mE=\mE_N\subset \mE_{N-1}\subset\cdots \subset \mE_0=W(R)^n,\quad \mE_i=\mE+p^i\mE_0. \]
It is enough to show that each $\mE_i$ is a projective $W(R)$-module and $\mE_i/\mE_{i+1}$ is an invertible $W(R)/p$-module. Indeed, at each point $x\in \Spec R$, the dimension of the stalk of $\mE_i/\mE_{i+1}$ is one. Then by the same argument as in the last part of the proof of Lemma \ref{minu}, $\mE_i/\mE_{i+1}$ is invertible. In addition, by the same argument as in Lemma \ref{aux}, and by induction on $i$, each $\mE_i$ is a projective $W(R)$-module. 

Next, let $K$ be a perfect field over $k$. Every isogeny $\mE\to \mE_0=W(K)^n$ of finite projective $W(K)$-modules can be factored as a sequence of maps $\mE=\mE_N\to \mE_{N-1}\to\cdots\to \mE_1\to \mE_0$ such that $\mE_i/\mE_{i+1}$ is a one-dimensional vector space over $K$. This proves that the fibers of $\pi$ are non-empty.

Next, let $x\in \bGr_N$ be a geometric point. We show that $\pi^{-1}(x)$ is connected. Let $C_1,\ldots, C_r$ denote its connected components. We factor $\wGr_N\to \bGr_N$ as 
\begin{equation}\label{factorization}
\wGr_N\stackrel{\pi_1}{\to} \bP^\pf(\mE/p)\stackrel{\pi_2}{\to} \bGr_N,
\end{equation}
where $\mE\to \mE_0$ denotes the tautological isogeny over $\bGr_{N-1}$ so $\bP^\pf(\mE/p)$ is a $\bP^{n-1,\pf}$-bundle over $\bGr_{N-1}$.
Given $(\mE_\bullet,\beta_\bullet)\in\wGr_N$, the first map forgets $\mE_{N-2},\ldots,\mE_1$, and the second map further forgets $\mE_{N-1}$.
By induction, the first map has geometrically connected fibers. This implies that  $\{\pi_1(C_i)\}$ are disjoint subsets of $\pi_2^{-1}(x)$. In addition, since by Lemma \ref{resol}, $\pi^{-1}(x)$ is proper so each $\pi_1(C_i)$ is closed in $\pi_2^{-1}(x)$. Therefore, to show that $\pi^{-1}(x)$ is connected, it is enough to show that $\pi_2^{-1}(x)$ is connected. Let $K$ be the residue field of $x$.
Let us regard $K$-points of $\Gr$ as lattices $W(K)[1/p]^n$ and switch the notation to represent $x$ by a lattice $\La$. Then the fiber of $\pi_2$ over this point is given by $\bP^\pf((p^{-1}\La\cap \La_0)/\La)$ (recall that $\La_0=W(K)^n$ denotes the standard lattice), which is the perfection of a projective space and therefore is connected.

Finally, we show that the fiber over every point in $\bGr_N-\Gr_N$ has positive dimension. 
First note that $\wGr_N\to \bGr_N$ is $L^+\GL_n$-equivariant, where $L^+\GL_n$ acts on both spaces via automorphisms of $\mE_0$. Let $p^\la\in \Gr(k)$ be the point defined by the lattice $\La_\la$ as in \eqref{pmu}. Then
by Lemma \ref{decomp}, it is enough to show that for $\la<N\omega_1$, the fiber over $p^\la\in \bGr_N(k)$ has positive dimension. 
If $\la<N\omega_1$, there exists some $i$ such that $$\dim_k (\La_\la\cap p^i\La_0/ \La_\la \cap p^{i+1}\La_0)>1.$$ Therefore the fiber $\pi^{-1}(p^\la)$ contains at least a $\bP^{1,\pf}$, corresponding to possible choices of a line in $(\La_\la\cap p^i\La_0/ \La_\la\cap p^{i+1}\La_0)$.
\end{proof}

We have the following consequence.

\begin{cor}\label{irr and proper}
The separated pfp perfect algebraic space $\bGr_N$ is irreducible and perfectly proper. In particular, $\Gr=\Gr_{\GL_n}$ is ind-perfectly proper.
\end{cor}

\subsection{Affine Grassmannians and affine flag varieties}\label{gen aff flag}
\subsubsection{}
Once the representability of $\Gr_{\GL_n}$ is established, it is not hard to show that the affine Grassmannian $\Gr_{G}=LG/L^+G$ for a general smooth affine group scheme $G$ over $\mO$  is representable.

\begin{prop}\label{gen G}
Let $\rho:G\to \GL_n$ be a linear representation such that $\GL_n/G$ is quasi-affine, then $\Gr_{G}\to \Gr_{\GL_n}$ is a locally closed embedding. In addition, if $\GL_n/G$ is affine, this is a closed embedding.
\end{prop}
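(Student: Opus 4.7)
The plan is to identify, for each point $(\mE,\beta)\in\Gr_{\GL_n}(R)$, the fiber of $\Gr_\mG\to\Gr_{\GL_n}$ over it with the functor of sections of the associated fiber bundle $Y:=\mE\times^{\GL_n}(\GL_n/\mG)\to\Spec W_\mO(R)$ that restrict, over $\Spec W_\mO(R)[1/\varpi]$, to the canonical section $s_0$ determined by the basepoint $e\mG\in\GL_n/\mG$ together with $\beta$. Indeed, a $\mG$-reduction of $\mE$ together with a compatibility with $\beta$ is exactly such a section. To prove the proposition, I will show that this section functor is representable by a closed (resp.\ locally closed) perfect subscheme of $\Spec R$ when $\GL_n/\mG$ is affine (resp.\ quasi-affine); varying $(\mE,\beta)$ over charts of $\Gr_{\GL_n}$ will then yield the claim globally.

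First I will note that $Y\to\Spec W_\mO(R)$ is affine (resp.\ quasi-affine) of finite type. By Hilbert 90 the $\GL_n$-torsor $\mE$ is Zariski-locally trivial on $\Spec W_\mO(R)$, so $Y$ is Zariski-locally on the target isomorphic to $(\GL_n/\mG)\times\Spec W_\mO(R)$, and (quasi-)affineness is Zariski-local on the target.

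Next I will treat the affine case. Writing $Y=\Spec B$ with $B$ a finitely generated $W_\mO(R)$-algebra, I pick global generators $b_1,\dots,b_m$ and, after clearing denominators, elements $a_i\in W_\mO(R)$ with $s_0(b_i)=\varpi^{-N}a_i$ for some $N\gg 0$. A section $s:\Spec W_\mO(R')\to Y_{R'}$ is the data of $s(b_i)\in W_\mO(R')$ satisfying the defining relations of $B$, and the condition $s[1/\varpi]=(s_0)_{R'}$ becomes $\varpi^N s(b_i)=a_i\otimes 1$ in $W_\mO(R')$. Because $\varpi$ is a non-zero-divisor in $W_\mO(R')$ for perfect $R'$, such $s(b_i)$ exists, and is then unique, exactly when the image of $a_i$ in the Greenberg realization $W_{\mO,N}(R)$ vanishes after base change to $R'$; moreover the polynomial relations of $B$ are automatically preserved since they held for $s_0$ and $W_\mO(R')$ is $\varpi$-torsion-free. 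Thus the section functor is cut out by the simultaneous vanishing of finitely many Witt coordinates of the $a_i$ (viewed as functions on $\Spec R$ via $W_{\mO,N}$), hence is represented by a closed perfect subscheme of $\Spec R$.

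For the quasi-affine case I will choose a quasi-compact open immersion $\GL_n/\mG\hookrightarrow\bar Z$ into an affine $\mO$-scheme $\bar Z$ of finite type and form $\bar Y=\mE\times^{\GL_n}\bar Z\supset Y$. Applying the previous paragraph to the section problem in $\bar Y$ produces a closed subscheme $Z_R\subset\Spec R$; the extra requirement that the section factor through the open $Y\subset\bar Y$ is an open condition, since the preimage in $\Spec W_\mO(R')$ of the closed complement $\bar Y\setminus Y$ is disjoint from $\Spec W_\mO(R')[1/\varpi]$ (as $s_0$ already lands in $Y$), hence is contained in the closed fiber $V(\varpi)=\Spec R'$, and emptiness there is an open condition pulled back to an open subscheme of $Z_R$. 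The principal technical obstacle is maintaining the dictionary between conditions on $W_\mO(R)$-points and honest scheme-theoretic conditions on $\Spec R$, but once the Greenberg formalism is invoked the argument reduces to the familiar equal-characteristic one.
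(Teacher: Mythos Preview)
Your approach is exactly the one the paper invokes: it simply cites \cite[Theorem~4.5.1]{BD} and \cite[Theorem~1.4]{PR} and says the argument ``extends verbatim''. Your write-up spells out that argument in the mixed-characteristic setting, correctly using that $W_\mO(R')$ is $\varpi$-torsion-free for perfect $R'$ so that integrality and the algebra relations transfer from $s_0$ to $s$.

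One small point to fix in the quasi-affine case: to form $\bar Y=\mE\times^{\GL_n}\bar Z$ you need $\bar Z$ to carry a $\GL_n$-action extending that on $\GL_n/\mG$, not just to be an abstract affine envelope. The usual remedy is to take $\bar Z=\Spec A$ for a finitely generated $\GL_n$-stable $\mO$-subalgebra $A\subset\Gamma(\GL_n/\mG,\mO)$ large enough that $\GL_n/\mG\to\Spec A$ is an open immersion; such $A$ exists because $\Gamma(\GL_n/\mG,\mO)$ is a locally finite $\GL_n$-module. With that adjustment your argument goes through.
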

\begin{proof}The proof as in \cite[Theorem 4.5.1]{BD} or \cite[Theorem 1.4]{PR} extends \emph{verbatim} to the present situation.
\end{proof}

For a smooth affine group scheme $G$ over a Dedekind domain, it is known that there exists a linear representation $\rho: G\to \GL_n$ such that $\GL_n/G$ is quasi-affine (cf. \cite[\S 1.b]{PR}). In addition, if $G$ is reductive, one can choose $\rho$ such that $\GL_n/G$ is affine (cf. \cite[Corollary 9.7.7]{Ap}). Therefore, it follows from the representability of $\Gr_{\GL_n}$ and the above proposition that Theorem \ref{aff: main} holds for group schemes over $\mO_0=W(k)$. To finish the proof for the general case, it is enough to note that if $\mO$ is a totally ramified extension of $\mO_0$ and $G$ is an affine group scheme over $\mO$, then the affine Grassmannian $\Gr_G$ of $G$ is isomorphic to the affine Grassmannian $\Gr_{\Res_{\mO/\mO_0}G}$ of the Weil restriction $\Res_{\mO/\mO_0}G$ (which is a group scheme over $\mO_0$). 

\subsubsection{}\label{Schubert geom}
Now we study affine Grassmannians for an important  a class of group schemes over $\mO$, namely parahoric group schemes in the sense of  Bruhat-Tits. Following the standard terminology in literature, we call the affine Grassmannian of a parahoric group scheme a (partial) affine flag variety. 
As the theory is completely parallel to the equal characteristic situation (after passing to the perfection), we will be sketchy here and refer to \cite{PR} for details. 
We will assume that $k$ is algebraically closed. 

We temporarily use notations different from \S\ \ref{not}. Namely, we start with a connected reductive group  over $F$, denoted by $G$. 
Let $B(G,F)$ denote the Bruhat-Tits building of $G$. We fix an apartment $A(G,F)\subset B(G,F)$ 
and an alcove $\mathbf{a}\subset A(G,F)$. 
They determine a maximal split torus $A\subset G$ and an Iwahori group scheme $\mG_{\mathbf{a}}$ of $G$ over $\mO$. Let $T=Z_G(A)$ be the centralizer of $A$ in $G$, which is a maximal torus of $G$. Its connected N\'eron model, denoted by $\mT$, is a closed subgroup scheme of $\mG_{\mathbf{a}}$.
Let $\widetilde W$ denote the Iwahori-Weyl group, which is the quotient of the normalizer $N(F)$ of $T(F)$ by $\mT(\mO)$,
and let $W_a\subset \widetilde W$ denote the affine Weyl group. Let $\{s_i, i\in \bS\}$ denote the set of simple reflections, corresponding to the codimension one walls $\mathbf{a}_i$ of  the closure $\bar{\mathbf{a}}$ of $\mathbf{a}$ in $A(G,F)$, and let $``\leq"$ denote the Bruhat order on $\widetilde W$. We refer to \cite{PR} (and in particular \cite{HR}) for detailed discussions of the above notions.

For $i\in\bS$, let $\mG_i$ denote the corresponding parahoric group scheme. There is a natural map $\mG_{\mathbf{a}}\to \mG_i$.
Let $I=L^+\mG_{\mathbf{a}}$ and $P_i=L^+\mG_i$.  Let us write $\mathcal F\ell=LG/I$, and call it the affine flag variety of $G$. By Theorem \ref{aff: main}, it is representable. For $w\in \widetilde W$, let $S_w$ denote the closure of the $I$-orbit through $\dot{w}$, where $\dot{w}$ is a lifting of $w$ to $G(F)$. This is the ``Schubert variety", which in the current setting is a separated pfp perfect algebraic space. As in the equal characteristic situation, 
\[S_w=\bigsqcup_{v\leq w} I\dot{v} I/I\]
is a decomposition of $S_w$ into locally closed subsets and each $I\dot{v}I/I$ is isomorphic to the perfection of an affine space of dimension $\ell(v)$.
We show that $S_w$ is perfectly proper so that  $\mF\ell=\underrightarrow\lim S_w$ is ind-perfectly proper. The idea is similar to the proof of Corollary \ref{irr and proper}.

Note that $I$ is a subgroup of $P_i$ (however, $L^+_p\mG_C\to L^+_p\mG_i$ is \emph{not} a closed embedding). It is easy to see that $P_i/I\simeq \bP^{1,\pf}$.
 Then to any sequence $\tilde w=(s_{j_1},\ldots,s_{j_m}), j_1,\ldots, j_m\in \bS$ (sometimes called a word), one can associate the ``Demazure" variety
 \begin{equation}\label{Dem Var}
D_{\tilde w}= P_{j_1}\times^{I} P_{j_2} \times^I\cdots\times P_{j_m}/I.
\end{equation}
Similar to $\wGr_N$, this is an iterated $\bP^{1,\pf}$-bundle. In particular, it is perfectly proper. Now assume that $\tilde w$ is a reduced word, i.e. the length $\ell(w)=m$, where $w=s_{j_1}\cdots s_{j_m}$. Then as in \cite[\S 8]{PR}, there is a surjective map
\begin{equation}\label{DR}
\pi_{\tilde w}:D_{\tilde w}\to S_w,
\end{equation}
with geometrically connected fibers. This shows that $S_w$ is perfectly proper.

In addition, we have the following proposition as in the equal characteristic situation.
\begin{prop}\label{conn comp}
There is a canonical isomorphism $\pi_1(G)_{\on{Gal}(\bar{F}/F)}\simeq \pi_0(LG)\simeq \pi_0(\Gr_{\mG})$.
\end{prop}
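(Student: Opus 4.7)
The plan is to adapt the equal characteristic argument of Pappas--Rapoport. For the second isomorphism $\pi_0(LG) \simeq \pi_0(\Gr_{\mG})$, the quotient presentation $\Gr_{\mG} = LG/L^+\mG$ induces a surjection on $\pi_0$, which is a bijection once $L^+\mG$ is geometrically connected. Since $\mG$ is parahoric, each truncated Greenberg realization $L_p^h\mG$ is a smooth affine group scheme with geometrically connected fibres (a standard Bruhat--Tits fact), so $L_p^+\mG$ and therefore its perfection $L^+\mG$ are connected.

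For the main isomorphism $\pi_1(G)_{\on{Gal}(\bar F/F)} \simeq \pi_0(LG)$, I would first construct the Kottwitz map $\kappa_G\colon LG \to \underline{\pi_1(G)_{\on{Gal}(\bar F/F)}}$ as a morphism of perfect sheaves, with the right-hand side viewed as a locally constant perfect sheaf. On geometric points this recovers the classical Kottwitz surjection, so the task is to show that its geometric fibres are connected perfect ind-subschemes of $LG$. A $z$-extension $1 \to Z \to \tilde G \to G \to 1$ with $Z$ an induced torus and $\tilde G_{\mathrm{der}}$ simply connected, combined with the functoriality of $\kappa$ and diagram chasing on $\pi_0(L(-))$, reduces to two basic cases: (a) $G$ is an induced torus, which is handled by the direct valuation computation on $L\Res_{E/F}\bG_m$, where the connected components are indexed by $\bZ \simeq \xcoch(\Res_{E/F}\bG_m)_{\on{Gal}(\bar F/F)}$; and (b) $G$ is semisimple and simply connected, in which case $\pi_1(G)=0$ and one must prove that $LG$ itself is connected.

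The main obstacle is case (b). The strategy is to use that $G_{\mathrm{sc}}(L)$ is generated by the $L$-points of its root subgroups $U_\alpha \simeq \bG_a$ (the Kneser--Tits property, valid since $L$ is a local field and $G_{\mathrm{sc}}$ is simply connected, via Bruhat--Tits). Each $LU_\alpha \simeq L\bG_a$ is an ascending union of perfections of affine spaces, hence a connected perfect ind-scheme containing the identity. To upgrade ``generated by root subgroups'' from an abstract statement about $L$-points to a connectedness statement for the perfect ind-scheme $LG_{\mathrm{sc}}$, one exhibits an exhaustive filtration of $LG_{\mathrm{sc}}$ by closed perfect ind-subschemes obtained as images of bounded products $LU_{\alpha_1} \times \cdots \times LU_{\alpha_r} \to LG_{\mathrm{sc}}$; each such image is irreducible and meets the identity, so the whole $LG_{\mathrm{sc}}$ lies in a single connected component. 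This mirrors the equal characteristic ind-scheme argument verbatim, with the only modification being that every object is replaced by its perfection, which preserves connectedness.
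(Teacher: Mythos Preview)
Your overall reduction via $z$-extensions matches the paper. For tori, the paper invokes Kottwitz \cite[\S 2]{Ko}: one verifies that $T \mapsto \pi_0(LT)$ satisfies his axioms (correct value on $\bG_m$, exactness for short exact sequences with induced kernel), which forces it to be $\xcoch(T)_{\Gal}$. Your phrasing ``reduces to induced tori'' elides this step---the $z$-extension alone leaves a general torus as the abelianization of $\tilde G$---but presumably your ``diagram chasing'' is meant to cover the same verification.

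The genuine difference is the simply-connected case. The paper argues via the geometry already set up in \S\ref{Schubert geom}: for $G$ simply connected the Iwahori--Weyl group equals the affine Weyl group $W_a$, so $\Fl = \bigcup_{w \in W_a} S_w$; each Schubert variety $S_w$ is the image of a Demazure variety $D_{\tilde w}$ (an iterated $\bP^{1,\pf}$-bundle, hence connected) and contains the base point $S_e$, whence $\Fl$ and therefore $LG$ are connected. Your Kneser--Tits route also works, but two repairs are needed. First, since $k$ is algebraically closed, $L$ has residue field $\bar k$ and is \emph{not} a local field, so the standard Kneser--Tits theorem over local fields is the wrong citation; what saves the argument is that $\mathrm{cd}(L) \le 1$, hence by Steinberg's theorem $G_{\mathrm{sc}}$ is quasi-split over $L$, and Kneser--Tits for quasi-split simply-connected groups then applies over any field. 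Second, for non-split $G$ the relative root groups $U_\alpha$ are not $\bG_a$ but higher-dimensional split unipotent groups; this is harmless since $LU_\alpha$ remains a connected perfect ind-scheme. (Also, the images of the product maps are neither closed nor a filtration in any useful sense; all you actually need is that each such image is connected and contains $e$, which suffices to place every $\bar k$-point in the identity component.) The paper's argument stays entirely within the machinery it has already built; yours imports outside structure theory but is otherwise sound.
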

\begin{proof}One can argue as in \cite[\S 5]{PR}: By the standard argument (using the $z$-extension), it reduces to consider the case when $G=T$ is a torus or when $G=G_{\on{sc}}$ is semisimple and simply connected.  Note that the functor $T\mapsto \pi_0(LT)$ from the category of $F$-tori to the category of abelian groups satisfies the condition of \cite[\S 2]{Ko}. Therefore it follows from \emph{loc. cit.} that the proposition holds for $G=T$. Using the ``Demazure resolution" and the Cartan decomposition, one shows that $LG$ is connected if $G$ is simply-connected.
\end{proof}

\subsubsection{}\label{Sch in Gr}
Now we switch back to the notations as in \S~\ref{not}. So $G$ denotes an affine group scheme over $\mO$. In addition we assume that $G$ is split reductive.

We first discuss some generalizations of \S~\ref{rel pos latt}.
Let $\mE_1$ and $\mE_2$ be two $G$-torsors over $\mO$, and
let $\beta:\mE_1|_{D^*_{F}}\simeq\mE_2|_{D^*_{F}}$ be an isomorphism between them over $F$. One can generalize \eqref{rel pos} to define the relative position $\inv(\beta)$ of $\beta$ as an element in $\xcoch^+$. In addition, Lemma \ref{Hodge stra} also admits a natural generalization.
\begin{lem}\label{Hodge stra2}
Let $\mE_1$ and $\mE_2$ be two $G$-torsors over $D_{F,R}=\Spec W_\mO(R)$, and let $\beta: \mE_1|_{D^*_{F,R}}\simeq \mE_2|_{D^*_{F,R}}$ be an isomorphism. Then the set
\[(\Spec R)_{\leq \mu}=\{x\in \Spec R\mid \inv_x(\beta)\leq \mu\}\]
is a closed subset.
\end{lem}
In equal characteristic, these facts are well-known (e.g. see \cite[\S~2.1]{Z16}), and exactly the same arguments apply here.

Then we define $\Gr_{\leq \mu}\subset\Gr$ as
\[\Gr_{\leq \mu}=\{(\mE,\beta)\in \Gr\mid \inv(\beta)\leq \mu\},\]
which is a closed subspace of $\Gr$ by Lemma \ref{Hodge stra2}. It contains
\[\Gr_{\mu}=\{(\mE,\beta)\in \Gr\mid \inv(\beta)=\mu\}\]
as an open subspace. We call $\Gr_{\leq \mu}$ the (spherical) ``Schubert variety" corresponding to $\mu$ and $\Gr_{\mu}$ the corresponding ``Schubert cell". The terminology is justified by the following proposition.

\begin{prop}\label{sph sch var}
\begin{enumerate}
\item Let $\mu\in\xcoch^+$, and let $\varpi^\mu\in \Gr$ be the corresponding point (see \S~\ref{not}). Then the map
\begin{equation}\label{Sch cell}
i_\mu:L^+G/(L^+G\cap \varpi^\mu L^+G \varpi^{-\mu}) \to LG/L^+G,\quad g \mapsto g\varpi^\mu
\end{equation}
induces an isomorphism $L^+G/(L^+G\cap \varpi^\mu L^+G \varpi^{-\mu})\simeq \Gr_\mu$. 
\item $\Gr_{\mu}$ is the perfection of a quasi-projective smooth variety of dimension $(2\rho,\mu)$. 
\item $\Gr_{\leq \mu}$ is the Zariski closure of $\Gr_\mu$ in $\Gr$ and therefore is perfectly proper of dimension $(2\rho,\mu)$. 
\end{enumerate}
\end{prop}
\begin{proof}
Note that for $h\gg 0$, the Greenberg realization $L^h_pG$ of $G\otimes \mO/\varpi^h$ is a canonical model of $L^hG=L^+G/L^+G^{(h)}$, and there is a unique reduced closed subgroup $K\subset L^h_pG$ whose perfection is $(L^+G\cap \varpi^\mu L^+G \varpi^{-\mu})/L^+G^{(h)}$.
Then the quotient $L^h_pG/K$ is represented by a smooth quasi-projective variety $\Gr'_\mu$ whose perfection is $L^+G/(L^+G\cap \varpi^\mu L^+G \varpi^{-\mu})$. In addition, similar to the equal characteristic situation, it is not hard to show that $\dim \Gr'_\mu=(2\rho,\mu)$. By Proposition \ref{perfect orbit} and the Cartan decomposition, the inclusion ${\Gr'_\mu}^\pf\subset \Gr_\mu$ is a bijective locally closed embedding, and therefore is an isomorphism. This implies (1) and (2).

Finally (3) follows from Lemma \ref{Hodge stra2} and (2) by the same argument as in the equal characteristic situation (e.g. see \cite[Proposition 2.1.4]{Z16} for details).
\end{proof}

For a coweight $\mu$, let $P_{\mu}$ denote the parabolic subgroup of $G$ generated by the root subgroups $U_\al$ of $G$ corresponding to those roots $\al$ satisfying $\langle\al,\mu\rangle\leq 0$. Let $\bar G$ and $\bar P_{\mu}$ be the special fibers of $G$ and $P$. Let
us denote the natural  projection $L^+G\to \bar G^\pf$ defined by reduction mod $\varpi$ by $g\mapsto \bar{g}$.
Then there is a natural projection
\begin{equation}\label{mod to fil}
\pi_\mu:  L^+G/(L^+G\cap \varpi^\mu L^+G \varpi^{-\mu})\to (\bar G/\bar P_\mu)^\pf,\quad (gt^\mu \mod L^+G)\mapsto (\bar{g}\mod \bar P^\pf_{\mu}).
\end{equation}
The fibers are isomorphic to the perfection of affine spaces. 

We have the following generalization of the isomorphism $\Gr_{\omega_i}\simeq\Gr^\pf(i,n)$ from Proposition \ref{rep of dem res}. Recall that $\mu$ is called minuscule if $\langle\mu,\al\rangle\leq 1$ for every positive root. 
\begin{cor}\label{minusch}
If $\mu$ is minuscule, then
$\Gr_\mu=\Gr_{\leq \mu}$ and therefore $\pi_\mu$ induces an isomorphism $\Gr_\mu=(\bar G/\bar P_\mu)^\pf$. 
\end{cor}
In particular, for minuscule $\mu$, $\Gr_{\leq \mu}$ is isomorphic to the perfection of its equal characteristic counterpart. But as mentioned in Remark \ref{wGr2}, we do \emph{not} think this is true for general ``Schubert varieties".

There is a map $\xcoch\to \bZ/2,\ \mu\mapsto (-1)^{(2\rho,\mu)}$, which factors through $\xcoch(T)\to \pi_1(G)\to \bZ/2$ and therefore induces a map
\begin{equation}\label{parity map}
p:\Gr_G\to \pi_0(\Gr_G)\to \bZ/2
\end{equation}
by Proposition \ref{conn comp}. 
\begin{lem}\label{parity}
The Schubert cell $\Gr_{\mu}$ is in the even (resp. odd) components, i.e. $p(\Gr_{\mu})=1$ (resp. $p(\Gr_{\mu})=-1$) if and only if $\dim \Gr_{\mu}$ is even (resp. odd).
\end{lem}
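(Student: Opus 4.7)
The plan is to show that $p(\Gr_\mu)$ and the parity of $\dim \Gr_\mu$ are computed by the same integer, namely $(2\rho,\mu) \bmod 2$, so the lemma becomes essentially tautological once one unwinds the definitions.

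First I would check that $\Gr_\mu$ is connected and therefore lies in a single component of $\Gr_G$, so that $p(\Gr_\mu)$ is a well-defined element of $\bZ/2$. This is immediate from $\Gr_\mu \simeq L^+G/(L^+G\cap \varpi^\mu L^+G\varpi^{-\mu})$ (cited in the preceding proposition): as a quotient of the connected group $L^+G$, the cell $\Gr_\mu$ is connected, so it suffices to compute $p$ on the single point $\varpi^\mu \in \Gr_\mu(k)$.

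Next I would identify the image of $\varpi^\mu$ under the isomorphism $\pi_0(\Gr_G) \simeq \pi_1(G)_{\on{Gal}(\bar F/F)}$ of Proposition~\ref{conn comp}. By the construction of that isomorphism (which, via the reduction to tori in its proof, reduces the claim to the case $G=T$ where it is explicit), the component of $\Gr_G$ containing $\varpi^\mu$ corresponds to the class $[\mu] \in \pi_1(G) = \xcoch(T)/Q^\vee$. The parity map $p$ is defined on $\pi_1(G)$ by $\mu \mapsto (-1)^{(2\rho,\mu)}$; this is well defined because $(2\rho,\alpha^\vee)=2$ for every simple coroot $\alpha^\vee$, hence $(2\rho,-)$ is even on the coroot lattice $Q^\vee$. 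Thus $p(\Gr_\mu) = (-1)^{(2\rho,\mu)}$.

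Finally, by part (2) of the preceding proposition, $\dim \Gr_\mu = (2\rho,\mu)$, so $p(\Gr_\mu)=(-1)^{\dim \Gr_\mu}$, which is exactly the claim. The only substantive ingredient beyond bookkeeping is the explicit identification of $[\varpi^\mu]$ with $[\mu]$ in $\pi_0(\Gr_G) \simeq \pi_1(G)$, but this is standard and is implicit in the proof of Proposition~\ref{conn comp}; no further geometry of the Schubert cells is required.
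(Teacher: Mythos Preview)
Your proposal is correct and matches the paper's approach: the paper does not actually write out a proof of this lemma, treating it as immediate from the definition of $p$ together with $\dim\Gr_\mu=(2\rho,\mu)$, and your argument is precisely the unwinding of definitions that justifies this. The only point you make explicit that the paper leaves tacit is the identification of the component of $\varpi^\mu$ with $[\mu]\in\pi_1(G)$, which is indeed the standard content of the isomorphism in Proposition~\ref{conn comp}.
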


To finish this section, we remark that although affine Grassmannians in mixed and equal characteristic share many similar properties, there are some essential difference. The first difference is that since there is no analogue of the Birkhoff decomposition for $p$-adic groups, it is not clear whether one can construct the ``big open cell" in the mixed characteristic affine Grassmannian. This is also related to the lack of a Beauville-Laszlo style description of the mixed characteristic affine Grassmannian via a ``global curve". The second difference is that  there is no ``rotation" torus acting on the mixed characteristic affine Grassmannian. As a result, 
there is no natural section of the projection $\pi_\mu$ defined in \eqref{mod to fil}.

\section{The geometric Satake}\label{GS}
We establish the geometric Satake equivalence in this setting. We use notations from \S~\ref{not}. In addition, we assume that $k$ is algebraically closed and that $G$ is a connected reductive group scheme over $\mO$  in this section. 
As explained in the introduction, one can define the category of $L^+G$-equivariant perverse sheaves on $\Gr_G$, denoted by $\on{P}_{L^+G}(\Gr_G)$. As will be explained below, there is a convolution product that makes it a semisimple monoidal category. In addition, the global cohomology functor is a natural monoidal functor. Then we establish the commutativity constraints using some numerical results of  the affine Hecke algebra. In the course, we will also develop the Mirkovi\'{c}-Vilonen's theory in this setting. 

For simplicity, we will write $\Gr$ for $\Gr_G$ if the group $G$ is clear. Proofs are sketchy or omitted if they are similar to their equal characteristic counterparts.

\subsection{The Satake category $\Sat_G$}
In this subsection, we define the Satake category $\Sat_G$ as a monoidal category.
\subsubsection{}

Recall that $\Gr$ can be written as an inductive limit of $L^+G$-invariant closed subsets $\Gr_{\leq \mu}$, which are perfectly proper, and that the action of $L^+G$ on $\Gr_{\leq \mu}$ factors through some $L^hG$ which is perfectly of finite type. Therefore, it makes sense to talk about the category of $L^+G$-equivariant perverse sheaves on $\Gr_{\leq \mu}$ (see \S \ref{equivariant category}), denoted by $\on{P}_{L^+G}(\Gr_{\leq \mu})$. Then we denote by 
$$\on{P}_{L^+G}(\Gr_G)=\underrightarrow\lim \on{P}_{L^+G}(\Gr_{\leq \mu})$$ the category of $L^+G$-equivariant perverse sheaves on $\Gr_G$. We denote by $\IC_\mu$ the intersection cohomology sheaf on $\Gr_{\leq \mu}$. Then
$\IC_\mu|_{\Gr_\mu}=\Ql[(2\rho,\mu)]$, and its restriction to each stratum $\Gr_\la$ is constant. As $$\Gr_\mu=L^+G/(L^+G\cap \varpi^{\mu} L^+G\varpi^{-\mu})$$ and  $L^+G\cap \varpi^{\mu} L^+G\varpi^{-\mu}$ is connected, the irreducible objects of $\on{P}_{L^+G}(\Gr_G)$ are exactly these $\on{IC}_\mu$'s.

\begin{lem}\label{semisimplicity}
The category $\on{P}_{L^+G}(\Gr_G)$ is semisimple.
\end{lem}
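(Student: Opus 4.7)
The plan is to deduce semisimplicity from the vanishing of $\on{Ext}^1(\IC_\mu,\IC_\la)$ for all pairs of dominant coweights $\mu,\la$. First, the parity map $p:\Gr_G\to\bZ/2$ of \eqref{parity map} partitions $\Gr_G$ into two clopen components, so any such Ext between sheaves supported on distinct components vanishes trivially. One may therefore assume $\mu$ and $\la$ lie in the same component, and by Lemma \ref{parity} every Schubert stratum $\Gr_\nu$ meeting this component then has $(2\rho,\nu)$ of a single fixed parity.

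Next I would deploy a Demazure-type resolution $\pi:D_{\tilde w}\to S_w$ as in \eqref{DR}, pulled back through $\Fl\to\Gr_G$ if needed, to produce a surjective morphism $D\to\Gr_{\leq\mu}$ from a perfectly smooth, perfectly proper iterated $\bP^{1,\pf}$-bundle $D$. In particular the $\ell$-adic cohomology of $D$ is concentrated in even degrees, so $\pi_{*}\Ql[\dim D]$ is pure.

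The BBD decomposition theorem --- available here because $\ell$-adic \'{e}tale cohomology is insensitive to perfection (Appendix \ref{generality of perf}), allowing passage to a finite-type algebraic-space model --- then yields
\begin{equation*}
\pi_{*}\Ql[\dim D]\;\simeq\;\bigoplus_{\nu,i}\IC_\nu[-i]^{\oplus m_{\nu,i}}.
\end{equation*}
The even-degree concentration of $\on{H}^*(D)$ together with Lemma \ref{parity} forces every $i$ to be even, and a sharper parity count comparing $\dim D$ with $(2\rho,\nu)$ pins each $i$ to zero. The pushforward is then a semisimple perverse sheaf, and since every $\IC_\la$ lying in the given parity component appears as a direct summand of such a pushforward for a suitable $\tilde w$, any extension of simples in $\on{P}_{L^+G}(\Gr_G)$ must split.

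The principal obstacle is justifying the decomposition theorem and the six-functor formalism in the perfect-algebraic-space setting. Once Appendix \ref{generality of perf} identifies the relevant $\ell$-adic constructible derived categories with those of an underlying finite-type algebraic-space model, BBD applies verbatim and the parity bookkeeping is formally identical to the equal characteristic argument of Lusztig and Mirkovi\'{c}--Vilonen.
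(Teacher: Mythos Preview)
Your approach has the right ingredients but contains two genuine gaps. First, the claim that ``a sharper parity count\ldots pins each $i$ to zero'' is unjustified: restricting a summand $\IC_\nu[-i]$ to $\Gr_\nu$ and using the even cohomology of the fibers gives $-(2\rho,\nu)+i\equiv -\dim D\pmod 2$, which together with Lemma~\ref{parity} yields only $i\in 2\bZ$, not $i=0$. Pinning $i=0$ would amount to semismallness of $\pi$, which your parity count does not supply and which fails for Demazure-type maps in general. Second, and more fundamentally, even if $\pi_*\Ql[\dim D]$ were perverse and hence semisimple, this would not force extensions in $\on{P}_{L^+G}(\Gr_G)$ to split: knowing that $\IC_\mu$ and $\IC_\la$ are both summands of some fixed semisimple object says nothing about $\Hom(\IC_\mu,\IC_\la[1])$, since an arbitrary extension $0\to\IC_\la\to E\to\IC_\mu\to 0$ has no reason to embed in that pushforward. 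The decomposition theorem constrains the structure of $\pi_*\Ql[\dim D]$, not the Ext groups among its simple constituents.

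The paper's argument (after \cite{Lu} and \cite[Appendix]{Ga}) uses the resolution for a different purpose. The affine paving of the fibers of $\pi_{\tilde w}$ shows that the \emph{stalks} $i_\la^*\IC_\mu$ lie in degrees of a fixed parity (the decomposition theorem enters only here, to pass this parity from $\pi_{\tilde w,*}\Ql[\dim D]$ to its summand $\IC_\mu$). This stalk parity, combined with Lemma~\ref{parity} and the fact that each stratum $\Gr_\la$ has only even cohomology, then feeds a d\'evissage over the Schubert stratification that computes $\Hom(\IC_\mu,\IC_\la[1])=0$ directly. That homological step is precisely what your sketch omits.
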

\begin{proof}The proof is literally the same as the equal  characteristic situation (see \cite{Lu} and \cite[Proposition 1]{Ga} for details): The existence of the ``Demazure resolution" (see \eqref{DR}) whose fibers have pavings by (perfect) affine spaces implies the parity property of the stalk cohomology of $\IC_\mu$s. Together with Lemma \ref{parity}, one concludes that there is no extension between two irreducible objects.
\end{proof}

\subsubsection{}
We refer to \S \ref{torsor} for the definition of the twisted product, which will also be called the convolution product in the current setting.
Note that there are the $L^+G$-torsor $LG\to \Gr$ and the $L^+G$-space $\Gr$. Then one can form the convolution affine Grassmannian 
$$\Gr\tilde\times \Gr:=LG\times^{L^+G}\Gr.$$ As in the equal  characteristic situation (e.g. \cite{MV}), one can interpret $\Gr\tilde\times\Gr$ as
\[\Gr\tilde\times\Gr(R)=\left\{(\mE_1,\mE_2,\beta_1,\beta_2)\ \left|\ \begin{split}&\mE_1,\mE_2 \mbox{ are } G\mbox{-torsors on } D_{F,R}, \\
&  \beta_1:\mE_1|_{D_{F,R}^*}\simeq \mE_0|_{D_{F,R}^*}, \beta_2:\mE_2|_{D_{F,R}^*}\simeq  \mE_1|_{D_{F,R}^*}\end{split}\right.\right\}.\]

Note that there is the convolution map
$$m:\Gr\tilde\times\Gr\to \Gr,\quad (\mE_1,\mE_2,\beta_1,\beta_2)\mapsto (\mE_2, \beta_1\beta_2)$$
and the natural projection
$$\pr_1:\Gr\tilde\times\Gr\to \Gr, \quad (\mE_1,\mE_2,\beta_1,\beta_2)\mapsto (\mE_1,\beta_1),$$
which induces $(\pr_1,m):\Gr\tilde\times\Gr\simeq \Gr\times \Gr$. In particular, the convolution Grassmannian is representable as an ind-perfect algebraic space, ind-perfectly proper. Given $\mu_1,\mu_2\in \xcoch^+$ of $G$, one can form the convolution product of $\Gr_{\leq \mu_1}$ and $\Gr_{\leq \mu_2}$,
\[\Gr_{\leq \mu_1}\tilde{\times}\Gr_{\leq \mu_2}=\{(\mE_1,\mE_2,\beta_1,\beta_2)\in\Gr\tilde\times\Gr\mid \inv(\beta_1)\leq \mu_1, \inv(\beta_2)\leq \mu_2.\},\]
which is closed in $\Gr\tilde\times\Gr$ and therefore is representable. Similarly, one can form the $n$-fold convolution Grassmannian $\Gr\tilde\times\cdots\tilde\times\Gr$, classifying $\{(\mE_i,\beta_i), \ i=1,\ldots,n\}$ where $\mE_i$ is a $G$-torsor on $D_{F,R}$ and $\beta_i: \mE_{i}|_{D_{F,R}^*}\simeq \mE_{i-1}|_{D_{F,R}^*}$ is an isomorphism. By sending $\{(\mE_i,\beta_i), \ i=1,\ldots,n\}$ to $\beta_1\cdots\beta_i:\mE_i|_{D_{F,R}^*}\simeq \mE_0|_{D_{F,R}^*}$, we obtain a map $m_i:\Gr\tilde\times\cdots\tilde\times\Gr\to \Gr$. They together induce an isomorphism
\begin{equation}\label{Grn}
(m_1,\ldots,m_n): \Gr\tilde\times\cdots\tilde\times\Gr\simeq \Gr^n.
\end{equation}
We call $m_n=m: \Gr\tilde\times\cdots\tilde\times\Gr\to \Gr$ the convolution map.
Given a sequence of dominant coweights $\mmu=(\mu_1,\ldots,\mu_n)$  of $G$, one can define the closed subspace $\Gr_{\leq\mmu}=\Gr_{\leq \mu_1}\tilde\times\cdots\tilde\times\Gr_{\leq \mu_n}$ which classifies those $\{(\mE_i,\beta_i), \ i=1,\ldots,n\}$ satisfying $\inv(\beta_i)\leq \mu_i$. Let $|\mmu|=\sum \mu_i$, then the convolution map $m$ induces
\begin{equation}\label{conv prod}
m: \Gr_{\leq \mmu}\to \Gr_{\leq |\mmu|}, \quad (\mE_\bullet,\beta_\bullet)\mapsto (\mE_n,\beta_1\cdots\beta_n).
\end{equation}
There are variants of the above construction. Namely, one can replace $\Gr_{\leq \mu_i}$ by $\Gr_{\mu_i}$ and define $\Gr_{\mu_\bullet}=\Gr_{\mu_1}\tilde\times\cdots\tilde\times\Gr_{\mu_n}$. In particular, 
\begin{equation}\label{str on conv}
\Gr_{\leq \mu_\bullet}=\bigcup_{\mu'_\bullet\leq \mu_\bullet}\Gr_{\mu'_\bullet}
\end{equation}
form a stratification of $\Gr_{\leq \mu_\bullet}$, where $\mu'_\bullet\leq \mu_\bullet$ means $\mu'_i\leq \mu_i$ for each $i$.

Now, as in the equal  characteristic situation, one can define a monoidal structure on $\on{P}_{L^+G}(\Gr)$, using Lusztig's convolution of sheaves (e.g. see \cite[\S 4]{MV} for more details). For $\mA_1,\mA_2\in \on{P}_{L^+G}(\Gr)$, we denote by $\mA_1\tilde\boxtimes\mA_2$ the ``external twisted product" of $\mA_1$ and $\mA_2$ on $\Gr\tilde\times\Gr$, i.e., the pullback of $\mA_1\tilde\boxtimes\mA_2$ along $LG\times \Gr\to \Gr\tilde\times\Gr$ is equal to the pullback the external product $\mA_1\boxtimes\mA_2$ along $LG\times \Gr\to \Gr\times \Gr$. Define
$$\mA_1\star\mA_2:=m_!(\mA_1\tilde\boxtimes\mA_2)$$ to be their convolution product, which is an $L^+G$-equivariant $\ell$-adic complex on $\Gr$. Similarly, one can define the $n$-fold convolution $\mA_1\star\cdots\star\mA_n=m_!(\mA_1\tilde\boxtimes\cdots\tilde\boxtimes\mA_n)$.

\begin{prop}\label{conv prod}
The convolution $\mA_1\star\mA_2$ is perverse.
\end{prop}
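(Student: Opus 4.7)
The strategy is the standard one from the equal characteristic setting: reduce to IC sheaves and invoke semismallness of the convolution map. By the semisimplicity of $\on{P}_{L^+G}(\Gr)$ established in Lemma \ref{semisimplicity}, both $\mA_1$ and $\mA_2$ are direct sums of $\IC_\mu$'s, so it suffices to prove that $\IC_{\mu_1}\star\IC_{\mu_2}$ is perverse for each pair of dominant coweights $\mu_1,\mu_2\in\xcoch^+$.

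First I would check that the external twisted product $\IC_{\mu_1}\tilde\boxtimes\IC_{\mu_2}$ is a perverse sheaf on $\Gr_{\leq\mu_1}\tilde\times\Gr_{\leq\mu_2}$. Indeed, the convolution space is built from the $L^+G$-torsor $LG\to\Gr$ (see \S\ref{torsor}), which is \'etale-locally trivial, so \'etale-locally on $\Gr_{\leq\mu_1}$ the convolution space becomes the product $\Gr_{\leq\mu_1}\times\Gr_{\leq\mu_2}$, and the twisted product is identified with the ordinary external product $\IC_{\mu_1}\boxtimes\IC_{\mu_2}$. Since external products of perverse sheaves are perverse and both factors are irreducible, $\IC_{\mu_1}\tilde\boxtimes\IC_{\mu_2}$ is in fact the intersection cohomology sheaf of the irreducible perfectly proper space $\Gr_{\leq\mu_1}\tilde\times\Gr_{\leq\mu_2}$.

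The key step, and the main obstacle, is to establish that the convolution map
\[m:\Gr_{\leq\mu_1}\tilde\times\Gr_{\leq\mu_2}\longrightarrow\Gr_{\leq\mu_1+\mu_2}\]
is semismall: for every dominant $\nu\leq\mu_1+\mu_2$ and every $z\in\Gr_\nu$,
\[\dim m^{-1}(z)\ \leq\ \langle\rho,\mu_1+\mu_2-\nu\rangle\ =\ \tfrac{1}{2}\bigl(\dim\Gr_{\leq\mu_1+\mu_2}-\dim\Gr_\nu\bigr).\]
By $L^+G$-equivariance I can take $z=\varpi^\nu$. The fiber parametrizes triples $(\mE_1,\beta_1,\beta_2)$ with $\inv(\beta_i)\leq\mu_i$ and $\beta_1\beta_2$ of invariant $\nu$; its dimension can be estimated by stratifying according to the invariant of $\beta_1$ and the position of $\mE_1$ relative to the semi-infinite orbits. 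The combinatorial bookkeeping is exactly the Mirkovi\'c--Vilonen argument of \cite[\S 4]{MV}, using only that $\Gr_\mu\simeq L^+G/(L^+G\cap\varpi^\mu L^+G\varpi^{-\mu})$ has dimension $\langle 2\rho,\mu\rangle$. The nontrivial point is verifying that the dimension theory of perfect algebraic spaces developed in the appendix is robust enough to run the same argument; once that is confirmed, the estimate transfers verbatim.

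Given semismallness, $m$ is proper as a morphism between perfectly proper spaces (\S\ref{gen aff flag}), so the general perverse pushforward theorem for proper semismall maps yields that $m_!(\IC_{\mu_1}\tilde\boxtimes\IC_{\mu_2})=\IC_{\mu_1}\star\IC_{\mu_2}$ is perverse, and in fact a direct sum of shifted $\IC_\nu$'s supported on the strata with $\nu\leq\mu_1+\mu_2$. Combined with the initial reduction, this proves the proposition for arbitrary $\mA_1,\mA_2\in\on{P}_{L^+G}(\Gr)$.
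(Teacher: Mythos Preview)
Your overall strategy—reduce to $\IC_{\mu_1}\star\IC_{\mu_2}$ and show the convolution map is semismall—is the right shape, but the key step contains a genuine gap. The claim that the MV fiber estimate ``uses only that $\Gr_\mu$ has dimension $\langle 2\rho,\mu\rangle$'' is not correct: the semismallness argument in \cite[\S 4]{MV} rests on the bound $\dim(S_\nu\cap\Gr_{\leq\mu})\leq\langle\rho,\mu+\nu\rangle$ for the semi-infinite orbits (their Theorem~3.2), and that is precisely the nontrivial input. In equal characteristic MV obtain it via hyperbolic localization (their Theorem~3.5), but the paper explicitly notes after Proposition~\ref{coh vanishing} that this proof does not transfer to mixed characteristic. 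So the ``dimension theory of perfect algebraic spaces'' is not the obstacle; the missing ingredient is the MV cycle dimension estimate itself, and your sketch does not supply it.

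The paper's route is therefore different, and in fact the logical flow is reversed from yours. Following Ng\^o--Polo \cite{NP}, the paper first establishes the cohomology vanishing and MV cycle dimensions directly for minuscule and quasi-minuscule $\mu$ by explicit geometry (Lemma~\ref{min and qmin}), then deduces semismallness of $\Gr_{\leq\mu_\bullet}\to\Gr$ when each $\mu_i$ is minuscule or quasi-minuscule (Corollary~\ref{Semismall conv}), hence perversity of those particular convolutions. Since every $\IC_\mu$ is a direct summand of some $\IC_{\mu_1}\star\cdots\star\IC_{\mu_m}$ with $\mu_i\in M$ (Lemma~\ref{idempotent completion}), perversity of $\mA_1\star\mA_2$ in general follows. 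Semismallness for arbitrary $\mu_1,\mu_2$ (Proposition~\ref{semismall}) is then deduced \emph{from} perversity, not the other way around. The paper also mentions an alternative via the numerics of affine Hecke algebras \cite{Lu,Gi1}; either route avoids the circularity your direct argument would run into.
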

This can be proved using the numerical results of the affine Hecke algebra \cite{Lu} (see \cite{Gi1} for details).
We will outline a direct proof in the next subsection (see \S \ref{qm:conclusion}) following \cite[\S 9]{NP}, after we introduce the semi-infinite orbits.

There is an equivalent formulation of this proposition.
\begin{prop}\label{semismall}
The convolution product $m:\Gr_{\leq\mmu}\to \Gr$ is semismall. I.e., the dimension of $\Gr_{\leq \mmu}^{\la}:=m^{-1}(\Gr_{\leq \la})$ is at most $(\rho,|\mmu|+\la)$.
\end{prop}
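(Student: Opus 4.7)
The plan is to adapt the standard equal-characteristic semismallness argument (cf.\ \cite{MV}, \cite{NP}) to the perfect mixed-characteristic setting developed above.

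First, I would verify $\dim \Gr_{\leq \mmu} = (2\rho, |\mmu|)$. Viewing $\Gr_{\leq \mmu}$ as the iterated twisted product, the projection $\Gr_{\leq (\mu_1, \ldots, \mu_i)} \to \Gr_{\leq (\mu_1, \ldots, \mu_{i-1})}$ is locally trivial with typical fiber $\Gr_{\leq \mu_i}$, which is perfectly proper of pure dimension $(2\rho, \mu_i)$ by the Schubert dimension formula stated just above; iterating gives the total dimension. Since $m$ is $L^+G$-equivariant with respect to the natural action on the first factor $\mE_0$, and $L^+G$ acts transitively on $\Gr_\la(k)$, the fiber dimension $\dim m^{-1}(x)$ for $x\in\Gr_\la$ is independent of $x$. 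Combined with $\dim \Gr_{\leq \la} = (2\rho, \la)$, the statement is equivalent to the fiber estimate
\[
\dim m^{-1}(p^\la) \leq (\rho, |\mmu| - \la) \qquad \text{for every dominant } \la \leq |\mmu|.
\]

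To bound the fiber, I would stratify it via semi-infinite orbits. Fix an opposite Borel $B^- = T U^-$ and write $T_\nu := L^+ U^- \cdot p^\nu \subset \Gr$ for the semi-infinite orbit through $p^\nu$; these decompose $\Gr(k)$ by the Iwasawa decomposition. The fiber $m^{-1}(p^\la)$ parametrizes chains $\mE_0 \leftarrow \mE_1 \leftarrow \cdots \leftarrow \mE_n = \La_\la$ with $\inv(\mE_{i-1} \leftarrow \mE_i) \leq \mu_i$. Stratify by the unique $\nu_i \in \xcoch$ with $\mE_i \in T_{\nu_i}$; the endpoints force $\nu_0 = 0$ and $\nu_n = \la$. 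For each such sequence $\nu_\bullet$, the corresponding stratum is a tower of fibrations whose $i$-th fiber is (after translation by $\mE_{i-1}$) contained in $T_{\nu_i - \nu_{i-1}} \cap \Gr_{\leq \mu_i}$. The Mirković--Vilonen dimension bound $\dim(T_\nu \cap \Gr_{\leq \mu}) \leq (\rho, \mu - \nu)$---developed in parallel in this section as part of the mixed-characteristic MV theory---then gives that each stratum has dimension at most
\[
\sum_{i=1}^n (\rho, \mu_i - (\nu_i - \nu_{i-1})) = (\rho, |\mmu| - \la),
\]
which is independent of $\nu_\bullet$, yielding the desired fiber bound.

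The main obstacle is the Mirković--Vilonen dimension estimate for $T_\nu \cap \Gr_{\leq \mu}$ in the perfect setting. In equal characteristic this follows from an explicit affine paving; in the perfect mixed-characteristic setting, the semi-infinite orbits $T_\nu$ are well-defined as ind perfect subspaces of $\Gr$, and I would expect the analogous bound to follow either by pulling back along the Demazure-type resolutions $\pi_{\tilde w}$ of \S\ref{Schubert geom}, or by a direct dimension count on $k$-points using the Iwasawa decomposition of $LG(k)$ and the Cartan decomposition. Once this MV input is in hand, the stratification above yields semismallness.
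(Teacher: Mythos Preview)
Your approach is correct but genuinely different from the paper's. The paper does \emph{not} argue geometrically via semi-infinite orbits; instead it deduces semismallness from Proposition~\ref{conv prod} (perversity of $\IC_{\mu_1}\star\cdots\star\IC_{\mu_n}$) by a stalk-cohomology argument: writing $\IC_{\mmu}=\bigoplus_\la V_{\mmu}^\la\otimes\IC_\la$, the top compactly supported cohomology of the fiber $\Gr_{\mmu}\cap m^{-1}(\varpi^\la)$ appears as stalk cohomology of $\IC_{\mmu}$ at $\varpi^\la$, and the perversity bound on stalks forces $\dim m^{-1}(\varpi^\la)\leq(\rho,|\mmu|-\la)$. (Perversity itself is established later, in \S\ref{qm:conclusion}, by your NP-style argument but only for $\mmu\subset M$, then bootstrapped via Lemma~\ref{idempotent completion}.) What the paper's route buys is the explicit equivalence of Propositions~\ref{conv prod} and~\ref{semismall}, together with the identification in Remark~\ref{Satake fiber}; what your route buys is directness, bypassing the idempotent-completion step.

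Two comments on your execution. First, your ``tower of fibrations'' description of the $\nu_\bullet$-stratum in $m^{-1}(p^\la)$ is slightly off: since $\mE_n=\La_\la$ is fixed, the last step contributes a single point, not all of $T_{\la-\nu_{n-1}}\cap\Gr_{\leq\mu_n}$. The cleaner statement (and the one the paper records as~\eqref{factorizationII}, transposed to $T_\nu$) is that the whole stratum embeds in the twisted product $(T_{\nu_1}\cap\Gr_{\leq\mu_1})\tilde\times\cdots\tilde\times(T_{\nu_n-\nu_{n-1}}\cap\Gr_{\leq\mu_n})$, so its dimension is bounded by $\sum_i(\rho,\mu_i-(\nu_i-\nu_{i-1}))=(\rho,|\mmu|-\la)$; equivalently, just observe $m^{-1}(p^\la)\subset m^{-1}(T_\la)\cap\Gr_{\leq\mmu}$ and bound the latter. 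Second, regarding circularity: the MV upper bound $\dim(T_\nu\cap\Gr_{\leq\mu})\leq(\rho,\mu-\nu)$ that you need for general $\mu$ is exactly the point-counting part of Corollary~\ref{dim MV cycle} (via~\eqref{asym} and the Lusztig--Kato formula), which is independent of Propositions~\ref{conv prod} and~\ref{semismall}, so your argument is not circular.
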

\begin{proof}
The direction from Proposition \ref{semismall} to Proposition \ref{conv prod} is \cite[Lemma 4.3]{MV}. The inverse direction is mentioned in \cite[Remark 4.5]{MV}. As we will make use of this statement in Proposition \ref{unramified to split}, we include a sketch of the proof.

Let $d=\dim \Gr_{\mmu}\cap m^{-1}(\varpi^\la)$.
By Lemma \ref{semisimplicity} and Proposition \ref{conv prod}, we can write
\[\IC_{\mmu}:=\IC_{\mu_1}\star\cdots\star\IC_{\mu_n}=\bigoplus_{\la} V_{\mmu}^\la\otimes \IC_\la,\]
where $V_{\mmu}^\la=\Hom(\IC_\la,\IC_{\mmu})$. The spectral sequence induced by the stratification \eqref{str on conv} implies that the degree $2d- (2\rho,|\mmu|)$ stalk cohomology of the left hand side at $\varpi^\la$ is given by
$$\on{H}^{2d}_c(\Gr_{\mmu}\cap m^{-1}(\varpi^\la),\Ql).$$
The perversity of the right hand side then implies that $2d- (2\rho,|\mmu|)\leq -(2\rho,\la)$. This implies that $d\leq (\rho,|\mmu|-\la)$. By induction on $\la$, we conclude that
\[\dim \Gr_{\leq\mmu}^\la\leq d+\dim \Gr_\la=(\rho,|\mmu|+\la).\]
\end{proof}
\begin{rmk}\label{Satake fiber}
This argument also gives a canonical isomorphism
\[V_{\mmu}^\la=\on{H}_c^{(2\rho,|\mmu|-\la)}(\Gr_{\mmu}\cap m^{-1}(\varpi^\la),\Ql).\]
Together with \S~\ref{cycle class}, we see that there is a canonical basis of $V_{\mmu}^\la$ given by the set $\bB_{\mmu}^\la$ of irreducible components of $\Gr_{\mmu}\cap m^{-1}(\varpi^\la)$ of dimension $(\rho,|\mmu|-\la)$.
\end{rmk}

By identifying $(\mA_1\star\mA_2)\star\mA_3$ and $\mA_1\star(\mA_2\star\mA_3)$ with $\mA_1\star\mA_2\star\mA_3$, one equips $\on{P}_{L^+G}(\Gr)$ with a natural monoidal structure. The monoidal category $(\on{P}_{L^+G}(\Gr),\star)$ is sometimes also denoted by $\Sat_G$ for simplicity.

\subsection{Semi-infinite orbits}\label{semiinfinite geometry}
In this subsection, we discuss the geometry of semi-infinite orbits and establish the corresponding Mirkovi\'c-Vilonen theory in our setting. We will use the notations from \S~\ref{not}. 
\subsubsection{}
The affine Grassmannian $\Gr_U$ of $U$ is clearly represented by an inductive limit of perfect affine spaces. Since $U\backslash G$ is quasi-affine, Proposition \ref{gen G} implies that $\Gr_U\subset \Gr_G$ is a locally closed embedding. Write $S_0=\Gr_U\subset \Gr_G$. For $\la\in\xcoch$, let $S_\la= LU\varpi^\la$ be the orbit through $\varpi^\la$. Then $S_\la= \varpi^\la \Gr_U$ and therefore is locally closed in $\Gr_G$. 
By the Iwasawa decomposition,
\[\Gr_G=\bigcup_{\la\in \xcoch} S_\la.\]
As in the equal characteristic situation, one can also regard semi-infinite orbits as the attractor locus of certain torus-action on $\Gr_G$. Namely, let $2\rho^\vee$ denote the sum of positive coroots of $G$ (with respect to $B$), regarded as a cocharacter of $G$. Note that the projection $L_p^+\bG_m\to \bG_m$ admits a unique section $\bG_m\to L_p^+\bG_m$ (as the maximal torus of $L^+_p\bG_m$). Then we have a cocharacter
\begin{equation*}\label{2rhoGm}
\bG^\pf_m\to L^+\bG_m\stackrel{L^+(2\rho^\vee)}{\to} L^+T\subset L^+G.
\end{equation*}
The action of $L^+G$ on $\Gr$ induces a $\bG^\pf_m$-action on $\Gr_G$. The set of fixed points are $\{\varpi^\la\mid \la\in\xcoch\}$ and the action contracts $S_\la$ to $\varpi^\la$.
On the other hand, let $B^-\subset G$ be the Borel opposite to $B$ with $U^-$ its unipotent radical, and let $\{S^-_\la= LU^-\varpi^\la,\ \la\in\xcoch\}$ be the opposite semi-infinite orbits. These orbits can be regarded the repeller locus of the above $\bG_m^\pf$-action on $\Gr_G$.

As in the equal  characteristic situation, we have the following closure relation for semi-infinite orbits.
\begin{prop}\label{closure semi-infinite1}
The closure $\bar S_\la= \cup_{\la'\leq \la} S_{\la'}$. More precisely $\overline{S_\la\cap\Gr_{\leq \mu}}=\cup_{\la'\leq \la} S_{\la'}\cap\Gr_{\leq \mu}$. Similarly $\bar S^-_\la=\cup_{\la'\geq\la}S^-_{\la'}$.
\end{prop}
\begin{proof}It is enough to prove the statement for $S_\la$.
The argument of \cite[Proposition 3.1]{MV} does not apply directly because in mix characteristic one cannot attach to an affine root a map $\SL_2\to LG$ and (currently) there is no Kac-Moody theory available. However, the alternative argument given in  \cite[Proposition 5.3.6]{Z16} applies to the current situation.
\end{proof}

Note that the restriction of the $L^+G$-torsor $LG\to \Gr_G$ over $S_{\la}$ has a canonical reduction as an $L^+U$-torsor given by $LU\to S_{\la},\ n\mapsto \varpi^{\la}n \mod L^+G$. Then it makes sense to talk about the twisted product of these semi-infinite orbits.
Let $\nu_\bullet$ be a sequence of (not necessarily dominant) coweights of $G$.  One can define
\[S_{\nu_\bullet}:=S_{\nu_1}\tilde\times S_{\nu_2}\tilde\times\cdots\tilde\times S_{\nu_n}\subset \Gr\tilde\times \Gr\tilde\times\cdots\tilde\times\Gr.\]

The formula
\begin{equation}\label{fact for}
(\varpi^{\nu_1}x_1,\ldots,\varpi^{\nu_n}x_n)\mapsto (\varpi^{\nu_1}x_1,\varpi^{\nu_1+\nu_2}(\varpi^{-\nu_2}x_1\varpi^{\nu_2})x_2,\ldots),
\end{equation}
defines an isomorphism 
\begin{equation}\label{factorizationI}
m: S_{\nu_\bullet}\simeq S_{\nu_1}\times S_{\nu_1+\nu_2}\times\cdots\times S_{|\nu_\bullet|},
\end{equation}
as locally closed subsets of $\Gr\tilde\times \Gr\tilde\times\cdots\tilde\times\Gr\simeq \Gr^n$ (see \eqref{Grn} for this isomorphism).

Note that each $S_{\nu_i}\cap\Gr_{\leq \mu_i}$ is $L^+U$-invariant.
So it also makes sense to define the twisted product of these $L^+U$-spaces $S_{\nu_i}\cap\Gr_{\leq \mu_i}$. In addition, there is the canonical isomorphism
\begin{equation}\label{factorizationII}
(S_{\nu_1}\cap\Gr_{\leq \mu_1})\tilde\times\cdots\tilde\times (S_{\nu_n}\cap \Gr_{\leq \mu_n})\simeq S_{\nu_\bullet}\cap \Gr_{\leq\mmu}.
\end{equation}

\begin{rmk}\label{non-split conv}
(i) Note that $S_\nu\cap \Gr_{\leq \mu}$ is closed in $S_\nu$ and therefore is a scheme.
(ii) Unlike \cite[Lemma 9.1]{NP}, it is not clear whether the twisted product on the left hand side of \eqref{factorizationII} splits as a product.
\end{rmk}

\medskip

The Mirkovi\'c-Vilonen theory exists in our situation. The key statement is the following.

\begin{prop}\label{coh vanishing}
For any $\mA\in\on{P}_{L^+G}(\Gr_G)$, $\on{H}_c^i(S_\la,\mA)=0$ unless $i=(2\rho,\la)$.
\end{prop}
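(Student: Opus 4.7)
By the semisimplicity statement of Lemma~\ref{semisimplicity}, it suffices to establish the claim when $\mA = \IC_\mu$ for some dominant coweight $\mu$. The plan has three steps: reduce to the simple IC sheaves; prove the dimension estimate
\[
\dim(S_\la \cap \Gr_{\leq \mu}) \leq (\rho, \mu + \la),
\]
and the analogous estimate $\dim(T_\la \cap \Gr_{\leq \mu}) \leq (\rho, \mu - \la)$ for the opposite semi-infinite orbit $T_\la = L U^- \varpi^\la$; and then deduce concentration of $\on{H}^*_c(S_\la, \IC_\mu)$ in the single degree $(2\rho, \la)$ by a hyperbolic-localization argument \`a la Braden/Mirkovi\'c--Vilonen.

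First I would set up the dimension bound. Choose a regular dominant cocharacter, which we may take to be $2\rho^\vee : \bG_m \to T$; this gives a $\bG_m$-action on $\Gr_G$ whose fixed locus is precisely $\{\varpi^\la\}_{\la \in \xcoch}$, whose attractor for $\varpi^\la$ is $S_\la$, and whose repeller is $T_\la$. Because $S_\nu$ and $T_\nu$ are locally closed $LU$- (resp.\ $LU^-$-) orbits, their intersections with the perfectly proper $L^+G$-invariant closed subspaces $\Gr_{\leq\mu}$ inherit an algebro-geometric structure. The bound itself can be extracted from the ``Demazure resolution'' machinery of \S\ref{gen aff flag}: factor a chosen $w$-reduced word for $\varpi^\mu$ and use the iterated $\bP^{1,\pf}$-bundle structure of $D_{\tilde w}$ to cover $\Gr_{\leq \mu}$, combining with the factorization isomorphism \eqref{factorizationI}--\eqref{factorizationII} to control $\dim(S_\la \cap \Gr_{\leq \mu})$ by a weight count on $U$; this is exactly the Iwahori-type computation in the equal-characteristic MV, and it transports to the perfect setting because all constructions used are $L^+U$- and $L^+T$-equivariant and commute with passing to perfection.

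With both dimension bounds in hand, I would conclude by hyperbolic localization. The $\bG_m$-action defined above, together with the fixed point $i_\la : \{\varpi^\la\} \hookrightarrow \Gr_G$ and the attracting/repelling pair $(S_\la, T_\la)$, give a canonical map
\[
i_\la^! \mA \longrightarrow \on{R}\Gamma_c(S_\la, \mA)[\, \text{shift}\,] \longleftarrow i_\la^* \mA,
\]
which is an isomorphism after applying Braden's theorem to each $\Gr_{\leq\mu}$ (torus actions on perfect schemes still possess well-defined attracting/repelling decompositions, and Braden's statement is a combinatorial consequence of the geometry of the contraction, so it survives perfection). For $\mA = \IC_\mu$, the fact that $\IC_\mu|_{\Gr_\la}$ is concentrated in perverse degree $0$, combined with the dimension estimates on $S_\la \cap \Gr_{\leq\mu}$ and $T_\la \cap \Gr_{\leq\mu}$, forces $i_\la^* \IC_\mu$ and $i_\la^! \IC_\mu$ to sit in complementary cohomological degrees whose intersection consists of the single degree $(2\rho,\la) - (2\rho,\mu)$; after the shift, this says $\on{H}^i_c(S_\la, \IC_\mu) = 0$ for $i \neq (2\rho,\la)$.

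The main obstacle I anticipate is verifying Braden's hyperbolic localization theorem in the perfect setting. The classical proof proceeds via contraction flows and the proper base change/Leray spectral sequence for $\bG_m$-equivariant sheaves; the ingredients (equivariant sheaves, proper base change, constructibility on the perfect analogues of affine attractors $S_\la \cap \Gr_{\leq\mu}$) are available by the appendix on perfect algebraic spaces and \S\ref{equivariant category}, but one must check that the contraction of $\Gr_{\leq \mu}$ onto its fixed locus by $2\rho^\vee$ is well-behaved as a map of perfect algebraic spaces. Once this is secured, the remainder is a bookkeeping exercise with dimensions and perverse degrees that mirrors \cite[\S 3]{MV} essentially verbatim.
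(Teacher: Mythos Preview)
Your approach is the Mirkovi\'c--Vilonen/Braden hyperbolic-localization strategy, and the paper explicitly flags (immediately after stating the proposition) that ``the proof in \cite[Theorem 3.5]{MV} does not work in mixed characteristic.'' The paper instead follows the Ng\^o--Polo route: it first proves the vanishing directly for $\IC_\mu$ with $\mu$ minuscule or quasi-minuscule (Lemma~\ref{min and qmin}), analyzing $S_\la\cap\Gr_{\leq\mu}$ by hand and, in the quasi-minuscule case, constructing a tailored parahoric ``resolution'' $\wGr_{\leq\mu}$ (Lemma~\ref{resolution qm}) since the usual NP resolution is unavailable. It then uses the factorization \eqref{factorizationII} to extend the vanishing to convolution products $\IC_{\mu_1}\star\cdots\star\IC_{\mu_m}$ with each $\mu_i$ (quasi-)minuscule (Corollary~\ref{van coh aux}), and finally invokes the PRV-type Lemma~\ref{idempotent completion} to realize every $\IC_\mu$ as a direct summand of such a convolution. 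The dimension estimate you want to use as input is, in the paper, a \emph{corollary} of the vanishing (Corollary~\ref{dim MV cycle}), proved afterwards by point-counting over finite fields together with the vanishing itself for equidimensionality.

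The genuine gap in your proposal is exactly the one you flag and then wave past: Braden's theorem for $\bG_m$-actions on perfect pfp algebraic spaces. Your claim that it ``is a combinatorial consequence of the geometry of the contraction, so it survives perfection'' is not justified in the paper; the paper's machinery in Appendix~\ref{generality of perf} does not supply attractor/repeller decompositions or the specialization maps Braden's proof uses, and the paper makes no attempt to set this up. (Such results were established only in later work.) A secondary issue: your dimension-bound argument conflates the affine-flag Demazure variety $D_{\tilde w}$ of \eqref{Dem Var} with the convolution Grassmannian appearing in \eqref{factorizationI}--\eqref{factorizationII}; the factorization you cite reduces $\dim(S_\la\cap\Gr_{\leq\mu})$ to knowing $\dim(S_{\nu_i}\cap\Gr_{\leq\mu_i})$ for the pieces, which is only useful once you already control the small-$\mu_i$ case --- and that is precisely the content of the paper's Lemma~\ref{min and qmin}.
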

The proof of this proposition will be sketched in \S~\ref{qm:conclusion}. 
Note that the proof in \cite[Theorem 3.5]{MV} does not work in mixed characteristic. Following \cite{MV}, we define the weight functor
\begin{equation}\label{weight functor F}
\on{CT}_\la: \Sat_G\to \on{Vect}_\Ql, \quad \on{CT}_\la(\mA)=\on{H}_c^{2(\rho,\la)}(S_\la,\mA).
\end{equation}

\begin{cor}\label{dim MV cycle}
The perfect scheme $S_\la\cap \Gr_{\leq \mu}$ is equidimensional, of dimension $(\rho,\la+\mu)$. The number of its irreducible components equals to the dimension of the $\la$-weight space $V_\mu(\la)$ of the irreducible representation $V_\mu$ of $\hat G$ of highest weight $\mu$.
\end{cor}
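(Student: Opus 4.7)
The plan is to derive both statements from Proposition~\ref{coh vanishing} applied to $\mA = \IC_\mu$. Since $\IC_\mu$ is supported on $\Gr_{\leq \mu}$, we have
\[
H^i_c(S_\la \cap \Gr_{\leq \mu}, \IC_\mu) \;=\; H^i_c(S_\la, \IC_\mu) \;=\; 0 \qquad \text{for } i \neq (2\rho,\la).
\]
Everything else comes from unpacking the single nonzero degree $(2\rho,\la)$.

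First I would bound the dimension. Let $d = \dim(S_\la \cap \Gr_{\leq \mu})$. On the open stratum $\Gr_\mu \subset \Gr_{\leq \mu}$ the sheaf $\IC_\mu$ is the constant sheaf placed in cohomological degree $-(2\rho,\mu)$; on lower strata $\Gr_{\mu'}$ with $\mu' < \mu$ the perverse IC-support condition forces $\IC_\mu|_{\Gr_{\mu'}}$ to live in strictly smaller cohomological degrees. An open-closed distinguished triangle then shows that the top compactly supported cohomology of $\IC_\mu$ on $S_\la \cap \Gr_{\leq \mu}$ sits in degree $2d - (2\rho,\mu)$ and is nonzero (top Borel--Moore cohomology of the open stratum). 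The vanishing forces $2d - (2\rho,\mu) = (2\rho,\la)$, i.e.\ $d = (\rho, \la + \mu)$. Equidimensionality follows by induction on $\mu$: for $\mu' < \mu$ the inductive hypothesis gives $\dim(S_\la \cap \Gr_{\leq \mu'}) = (\rho, \la + \mu') < (\rho, \la + \mu)$, so every top-dimensional irreducible component of $S_\la \cap \Gr_{\leq \mu}$ must meet the open stratum $S_\la \cap \Gr_\mu$, and each such component is the closure of a top-dimensional component of $S_\la \cap \Gr_\mu$.

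Second I would count components. By the dimension statement the top compactly supported cohomology is
\[
H^{(2\rho,\la)}_c(S_\la, \IC_\mu) \;=\; H^{2(\rho,\la+\mu)}_c\!\bigl(S_\la \cap \Gr_\mu,\, \Ql\bigr),
\]
which by the cycle-class construction of \S\ref{cycle class} admits a canonical basis indexed by the top-dimensional irreducible components of $S_\la \cap \Gr_{\leq \mu}$. It remains to identify the dimension of this space with $\dim V_\mu(\la)$. Summing over all $\la$, the vanishing yields a decomposition $H^*(\Gr_G, \IC_\mu) = \bigoplus_\la H^{(2\rho,\la)}_c(S_\la, \IC_\mu)$, while the total dimension of the left-hand side equals $\dim V_\mu$ by Lusztig's stalk computation (which only uses the Demazure resolution \eqref{DR} and the parity vanishing, both available here).

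The main obstacle is matching each summand individually with the correct weight space $V_\mu(\la)$, rather than just checking the totals agree. This requires showing that $F^\la := H^{(2\rho,\la)}_c(S_\la, -)$ is an exact functor on $\on{P}_{L^+G}(\Gr_G)$ that intertwines convolution with tensor product, so that it implements a weight functor for $\hat G$. Following the strategy of \cite{MV}, exactness of $F^\la$ is a formal consequence of the vanishing in Proposition~\ref{coh vanishing}, while the tensor compatibility uses the factorization isomorphisms \eqref{factorizationI}--\eqref{factorizationII} for semi-infinite orbits together with the convolution diagram \eqref{conv prod}. Given the explicit basis of $F^\la(\IC_\mu)$ by MV cycles, comparing with the known character of $V_\mu$ (for instance via the $T$-fixed points $\varpi^\la$ and Braden-type hyperbolic localization) pins down $\dim F^\la(\IC_\mu) = \dim V_\mu(\la)$, completing the count.
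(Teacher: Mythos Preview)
Your route differs from the paper's and has a gap at the identification step. The paper does not derive the count from the weight-functor formalism; instead it argues by point-counting over $\bF_q$ (following \cite[Proposition 5.4.2]{GHKR}): one writes $|(S_\la\cap\Gr_\mu)(\bF_q)|=\int_{U(F)}1_{G(\mO)\varpi^\mu G(\mO)}(\varpi^\la u)\,du$, rewrites this via the classical Satake transform and the Lusztig--Kato identity $q^{-(\rho,\mu)}1_{G(\mO)\varpi^\mu G(\mO)}=H_\mu+\sum_{\nu<\mu}P_{\mu\nu}(q^{-1})H_\nu$, and reads off both $\dim(S_\la\cap\Gr_{\leq\mu})=(\rho,\la+\mu)$ and the number of top-dimensional components $=\dim V_\mu(\la)$ from the limit $q\to\infty$. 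Equidimensionality is then deduced separately, combining this dimension bound with Proposition~\ref{coh vanishing} via the argument of \cite[Lemma 2.17.4]{GHKR}.

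Your inductive upper bound $\dim(S_\la\cap\Gr_{\leq\mu})\le(\rho,\la+\mu)$ from the vanishing is correct, and so is the observation that top-dimensional components must meet the open cell. The gap is the identification $\dim H^{(2\rho,\la)}_c(S_\la,\IC_\mu)=\dim V_\mu(\la)$, which you need both for the exact dimension and for the count. Your two proposed inputs are unavailable here. Braden-type hyperbolic localization is precisely the device from \cite{MV} that the paper flags as not carrying over (see the sentence immediately after Proposition~\ref{coh vanishing}); and even granting a $\bG_m$-action via $\bar T$, you would still need an independent computation of the (co)stalk of $\IC_\mu$ at $\varpi^\la$. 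Promoting the $F^\la$ to tensor functors and comparing characters amounts to identifying the Tannakian group of $\Sat_G$ with $\hat G$, which in this paper is the conclusion of \S\ref{GS} and logically depends on the present corollary (through Corollaries~\ref{semi coh} and~\ref{weight functor}), so that route is circular. The paper's point-counting argument sidesteps both problems by invoking only the \emph{classical} Satake isomorphism, which is independent of the geometric one.
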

\begin{proof}
First, we show that $S_\la \cap \Gr_{\leq \mu}$ is of dimension $(\rho,\la+\mu)$, and the number of irreducible components of maximal dimension equals to the dimension of $V_\mu(\la)$. The proof is a special case of \cite[Proposition 5.4.2]{GHKR}: first note that the group $G$ is in fact already defined over the ring of integers of a $p$-adic field so $\Gr$ is in fact defined over some finite field. Then it is enough to show that
\begin{equation}\label{asym}
\lim\limits_{q\to\infty}\frac{|(S_\la \cap \Gr_{\leq \mu})(\bF_q)|}{q^{(\rho,\la+\mu)}}= \dim V_\mu(\la).
\end{equation}
To prove this, one can replace $\Gr_{\leq \mu}$ in the above formula by the open cell $\Gr_{\mu}$. Now we regard $U(F)$ as a locally compact topological group and we normalize the measure on $U(F)$ so that the volume of $U(\mO)$ is one. Then one can express
\begin{equation}\label{count pt}
|(S_\la \cap \Gr_{\mu})(\bF_q)|=\int_{U(F)}1_{G(\mO)\varpi^\mu G(\mO)}(\varpi^\la u)du.
\end{equation}

Recall the Satake isomorphism
$$\on{Sat}:C_c^\infty(G(\mO)\backslash G(F)/G(\mO))\simeq C_c^\infty(T(F)/T(\mO))^W=C[\xcoch(T)]^W$$ is given by $$\on{Sat}(f)(\varpi^\la)=q^{-(\rho,\la)}\int_{U(F)}f(\varpi^\la u)du .$$
Let $H_\mu$ denote the function on $C_c^\infty(G(\mO)\backslash G(F)/G(\mO))$ such that
\begin{equation}\label{KL}
\on{Sat}(H_\mu)(\varpi^\la)=\dim V_\mu(\la).
\end{equation}
The theory of Lusztig-Kato polynomials implies that
\begin{equation}\label{LK}
q^{-(\rho,\mu)}1_{G(\mO)\varpi^\mu G(\mO)}= H_\mu+ \sum_{\nu<\mu}  P_{\mu\nu}(q^{-1})H_{\nu},
\end{equation}
where $P_{\mu\la}(v)$ is some polynomial of $v$ without the constant coefficient.
Combining \eqref{count pt}, \eqref{KL} and \eqref{LK}
\[
\frac{|(S_\la\cap\Gr_{\mu})(\bF_q)|}{q^{(\rho,\la+\mu)}} = \dim V_\mu(\la)+\sum_{\nu<\mu}c_{\mu\nu}(q^{-1}) \dim V_\nu(\la).
\]
As $q\to \infty$, the error term goes to zero and the dominant term becomes $\dim V_\mu(\la)$.

Next, one can follow  \cite[Lemma 2.17.4]{GHKR} to deduce the equidimensionality of $S_\la\cap\Gr_{\leq \mu}$ from the the upper bounds of the dimension of $S_\la\cap\Gr_{\leq \mu}$ and Proposition \ref{coh vanishing}.
\end{proof}

We have another two corollaries of Proposition \ref{coh vanishing}.
Let $\bB_{\mu}(\la)$ denote the set of irreducible components of $S_\la\cap \Gr_{\leq \mu}$. More generally,  let $\bB_{\mmu}(\la)$ denote the set of irreducible components of $m^{-1}(S_\la)\cap \Gr_{\leq\mmu}$. 

\begin{cor}\label{semi coh}There is a canonical isomorphism
$\on{CT}_\la(\IC_\mu)\simeq \Ql[\bB_\mu(\la)]$.
More precisely, the cycle classes of irreducible components of $S_\la\cap\Gr_{\leq \mu}$ form a basis of $\on{H}^i_c(S_\la,\IC_\mu)$.
\end{cor}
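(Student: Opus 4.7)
The plan is to derive everything from Proposition~\ref{coh vanishing} and Corollary~\ref{dim MV cycle}, with only a mild additional input from the defining properties of the intermediate extension.

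First, the vanishing of $\on{H}^i_c(S_\la,\IC_\mu)$ for $i\neq 2(\rho,\la)$ is an immediate specialization of Proposition~\ref{coh vanishing} to $\mA=\IC_\mu$, since $\IC_\mu\in \on{P}_{L^+G}(\Gr_G)$. So the only work is to compute the group in degree $2(\rho,\la)$ and produce the claimed cycle-class basis.

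Next, I would stratify $S_\la\cap\Gr_{\leq\mu}=\bigsqcup_{\mu'\leq\mu}(S_\la\cap\Gr_{\mu'})$ and show that only the open stratum $\mu'=\mu$ contributes to degree $2(\rho,\la)$. Let $j:\Gr_{\mu}\hookrightarrow\Gr_{\leq\mu}$ be the open inclusion and $i$ the complementary closed inclusion; the defining properties of the intermediate extension give $j^*\IC_\mu=\Ql[(2\rho,\mu)]$ on the open stratum, and $\mathcal{H}^k(i^*\IC_\mu)|_{\Gr_{\mu'}}=0$ for $k\geq-(2\rho,\mu')$ whenever $\mu'<\mu$. Combining this with Corollary~\ref{dim MV cycle} (which gives $\dim(S_\la\cap\Gr_{\mu'})=(\rho,\la+\mu')$), the $\on{H}^j_c$ of $\IC_\mu$ restricted to $S_\la\cap\Gr_{\mu'}$ is supported in degrees at most $2(\rho,\la+\mu')-(2\rho,\mu')-1=2(\rho,\la)-1$. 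Running the long exact sequence attached to $j_!j^*\IC_\mu\to\IC_\mu\to i_*i^*\IC_\mu$ in $\on{H}^{2(\rho,\la)}_c$, the boundary group from the closed part vanishes, so
\[
\on{H}^{2(\rho,\la)}_c(S_\la,\IC_\mu)\ \simeq\ \on{H}^{2(\rho,\la)}_c(S_\la\cap\Gr_\mu,\Ql[(2\rho,\mu)])\ =\ \on{H}^{2(\rho,\la+\mu)}_c(S_\la\cap\Gr_\mu,\Ql).
\]

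Finally, I would invoke the standard cycle-class computation recalled in \S\ref{cycle class}: for an equidimensional variety (or perfect algebraic space) $Y$ of dimension $d$, $\on{H}^{2d}_c(Y,\Ql)$ has a basis indexed by top-dimensional irreducible components. By Corollary~\ref{dim MV cycle}, $S_\la\cap\Gr_{\leq\mu}$ is equidimensional of dimension $(\rho,\la+\mu)$; hence no irreducible component can lie in a smaller stratum $S_\la\cap\Gr_{\mu'}$ (which is of strictly smaller dimension $(\rho,\la+\mu')$), so each element of $\bB_\mu(\la)$ meets the open part $S_\la\cap\Gr_\mu$ in an open dense subset and there is a canonical bijection between irreducible components of $S_\la\cap\Gr_{\leq\mu}$ and those of $S_\la\cap\Gr_\mu$ of top dimension. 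Applying the cycle-class fact to $S_\la\cap\Gr_\mu$ yields the asserted basis.

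The only non-formal ingredient is the support-degree inequality for $i^*\IC_\mu$ on smaller strata, which is purely a statement about intermediate extensions and transports from the equal characteristic setting without change once the perverse $t$-structure is in place (cf.\ \S\ref{equivariant category}); no further geometric input beyond Proposition~\ref{coh vanishing} and Corollary~\ref{dim MV cycle} is needed.
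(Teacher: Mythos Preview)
Your overall strategy is sound and close to the argument in \cite[Proposition 3.10]{MV} that the paper invokes, but there is a genuine gap in the step where you pass from the open--closed long exact sequence to an isomorphism. From the triangle $j_!j^*\IC_\mu\to\IC_\mu\to i_*i^*\IC_\mu$ you correctly deduce $\on{H}^{2(\rho,\la)}_c(S_\la\cap Z,\,i^*\IC_\mu)=0$, which yields \emph{surjectivity} of
\[
\on{H}^{2(\rho,\la)}_c\bigl(S_\la\cap\Gr_\mu,\Ql[(2\rho,\mu)]\bigr)\longrightarrow \on{H}^{2(\rho,\la)}_c(S_\la,\IC_\mu).
\]
But your degree bound on the closed part is only $\le 2(\rho,\la)-1$, so $\on{H}^{2(\rho,\la)-1}_c(S_\la\cap Z,\,i^*\IC_\mu)$ may be nonzero, and the long exact sequence does not give injectivity for free. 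In fact, combining it with $\on{H}^{2(\rho,\la)-1}_c(S_\la,\IC_\mu)=0$ (Proposition~\ref{coh vanishing}) produces a short exact sequence
\[
0\to \on{H}^{2(\rho,\la)-1}_c(S_\la\cap Z,\,i^*\IC_\mu)\to \on{H}^{2(\rho,\la)}_c(S_\la\cap\Gr_\mu,\Ql[(2\rho,\mu)])\to \on{H}^{2(\rho,\la)}_c(S_\la,\IC_\mu)\to 0,
\]
so you must still kill the left-hand term.

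There are two clean ways to close this. The first, and this is what the MV argument the paper cites actually does, is to replace your open--closed triangle by the perverse truncation triangle ${}^p\tau_{\le -1}\Ql_{\Gr_{\le\mu}}[(2\rho,\mu)]\to \Ql_{\Gr_{\le\mu}}[(2\rho,\mu)]\to \IC_\mu$. Each perverse cohomology of the first term lies in $\on{P}_{L^+G}(\Gr)$, so Proposition~\ref{coh vanishing} applies directly and forces $\on{H}^i_c(S_\la,{}^p\tau_{\le -1}\Ql[(2\rho,\mu)])=0$ for $i\ge 2(\rho,\la)$; the long exact sequence then gives a canonical isomorphism
\[
\on{H}^{2(\rho,\la+\mu)}_c\bigl(S_\la\cap\Gr_{\le\mu},\Ql\bigr)\;\simeq\;\on{H}^{2(\rho,\la)}_c(S_\la,\IC_\mu),
\]
whose left-hand side is $\Ql[\bB_\mu(\la)]$ by the cycle-class computation of \S\ref{cycle class} together with Corollary~\ref{dim MV cycle}. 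This also matches the phrasing of the corollary, which asks for cycle classes of components of $S_\la\cap\Gr_{\le\mu}$, not of the open cell. Alternatively, you can keep your stratification argument and invoke the parity of the stalks of $\IC_\mu$ (established via the Demazure resolution in the proof of Lemma~\ref{semisimplicity}): since $\mathcal{H}^k(\IC_\mu)|_{\Gr_{\mu'}}$ vanishes unless $k\equiv -(2\rho,\mu)\pmod 2$, and $(2\rho,\mu')\equiv (2\rho,\mu)\pmod 2$ by Lemma~\ref{parity}, your bound improves to $\le 2(\rho,\la)-2$, and the offending term vanishes.
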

\begin{proof}One can use the same argument as in \cite[Proposition 3.10]{MV}. Namely, the stratification of $S_\la$ by $\{S_\la\cap\Gr_\mu, \mu\in\xcoch(T)^+\}$ induces a spectral sequence with the $E_1$-term $\on{H}_c^*(S_\la\cap\Gr_\mu,\mA)$ and the abutment $\on{H}_c^*(S_\la,\mA)$. 
Combining Proposition \ref{coh vanishing} and Corollary \ref{dim MV cycle}, one obtains that $\on{H}^{2(\rho,\la)}_c(S_\la,\IC_\mu)\simeq \on{H}^{(2\rho,\la)}_c(S_\la\cap\Gr_\mu,\Ql)$.
The claim follows.
\end{proof}
We define the total weight functor (which is the categorical analogue of the Satake transform) as
\begin{equation}\label{weight functor1}
\on{CT}:=\bigoplus_{\la} \on{CT}_\la: \on{P}_{L^+G}(\Gr_G)\to \on{Vect}_{\Ql}.
\end{equation}
\begin{cor}\label{weight functor}
There is a canonical isomorphism
\[\on{H}^*(\Gr_G,-)\simeq \on{CT}: \on{P}_{L^+G}(\Gr_G)\to \on{Vect}_{\Ql}.\]
The functor $\on{H}^*(\Gr_G,-)$ is faithful. 
\end{cor}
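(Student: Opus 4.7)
\textbf{Proof plan for Corollary \ref{weight functor}.}

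My plan is to reduce the global cohomology to a direct sum over semi-infinite orbits by exploiting the vanishing in Proposition \ref{coh vanishing} together with the parity constraint coming from $\pi_1(G)$. Fix $\mA\in\on{P}_{L^+G}(\Gr_G)$, which is supported on some $\Gr_{\leq\mu}$; since $\Gr_{\leq\mu}$ is perfectly proper (by the analogue of Corollary \ref{irr and proper} for general $G$), we have $\on{H}^*(\Gr_G,\mA)=\on{H}^*_c(\Gr_{\leq\mu},\mA)$. I would stratify by the finitely many non-empty pieces $S_\la\cap\Gr_{\leq\mu}$ (those with $\la\preceq\mu$) and assemble them into the closed filtration
\[
X_n\;:=\;\bigcup_{(2\rho,\la)\leq n}\bar S_\la\cap\Gr_{\leq\mu},
\]
so that $X_n\setminus X_{n-1}=\bigsqcup_{(2\rho,\la)=n}S_\la\cap\Gr_{\leq\mu}$.

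The main step is a parity-induction on $n$. Only $\la$ congruent to $\mu$ in $\pi_1(G)=\xcoch/Q^\vee$ contribute (since $\mu-\la$ must be a non-negative sum of simple coroots), and the parity map \eqref{parity map} shows that every such $\la$ satisfies $(2\rho,\la)\equiv(2\rho,\mu)\pmod 2$. Hence $X_n\setminus X_{n-1}$ is empty unless $n\equiv(2\rho,\mu)\pmod 2$, in which case Proposition \ref{coh vanishing} forces $\on{H}^i_c(X_n\setminus X_{n-1},\mA)$ to be concentrated in degree $i=n$. The long exact sequence
\[
\cdots\to\on{H}^{i-1}_c(X_{n-1},\mA)\to\on{H}^i_c(X_n\setminus X_{n-1},\mA)\to\on{H}^i_c(X_n,\mA)\to\on{H}^i_c(X_{n-1},\mA)\to\cdots
\]
then collapses: inductively $\on{H}^*_c(X_{n-1},\mA)$ is concentrated in degrees of the same parity as $(2\rho,\mu)$, so both the incoming $\on{H}^{i-1}_c(X_{n-1},\mA)$ and the outgoing $\on{H}^{i+1}_c(X_n\setminus X_{n-1},\mA)$ vanish whenever $i$ has the correct parity, yielding a (split, by semisimplicity of parity-concentrated cohomology in this setup) short exact sequence. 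Iterating and taking $n\gg 0$ gives the desired canonical isomorphism
\[
\on{H}^*(\Gr_G,\mA)\;\simeq\;\bigoplus_\la\on{H}^*_c(S_\la,\mA).
\]
The functoriality in $\mA$ is automatic from the naturality of the long exact sequences.

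For faithfulness: by Lemma \ref{semisimplicity} the category $\on{P}_{L^+G}(\Gr_G)$ is semisimple with simple objects the $\IC_\mu$, so it suffices to check $\on{H}^*(\Gr_G,\IC_\mu)\neq 0$ for each $\mu\in\xcoch^+$. This follows from the $\la=\mu$ summand: Corollary \ref{semi coh} identifies $\on{H}^{2(\rho,\mu)}_c(S_\mu,\IC_\mu)$ with $\Ql[\bB_\mu(\mu)]$, and $\bB_\mu(\mu)$ is non-empty since $S_\mu\cap\Gr_\mu$ contains the base point $\varpi^\mu$ (in fact this intersection is a single irreducible component, by Corollary \ref{dim MV cycle} together with $\dim V_\mu(\mu)=1$).

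The main obstacle I anticipate is Proposition \ref{coh vanishing} itself, which is a substantive input and whose proof is postponed to \S\ref{qm:conclusion}; granting it, the parity-induction above is essentially formal. A secondary concern is verifying the closedness of the $X_n$, which reduces to $\overline{S_\la\cap\Gr_{\leq\mu}}=\bigcup_{\la'\preceq\la}S_{\la'}\cap\Gr_{\leq\mu}$—established earlier in this subsection—so there is nothing new to check.
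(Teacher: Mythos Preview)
Your proposal is correct and is precisely the filtration-and-degree-vanishing argument of \cite[Theorem~3.6, Corollary~3.7]{MV} that the paper invokes. The one imprecision---calling the splitting a matter of ``semisimplicity''---is harmless: in each cohomological degree $i$, the degree concentration from Proposition~\ref{coh vanishing} forces one of the two outer terms of your short exact sequence to vanish (the left term unless $i=n$, the right term when $i=n$ since inductively $\on{H}^*_c(X_{n-1},\mA)$ lives in degrees $\leq n-2$), so the middle term is canonically identified with the surviving outer term and no choice of splitting is ever made.
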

\begin{proof}
The argument is the same as in \cite[Theorem 3.6]{MV}. Namely, according to Proposition \ref{closure semi-infinite1}, there are two stratifications of $\Gr$, one by semi-infinite orbits $\{S_\la, \la\in\xcoch\}$, and the other by opposite semi-infinite orbits $\left\{S^-_\la, \la\in\xcoch\right\}$. The first stratification induces a spectral sequence with the $E_1$-term $\on{H}_c^*(S_\la,-)$ and the abutment $\on{H}^*(-)$. It degenerates at the $E_1$-term for degree reasons by virtue of Proposition \ref{coh vanishing}. So there is a natural filtration on $\on{H}^*$ with the associated graded being $\oplus_\la \on{H}_c^*(S_\la,-)$. Explicitly, this is a filtration indexed by $(\xcoch,\leq)$ defined as
\[\on{Fil}_{\geq \mu}\on{H}^*(\mA)=\ker(\on{H}^*(\mA)\to \on{H}^*(S_{<\la},\mA))\] 
where $S_{<\la}=\bar{S}_\la-S_\la$. 

For $\mA\in \Sat_G$ and $Z\subset \Gr$ a closed subset, let $\on{H}^*_{Z}(\mA)$ denote the cohomology of the $!$-restriction of $\mA$ to $Z$. Applying Braden's theorem for algebraic spaces (see \cite{DG}) to some model, there is a canonical isomorphism
\begin{equation}\label{braden}
\on{H}^*_c(S_\la, \mA)\simeq \on{H}^*_{S^-_{\la}}(\mA).
\end{equation}
Then the second stratification of $\Gr$ also induces a filtration of $\on{H}^*$ as
\[\on{Fil}'_{<\la}\on{H}^*(\mA)=\on{Im}(\on{H}^*_{S^-_{<\la}}(\mA)\to \on{H}^*(\mA)),\]
where $S^-_{<\la}=\bar S^-_{\la}-S^-_\la$.
These two filtrations are complimentary to each other by \eqref{braden} and together define the decomposition $\on{H}^*=\oplus_\la \on{H}_c^*(S_\la,-)$.

Since $\on{P}_{L^+G}(\Gr_G)$ is semisimple and $\on{H}^*(\Gr_G,\IC_\mu)$ is non-zero for every $\mu$, $\on{H}^*$ is faithful.
\end{proof}

\subsubsection{}We discuss the geometry of $\Gr_{\leq \mu}$ when $\mu$ is a (quasi-)minuscule cocharacter, similar to \cite[\S 6-\S 8]{NP}, but with a few justifications. Denote by $\bar{G}=G\otimes_{\mO}k$ the special fiber of $G$, which is a reductive group over $k$, with $\bar{U}\subset \bar{B}\subset \bar{G}$.

Recall that a dominant coweight $\mu$ of $G$ is called (quasi-)minuscule if all (non-zero weights) of the irreducible representation $V_\mu$ of $\hat G$ are in a single orbit under the Weyl group. 
Recall that if $G$ is a simple group not of type $A$, then the unique quasi-minuscule (but non-minuscule) coweight is the unique \emph{short} dominant coroot. 

\begin{lem}\label{min and qmin}
Proposition \ref{coh vanishing} hold for $\mA=\IC_\mu$ when $\mu$ is (quasi-)minuscule.
\end{lem}
If $\mu$ is a minuscule coweight of $G$, then  $\Gr_{\leq \mu}=\Gr_{\mu}= (\bar{G}/\bar P_{\mu})^\pf$ by Corollary \ref{minusch}.  In this case,
\[S_\la\cap \Gr_\mu=\left\{\begin{array}{ll}
\emptyset & \la\not\in W\mu \\
(\bar{U}w\bar P_{\mu}/\bar P_{\mu})^\pf & \la=w\mu,
\end{array}\right.\] 
is irreducible, isomorphic to the perfection of an affine space. Then Lemma \ref{min and qmin} is clear.

Next we assume that $\mu$ is quasi-minuscule but non-minuscule. In this case $\mu=\theta$ is a coroot and we denote the corresponding root by $\theta^\vee$. In this case $\Gr_{\leq \mu}=\Gr_\mu\sqcup \Gr_0$.
Several discussions in \cite{NP} need justification. We first construct a ``resolution" of $\Gr_{\leq \mu}$. The one given in \emph{loc. cit.} does not work in mixed characteristic. Our construction is different, and arises as a discussion with X. He.

Recall that we fix a maximal torus $T\subset G$ over $\mO$. For a root $\al$, let $U_\al$ denote the corresponding root subgroup of $G$ over $\mO$. We identify $U_\al(F)=F$ such that $U_\al(\mO)=\mO$.
For a real number $r\in [0,1]$, we consider the parahoric subgroup of $G(F)$ generated by $T(\mO)$ and the subgroups $\varpi^{\lceil \langle r\mu,\al\rangle\rceil}\mO \subset F=U_\al(F)$ for all roots $\al$ . It determines the parahoric group scheme $\mG_r$ over $\mO$. Let $Q_r=L^+\mG_r$ denote the corresponding $p$-adic jet group. 
Note that 
\begin{enumerate}
\item $Q_0=L^+G$ and $Q_1=\varpi^{\mu}L^+G\varpi^{-\mu}$. 
\item $Q_{\frac{1}{2}}$ is a maximal parahoric. 
\item $Q_{\frac{1}{4}}= Q_0\cap Q_{\frac{1}{2}}$ and $Q_{\frac{3}{4}}= Q_{\frac{1}{2}}\cap Q_{1}$.
\end{enumerate}

\begin{lem}\label{resolution qm}
(i) The quotient $Q_{\frac{1}{2}}/ Q_{\frac{3}{4}}$ is isomorphic to $\bP^{1,\pf}$.

(ii) The map
\[\pi:\wGr_{\leq \mu}:=Q_0\times^{Q_{\frac{1}{4}}}Q_{\frac{1}{2}}/Q_{\frac{3}{4}}\to \Gr_{\leq \mu}, \quad (g,g')\mapsto gg'\varpi^\mu\]
restricts to an isomorphism 
$$\mathring{\pi}:Q_0\times^{Q_{\frac{1}{4}}}(Q_{\frac{1}{4}}Q_{\frac{3}{4}})/Q_{\frac{3}{4}}\simeq \Gr_{\mu},$$ 
and to a contraction
\[\pi_0: (\bar G/\bar P_{\mu})^\pf\simeq Q_0\times^{Q_{\frac{1}{4}}} Q_{\frac{1}{4}}s_{1+\theta}Q_{\frac{3}{4}}/Q_{\frac{3}{4}}\to \Gr_0=\{1\},\]
where $s_{1+\theta}$ is the affine reflection corresponding to $1+\theta$.
\end{lem}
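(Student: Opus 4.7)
The plan is to reduce the lemma to a calculation in the Bruhat--Tits apartment and the reductive quotient $\bar L$ of the maximal parahoric $Q_{\frac{1}{2}}$, and then pass to perfection. For a root $\alpha^\vee$ of $G$ and $a\in\bZ$, the affine root $a+\alpha^\vee$ is non-negative at $-r\mu$ iff $a\geq r\langle\alpha^\vee,\mu\rangle$; since $\mu$ is quasi-minuscule, $\langle\alpha^\vee,\mu\rangle\in\{-2,-1,0,1,2\}$, with value $\pm 2$ only for $\alpha^\vee=\pm\theta^\vee$. A short case check then gives: at $-\mu/2$ the affine roots $\pm(1+\theta^\vee)$ both vanish, so $U_{\pm\theta^\vee}$ both appear in $\bar L$ together with $U_{\pm\alpha^\vee}$ for $\langle\alpha^\vee,\mu\rangle=0$; at $-3\mu/4$ the affine root $1+\theta^\vee$ is strictly negative while $-1-\theta^\vee$ remains positive, so the image of $Q_{\frac{3}{4}}$ in $\bar L$ is the parabolic $\bar P^{\mathrm{op}}$ with Levi $\{\alpha^\vee:\langle\alpha^\vee,\mu\rangle=0\}$ and unipotent root $-\theta^\vee$; symmetrically, $Q_{\frac{1}{4}}$ maps onto the opposite parabolic $\bar P$. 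Note that $1+\theta^\vee$ is the only affine root flipping sign between $-\mu/2$ and $-3\mu/4$.

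For (i), since the kernel of $Q_{\frac{1}{2}}\to\bar L$ is pro-unipotent and contained in $Q_{\frac{3}{4}}$, we get $Q_{\frac{1}{2}}/Q_{\frac{3}{4}}\simeq\bar L/\bar P^{\mathrm{op}}$; and $\bar P^{\mathrm{op}}$ has $1$-dimensional unipotent radical spanned by $U_{-\theta^\vee}$, so $\bar L/\bar P^{\mathrm{op}}\simeq\bP^{1}$, whence $Q_{\frac{1}{2}}/Q_{\frac{3}{4}}\simeq\bP^{1,\pf}$ after perfection.

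For (ii), I decompose $\bP^{1,\pf}=Q_{\frac{1}{2}}/Q_{\frac{3}{4}}$ into the two $\bar P$-orbits, which are exactly the Bruhat cells of $\bar L/\bar P^{\mathrm{op}}$: the open orbit $\bar P\cdot\bar P^{\mathrm{op}}/\bar P^{\mathrm{op}}\simeq\bA^{1}$ lifts to $(Q_{\frac{1}{4}}Q_{\frac{3}{4}})/Q_{\frac{3}{4}}$, while the closed orbit $s_{\theta^\vee}\bar P^{\mathrm{op}}/\bar P^{\mathrm{op}}$ is a single point (it is $\bar P$-fixed because $s_{\theta^\vee}$ conjugates $\bar P^{\mathrm{op}}$ to $\bar P$) and lifts to $\{s_{1+\theta^\vee}Q_{\frac{3}{4}}\}$. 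On the open stratum, $Q_0\times^{Q_{\frac{1}{4}}}(Q_{\frac{1}{4}}Q_{\frac{3}{4}})/Q_{\frac{3}{4}}=Q_0/(Q_{\frac{1}{4}}\cap Q_{\frac{3}{4}})$. I then claim $Q_{\frac{1}{4}}\cap Q_{\frac{3}{4}}=Q_0\cap Q_1$: both are the pointwise fixers in $LG$ of the corresponding geodesic segment in the apartment, and the segments $[-\mu/4,-3\mu/4]$ and $[0,-\mu]$ meet exactly the same set of affine root hyperplanes (namely $\alpha^\vee=0$ for $\langle\alpha^\vee,\mu\rangle=0$), so they determine the same parahoric. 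Thus the map $g\mapsto g\varpi^\mu\bmod L^+G$ identifies this quotient with $Q_0/(Q_0\cap Q_1)\simeq\Gr_\mu$, using that the $L^+G$-stabilizer of $\varpi^\mu$ is $Q_0\cap\varpi^\mu L^+G\varpi^{-\mu}=Q_0\cap Q_1$.

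On the closed stratum, $Q_0\times^{Q_{\frac{1}{4}}}\{s_{1+\theta^\vee}Q_{\frac{3}{4}}\}=Q_0/Q_{\frac{1}{4}}$. The level-$0$ affine roots non-negative at $-\mu/4$ are exactly those $\alpha^\vee$ with $\langle\alpha^\vee,\mu\rangle\leq 0$, so the image of $Q_{\frac{1}{4}}$ in the reductive quotient $\bar G$ of $Q_0=L^+G$ is exactly $\bar P_{-\mu}$, giving $Q_0/Q_{\frac{1}{4}}\simeq(\bar G/\bar P_{-\mu})^{\pf}$. To see the map contracts to $\Gr_0=\{1\}$, use the affine Weyl identity $s_{1+\theta^\vee}=t_{-\mu}s_{\theta^\vee}$, with the convention $t_{-\mu}\leftrightarrow\varpi^{\mu}$ consistent with $Q_1=\varpi^\mu L^+G\varpi^{-\mu}$. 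Using $s_{\theta^\vee}(\mu)=\mu-\langle\theta^\vee,\mu\rangle\theta=-\mu$,
\[
s_{1+\theta^\vee}\varpi^\mu=\varpi^\mu s_{\theta^\vee}\varpi^\mu=\varpi^{\mu+s_{\theta^\vee}(\mu)}s_{\theta^\vee}=s_{\theta^\vee}\in L^+G,
\]
so $gs_{1+\theta^\vee}\varpi^\mu\in L^+G$ for every $g\in Q_0$, and the entire closed stratum collapses to the basepoint.

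The main obstacle is verifying that all the Bruhat--Tits identifications above (reductive quotient, images of neighboring parahorics as opposite parabolics, intersection-equals-fixer-of-segment) hold at the level of the $p$-adic integral parahoric group schemes $\mG_r/\mO$ before taking Greenberg realizations and perfections, and that perfection is compatible with the quotient constructions used. These are mild issues since the relevant kernels are pro-smooth and pro-unipotent and perfection commutes with quotients by such groups, so the equal-characteristic arguments transport without essential change.
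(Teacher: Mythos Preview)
Your argument follows essentially the same route as the paper --- a direct Bruhat--Tits/affine root computation --- but is considerably more detailed than the paper's very terse proof. The paper simply records that only one affine root distinguishes $Q_{1/2}$ from the adjacent parahoric (giving (i)), and that $Q_0\cap Q_{3/4}=Q_0\cap Q_1$ (giving $\mathring\pi$), declaring the rest ``clear''. Your reformulation $Q_{1/4}\cap Q_{3/4}=Q_0\cap Q_1$ is equivalent (since $Q_0\cap Q_{3/4}\subset Q_{1/4}$ by convexity of fixers), and your explicit computation $s_{1+\theta^\vee}\varpi^\mu=s_{\theta^\vee}\in L^+G$ is a nice verification of the contraction that the paper omits.

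There is one genuine slip in your justification of $Q_{1/4}\cap Q_{3/4}=Q_0\cap Q_1$. You claim the segments $[-\mu/4,-3\mu/4]$ and $[0,-\mu]$ meet exactly the same set of affine root hyperplanes, namely $\alpha^\vee=0$ for $\langle\alpha^\vee,\mu\rangle=0$. This is false: the segment $[0,-\mu]$ meets many additional hyperplanes at its endpoints (e.g., $\alpha^\vee=0$ for every root $\alpha^\vee$, at the hyperspecial vertex $0$), and also $1+\theta^\vee=0$ at $-\mu/2$. More importantly, ``meeting the same hyperplanes'' is not the right criterion for equality of fixers; what matters is that the sets of affine roots non-negative on each segment coincide. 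That equality \emph{does} hold, and follows immediately from the case check $|\langle\alpha^\vee,\mu\rangle|\le 2$ you already set up: for each value of $\langle\alpha^\vee,\mu\rangle\in\{-2,-1,0,1,2\}$, the conditions $a\ge\max(0,\langle\alpha^\vee,\mu\rangle)$ and $a\ge\max(\tfrac14\langle\alpha^\vee,\mu\rangle,\tfrac34\langle\alpha^\vee,\mu\rangle)$ cut out the same set of integers $a$. Replace your sentence with this and the proof is complete.
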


\begin{proof}For (i), it is enough to observe that the only affine root appearing in $Q_{\frac{1}{2}}$ but not in $Q_{\frac{1}{4}}$ is $-\theta-1$. For (ii), note that $Q_0\cap Q_{\frac{3}{4}}=Q_0\cap Q_1$. Therefore, the statement for $\mathring{\pi}$ holds. The statement for $\pi_0$ is clear.
\end{proof}
Let us write
\[\phi: \wGr_{\leq \mu}\to Q_0/Q_{\frac{1}{4}}=(\bar{G}/\bar P_{\mu})^\pf, \quad \mathring{\phi}: \Gr_\mu\stackrel{\mathring{\pi}^{-1}}{\to} Q_0\times^{Q_{\frac{1}{4}}}(Q_{\frac{1}{4}}Q_{\frac{3}{4}})/Q_{\frac{3}{4}}\to Q_0/Q_{\frac{1}{4}} \]
where $\bar P_{\mu}$ as before is the parabolic of $\bar G$ whose roots are those $\al$ with $\langle \al,\mu\rangle\leq 0$. Note that $\mathring{\phi}$ is nothing but the projection $\pi_\mu$ from \eqref{mod to fil}.
Let $\Delta_{\theta}$ denote the subset of simple coroots that are conjugate to $\theta$ under the action of the Weyl group. If $G$ is a simple group, then $\Delta_{\theta}$ is the set of short simple coroots.

Now we study $S_\la\cap \Gr_{\leq \mu}$. If $\la=w\mu$ for some $w\in W$, then $S_\la\cap\Gr_{\leq \mu}= L^+U\varpi^\la$, from which one deduces: if $\la=w\mu$ is a positive coroot, then
\[S_\la\cap \Gr_{\leq \mu}=S_\la\cap \Gr_\mu=\mathring{\phi}^{-1}(\bar{U}w\bar P_{\mu}/\bar P_{\mu})^\pf;\]
if $\la=w\mu$ is a negative coroot, then still $S_\la\cap \Gr_{\leq \mu}=S_\la\cap \Gr_\mu$ and
\[\mathring{\phi}: S_\la\cap \Gr_{\leq \mu}\simeq (\bar{U}w\bar P_{\mu}/\bar P_{\mu})^\pf.\]
Finally,
\[S_0\cap\Gr_{\leq \mu}=\pi(\phi^{-1}(\bigcup_{w\mu<0} \bar{U}w\bar{P}_{\mu}/\bar{P}_{\mu})^\pf)\setminus\bigcup_{w\mu<0}(S_{w\mu}\cap\Gr_{\leq \mu}).\]
There is a canonical bijection between $\Delta_\theta$ and the set of irreducible components of $S_0\cap\Gr_{\leq \mu}$ given as follows:
$\alpha\in\Delta_\theta$ corresponds to the unique irreducible component of $S_0\cap\Gr_{\leq \mu}$ given by 
\begin{equation}\label{irr comp}
(S_0\cap\Gr_{\leq \mu})^\al:=\Gr_0\bigcup\pi(\phi^{-1}(\bar{U}w\bar{P}_{\mu}/\bar{P}_{\mu})^\pf)\setminus (S_{w\mu}\cap\Gr_{\leq \mu}),
\end{equation} 
where $w\mu=-\al$. 

Now we prove Lemma \ref{min and qmin} for $\mu=\theta$. This is clear for $\la=w\mu$. It remains to consider the case $S_0\cap\Gr_{\leq \mu}$. Let $d=(2\rho,\mu)$. We will ignore the Tate twist in the sequel.
By the decomposition theorem (applying to certain model of $\pi:\wGr_{\leq \mu}\to \Gr_{\leq \mu}$), we have
\[\pi_*\Ql[d]=\IC_\mu\oplus \mC,\]
where $\mC$ is a certain complex of vector spaces supported at $\Gr_0$. One has
\begin{equation}\label{cohC}
\on{H}^i(\mC)=\left\{\begin{array}{ll}  \on{H}^{i+d}(\bar G/\bar P_{\mu}) & i\geq 0\\
                                                          \on{H}^{i+d-2}(\bar G/\bar P_{\mu}) & i<0.
\end{array}\right.
\end{equation}
Indeed, the first equality follows from the fact that the stalk cohomology of $\IC_\mu$ is concentrated in negative degrees, and the second equality follows from the first by duality.

On the other hand, we have
\[R\Gamma_c(\pi^{-1}(S_0\cap \Gr_{\leq \mu}),\Ql[d])=R\Gamma_c(S_0,\IC_\mu)\oplus \mC.\]
Note that $\pi^{-1}(S_0\cap\Gr_{\leq \mu})=\phi^{-1}(\bigcup_{w\mu<0} \bar{U}w\bar{P}_{\mu}/\bar{P}_{\mu})^\pf\setminus \pi^{-1}(\bigcup_{w\mu<0}(S_{w\mu}\cap\Gr_{\leq \mu}))$ and that the map
\[\phi:\phi^{-1}(\bigcup_{w\mu<0} \bar{U}w\bar{P}_{\mu}/\bar{P}_{\mu})^\pf\to (\bigcup_{w\mu<0} \bar{U}w\bar{P}_{\mu}/\bar{P}_{\mu})^\pf\]
is a $\bP^{1,\pf}$-fibration. In addition, $\pi^{-1}(\bigcup_{w\mu<0}(S_{w\mu}\cap\Gr_{\leq \mu}))$ can be regarded as a section of this map.
Therefore,
\begin{equation}\label{seccoh}
R\Gamma_c(\pi^{-1}(S_0\cap \Gr_{\leq \mu}),\Ql[d])=R\Gamma_c(\bigcup_{w\mu<0} \bar{U}w\bar{P}_{\mu}/\bar{P}_{\mu},\Ql[d-2]).
\end{equation}

To prove Lemma \ref{min and qmin} for $S_0\cap\Gr_{\leq \mu}$, it remains to compare \eqref{cohC} and \eqref{seccoh}. However, note that the right hand sides of both equalities only involve the group $\bar G$, defined over $k$. Therefore, one can directly apply the computation in \cite[\S 8]{NP} to conclude that $\on{H}^i(\mC)=\on{H}^i_c(\pi^{-1}(S_0\cap\Gr_{\leq \mu}))$ for $i\neq 0$ and if $i=0$,
\[\on{H}^0(S_0,\IC_\mu)\simeq\Ql^{|\Delta_\theta|}.\]

This finishes the proof of Lemma \ref{min and qmin}.

\begin{rmk}
In fact, we also showed that Corollary \ref{semi coh} holds in these cases.
\end{rmk}

\subsubsection{}\label{qm:conclusion}
Now combining the proof of Lemma \ref{min and qmin} with \eqref{factorizationII}, we have the following corollaries, whose proofs are as in \cite[ 9.2-9.4]{NP}. Let $M$ be the set of minimal elements in $\xcoch^+\setminus \{0\}$. This is exactly the set of non-zero quasi-minuscule coweights. 
\begin{cor}\label{dim conv MV}
Let $\mu_\bullet=(\mu_1,\ldots,\mu_m)\subset M$. Then for any $\la_\bullet=(\la_1,\ldots,\la_m)$, $S_{\la_\bullet}\cap\Gr_{\leq \mu_\bullet}$ is equidimensional, and
\[\dim (S_{\la_\bullet}\cap \Gr_{\leq \mu_\bullet})=(\rho,|\la_\bullet|+|\mu_\bullet|).\] 
\end{cor}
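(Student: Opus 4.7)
My plan is to reduce the statement to the single-factor case $S_\la\cap \Gr_{\leq \mu}$ for $\mu\in M$ via the factorization isomorphism \eqref{factorizationII}, and then use that a twisted product of equidimensional spaces is equidimensional with the expected additive dimension.

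First, I would apply \eqref{factorizationII} to identify
\[S_{\la_\bullet}\cap \Gr_{\leq \mu_\bullet}\ \simeq\ (S_{\la_1}\cap\Gr_{\leq \mu_1})\tilde\times\cdots\tilde\times (S_{\la_m}\cap \Gr_{\leq \mu_m})\]
as a locally closed subset of the iterated convolution Grassmannian. The advantage of this presentation is that each factor involves only a single $\mu_i\in M$, i.e.\ a minuscule or quasi-minuscule coweight, which is exactly the range where Lemma \ref{min and qmin} already guarantees Proposition \ref{coh vanishing}. Hence Corollary \ref{dim MV cycle} applies to each individual factor: $S_{\la_i}\cap \Gr_{\leq \mu_i}$ is equidimensional of dimension $(\rho,\la_i+\mu_i)$.

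Next I would argue that forming the twisted product preserves equidimensionality and adds dimensions. Concretely, the twisted product $(S_{\la_1}\cap\Gr_{\leq \mu_1})\tilde\times Y$ is the associated bundle constructed from the restriction of the $L^+G$-torsor $LG\to \Gr$ to $S_{\la_1}\cap \Gr_{\leq \mu_1}$; since the $L^+G$-action on $\Gr_{\leq \mu_1}$ factors through a quotient $L^hG$ that is perfectly of finite type and the torsor is étale-locally trivial, the total space is étale-locally isomorphic to a product $(S_{\la_1}\cap\Gr_{\leq \mu_1})\times Y$. Iterating this over the factors and using equidimensionality and additivity of dimension under étale-local products (applied in the perfect algebraic space setting of Appendix \ref{generality of perf}), we obtain
\[\dim(S_{\la_\bullet}\cap \Gr_{\leq \mu_\bullet})\ =\ \sum_{i=1}^{m}(\rho,\la_i+\mu_i)\ =\ (\rho,|\la_\bullet|+|\mu_\bullet|),\]
with the total space equidimensional, as required.

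The main obstacle I expect is bookkeeping for the twisted product construction rather than any deep geometric input: one must verify that the associated bundle of an equidimensional perfect php algebraic space along an étale-locally trivial $L^+G$-torsor has dimension equal to the sum of the dimensions, in the perfect setting where ``dimension'' is the dimension of any (hence every) deperfection. This is a formal consequence of the local triviality of $L^+G$-torsors on perfect algebraic spaces and the definitions in Appendix \ref{generality of perf}, so once that framework is available the argument goes through exactly as in \cite[\S 9.2--9.4]{NP}; no additional combinatorial or cohomological input is needed beyond Lemma \ref{min and qmin}.
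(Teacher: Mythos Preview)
Your proposal is correct and follows essentially the same route as the paper: reduce to the single-factor case via the isomorphism \eqref{factorizationII}, establish equidimensionality of each $S_{\la_i}\cap\Gr_{\leq\mu_i}$ for $\mu_i\in M$, and then use local triviality of the twisted product to conclude. Two minor remarks: first, the paper phrases the single-factor input as ``the proof of'' Lemma~\ref{min and qmin}, i.e.\ the explicit description of $S_\la\cap\Gr_{\leq\mu}$ given there, rather than routing through Corollary~\ref{dim MV cycle}; your detour through Corollary~\ref{dim MV cycle} is logically sound (since for $\mu\in M$ its proof only needs Lemma~\ref{min and qmin}), but one must be careful to note this restriction, as the general case of Corollary~\ref{dim MV cycle} lies downstream of the statement you are proving. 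Second, the torsor underlying the twisted product in \eqref{factorizationII} is an $L^+U$-torsor (see the remark preceding \eqref{factorizationII}), not an $L^+G$-torsor; this does not affect your argument, since $L^+U$ is (the perfection of) a pro-unipotent group and the local triviality you need still holds.
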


\begin{cor}\label{Semismall conv}
Let $\mu_\bullet=(\mu_1,\ldots,\mu_m)\subset M$. Then the map $\pi: \Gr_{\leq \mu_\bullet}\to \Gr_{\leq |\mu_\bullet|}$ is semi-small, and therefore $\IC_{\mu_1}\star\cdots\star\IC_{\mu_m}$ is perverse.
\end{cor}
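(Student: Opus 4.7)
The plan is to establish the semismallness of $\pi$ by a stratum-by-stratum dimension count using the semi-infinite orbits, and then deduce perversity of the convolution from \cite[Lemma 4.3]{MV}, exactly as was invoked in the proof of Proposition~\ref{semismall}. Concretely, the task is to verify $\dim \pi^{-1}(\Gr_{\leq \la}) \leq (\rho, |\mu_\bullet| + \la)$ for every dominant $\la \leq |\mu_\bullet|$.

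First I would stratify both sides by semi-infinite orbits. Using \eqref{factorizationII},
\[
\Gr_{\leq \mu_\bullet} = \bigsqcup_{\la_\bullet} \bigl(S_{\la_\bullet} \cap \Gr_{\leq \mu_\bullet}\bigr),
\]
with $\la_\bullet = (\la_1,\ldots,\la_m)$ running over tuples of coweights, while
\[
\Gr_{\leq \la} = \bigsqcup_{\la' \leq \la,\; \nu} (\Gr_{\la'} \cap S_\nu).
\]
The factorization \eqref{factorizationI} shows that convolution sends $S_{\la_\bullet}$ into $S_{|\la_\bullet|}$, so $\pi^{-1}(\Gr_{\la'} \cap S_\nu) \cap (S_{\la_\bullet} \cap \Gr_{\leq \mu_\bullet})$ is empty unless $|\la_\bullet| = \nu$. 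Hence
\[
\pi^{-1}(\Gr_{\la'} \cap S_\nu) \cap \Gr_{\leq \mu_\bullet} \;\subseteq\; \bigsqcup_{|\la_\bullet| = \nu} \bigl(S_{\la_\bullet} \cap \Gr_{\leq \mu_\bullet}\bigr),
\]
and each summand on the right has dimension $(\rho, \nu + |\mu_\bullet|)$ by Corollary~\ref{dim conv MV}.

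It then remains to bound $(\rho, \nu)$ by $(\rho, \la)$ whenever the stratum $\Gr_{\la'} \cap S_\nu$ with $\la' \leq \la$ is non-empty. Non-emptiness forces $\nu$ to be a weight of the irreducible $\hat{G}$-representation $V_{\la'}$, by Corollary~\ref{dim MV cycle}. Letting $\nu^+$ denote the dominant $W$-conjugate of $\nu$, both $\la' - \nu^+$ and $\nu^+ - \nu$ are non-negative integer combinations of positive coroots (the first because $\nu^+$ is a dominant weight of $V_{\la'}$, the second because $\nu^+$ is dominant and $\nu \in W\nu^+$), so $\la' - \nu$ is one as well; pairing with $\rho$ gives $(\rho, \nu) \leq (\rho, \la') \leq (\rho, \la)$. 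Therefore
\[
\dim \bigl(\pi^{-1}(\Gr_{\leq \la}) \cap \Gr_{\leq \mu_\bullet}\bigr) \leq (\rho, \la + |\mu_\bullet|),
\]
which is exactly the semismallness estimate.

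For the perversity conclusion, $\IC_{\mu_1} \star \cdots \star \IC_{\mu_m} = \pi_!(\IC_{\mu_1}\tilde\boxtimes \cdots \tilde\boxtimes \IC_{\mu_m})$, and the external twisted product is perverse on $\Gr_{\leq \mu_\bullet}$ because twisted product along an $L^+G$-torsor preserves perversity; since $\pi$ is proper and semismall, \cite[Lemma 4.3]{MV} yields perversity of the pushforward. The step I expect to require the most care is the clean translation of \eqref{factorizationI} and \eqref{factorizationII} into a stratified picture of $\pi^{-1}$ compatible with both source and target, rather than the final combinatorial inequality on $(\rho, \nu)$, which is standard once $\nu$ is identified as a weight of $V_{\la'}$.
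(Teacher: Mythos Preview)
Your approach is correct and is essentially the one the paper has in mind (it defers to \cite[9.2--9.4]{NP}): stratify by semi-infinite orbits, invoke Corollary~\ref{dim conv MV} for the dimension of each $S_{\la_\bullet}\cap\Gr_{\leq\mu_\bullet}$, and then bound $(\rho,\nu)$ by $(\rho,\la)$. The deduction of perversity from semismallness via \cite[Lemma~4.3]{MV} is exactly as in the paper.

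There is one genuine issue, however: your justification of step~5 via Corollary~\ref{dim MV cycle} is circular in the paper's logical order. To conclude from that corollary that $S_\nu\cap\Gr_{\la'}=\emptyset$ when $\nu$ is not a weight of $V_{\la'}$, you need the \emph{equidimensionality} clause (otherwise the point-counting argument only gives $\dim(S_\nu\cap\Gr_{\leq\la'})<(\rho,\nu+\la')$, not emptiness). But equidimensionality is deduced from Proposition~\ref{coh vanishing}, which in turn is proved via Lemma~\ref{idempotent completion} --- and that lemma presupposes Corollary~\ref{Semismall conv}.

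The repair is short and self-contained: show directly that $S_\nu\cap\Gr_{\leq\la}\neq\emptyset$ forces $\varpi^\nu\in\Gr_{\leq\la}$. Take any $x=u\varpi^\nu\in S_\nu\cap\Gr_{\leq\la}$ with $u\in LU$, and act by a regular dominant cocharacter $\ga:\bG_m\to T\subset L^+T$. Since every positive root pairs positively with $\ga$, one has $\ga(t)u\ga(t)^{-1}\to 1$ as $t\to 0$, so $\ga(t)x\to\varpi^\nu$; by perfect properness of $\Gr_{\leq\la}$ this limit lies in $\Gr_{\leq\la}$. Hence $\nu^+\leq\la$, and your inequality $(\rho,\nu)\leq(\rho,\la)$ follows as you wrote. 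With this replacement, your argument is complete and matches the paper's.
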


This corollary allows us to define a full additive subcategory of $\Sat_G$, spanned by objects isomorphic to $\IC_{\mu_1}\star\cdots\star\IC_{\mu_m}$ for $\mmu\subset M$. Let us denote this subcategory by $\Sat^0_G$. Note that $\Sat^0_G$ is in fact a monoidal subcategory of $\Sat_G$ under the convolution.

\begin{lem}\label{idempotent completion}
As a monoidal abelian category, $\Sat_G$ is the idempotent completion of $\Sat^0_G$. Concretely, every $\IC_\mu$ appears as a direct summand of $\IC_{\mu_1}\star\cdots\star\IC_{\mu_m}$ for $\mmu\in M$.
\end{lem}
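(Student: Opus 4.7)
The first assertion is formal: by the semisimplicity of $\Sat_G$ (Lemma \ref{semisimplicity}), every object is a finite direct sum of $\IC_\mu$'s, so once each simple $\IC_\mu$ is realized as a direct summand of an object of $\Sat^0_G$, the idempotent completion of $\Sat^0_G$ coincides with $\Sat_G$. Hence the whole content is the concrete statement.

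For the concrete statement, fix $\mmu=(\mu_1,\ldots,\mu_m)$ with entries in $M$. Corollary \ref{Semismall conv} ensures $\IC_\mmu:=\IC_{\mu_1}\star\cdots\star\IC_{\mu_m}$ is perverse, so by semisimplicity it decomposes as $\bigoplus_\la V^\la_\mmu\otimes \IC_\la$, and by Remark \ref{Satake fiber} the multiplicity $\dim V^\la_\mmu$ equals the number of top-dimensional components of $\Gr_\mmu\cap m^{-1}(\varpi^\la)$. The aim is to show: for every $\mu\in\xcoch^+$ one can find such a $\mmu$ in $M$ with $V^\mu_\mmu\neq 0$.

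The plan is a two-step identification with representation-theoretic data for $\hat G$. First, I compute $\dim H^*_c(S_\la,\IC_\mmu)$ in two ways. Using the decomposition above together with Corollaries \ref{semi coh} and \ref{dim MV cycle}, this equals $\sum_\nu \dim V^\nu_\mmu\cdot \dim V_\nu(\la)$. On the other hand, since $\pi^{-1}(S_\la)=\bigsqcup_{|\nu_\bullet|=\la}S_{\nu_\bullet}$, combining the factorization \eqref{factorizationII} with the K\"unneth-type splitting invoked in the proof of Corollary \ref{van coh aux} shows this same dimension equals $\sum_{|\nu_\bullet|=\la}\prod_i \dim V_{\mu_i}(\nu_i)=\dim\bigl(V_{\mu_1}\otimes\cdots\otimes V_{\mu_m}\bigr)(\la)$. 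Inverting the unipotent upper-triangular matrix $(\dim V_\nu(\la))_{\nu,\la}$ (nonzero only for $\la\preceq\nu$, with unit diagonal) then yields
\[
\dim V^\la_\mmu \;=\; \dim\Hom_{\hat G}\bigl(V_\la,\,V_{\mu_1}\otimes\cdots\otimes V_{\mu_m}\bigr).
\]

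It then suffices to verify the representation-theoretic claim that every $V_\mu$ embeds in $V_{\mu_1}\otimes\cdots\otimes V_{\mu_m}$ for some $\mu_i\in M$. By Tannaka duality, the full tensor abelian subcategory of $\on{Rep}(\hat G)$ generated by $\{V_\mu:\mu\in M\}$ equals $\on{Rep}(H)$ for a quotient $H$ of $\hat G$, and one has $H=\hat G$ because the weights of the quasi-minuscule representation $V_\theta$ form a Weyl orbit spanning the root lattice of $\hat G$ (forcing the kernel of $\hat G\to H$ to be central) and the minuscule $V_\mu$'s (when they exist) realize every character of $Z(\hat G)=\pi_1(G)$ (forcing that kernel to be trivial). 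The main obstacle is ensuring that the two-sided computation of $\dim H^*_c(S_\la,\IC_\mmu)$ in the first step does not circularly invoke geometric Satake itself; fortunately, every input used---Corollaries \ref{dim MV cycle}, \ref{semi coh}, and \ref{van coh aux}, and the factorization \eqref{factorizationII}---has already been established by independent (in particular, numerical Hecke-algebraic and geometric) means earlier in the section.
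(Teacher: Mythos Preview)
Your argument has a genuine circularity. In the paper's logical order, Proposition~\ref{coh vanishing} for \emph{general} $\nu$ is deduced \emph{from} Lemma~\ref{idempotent completion} (see the last paragraph of \S\ref{qm:conclusion}). Corollary~\ref{semi coh}, and the equidimensionality clause of Corollary~\ref{dim MV cycle}, both rest on Proposition~\ref{coh vanishing}. Your first computation needs $\dim \on{H}^*_c(S_\la,\IC_\nu)=\dim V_\nu(\la)$ for every $\nu$ occurring in the decomposition of $\IC_{\mmu}$, and that is precisely what Corollaries~\ref{semi coh} and~\ref{dim MV cycle} deliver; so your claim that these inputs ``have already been established by independent means'' is not correct as the paper is organized. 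At this stage only Lemma~\ref{min and qmin}, Corollaries~\ref{dim conv MV}, \ref{Semismall conv}, \ref{van coh aux}, and the factorization~\eqref{factorizationII} are available.

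The repair is easy and keeps your strategy intact: work with Euler characteristics rather than dimensions. The identity $\chi_c(S_\la,\IC_\nu)=(-1)^{(2\rho,\la)}\dim V_\nu(\la)$ is the classical Satake isomorphism, obtainable from the point-counting computation already carried out in the proof of Corollary~\ref{dim MV cycle} (formulas \eqref{count pt}--\eqref{LK}) together with the trace formula of \S\ref{trace formula}; it requires no cohomological vanishing. On the other side, Corollary~\ref{van coh aux} gives $\dim \on{H}^*_c(S_\la,\IC_{\mmu})=(-1)^{(2\rho,\la)}\chi_c(S_\la,\IC_{\mmu})$. Additivity of $\chi_c$ on the decomposition $\IC_{\mmu}=\bigoplus_\nu V^\nu_{\mmu}\otimes\IC_\nu$ then yields your identity $\dim(\bigotimes_i V_{\mu_i})(\la)=\sum_\nu \dim V^\nu_{\mmu}\cdot\dim V_\nu(\la)$ without circularity, and your matrix inversion and Tannakian step go through unchanged.

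By contrast, the paper's route (via \cite[Proposition~9.6]{NP}) is a direct geometric argument: one writes $\mu=\sum_i w_i\mu_i$ with $\mu_i\in M$ and $w_i\in W$, and then uses the explicit description of $S_{w_i\mu_i}\cap\Gr_{\mu_i}$ from the minuscule/quasi-minuscule analysis to exhibit a top-dimensional component of $\Gr_{\mmu}\cap m^{-1}(\varpi^\mu)$, so that $\IC_\mu$ appears in $\IC_{\mmu}$ by Remark~\ref{Satake fiber}. This never invokes representations of $\hat G$ or the classical Satake isomorphism, which is why it sits cleanly before Proposition~\ref{coh vanishing}. Your (repaired) argument is a legitimate alternative that trades that geometric construction for the numerical Satake identity and a Tannakian step.
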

This is a geometric version of the so-called PRV conjecture. The argument as in \cite[Proposition 9.6]{NP} applies here. Note that this lemma and Corollary \ref{Semismall conv} together  imply Proposition \ref{conv prod}.

In addition, we have the following corollary. The argument is similar to the proof of \cite[Theorem 3.1]{NP} given at the beginning of  \S 11 of \emph{ibid.}. But due to Remark \ref{non-split conv}, one justification is needed.
\begin{cor}\label{van coh aux}
Let $\mu_\bullet\subset M$. For any $\la$, $R\Gamma_c(S_\la,\IC_{\mu_1}\star\cdots\star\IC_{\mu_m})$ is concentrated in degree $(2\rho,\la)$.
\end{cor}
\begin{proof}Note that $\pi^{-1}S_\la=\bigsqcup_{\nu_\bullet,|\nu_\bullet|=\la}S_{\nu_\bullet}$. It is enough to show that $R\Gamma_c(S_{\nu_\bullet},\IC_{\mu_1}\star\cdots\star\IC_{\mu_m})$ is concentrated in degree $(2\rho,\la)$.

For an integer $n$, let $S_\nu^{(n)}$ denote the pushout of the $L^+U$-torsor $LU\to S_\nu$ along $L^+U\to L^nU$, and let $(S_\nu\cap\Gr_{\leq \mu})^{(n)}$ denote the restriction of $S_\nu^{(n)}$ to $S_\nu\cap\Gr_{\leq \mu}\subset S_\nu$. This is an $L^nU$-torsor over $S_\nu\cap \Gr_{\leq \mu}$.
Note that the action of $L^+U$ on every $S_{\nu_i}\cap \Gr_{\leq \mu_i}$ factors through some $L^{r_i}U$. Now we can choose $\{r_i,\ i=1,\ldots,m\}$ such that $r_m=0$ and that the action of $L^+U$ on $(S_{\nu_i}\cap \Gr_{\leq \mu_i})^{(r_i)}$ factors through $L^{r_{i-1}}U$. Let $\pr^*\IC_{\mu_i}$ denote the pullback the sheaf along the projection $(S_{\nu_i}\cap\Gr_{\leq \mu_i})^{(r_i)}\to (S_{\nu_i}\cap\Gr_{\leq \mu_i})$. Then 
$\prod (S_{\nu_i}\cap \Gr_{\leq \mu_i})^{(r_i)}$ is an $\prod L^{r_i}U$-torsor. Since $L^{r_i}U$ is isomorphic to the perfection of an affine space of dimension $r_i\dim U$, we have
\begin{multline*}R\Gamma_c((S_{\nu_1}\cap\Gr_{\leq \mu_1})\tilde\times\cdots\tilde\times (S_{\nu_m}\cap \Gr_{\leq \mu_m}),\IC_{\mu_1}\tilde\boxtimes\cdots\tilde\boxtimes\IC_{\mu_m})\\ =R\Gamma_c((S_{\nu_1}\cap\Gr_{\leq \mu_1})^{(r_1)}, \pr^*\IC_{\mu_1})\otimes\cdots\otimes R\Gamma_c((S_{\nu_m}\cap \Gr_{\leq \mu_m})^{(r_m)},\pr^*\IC_{\mu_m})[2\dim U\sum r_i].\end{multline*}
The corollary now follows from \eqref{factorizationII} and Lemma \ref{min and qmin}.
\end{proof}

Note that this corollary and Lemma \ref{idempotent completion} together imply Proposition \ref{coh vanishing}.

\subsection{The monoidal structure on $\on{H}^*$}\label{mon str of H}
\subsubsection{}
We endow the hypercohomology functor 
$$\on{H}^*(-):=\on{H}^*(\Gr,-):\Sat_G\to \on{Vect}_{\Ql}$$
with a monoidal structure. In equal characteristic, this is achieved by identifying the convolution product with the fusion product defined using a global curve (cf. \cite{MV} and \cite[\S 5.3]{BD}). If in addition, $k=\bC$, one can endow $\on{H}^*$ with  another monoidal structure by identifying $\Gr^\flat_G$ with the based loop space of a maximal compact subgroup of $G$ and identifying the convolution map of the affine Grassmannian with the multiplication of the loop group (cf. \cite{Gi1}). Neither method applies directly in our setting so we need a third construction. It is not hard to check that in equal  characteristic, all three monoidal structures coincide.

Recall that for $\mA\in\Sat_G$, it makes sense to consider its $L^+G$-equivariant cohomology
$\on{H}^*_{L^+G}(\mA)$, which is an $R_{\bar G,\ell}$-module (see \S~\ref{equivariant category}). But as is well-known, there is another $R_{\bar G,\ell}$-module structure on $\on{H}^*_{L^+G}(\mA)$ so it is an $R_{\bar G,\ell}$-bimodule. In fact, let $L^+G^{(m)}\subset L^+G$ denote the $m$th congruence subgroup, and let 
$$\Gr^{(m)}=LG/L^+G^{(m)}$$ denote the universal $L^m G$-torsor on $\Gr$. Then $\Gr^{(m)}$ admits an action of $L^+G\times L^mG$ and the projection $\pi_m:\Gr^{(m)}\to \Gr$ is $L^+G$-equivariant. Then by \eqref{coh free action},
\[\on{H}^*_{L^+G}(\mA)\simeq \on{H}^*_{L^+G\times L^mG}(\pi_m^*\mA),\] 
giving an $R_{\bar G,\ell}$-bimodule structure on $\on{H}^*_{L^+G}(\mA)$. This structure is independent of $m$, as soon as $m>0$.
 Recall that the category of $R_{\bar G,\ell}$-bimodules has a natural monoidal structure. 
\begin{lem}\label{soergel monoidal}
There is a natural monoidal structure on $\on{H}^*_{L^+G}(-):\Sat_G\to (\on{R}_{\bar G,\ell}\otimes\on{R}_{\bar G,\ell})\on{-mod}$. I.e., for every $\mA_1,\mA_2,\ldots,\mA_n$, there is a canonical isomorphism of $R_{\bar G,\ell}$-bimodules
\[\on{H}^*_{L^+G}(\mA_1\star\cdots\star\mA_n)\simeq \on{H}^*_{L^+G}(\mA_1)\otimes_{R_{\bar G,\ell}}\cdots\otimes_{R_{\bar G,\ell}}\on{H}^*_{L^+G}(\mA_n),\]
satisfying the natural compatibility conditions.
\end{lem}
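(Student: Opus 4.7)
The plan is to mimic the proof in the equal characteristic setting, going back to Ginzburg and Beilinson--Drinfeld (cf.\ also Soergel's bimodule formalism). By associativity of $\star$ it suffices to establish the case $n=2$; the general $n$-factor statement then follows by iteration. The argument has three ingredients: a geometric Cartesian identification, a Künneth spectral sequence, and degeneration via parity/freeness.

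For the geometric ingredient, one exploits the description $\Gr\tilde{\times}\Gr = LG\times^{L^+G}\Gr$ together with the two commuting $L^+G$-actions on $LG$---the ``outer'' left action providing the $L^+G$-equivariance of $\star$, and the ``inner'' right action through which the bimodule structure on $H^*_{L^+G}$ is defined via $\Gr^{(h)}$. Working at finite level with $L^hG$ and $\Gr^{(h)} = LG/L^+G^{(h)}$, one obtains a canonical identification of homotopy quotients
\[
(\Gr\tilde{\times}\Gr)_{hL^+G} \;\simeq\; \Gr_{hL^+G}\times_{BL^+G}\Gr_{hL^+G},
\]
in which the left factor uses the equivariant structure on $\Gr$ and the right factor uses the virtual right action via $\Gr^{(h)}$, so that the balancing $BL^+G$ records precisely the middle $R_{\bar G,\ell}$ over which one wants to tensor. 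Taking equivariant cohomology of this Cartesian square yields a Künneth spectral sequence
\[
E_2 = \operatorname{Tor}^{R_{\bar G,\ell}}_*\bigl(H^*_{L^+G}(\mA_1),\,H^*_{L^+G}(\mA_2)\bigr) \;\Longrightarrow\; H^*_{L^+G}(\mA_1\star\mA_2),
\]
compatible with the left/right $R_{\bar G,\ell}$-actions on both sides.

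To conclude one needs the higher Tor terms to vanish, i.e.\ that $H^*_{L^+G}(\mA)$ is a \emph{free} $R_{\bar G,\ell}$-module for every $\mA\in\Sat_G$. By Lemma~\ref{semisimplicity} it suffices to treat $\mA=\IC_\mu$. The parity vanishing of stalks of $\IC_\mu$ already used in Lemma~\ref{semisimplicity}---furnished by the Demazure resolution \eqref{DR} and the affine paving of its fibers---causes the Cousin spectral sequence for the stratification $\Gr_{\leq\mu}=\bigcup_{\lambda\leq\mu}\Gr_\lambda$ to collapse at $E_1$. Since each stratum $\Gr_\lambda\simeq L^+G/(L^+G\cap\varpi^\lambda L^+G\varpi^{-\lambda})$ has $L^+G$-equivariant cohomology equal to $H^*\bigl(B(L^+G\cap\varpi^\lambda L^+G\varpi^{-\lambda})\bigr)$, a polynomial ring and hence free over $R_{\bar G,\ell}$, freeness of $H^*_{L^+G}(\IC_\mu)$ follows. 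The Künneth spectral sequence then collapses to its $\operatorname{Tor}_0$ row, yielding the claimed isomorphism for $n=2$; associativity in the $n$-factor statement and the natural compatibility conditions are formal consequences of coherence of the corresponding Cartesian diagrams at higher length.

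The main obstacle lies in the Cartesian identification: one must carefully track the two independent $L^+G$-actions on $LG$ and handle the homotopy quotient in the perfect ind-scheme setting. Once this is done at finite level (with $L^hG$ and $\Gr^{(h)}$), the identification becomes a statement about quotients of perfections of schemes of finite type by $L^hG$-actions, and the remainder of the argument transports from the equal characteristic case without new ideas.
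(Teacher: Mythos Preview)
Your approach is correct and essentially the same as the paper's: the Cartesian identification of homotopy quotients you describe is exactly what the paper realizes concretely via the space $\prod_i\on{Supp}(\mA_i)^{(m_i)}$ with its $L^+G\times\prod_i L^{m_i}G$-action, and your K\"unneth/Tor spectral sequence together with the freeness argument is precisely the content of the paper's ``easy spectral sequence argument.'' The only packaging difference is that the paper treats all $n$ factors at once (so associativity is built in), whereas you reduce to $n=2$ and defer coherence to a separate formal check.
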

\begin{proof}This is standard (in light of Soergel's bimodules) and we sketch a proof. In fact, the idea already appears in the proof of Corollary \ref{van coh aux}.

For a closed subset $Z\subset \Gr$, let $Z^{(m)}$ denote its preimage in $\Gr^{(m)}$. We choose a sequence of positive integers $(m_1,\ldots,m_n)$, such that $L^+G$ acts on $\on{Supp}(\mA_i)^{(m_i)}$ via $L^+G\to L^{m_{i-1}}G$. Then there is an $L^+G\times\prod_i L^{m_i}G$-equivariant projection
\[
\prod_i\on{Supp}(\mA_i)^{(m_i)}\to \on{Supp}(\mA_1)\tilde\times\cdots\tilde\times\on{Supp}(\mA_n),\]
where $L^+G$ acts by left multiplication, and $L^{m_i}G$ acts on $\on{Supp}(\mA_{i})^{(m_i)}\times\on{Supp}(\mA_{i+1})^{(m_{i+1})}$  diagonally from the middle. It induces a canonical isomorphism
\begin{equation}\label{de1}
\on{H}^*_{L^+G}(\mA_1\star\cdots\star\mA_n)\simeq \on{H}^*_{L^+G\times\prod L^{m_i}G}(\boxtimes_i\pi_{m_i}^*\mA_i).
\end{equation}
On the other hand, the $L^+G\times\prod_i L^{m_i}G$-equivariant projection
\[\prod_i\on{Supp}(\mA_i)^{(m_i)}\to \prod_i\on{Supp}(\mA_i),\]
where $L^+G$ acts on $\on{Supp}(\mA_1)$ and $L^{m_i}G$ acts on $\on{Supp}(\mA_{i+1})$ by left multiplication,
induces a map
\begin{equation}\label{de2}
\on{H}^*_{L^+G}(\mA_1)\otimes_{R_{\bar G,\ell}}\cdots\otimes_{R_{\bar G,\ell}}\on{H}^*_{L^+G}(\mA_n)\to  \on{H}^*_{L^+G\times\prod L^{m_i}G}(\boxtimes_i\pi_{m_i}^*\mA_i).
\end{equation}
The composition of \eqref{de1} and \eqref{de2} gives a map
\[\on{H}^*_{L^+G}(\mA_1)\otimes_{R_{\bar G,\ell}}\cdots\otimes_{R_{\bar G,\ell}}\on{H}^*_{L^+G}(\mA_n)\to \on{H}^*_{L^+G}(\mA_1\star\cdots\star\mA_n),\]
which is an isomorphism by an easy spectral sequence argument. Its inverse then gives the desired isomorphism, which is clearly compatible with the associativity constraints.
\end{proof}

\subsubsection{}\label{monoidal equiv coh}
To continue, we make the following observation. Recall that we fix $T\subset B\subset G$, and denote $\bar T\subset\bar B\subset \bar G$ their fibers over $\mO/\varpi$.
Let $\widetilde W=N_G(T)(F)/T(\mO)$ denote the Iwahori-Weyl group of $G(F)$, where $N_G(T)$ is the normalizer of $T$ in $G$. Let $\overline W=\widetilde{W}/\xcoch$ denote the finite Weyl group, i.e. the Weyl group of $G$.  For (the $p$-adic jet group of) a parahoric $P$ that contains $T$, let $\bar L_P$ denote the reductive quotient of $P$ (we ignore the perfection). Then $\bar T\subset \bar L_P$. Let $W_P\subset \widetilde W$ denote the Weyl group of $\bar L_P$, and let $\overline W_P$ denote its image in $\overline W$. Then
\begin{equation}
R_{\bar L_P,\ell}= R_{\bar T,\ell}^{\overline W_P}.
\end{equation}
In particular, we see that for every $P$, $R_{\bar G,\ell}=R_{\bar T,\ell}^{\overline W}\subset R_{\bar L_P,\ell}\subset R_{\bar T,\ell}$.

\begin{lem}\label{left=right}
The two $R_{\bar G,\ell}$-structures on $\on{H}^*_{L^+G}(\mA)$ coincide.
\end{lem}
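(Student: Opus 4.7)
The plan is to identify both $R_{\bar G,\ell}$-module structures on $\on{H}^*_{L^+G}(\mA)$ as cap product by the characteristic classes of a single canonical $\bar G$-torsor on the Borel construction, namely (the pullback of) the torsor $\Gr^{(1)}=LG/L^+G^{(1)}\to\Gr$. Once this common description is in place, the coincidence of the two actions is tautological.

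More precisely, I would first observe that the left structure is, by definition, pullback along the projection $EL^+G\times^{L^+G}\Gr\to BL^+G$ applied to $R_{\bar G,\ell}=\on{H}^*(BL^+G,\Ql)$; geometrically this is cap product with the characteristic classes of the $\bar G$-torsor on $EL^+G\times^{L^+G}\Gr$ obtained by pushing the tautological $L^+G$-torsor $LG\to\Gr$ forward along the reduction $L^+G\twoheadrightarrow\bar G$. That pushout is precisely $\Gr^{(1)}\to\Gr$. For the right structure, use the interpretation $\on{H}^*_{L^+G}(\mA)\simeq\on{H}^*_{L^+G\times L^mG}(\pi_m^*\mA)$, under which $R_{L^mG,\ell}=R_{\bar G,\ell}$ acts as the characteristic class map of the $L^mG$-torsor $\pi_m:\Gr^{(m)}\to\Gr$, extended to the Borel construction. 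Further pushing this $L^mG$-torsor out along $L^mG\twoheadrightarrow\bar G$ again yields the \emph{same} $\bar G$-torsor $\Gr^{(1)}\to\Gr$ on $\Gr$, and hence the same classifying map $EL^+G\times^{L^+G}\Gr\to B\bar G$ up to homotopy. The two characteristic class maps $R_{\bar G,\ell}\to\on{H}^*(EL^+G\times^{L^+G}\Gr,\Ql)$ therefore coincide, and so do the two actions on $\on{H}^*_{L^+G}(\mA)$.

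The main technical point to handle carefully is the comparison of Borel constructions in the perfect ind-algebraic-space setting: one must check that the two descriptions $EL^+G\times^{L^+G^{(1)}}\Gr$ and $EL^+G\times^{L^+G}\Gr^{(1)}$ of the relevant $\bar G$-torsor over $EL^+G\times^{L^+G}\Gr$ are canonically identified, and that the isomorphism $\on{H}^*_{L^+G}(\mA)\simeq\on{H}^*_{L^+G\times L^mG}(\pi_m^*\mA)$ intertwines the right action with the characteristic-class action just described. This is formal but requires some bookkeeping; once it is done, the independence of the bimodule structure from the choice of $m\ge1$ noted in the text becomes transparent, since every sufficiently large quotient $L^mG\twoheadrightarrow\bar G$ recovers the same underlying $\bar G$-torsor $\Gr^{(1)}\to\Gr$.
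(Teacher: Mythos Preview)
Your argument has a genuine gap at its very first step: the identification of the \emph{left} $R_{\bar G,\ell}$-structure with the characteristic classes of the torsor $\Gr^{(1)}\to\Gr$ is incorrect. You correctly note that the left action is pullback along $EL^+G\times^{L^+G}\Gr\to BL^+G$; but the $\bar G$-torsor this corresponds to is the pullback of the universal torsor $EL^+G/L^+G^{(1)}\to BL^+G$, not the equivariant extension of $LG/L^+G^{(1)}\to\Gr$. These are two \emph{different} $\bar G$-torsors on the Borel construction. Concretely, over the $L^+G$-orbit $\Gr_\la=L^+G/(L^+G\cap\varpi^\la L^+G\varpi^{-\la})$, the stabilizer $I_\la:=L^+G\cap\varpi^\la L^+G\varpi^{-\la}$ acts on the fiber of the left torsor via the reduction $I_\la\to\bar G$, $g\mapsto\bar g$ (landing in one parabolic $\bar P_\la$), whereas it acts on the fiber of $\Gr^{(1)}$ via $g\mapsto\overline{\varpi^{-\la}g\varpi^{\la}}$ (landing in the opposite parabolic $\bar P_{-\la}$). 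So the two torsors are not isomorphic, even $L^+G$-equivariantly. What you have actually written down is the right structure twice; the asserted coincidence of the two classifying maps is precisely the content of the lemma, not a tautology.

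It is true that on a single orbit the two characteristic-class maps $R_{\bar G,\ell}\to\on{H}^*(BI_\la)$ agree, because both factor through the common Levi $\bar L_\la$ and hence through the same inclusion $R_{\bar G,\ell}\hookrightarrow R_{\bar L_\la,\ell}$. This is exactly the paper's argument for minuscule $\mu$ (where $\Gr_{\le\mu}$ is a single orbit), phrased slightly differently. But for general $\mA$ one must glue these orbitwise identifications, and that is not formal: the paper handles this by first reducing via Lemma~\ref{idempotent completion} and Lemma~\ref{soergel monoidal} to $\IC_\mu$ with $\mu$ minuscule or quasi-minuscule, and in the quasi-minuscule case uses the explicit parahoric resolution $\wGr_{\le\mu}=Q_0\times^{Q_{1/4}}Q_{1/2}/Q_{3/4}$ to exhibit $\on{H}^*_{L^+G}(\wGr_{\le\mu})$ as an $R_{\bar L_{-\mu/2},\ell}$-module into which both $R_{\bar G,\ell}$-structures factor. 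Your proposal does not supply any mechanism for this gluing; the ``formal bookkeeping'' you allude to would, if carried out, reveal that the two torsors differ and that their characteristic classes only coincide after a nontrivial argument of the sort the paper gives.
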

\begin{proof}
According to Lemma \ref{idempotent completion} and Lemma \ref{soergel monoidal}, it is enough to prove this for $\mA=\IC_\mu$ when $\mu$ is quasi-minuscule.
We first consider the case when $\mu$ is quasi-minuscule but non-minuscule. Recall the definition of $\wGr_{\leq \mu}$ from Lemma \ref{resolution qm},
\[\wGr_{\leq \mu}=Q_0\times^{Q_{\frac{1}{4}}}Q_{\frac{1}{2}}\times^{Q_{\frac{3}{4}}}Q_1/Q_1.\]
Then by the same argument as in Lemma \ref{soergel monoidal},
\[\on{H}_{L^+G}^*(\wGr_{\leq \mu})= R_{\bar L_{Q_{1/4}},\ell}\otimes_{R_{\bar L_{Q_{1/2}},\ell}} R_{\bar L_{Q_{3/4}},\ell}.\]
 The first $R_{\bar G,\ell}$-structure comes from the inclusion $R_{\bar G,\ell}=R_{\bar L_{Q_0},\ell}\subset R_{\bar L_{Q_{1/4}},\ell}$, and the second comes from the map $R_{\bar G,\ell}=R_{\bar L_{Q_1},\ell}\subset R_{\bar L_{Q_{3/4}},\ell}$. But as $R_{\bar G,\ell}$ is a subring of $R_{\bar L_{Q_{1/2}},\ell}$, these two $R_{\bar G,\ell}$ structures coincide.  It follows that the two $R_{\bar G,\ell}$-structures on $\on{IH}_{L^+G}(\Gr_{\leq \mu})=\on{H}^*_{L^+G}(\IC_{\mu}[-(2\rho,\mu)])$ also coincide, as it is direct summand of $\on{H}^*_{L^+G}(\wGr_{\leq \mu})$.

Next we consider the case when $\mu$ is minuscule. Note that the definition of $Q_r$ before Lemma \ref{resolution qm} in fact makes sense for every $\mu$. In particular, $Q_0=L^+G$, $Q_{1}=\varpi^\mu L^+G\varpi^{-\mu}$ and $Q_{\frac{1}{2}}=Q_1\cap Q_0$. Then
one can argue similarly to conclude that
\[\on{H}^*_{L^+G}(\Gr_\mu)= R_{\bar L_{Q_{1/2}},\ell},\]
with the two $R_{\bar G,\ell}$ structures given by $R_{\bar G,\ell}=R_{\bar L_{Q_0},\ell}\subset R_{\bar L_{Q_{1/2}},\ell}$ and $R_{\bar G,\ell}=R_{\bar L_{Q_1},\ell}\subset R_{\bar L_{Q_{1/2}},\ell}$, which clearly coincide.
\end{proof}

Note that there is a canonical isomorphism $\on{H}^*(\mA)=\Ql\otimes_{R_{\bar G,\ell}}\on{H}^*_{L^+G}(\mA),$ where $R_{\bar G,\ell}\to \Ql$ is via the augmentation map, again by an easy spectral sequence argument. 
Then combining the above two lemmas, we obtain the following statement.
\begin{prop}\label{monoidal functor}
The $L^+G$-equivariant hypercohomology functor 
$$\on{H}_{L^+G}^*(-):=\on{H}_{L^+G}^*(\Gr_G,-):\Sat_G\to \on{Proj}_{R_{\bar G,\ell}}$$ has a canonical monoidal structure, where $ \on{Proj}_{R_{\bar G,\ell}}$ denotes the tensor category of finite projective $R_{\bar G,\ell}$-modules. After base change along the augmentation map $R_{\bar G,\ell}\to \Ql$, the usual hypercohomology functor
$$\on{H}^*(-):=\on{H}^*(\Gr_G,-):\Sat_G\to \on{Vect}_{\Ql}$$ 
is a natural monoidal functor.
\end{prop}

\subsection{The commutativity constraints}\label{comm constr}
\subsubsection{}
In this subsection, we endow $\Sat_G$ with the commutativity constraints. The main statement is
\begin{prop}\label{existence of comm}
For every $\mA_1,\mA_2\in\Sat_G$, there exists a unique isomorphism $c_{\mA_1,\mA_2}:\mA_1\star\mA_2\simeq\mA_2\star\mA_1$ such that the following diagram is commutative
\[\begin{CD}
\on{H}^*(\mA_1\star\mA_2)@>\on{H}^*(c_{\mA_1,\mA_2})>>\on{H}^*(\mA_2\star\mA_1)\\
@V\simeq VV@VV\simeq V\\
\on{H}^*(\mA_1)\otimes\on{H}^*(\mA_2)@>\simeq>c_{\on{vect}}>\on{H}^*(\mA_2)\otimes\on{H}^*(\mA_1),
\end{CD}\]
where the vertical isomorphisms come from Proposition \ref{monoidal functor}, and the isomorphism $c_{\on{vect}}$ in the bottom row is the usual flip isomorphism between vector spaces.
\end{prop}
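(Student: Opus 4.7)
Uniqueness is immediate from the faithfulness of $\on{H}^*$ on morphisms (Corollary~\ref{weight functor}): any $c_{\mA_1,\mA_2}$ making the square commute must map under $\on{H}^*$ to the unique map determined by conjugating $c_{\on{vect}}$ by the monoidal structure isomorphism of Proposition~\ref{monoidal functor}, and a faithful functor cannot admit two different preimages of the same morphism. So the real content is existence.

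The plan for existence is to import Ginzburg's categorification of the classical Gelfand trick from \cite{Gi1}. The inversion map $\iota:LG\to LG$, $g\mapsto g^{-1}$, is an anti-equivariant automorphism with respect to left and right $L^+G$ actions, and descends to an involution on $\Gr_G$ once one identifies $LG/L^+G$ with $L^+G\backslash LG$ via $\iota$. Pulling convolution products back along $\iota$ reverses the order of factors: $\iota^*(\mA_1\star\mA_2)\simeq \iota^*\mA_2\star\iota^*\mA_1$. Combining this identity with Verdier self-duality of the objects of $\Sat_G$ (each $\IC_\mu$ is Verdier self-dual up to the Tate twist already built into our normalization, and duality interchanges $!$- and $*$-pushforwards along the perfectly proper map~$m$) produces, for every pair $(\mA_1,\mA_2)$, a natural isomorphism
\[
c_{\mA_1,\mA_2}:\mA_1\star\mA_2\xrightarrow{\ \sim\ }\mA_2\star\mA_1.
\]
The adaptations of \cite{Gi1} to our setting are formal because the six-functor formalism available for perfect algebraic spaces (Appendix~\ref{generality of perf}) supports all the needed manipulations.

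It remains to verify that $\on{H}^*(c_{\mA_1,\mA_2})$ coincides with $c_{\on{vect}}$ under the identifications of Proposition~\ref{monoidal functor}. By semisimplicity (Lemma~\ref{semisimplicity}) and additivity of $\on{H}^*$, one reduces to the case $\mA_i=\IC_{\mu_i}$. Under the Mirkovi\'c--Vilonen decomposition
$\on{H}^*(\IC_{\mu_1}\star\IC_{\mu_2})=\bigoplus_\la \on{H}_c^{(2\rho,\la)}(S_\la,\IC_{\mu_1}\star\IC_{\mu_2})$
furnished by Corollary~\ref{weight functor}, and using the canonical basis by irreducible components of $\Gr_{\mmu}\cap m^{-1}(\varpi^\la)$ (Remark~\ref{Satake fiber}), both $\on{H}^*(c_{\mA_1,\mA_2})$ and the conjugate of $c_{\on{vect}}$ act on each canonical basis element by $\pm 1$. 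Equality of the two maps thus reduces to matching a list of signs indexed by pairs of affine Weyl group elements.

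The hard step is this sign matching; it is the point where fusion products (used in the equal characteristic proof of \cite{MV,BD}) are unavailable. The key observation, as advertised in the introduction, is that the sign identity is equivalent to Lusztig's conjecture $P^\sigma_{y,w}(q)=P_{y,w}(-q)$ relating Lusztig--Vogan polynomials to Kazhdan--Lusztig polynomials, for the relevant pairs $(y,w)$ in the affine Weyl group (see \S\ref{comb formula}). Rather than proving this combinatorial identity directly, the plan is to invoke the theorem of Lusztig--Yun \cite{LY}, which establishes it using the \emph{equal} characteristic geometric Satake correspondence. Thus the overall strategy proceeds in three steps: (i)~construct the candidate $c_{\mA_1,\mA_2}$ via the Gelfand-trick/Verdier-duality recipe; (ii)~translate compatibility with $c_{\on{vect}}$ into the Lusztig--Vogan sign identity; (iii)~quote \cite{LY}. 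The main obstacle, and the essentially non-formal ingredient, is step~(iii), which is why the mixed characteristic geometric Satake ultimately rests on its equal characteristic counterpart.
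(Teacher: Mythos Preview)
Your overall strategy matches the paper's: construct candidate constraints via a categorified Gelfand trick, reduce compatibility with $c_{\on{vect}}$ to a sign identity, and invoke Lusztig--Yun \cite{LY}. But the construction you sketch is not quite right, and your step~(ii) glosses over the real work.

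\textbf{The construction.} The plain inversion $\iota:g\mapsto g^{-1}$ sends $\Gr_\mu$ to $\Gr_{\mu^*}^{\on{op}}$ (with $\mu^*=-w_0\mu$), not to $\Gr_\mu^{\on{op}}$, so $\iota^*\IC_\mu\simeq\IC_{\mu^*}^{\on{op}}$. Composing with the identification $\Id':\Sat_G^{\on{op}}\to\Sat_G$ then lands in $\IC_{\mu^*}$, not $\IC_\mu$; Verdier duality does not repair this, since it fixes each $\IC_\mu$ and is \emph{compatible} with convolution (via $m_!=m_*$), not anti-compatible. The paper instead uses the anti-involution $\theta(g)=\theta'(g)^{-1}$, where $\theta'$ is the pinned automorphism sending $\mu\mapsto\mu^*$, so that $\theta$ genuinely carries $\Gr_\mu$ to $\Gr_\mu^{\on{op}}$. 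One then builds the anti-autoequivalence $\Id'\circ\theta^*$ of $\Sat_G$ and a natural isomorphism $e:\Id'\circ\theta^*\to\Id$; crucially, $e_\mu$ is not the na\"ive isomorphism but is rescaled by $(-1)^{(\rho,\mu)}$, and a further Koszul sign $(-1)^{p(\mA_1)p(\mA_2)}$ is inserted at the end (see \eqref{def of comm} and \eqref{def of comm 2}). These signs are not cosmetic---without them the compatibility with $c_{\on{vect}}$ fails.

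\textbf{The reduction.} The paper does not reduce directly to signs on MV-cycle or Satake-fiber bases. Instead, it packages the discrepancy between $\on{H}^*(c'_{\mA_1,\mA_2})$ and $c_{\on{vect}}$ into a single automorphism $\Theta$ of $\on{H}^*(\mA)$ (built from Lemmas~\ref{aux1} and~\ref{aux2}), and the claim becomes that $\Theta$ acts on $\on{H}^j$ by $\sqrt{-1}^{\,j}$ (Lemma~\ref{aux 3}). This is further reduced, via a filtration of $\on{IH}^*(\Gr_{\leq\mu})$ by closed Schubert strata together with the open-cell computation of Lemma~\ref{toy case}, to a statement on stalk cohomology (Lemma~\ref{local involution}): the involution $\mH_\la^{2j}\Psi_\mu$ on $\mH_\la^{2j}\mathbf{C}_\mu$ is multiplication by $(-1)^j$. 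It is \emph{this} last identity that translates into $P^{\sigma,\diamond}_{d_\la,d_\mu}(q)=P_{d_\la,d_\mu}(-q)$ and is supplied by \cite{LY}. Your assertion that ``both maps act on each canonical basis element by $\pm 1$'' and that one must ``match a list of signs'' does not account for this chain of reductions, and in particular does not explain why the Lusztig--Vogan formula is the relevant input.
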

As $\on{H}^*:\Sat_G\to\on{Vect}_{\Ql}$ is faithful, the uniqueness of $c_{\mA_1,\mA_2}$ is clear. The content is its existence.
This proposition will be proved in the rest of the subsection. We first give its consequence.

\begin{cor}The monoidal category $\Sat_G$, equipped with the above constraints $c_{\mA_1,\mA_2}$, form a symmetric monoidal category. The hypercohomology functor $\on{H}^*$ is a tensor functor. 
\end{cor}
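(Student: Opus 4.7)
The plan is to reduce both claims to formal consequences of the three facts already in hand: Proposition~\ref{monoidal functor} says $\on{H}^*$ is a monoidal functor; Corollary~\ref{weight functor} says $\on{H}^*$ is faithful; and Proposition~\ref{existence of comm} produces for each pair $(\mA_1,\mA_2)$ an isomorphism $c_{\mA_1,\mA_2}$ whose image under $\on{H}^*$ is the standard commutativity constraint $c_{\on{vect}}$ on $\on{Vect}_{\Ql}$.

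First I would verify naturality of $c_{-,-}$. Given $f_i:\mA_i\to\mA_i'$ in $\Sat_G$, both compositions $c_{\mA_1',\mA_2'}\circ(f_1\star f_2)$ and $(f_2\star f_1)\circ c_{\mA_1,\mA_2}$ are morphisms $\mA_1\star\mA_2\to\mA_2'\star\mA_1'$; applying $\on{H}^*$ and using Proposition~\ref{monoidal functor}, they both become the single morphism $c_{\on{vect}}\circ(\on{H}^*(f_1)\otimes\on{H}^*(f_2))$, so by faithfulness they are equal. Next I would check the hexagon axiom: the two morphisms $\mA_1\star(\mA_2\star\mA_3)\to(\mA_2\star\mA_3)\star\mA_1$ coming from the two sides of the hexagon also become equal under $\on{H}^*$, because on the side of vector spaces the hexagon for $c_{\on{vect}}$ commutes; faithfulness again gives the equality in $\Sat_G$. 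The symmetry $c_{\mA_2,\mA_1}\circ c_{\mA_1,\mA_2}=\on{id}$ is identical: apply $\on{H}^*$ to obtain $c_{\on{vect}}\circ c_{\on{vect}}=\on{id}$, and invoke faithfulness. Together with the unit constraint (which is forced by the same uniqueness argument, since $\on{H}^*$ sends it to the identity in $\on{Vect}_{\Ql}$), these checks make $(\Sat_G,\star,c)$ symmetric monoidal.

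For the second sentence, $\on{H}^*$ is already a monoidal functor by Proposition~\ref{monoidal functor}, and its compatibility with the commutativity constraints is precisely the defining property of $c_{\mA_1,\mA_2}$ in Proposition~\ref{existence of comm}; so $\on{H}^*$ is symmetric monoidal, i.e.\ a tensor functor. There is really no main obstacle once one has Proposition~\ref{existence of comm}: all of the symmetric monoidal axioms reduce, via faithfulness of $\on{H}^*$ and the fact that $\on{H}^*(c_{\mA_1,\mA_2})=c_{\on{vect}}$, to the corresponding identities in $\on{Vect}_{\Ql}$, which are standard. The genuine work has already been done in establishing Propositions~\ref{monoidal functor} and~\ref{existence of comm}; the corollary is the formal bookkeeping that packages them into a symmetric tensor structure suitable for Tannakian reconstruction of $\hat G$.
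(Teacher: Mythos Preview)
Your proposal is correct and follows essentially the same approach as the paper: use faithfulness of $\on{H}^*$ together with Proposition~\ref{existence of comm} to reduce the symmetry and hexagon axioms to the corresponding identities for $c_{\on{vect}}$ in $\on{Vect}_{\Ql}$. The paper's proof is terser---it only mentions the symmetry $c_{\mA_2,\mA_1}c_{\mA_1,\mA_2}=\id$ and the hexagon axioms explicitly---but your additional remarks on naturality and the unit constraint are handled by the same mechanism and are in the same spirit.
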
 
\begin{proof}The proof of the first statement follows the idea of Ginzburg (cf. \cite{Gi1}). Namely, we need to check $c_{\mA_2,\mA_1}c_{\mA_1,\mA_2}=\id$ and the hexagon axiom.
Using the faithfulness of $\on{H}^*$, it is enough to prove these statements after taking the cohomology. Using Proposition \ref{existence of comm}, and the fact $c_{\on{vect}}^2=\id$, we conclude that $\on{H}^*(c_{\mA_2,\mA_1}c_{\mA_1,\mA_2})=\id$, and therefore $c_{\mA_2,\mA_1}c_{\mA_1,\mA_2}=\id$. The hexagon axiom can be proved similarly.
The second statement is clear.
\end{proof}

\subsubsection{}\label{functor Id'}
In order to construct $c_{\mA_1,\mA_2}$, we need some preparations. Define
\[\Gr_G^{\on{op}}:=L^+G\backslash LG,\]
on which  $L^+G$ acts by right multiplication. As before, sometimes we denote $\Gr^{\on{op}}_G$ by $\Gr^{\on{op}}$ for simplicity. Let $\on{P}_{L^+G}(\Gr_G^{\on{op}})$ denote the corresponding category of equivariant perverse sheaves. Note that $\on{P}_{L^+G}(\Gr_G^{\on{op}})$ also has a monoidal structure: There is the convolution Grassmannian $$\Gr^{\on{op}}\tilde\times\Gr^{\on{op}}:=L^+G\backslash LG\times^{L^+G} LG$$ equipped with $(m,\pr_2):\Gr^{\on{op}}\tilde\times\Gr^{\on{op}}\to \Gr^{\on{op}}\times \Gr^{\on{op}}$. Then for $\mA_1,\mA_2\in \on{P}_{L^+G}(\Gr_G^{\on{op}})$, one forms the twisted product $\mA_1\tilde\boxtimes\mA_2$ whose pullback along $\Gr^{\on{op}}\times LG\to \Gr^{\on{op}}\tilde\times\Gr^{\on{op}}$ is the pullback of $\mA_1\boxtimes\mA_2$ along $\Gr^{\on{op}}\times LG\to \Gr^{\on{op}}\times\Gr^{\on{op}}$, and forms the convolution product $\mA_1\star\mA_2=m_!(\mA_1\tilde\boxtimes\mA_2)$. For simplicity, we sometimes denote $(\on{P}_{L^+G}(\Gr^{\on{op}}_G),\star)$ by $\Sat_G^{\on{op}}$.

We have the following statements.
\begin{lem}
There is an equivalence of the monoidal categories
\[\Id':\Sat_G^{\on{op}}\simeq \Sat_G,\]
sending the intersection cohomology sheaf $\IC_{\mu}^{\on{op}}$ of $\Gr_{\leq \mu}^{\on{op}}$ to $\IC_\mu$, where $\Gr_{\leq\mu}^{\on{op}}$ is the closure of $\Gr_{\mu}^{\on{op}}=L^+G\backslash L^+G\varpi^\mu L^+G$. 
\end{lem}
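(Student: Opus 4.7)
The plan is to realize $\Id'$ as the canonical identification obtained by describing both categories as perverse sheaves on a single common quotient stack. Set
\[\mX := [L^+G \backslash LG / L^+G],\]
interpreted as a perfect algebraic stack in the sense of Appendix \ref{generality of perf}. Via the equivariant formalism (cf.\ \S \ref{equivariant category}), the quotient map $\pi : \Gr_G \to \mX$ which mods out the residual left $L^+G$-action on $\Gr_G = LG/L^+G$ induces a canonical equivalence $\pi^* : \on{P}(\mX) \xrightarrow{\sim} \on{P}_{L^+G}(\Gr_G)$, and likewise $\pi^{\on{op}} : \Gr_G^{\on{op}} \to \mX$ (modding out the residual right action) induces $(\pi^{\on{op}})^* : \on{P}(\mX) \xrightarrow{\sim} \on{P}_{L^+G}(\Gr_G^{\on{op}})$. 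I then define
\[\Id' := \pi^* \circ \bigl((\pi^{\on{op}})^*\bigr)^{-1} : \on{P}_{L^+G}(\Gr_G^{\on{op}}) \xrightarrow{\sim} \on{P}_{L^+G}(\Gr_G).\]

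Under this identification the matching of IC sheaves is immediate: for each dominant $\mu$, the double coset $L^+G \varpi^\mu L^+G \subset LG$ descends to a locally closed substack $\mX_\mu \subset \mX$ whose inverse images under $\pi$ and $\pi^{\on{op}}$ are exactly $\Gr_\mu$ and $\Gr_\mu^{\on{op}}$ (and similarly for the closures $\Gr_{\leq \mu}$ and $\Gr_{\leq \mu}^{\on{op}}$). The IC sheaf of $\overline{\mX_\mu}$ therefore pulls back to $\IC_\mu$ on the $\Gr_G$ side and to $\IC_\mu^{\on{op}}$ on the $\Gr_G^{\on{op}}$ side, so $\Id'(\IC_\mu^{\on{op}}) = \IC_\mu$.

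For the monoidal structure, I observe that both convolution Grassmannians---$\Gr_G \tilde\times \Gr_G = LG \times^{L^+G} LG / L^+G$ and $\Gr_G^{\on{op}} \tilde\times \Gr_G^{\on{op}} = L^+G \backslash LG \times^{L^+G} LG$---descend along further $L^+G$-quotients to the same triple-quotient stack
\[\mY := [L^+G \backslash LG \times^{L^+G} LG / L^+G],\]
and the two multiplication maps $m$ and $m^{\on{op}}$ both lift a single morphism $\bar m : \mY \to \mX$ induced from the group multiplication $LG \times LG \to LG$. Consequently both convolution products $\star$ and $\star^{\on{op}}$ correspond under $\Id'$ to one and the same operation on $\on{P}(\mX)$, defined by applying $\bar m_!$ to a common external twisted product over $\mY$; the associativity constraints likewise descend from those on $\mX$. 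Thus $\Id'$ is manifestly a monoidal equivalence.

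The main technical task is to install the perverse $t$-structure and verify proper base change on $\mX$ and $\mY$ in this perfect, ind-pro setting: since $L^+G$ is pro-algebraic (after perfection) and $LG$ is ind-perfect, one must extend the equivariant formalism of Appendix \ref{generality of perf} from equivariant perfect schemes to quotient stacks by $L^+G$, which is routine but requires some care. Once this framework is in place the lemma becomes essentially formal, as $\Id'$ is literally the identity on $\on{P}(\mX)$, and each of its claimed properties---equivalence, matching of IC sheaves, and monoidality---reduces to a transparent assertion about the common description.
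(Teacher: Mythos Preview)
Your approach is the natural ``stacky'' formulation, and indeed the paper acknowledges it in the remark following Lemma~\ref{can c}: ``Informally, one can think both categories as certain category of $(L^+G\times L^+G)$-equivariant sheaves on $LG$. As we did not introduce sheaves on infinite-dimensional spaces, we give a concrete treatment here.'' So the viewpoint is correct, but the paper deliberately avoids it because of a genuine technical obstacle that your last paragraph glosses over as ``routine.''

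The difficulty is this. To make sense of a perverse $t$-structure on $\mX = [L^+G\backslash LG/L^+G]$ compatible with both $\pi^*$ and $(\pi^{\on{op}})^*$, you need both pullbacks to be $t$-exact up to shift. Concretely, at finite level this amounts to the diagram \eqref{left to right corr}: the map $\pi_m : \Gr_{\leq\mu}^{(m)} \to \Gr_{\leq\mu}$ is an $L^mG$-torsor, hence perfectly smooth, so $\pi_m^*[m\dim G]$ preserves perversity. But the other projection $\phi_m : \Gr_{\leq\mu}^{(m)} \to \Gr_{\leq\mu}^{\on{op}}$ is a quotient by the left $L^+G$-action, which is \emph{not} a torsor (the stabilizers $hL^+G^{(m)}h^{-1}$ vary with $h$). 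The paper explicitly flags this: ``as we do not know whether $\phi_m$ is perfectly smooth, a priori it is not obvious that $\phi_m^*\IC_\mu^{\on{op}}[m\dim G]$ is perverse.'' In equal characteristic this is not an issue because smoothness is available; in the perfect setting it is not.

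The paper's actual argument therefore bypasses the stack and instead proves Lemma~\ref{can c} directly---that $\phi_m^*\IC_\mu^{\on{op}}\simeq \pi_m^*\IC_\mu$---by first handling minuscule and quasi-minuscule $\mu$ via the explicit resolutions of Lemma~\ref{resolution qm}, then bootstrapping to all $\mu$ through convolution and the idempotent-completion Lemma~\ref{idempotent completion}. The monoidal structure on $\Id'$ is then extracted from the isomorphisms $\id'_{\mu_\bullet}$. So the content of the lemma, which you hoped would be ``essentially formal,'' is precisely the verification that the two perverse $t$-structures one would want to put on $\mX$ agree, and this requires the explicit geometric input.
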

Let $(LG)_{\leq \mu}$ denote the preimage of $\Gr_{\leq \mu}$ under the projection $LG\to \Gr$. Let $m$ be an integer large enough such that the $m$th congruence subgroup $L^+G^{(m)}$ is contained in  $L^+G\cap \varpi^{\mu}L^+G\varpi^{-\mu}$. Then we obtain the following diagram of surjective maps
\begin{equation}\label{left to right corr}
\Gr_{\leq \mu}\stackrel{\pi_m}{\leftarrow} \Gr_{\leq \mu}^{(m)}=(LG)_{\leq \mu}/L^+G^{(m)}\stackrel{\phi_m}{\to} L^+G\backslash (LG)_{\leq \mu}=\Gr_{\leq\mu}^{\on{op}}.
\end{equation}
\begin{lem}\label{can c}
There exists a unique isomorphism 
\[
\id'_\mu:\phi_m^*\IC_\mu^{\on{op}}\simeq \pi_m^*\IC_\mu
\]
of sheaves on $\Gr_{\leq \mu}^{(m)}$, whose restriction to $\Gr_\mu^{(m)}$ is given by 
$$\phi_m^*\IC_\mu^{\on{op}}|_{\Gr_{\mu}^{(m)}}=\Ql[(2\rho,\mu)]=\pi_m^*\IC_\mu|_{\Gr_\mu^{(m)}}.$$
In particular, $\phi_m^*\IC_\mu^{\on{op}}[m\dim G]$ is perverse.
\end{lem}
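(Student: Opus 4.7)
I plan to realize both $\pi_m^*\IC_\mu[m\dim G]$ and $\phi_m^*\IC_\mu^{\on{op}}[m\dim G]$ as simple perverse sheaves on $\Gr_{\leq\mu}^{(m)}$ that restrict to $\Ql[(2\rho,\mu)+m\dim G]$ (with the appropriate Tate twist) on the common open dense smooth stratum $\Gr_\mu^{(m)} = \pi_m^{-1}(\Gr_\mu) = \phi_m^{-1}(\Gr_\mu^{\on{op}})$. Uniqueness of intersection cohomology extensions will then produce an iso $\phi_m^*\IC_\mu^{\on{op}}[m\dim G]\simeq\pi_m^*\IC_\mu[m\dim G]$, determined up to scalar, and the normalization on $\Gr_\mu^{(m)}$ pins down the scalar to yield $\id'_\mu$. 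The uniqueness assertion in the lemma is automatic once simplicity is established, since then $\pi_m^*\IC_\mu$ has scalar endomorphisms.

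The first identification is formal: $\pi_m$ is a right torsor under the connected perfect algebraic group $L^mG = L^+G/L^+G^{(m)}$ of dimension $m\dim G$, so $\pi_m^*[m\dim G]$ is t-exact on perverse sheaves, preserves simplicity, and visibly satisfies the restriction condition.

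The substantive work is the analogous statement for $\phi_m$. I would show that $\phi_m$ is perfectly smooth of relative dimension $m\dim G$ with geometrically connected fibers. The fiber over $L^+Gx\in\Gr_{\leq\mu}^{\on{op}}$ identifies canonically with $L^+G/(xL^+G^{(m)}x^{-1})$; the assumption $L^+G^{(m)}\subset L^+G\cap\varpi^\mu L^+G\varpi^{-\mu}$ together with the normality of $L^+G^{(m)}$ in $L^+G$ ensures $xL^+G^{(m)}x^{-1}\subset L^+G$ for every $x\in(LG)_{\leq\mu}$, so $\phi_m$ is well-defined and surjective. At $x=\varpi^\mu$ the identity $\Ad(\varpi^\mu)(\varpi^m\mathfrak{g}_\alpha(\mO)) = \varpi^{m+\langle\alpha,\mu\rangle}\mathfrak{g}_\alpha(\mO)$, combined with the vanishing $\sum_\alpha\langle\alpha,\mu\rangle=0$ (roots coming in opposite pairs), shows that $\varpi^\mu L^+G^{(m)}\varpi^{-\mu}$ has codimension exactly $m\dim G$ in $L^+G$. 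Trivializing the left $L^+G$-torsor $(LG)_{\leq\mu}\to\Gr_{\leq\mu}^{\on{op}}$ over a suitable open cover then presents $\phi_m$ locally as an iterated perfect affine bundle of the right dimension, and connectivity of fibers follows from connectivity of $L^+G$.

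The main obstacle is making this smoothness check rigorous in the perfect world. An alternative, perhaps cleaner route is to lift everything to $(LG)_{\leq\mu}$: with $p:(LG)_{\leq\mu}\to\Gr_{\leq\mu}^{(m)}$ the right $L^+G^{(m)}$-torsor, $\pi_m\circ p$ and $p_2:=\phi_m\circ p$ are respectively a right and a left $L^+G$-torsor onto $\Gr_{\leq\mu}$ and $\Gr_{\leq\mu}^{\on{op}}$. Both $(\pi_mp)^*\IC_\mu$ and $p_2^*\IC_\mu^{\on{op}}$ are then the ``universal'' IC sheaf on the closure of the bi-$L^+G$-invariant orbit $L^+G\varpi^\mu L^+G$, canonically identified via their common restriction $\Ql[(2\rho,\mu)]$ on the smooth open and extended by IC uniqueness; the resulting iso is right $L^+G^{(m)}$-equivariant and descends through $p$ to the required $\id'_\mu$ on $\Gr_{\leq\mu}^{(m)}$, from which the perversity of $\phi_m^*\IC_\mu^{\on{op}}[m\dim G]$ follows by transport.
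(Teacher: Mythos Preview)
Your proposal has a genuine gap in both routes you sketch, and the paper takes a completely different path.

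\textbf{On the first route.} The crux is your claim that $\phi_m$ is perfectly smooth. The paper explicitly flags this as unknown (see the remark immediately after the lemma): ``as we do not know whether $\phi_m$ is perfectly smooth, a priori it is not obvious that $\phi_m^*\IC_\mu^{\on{op}}[m\dim G]$ is perverse.'' Your argument does not close this. After trivializing the left $L^+G$-torsor over $U\subset\Gr_{\leq\mu}^{\on{op}}$ via a section $s$, the fiber of $\phi_m$ over $u$ is $L^+G/\Ad(s(u))(L^+G^{(m)})$, and the subgroup $\Ad(s(u))(L^+G^{(m)})$ genuinely varies with $u$, jumping as $u$ crosses from one Schubert stratum to another. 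Equality of fiber dimensions does not give local triviality; your assertion that this is ``locally an iterated perfect affine bundle'' is exactly the missing step, and in the perfect setting (where ``perfectly smooth'' is the restrictive condition of being \'etale-locally a projection from a product with $(\bA^n)^{\pf}$) there is no obvious mechanism to produce it.

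\textbf{On the second route.} Lifting to $(LG)_{\leq\mu}$ and invoking ``IC uniqueness'' there does not work: $(LG)_{\leq\mu}$ is infinite-dimensional, so there is no perverse $t$-structure and no intermediate-extension characterization available. The paper's remark (i) after the lemma makes this point as well: one can only think of these as bi-equivariant sheaves on $LG$ \emph{informally}, which is precisely why a concrete finite-dimensional treatment is needed. Knowing that two complexes agree on an open dense subset is not enough to identify them without a $t$-structure argument.

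\textbf{What the paper does instead.} The paper proves this lemma simultaneously with the preceding one by bootstrapping from small $\mu$. For $\mu$ minuscule, $\Gr_{\leq\mu}$ and $\Gr_{\leq\mu}^{\on{op}}$ are perfectly smooth single orbits, so both IC sheaves are shifted constant sheaves and $\id'_\mu$ is tautological. For $\mu$ quasi-minuscule, one uses the explicit ``resolution'' $\pi_\mu:\wGr_{\leq\mu}\to\Gr_{\leq\mu}$ of Lemma~\ref{resolution qm} and its op-analogue $\pi_\mu^{\on{op}}$; forming the common fiber product $\wGr_{\leq\mu}^{(m)}$ over $\Gr_{\leq\mu}^{(m)}$ gives a chain of canonical isomorphisms
\[
\phi_m^*\IC_\mu^{\on{op}}\oplus\phi_m^*\mC^{\on{op}}\simeq\phi_m^*(\pi_\mu^{\on{op}})_*\Ql[d]\simeq(\pi_\mu^{(m)})_*\Ql[d]\simeq\pi_m^*(\pi_\mu)_*\Ql[d]\simeq\pi_m^*\IC_\mu\oplus\pi_m^*\mC,
\]
from which one extracts $\id'_\mu$. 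For general $\mu$, one writes $\IC_\mu$ as a summand of a convolution $\IC_{\mu_1}\star\cdots\star\IC_{\mu_n}$ with each $\mu_i$ (quasi-)minuscule (Lemma~\ref{idempotent completion}), assembles $\id'_{\mmu}$ from the $\id'_{\mu_i}$ via the convolution diagram, and reads off $\id'_\mu$ on the summand. The perversity of $\phi_m^*\IC_\mu^{\on{op}}[m\dim G]$ then follows \emph{a posteriori} from the isomorphism with $\pi_m^*\IC_\mu[m\dim G]$, rather than from any smoothness property of $\phi_m$.
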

\begin{rmk}
(i) Informally, one can think both categories as certain category of $(L^+G\times L^+G)$-equivariant sheaves on $LG$. As we did not introduce sheaves on infinite-dimensional spaces, we give a concrete approach here.

(ii) As $\pi_m$ is an $L^mG$-torsor,  $\pi_m^*[m\dim G]$ preserves perversity. However, as we do not know whether $\phi_m$ is perfectly smooth, a priori it is not obvious that $\phi_m^*\IC_\mu^{\on{op}}[m\dim G]$ is perverse. On the other hand, as soon as the perversity of $\phi_m^*\IC_\mu^{\on{op}}[m\dim G]$ is known, the existence and the uniqueness of $\id'_\mu$ are clear.
\end{rmk}
\begin{proof}
We will prove these two lemmas simultaneously. First, if $\mu$ is minuscule, then $\Gr_{\leq \mu}$ and $\Gr_{\leq \mu}^{\on{op}}$ are perfectly smooth so there is a unique isomorphism 
$\id'_\mu: \phi_m^*\IC_\mu^{\on{op}}=\Ql[(2\rho,\mu)]= \pi_m^*\IC_\mu$
as required by Lemma \ref{can c}.

Next, if $\mu$ is quasi-minuscule but non-minuscule, let $\wGr_{\leq\mu}\to\Gr_{\leq \mu}$ denote the ``resolution" as constructed in Lemma \ref{resolution qm}. We can define
$$\wGr_{\leq \mu}^{\on{op}}=Q_0\backslash Q_0\times^{Q_{1/4}}Q_{1/2}\times^{Q_{3/4}}Q_1=Q_{1/4}\backslash Q_{1/2}\times^{Q_{3/4}}Q_1,$$ 
and the map 
$$\pi_\mu^{\on{op}}:\wGr_{\leq \mu}^{\on{op}}\to \Gr^{\on{op}}_{\leq \mu}, \ (g,g')\mapsto gg'\varpi^\mu,$$ which is a ``resolution" of $\Gr^{\on{op}}_{\leq \mu}$. We define $\wGr_{\leq \mu}^{(m)}$ by requiring that  both squares in the following diagram
\[\begin{CD}
\wGr_{\leq \mu}^{\on{op}}@<\tilde \phi_m<< \wGr_{\leq \mu}^{(m)}@>\tilde \pi_m>> \wGr_{\leq \mu}\\
@V\pi_\mu^{\on{op}}VV@VV\pi_\mu^{(m)}V@VV\pi_\mu V\\
\Gr_{\leq \mu}^{\on{op}}@<\phi_m<<\Gr_{\leq \mu}^{(m)}@>\pi_m>> \Gr_{\leq \mu}
\end{CD}\]
are Cartesian. Then we obtain the canonical isomorphisms
\[\phi_m^*\IC_\mu^{\on{op}}\oplus \phi_m^*\mC^{\on{op}}\simeq \phi_m^*(\pi_\mu^{\on{op}})_*\Ql[d]\simeq (\pi_\mu^{(m)})_*\Ql[d]\simeq \pi_m^*(\pi_\mu)_*\Ql[d]\simeq \pi_m^*\IC_\mu\oplus\pi_m^*\mC,\]
where $d=(2\rho,\mu)$, and $\mC$ and $\mC^{\on{op}}$ are as in the proof of Lemma \ref{min and qmin}. We therefore obtain $\id'_\mu$ as in Lemma \ref{can c}.

Now, let $\mmu\subset M$ as in \S~\ref{qm:conclusion}. Let $(m_1,\ldots,m_n)$ be a sequence of positive integers, such that $L^+G$ acts on $\Gr_{\leq \mu_i}^{(m_i)}$ via $L^+G\to L^{m_{i-1}}G$, and that $\phi_{m_i}:\Gr_{\leq \mu_i}^{(m_i)}\to \Gr_{\leq \mu_i}^{\on{op}}$ is defined. Then from the diagram
\[\begin{CD}\prod \Gr_{\leq \mu_i}^{\on{op}}@<\prod\phi_{m_i}<< \prod \Gr_{\leq \mu_i}^{(m_i)}@>\prod\pi_{m_i}>> \prod \Gr_{\leq \mu_i},\\
@.@ VVV @.\\
\Gr_{\leq \mmu}^{\on{op}}@<<< \Gr_{\leq \mu_1}\tilde\times\cdots\tilde\times\Gr_{\leq \mu_{n-1}}\tilde\times\Gr_{\leq\mu_n}^{(m_n)}@>>>\Gr_{\leq \mmu}.\\
@VVV@VVV@VVV\\
\Gr_{\leq |\mmu|}^{\on{op}}@<\phi_m<<\Gr_{\leq |\mmu|}^{(m)}@>\pi_m>>\Gr_{\leq |\mmu|}
\end{CD}\]
and the canonical isomorphism $\prod_i\id'_{\mu_i}: (\prod \phi_{m_i})^*(\boxtimes\IC^{\on{op}}_{\mu_i})\simeq (\prod \pi_{m_i})^*(\boxtimes\IC_{\mu_i})$, we obtain a canonical isomorphism
\[\id'_{\mmu}:\phi_m^*(\IC_{\mu_1}^{\on{op}}\star\cdots\star\IC_{\mu_n}^{\on{op}})\simeq \pi_m^*(\IC_{\mu_1}\star\cdots\star\IC_{\mu_n}).\]
By Lemma \ref{idempotent completion}, we conclude that for every $\mu$ the isomorphism $\id'_\mu$ as required in Lemma \ref{can c} exists. 

In addition, the isomorphism $\id'_{\mmu}$ also provides us the desired monoidal structure on $\Id'$. Again, by Lemma \ref{idempotent completion}, it is enough to exhibit the monoidal structure of $\Id'$ when restricted to the subcategories $\Id':\Sat_G^{0,\on{\on{op}}}\simeq\Sat_G^0$, where $\Sat_G^0$ is defined before Lemma \ref{idempotent completion} and $\Sat_G^{0,\on{\on{op}}}$ is defined similarly. For $\la_\bullet, \mmu\subset M$, we write $\IC_{\la_\bullet}=\IC_{\la_1}\star\cdots\star\IC_{\la_n}$, etc. Then there are canonical isomorphisms
\[\Hom(\IC_{\la_\bullet},\IC_{\mmu})\simeq \Hom(\pi_m^*\IC_{\la_\bullet},\pi_m^*\IC_{\mmu})\simeq \Hom(\phi_m^*\IC_{\la_\bullet}^{\on{op}},\phi_m^*\IC_{\mmu}^{\on{op}})\simeq\Hom(\IC_{\la_\bullet}^{\on{op}},\IC_{\mmu}^{\on{op}}),\]
which is clearly independent of $m$ (as soon as $m$ large enough). This isomorphism provides the monoidal structure on $\Id'$ as it is compatible with the union of sequences of coweights in $M$.
\end{proof}

We have the following corollary of Lemma \ref{can c}. 
For $\la\in \xcoch$, let $\mH^j_\la\mA$ (resp. $\mH^j_{\la,!}\mA$) denote the degree $j$ stalk (resp. costalk) cohomology of $\mA$ at $\varpi^\la$.
\begin{cor}\label{stalk identification}
There is a canonical isomorphism $\mH^j_\la \id': \mH^j_\la \mA\simeq \mH^j_\la\Id'\mA$ for $\mA\in\on{P}_{L^+G}(\Gr^{\on{op}})$, and similarly for $\mH^j_{\la,!}$
\end{cor}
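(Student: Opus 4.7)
The plan is to deduce the stalk (respectively costalk) identification by applying $i^*_{\tilde\varpi^\la}$ (respectively $i^!_{\tilde\varpi^\la}$) to the canonical isomorphism $\id'$ of Lemma~\ref{can c}, at the tautological lift $\tilde\varpi^\la := \varpi^\la L^+G^{(m)} \in \Gr^{(m)}$, which simultaneously satisfies $\pi_m(\tilde\varpi^\la) = \varpi^\la \in \Gr$ and $\phi_m(\tilde\varpi^\la) = \varpi^\la \in \Gr^{\on{op}}$.

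First, I would promote the construction of Lemma~\ref{can c} from the irreducible generators $\IC_\mu^{\on{op}}$ to a canonical, natural isomorphism
\[
\id'_{\mA} : \phi_m^*\mA \simeq \pi_m^*(\Id'\mA)
\]
for every $\mA \in \Sat_G^{\on{op}}$ supported on $\Gr^{\on{op}}_{\leq\mu}$ (with $m$ large enough as in Lemma~\ref{can c}). This step is automatic from the semisimplicity of $\Sat_G^{\on{op}}$ (Lemma~\ref{semisimplicity}): any such $\mA$ decomposes as a direct sum of shifts of the $\IC_{\mu_i}^{\on{op}}$'s, and the $\id'_{\mu_i}$ assemble by naturality, since the $\id'_\mu$ are characterized by their restriction to the open cell $\Gr^{(m)}_\mu$.

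For the stalk case, applying $i^*_{\tilde\varpi^\la}$ to $\id'_{\mA}$ and using the tautological identities $\pi_m \circ i_{\tilde\varpi^\la} = i_{\varpi^\la}$ and $\phi_m \circ i_{\tilde\varpi^\la} = i_{\varpi^\la}$ as maps $\{\on{pt}\} \to \Gr$ and $\{\on{pt}\} \to \Gr^{\on{op}}$, the functoriality of $*$-pullback gives
\[
i^*_{\varpi^\la}\mA \;=\; i^*_{\tilde\varpi^\la}\phi_m^*\mA \;\simeq\; i^*_{\tilde\varpi^\la}\pi_m^*(\Id'\mA) \;=\; i^*_{\varpi^\la}\Id'\mA,
\]
and passing to $j$-th cohomology yields $\mH^j_\la\id'$. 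For the costalk case, I would first Verdier-dualize $\id'_{\mA}$: because $\Id'$ sends $\IC_\mu^{\on{op}}$ to $\IC_\mu$ and both are Verdier self-dual (up to Tate twist), $\Id'$ commutes with Verdier duality, so dualizing $\id'_{\mA}$ produces a parallel isomorphism $\phi_m^!\mA \simeq \pi_m^!(\Id'\mA)$. Now applying $i^!_{\tilde\varpi^\la}$ and using the functoriality $i^! f^! = (f\circ i)^!$ gives the required $i^!_{\varpi^\la}\mA \simeq i^!_{\varpi^\la}\Id'\mA$.

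The one substantive point to verify is canonicity: the identification must not depend on the auxiliary integer $m$. This follows from the compatibility of $\id'_\mu$ for different values of $m$ with the smooth transition maps $\Gr^{(m+1)} \to \Gr^{(m)}$, which is built in by the characterization of $\id'_\mu$ on the open cell in Lemma~\ref{can c}. The genuine difficulty here — the a priori failure of $\phi_m$ to be perfectly smooth — has already been side-stepped in Lemma~\ref{can c} by working directly with the resolutions, so no additional obstacle arises at the level of this corollary; the proof is essentially a formal consequence of the earlier lemma.
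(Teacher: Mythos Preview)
Your stalk argument is correct and matches the paper's: reduce to $\mA=\IC_\mu^{\on{op}}$ by semisimplicity, pick the lift $\tilde\varpi^\la\in\Gr^{(m)}_{\leq\mu}$, and pull back $\id'_\mu$ along $i_{\tilde\varpi^\la}^*$ using $\pi_m\circ i_{\tilde\varpi^\la}=i_{\varpi^\la}$ and $\phi_m\circ i_{\tilde\varpi^\la}=i_{\varpi^\la}$. Independence of $m$ follows from the characterization on the open cell, exactly as you say.

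There is, however, a gap in your costalk argument. You claim that dualizing $\id'_\mA:\phi_m^*\mA\simeq\pi_m^*(\Id'\mA)$ yields $\phi_m^!\mA\simeq\pi_m^!(\Id'\mA)$. For the $\pi_m$ side this is fine, since $\pi_m$ is an $L^mG$-torsor and hence perfectly smooth, so $D\pi_m^*\simeq\pi_m^!D$ up to shift. But for $\phi_m$ you are implicitly using $D\phi_m^*\simeq\phi_m^!D$, which is exactly the perfect smoothness of $\phi_m$ that the paper explicitly does \emph{not} know (see the remark following Lemma~\ref{can c}). Knowing that $\Id'$ commutes with Verdier duality on the abelian categories does not supply this interchange at the level of the pullback functors.

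The paper sidesteps this by applying Verdier duality at the \emph{point}, not to the sheaf-level isomorphism: one has $\mH^j_{\la,!}\mA\simeq(\mH^{-j}_\la D\mA)^*$, and since $D\IC_\mu^{\on{op}}\simeq\IC_\mu^{\on{op}}$, $D\IC_\mu\simeq\IC_\mu$, and $\Id'$ intertwines the two dualities, the already-established stalk identification for $D\mA$ dualizes to the costalk identification for $\mA$. This uses only $i^!_{\varpi^\la}=D\,i^*_{\varpi^\la}\,D$ on each side separately and never needs to compare $\phi_m^*$ with $\phi_m^!$.
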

\begin{proof}We prove the first statement as the second is obtained by the Verdier duality. 
It is enough to assume that $\mA=\IC_\mu^{\on{op}}$.
Then the isomorphism $\mH^j_\la \id'_\mu$ is given by the composition
\[\mH^j_\la\IC^{\on{op}}_\mu= \mH^j_\la\phi_m^*\IC^{\on{op}}_\mu\stackrel{\mH^j_\la\id'_\mu}{\simeq}\mH^j_\la\pi_m^*\IC_\mu=\mH^j_\la\IC_\mu,\]
which is clearly independent of the choice of $m$.
\end{proof}

\subsubsection{}\label{constr of comm}
Now, we construct $c_{\mA_1,\mA_2}$ as in Theorem \ref{existence of comm}. In the equal  characteristic situation, this is obtained from the fusion product interpretation of the convolution product (see \cite{MV} and \cite[\S 5.3]{BD}). Currently, the fusion product does not exist in mixed characteristic. Our method is a kind of categorification of classical Gelfand's trick (see also \cite[\S 5.3.8]{BD} which modifies the construction of \cite{Gi1}).

Fix a pinning $(G,B,T,X)$ of $G$ and let $\theta'$ be the involution that sends a dominant coweight $\la$ to its dual $\la^*=-w_0(\la)$, where $w_0$ is the longest element in the finite Weyl group $\overline{W}$ of $G$. We define the anti-involution $\theta$ of $G$ as $\theta(g)=\theta'(g)^{-1}$. It induces an anti-involution of $LG$ preserving $L^+G$,  which are still denoted by $\theta$ (rather than $L\theta$ if no confusion will arise). Note that $\theta$ induces an isomorphism
\[\theta: \Gr_G^{\on{op}}=L^+G\backslash LG\simeq LG/L^+G=\Gr_G,\]
and therefore an equivalence of categories
\[\theta^*: \on{P}_{L^+G}(\Gr_G)\simeq \on{P}_{L^+G}(\Gr_G^{\on{op}}).\]

Now $\theta$ also induces 
$$\theta\tilde\times\theta:\Gr^{\on{op}}\tilde\times\Gr^{\on{op}}\to\Gr\tilde\times\Gr,\quad (g_1,g_2)\mapsto (\theta(g_2),\theta(g_1)),$$ and there is a canonical isomorphism $(\theta\tilde\times\theta)^*(\mA_1\tilde\boxtimes\mA_2)\simeq \theta^*\mA_2\tilde\boxtimes\theta^*\mA_1$. Using $m(\theta\tilde\times\theta)=\theta m$,
and the proper base change, we obtain a canonical isomorphism 
$$\theta^*(\mA_1\star\mA_2)\simeq \theta^*\mA_2\star\theta^*\mA_1.$$
Considering the $3$-fold convolutions, we concludes that $\theta^*$ is an anti-equivalence of monoidal categories.

Therefore, we obtain an anti-autoequivalence $\Id'\circ \theta^*$ of $\Sat_G$ as a monoidal category.
Now we define an isomorphism of (plain) functors $$e:\Id'\circ\theta^*\to\Id.$$ We will fix a square root $\sqrt{-1}$ in $\Ql$ in the sequel and define $(-1)^{(\rho,\mu)}:=\sqrt{-1}^{(2\rho,\mu)}$ for any coweight $\mu$. By Lemma \ref{semisimplicity}, it is enough to give an isomorphism $e_\mu:\Id'\circ\theta^*\IC_\mu\to \IC_\mu$ for every $\mu$. Note that $\theta^*\IC_\mu$ is (non-canonically) isomorphic to $\IC_\mu^{\on{op}}$. We define the isomorphism 
\begin{equation}\label{Nmu}
N_\mu: \theta^*\IC_\mu\to \IC_\mu^{\on{op}}
\end{equation} by requiring its restriction to $\Gr_\mu^{\on{op}}$ is given by
$$\theta^*\IC_\mu|_{\Gr_\mu^{\on{op}}}=\IC_{\mu}|_{\Gr_\mu}=\Ql[(2\rho,\mu)]=\Ql[(2\rho,\mu)]=\IC_\mu^{\on{op}}|_{\Gr_{\mu}^{\on{op}}}.$$
We define $M_\mu=(-1)^{-(\rho,\mu)}N_\mu$ and let $e_{\mu}=\Id'(M_\mu)$. Let us emphasize that the factor $(-1)^{-(\rho,\mu)}$ is crucial.

Now, we define the isomorphism $c'_{\mA_1,\mA_2}$ as
\begin{equation}\label{def of comm}
c'_{\mA_1,\mA_2}:\mA_1\star\mA_2\stackrel{e_{\mA_1\star\mA_2}}{\longleftarrow} \Id'\theta^*(\mA_1\star\mA_2)\simeq \Id'(\theta^*\mA_2\star\theta^*\mA_1)\simeq \Id'\theta^*\mA_2\star\Id'\theta^*\mA_1\stackrel{e_{\mA_2}\star e_{\mA_1}}{\longrightarrow} \mA_2\star\mA_1.
\end{equation}
Note that $c'_{\mA_1,\mA_2}$ is independent of the choice of $\sqrt{-1}$.

Finally, the isomorphism $c_{\mA_1,\mA_2}$ is obtained from $c_{\mA_1,\mA_2}'$ by a Koszul sign change. Namely, the category $\on{P}_{L^+G}(\Gr)$ admits a $\bZ/2$-grading induced by \eqref{parity map}. We say $\mA$ has pure parity if $p(\on{Supp}(\mA))$ is $1$ or $-1$, in which case we define $p(\mA)=p(\on{Supp}(\mA))$. Then
\begin{equation}\label{def of comm 2}
c_{\mA_1,\mA_2}:=(-1)^{p(\mA_1)p(\mA_2)}c'_{\mA_1,\mA_2},
\end{equation}
if $\mA_1$ and $\mA_2$ have the pure parity $p(\mA_1)$ and $p(\mA_2)$.  See also \cite[\S 5.3.21]{BD} or \cite{MV} after Remark 6.2 for a more elegant formulation.

\subsubsection{}
We prove that $c_{\mA_1,\mA_2}$ constructed as above satisfies the requirement as in Proposition \ref{existence of comm}. From the definition, this will be the consequence of the following three statements Lemma \ref{aux1}-Lemma \ref{aux 3}. Recall that we set $\on{IH}_{L^+G}(\Gr_{\leq \mu})=\on{H}^*_{L^+G}(\Gr,\IC_{\mu}[-(2\rho,\mu)])$.

To state the first lemma, note that Lemma \ref{soergel monoidal} and Lemma \ref{left=right} hold for $\Sat_G^{\on{op}}$,
and therefore $\on{H}^*:\Sat^{\on{op}}\to\on{Vect}_{\Ql}$ has a natural monoidal structure.

\begin{lem}\label{aux1}
There is a natural isomorphism of monoidal functors $\ga:\on{H}^*\simeq \on{H}^*\circ\Id': \Sat_G^{\on{op}}\to \on{Vect}_{\Ql}$.
\end{lem}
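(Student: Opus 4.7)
The plan is to construct $\gamma$ at the level of $L^+G$-equivariant cohomology using the diagram \eqref{left to right corr}, and then descend via the augmentation $R_{\bar G,\ell}\to \Ql$. For a fixed $\mu$ and $m$ sufficiently large, the maps
\[\Gr_{\leq\mu} \stackrel{\pi_m}{\longleftarrow} \Gr^{(m)}_{\leq\mu} \stackrel{\phi_m}{\longrightarrow} \Gr^{\on{op}}_{\leq\mu}\]
exhibit $\pi_m$ as a right $L^mG$-torsor and $\phi_m$ as a left $L^+G$-torsor, with commuting actions of $L^+G\times L^mG$ on $\Gr^{(m)}_{\leq\mu}$. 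Writing $\mA_1\in\Sat_G$, $\mA_2\in\Sat_G^{\on{op}}$ with support in the relevant Schubert subsets, the torsor formula used in the proof of Lemma~\ref{soergel monoidal} gives
\[\on{H}^*_{L^+G\times L^mG}(\pi_m^*\mA_1)=\on{H}^*_{L^+G}(\Gr,\mA_1),\qquad \on{H}^*_{L^+G\times L^mG}(\phi_m^*\mA_2)=\on{H}^*_{L^+G}(\Gr^{\on{op}},\mA_2),\]
where the second identity uses that $L^+G^{(m)}$ acts trivially on $\mA_2$ so that $L^+G$- and $L^mG$-equivariant cohomology agree.

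The next step is to verify that the isomorphism $\id'_\mu\colon \phi_m^*\IC_\mu^{\on{op}}\simeq \pi_m^*\IC_\mu$ from Lemma~\ref{can c} is equivariant for the full $L^+G\times L^mG$-action. The uniqueness in that lemma reduces this to a check on the open stratum $\Gr_\mu^{(m)}$, where both sides are the constant sheaf $\Ql[(2\rho,\mu)]$ and the identification is manifestly equivariant for each factor. Applying $\Ql\otimes_{R_{\bar G,\ell}}(-)$ to the resulting isomorphism of $R_{\bar G,\ell}$-modules
\[\on{H}^*_{L^+G}(\Gr^{\on{op}},\IC_\mu^{\on{op}})\simeq \on{H}^*_{L^+G}(\Gr,\IC_\mu)\]
defines $\gamma_{\IC_\mu^{\on{op}}}\colon \on{H}^*(\IC_\mu^{\on{op}})\simeq \on{H}^*(\IC_\mu)$. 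By semisimplicity (Lemma~\ref{semisimplicity} applied to $\Gr^{\on{op}}$) and naturality, this extends uniquely to a natural isomorphism $\gamma\colon \on{H}^*\simeq \on{H}^*\circ \Id'$ of functors on $\Sat_G^{\on{op}}$.

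For the monoidal compatibility, one compares the above construction with Lemma~\ref{soergel monoidal}. In the proof of Lemma~\ref{can c}, the iterated isomorphism $\id'_{\mmu}$ is built by pulling back the individual $\id'_{\mu_i}$ along the convolution-with-congruence-cover diagram, which is precisely the diagram that produces the monoidal structure on $\on{H}^*_{L^+G}(-)$ as a functor to $R_{\bar G,\ell}$-bimodules. Matching these diagrams term by term shows that the induced isomorphism $\on{H}^*_{L^+G}(\mA_1\star\cdots\star\mA_n)\simeq \on{H}^*_{L^+G}(\Id'\mA_1\star\cdots\star\Id'\mA_n)$ is compatible with the tensor-product decomposition over $R_{\bar G,\ell}$; descending through the augmentation yields the monoidal coherence for $\gamma$. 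The main technical hurdle is the equivariance verification of $\id'_\mu$ for the bi-action, together with careful tracking of which of the two $R_{\bar G,\ell}$-structures on the bimodule $\on{H}^*_{L^+G}$ corresponds to which torsor (a non-triviality mitigated by Lemma~\ref{left=right}); once this bookkeeping is carried out, the rest is formal from Lemmas~\ref{can c} and \ref{soergel monoidal}.
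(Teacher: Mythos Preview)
Your proposal is correct and follows essentially the same approach as the paper: both use the diagram \eqref{left to right corr} to identify $\on{IH}^*_{L^+G}(\Gr_{\leq\mu}^{\on{op}})\simeq \on{IH}^*_{L^+G\times L^+G}(\Gr_{\leq\mu}^{(m)})\simeq \on{IH}^*_{L^+G}(\Gr_{\leq\mu})$ via the two torsor structures, then descend through the augmentation using Lemma~\ref{left=right}, and finally invoke the constructions of Lemma~\ref{soergel monoidal} and \S\ref{functor Id'} for monoidal compatibility. The paper is slightly more explicit that the specialization is on the \emph{left} for $\Gr$ and on the \emph{right} for $\Gr^{\on{op}}$, which is precisely why Lemma~\ref{left=right} is needed; you acknowledge this point but could state it more sharply.
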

\begin{proof}
It is enough to construct the canonical isomorphism $\ga_\mu:\on{IH}^*(\Gr_{\leq \mu}^{\on{op}})\simeq \on{IH}^*(\Gr_{\leq \mu})$ for every $\mu$. From the diagram \eqref{left to right corr}, we obtain a canonical isomorphism 
\[\on{IH}_{L^+G}^*(\Gr_{\leq \mu}^{\on{op}})\simeq \on{IH}^*_{L^+G\times L^+G}(\Gr_{\leq \mu}^{(m)})\simeq \on{IH}^*_{L^+G}(\Gr_{\leq \mu}).\]
as $(R_{\bar G,\ell}\otimes R_{\bar G,\ell})$-bimodules. Note that this is independent of the choice of $m$ (as soon as it is large). As 
$$\on{IH}^*(\Gr_{\leq \mu})= \Ql\otimes_{R_{\bar G,\ell}}\on{IH}^*_{L^+G}(\Gr_{\leq \mu}),\quad \on{IH}^*(\Gr_{\leq \mu}^{\on{op}})= \on{IH}^*_{L^+G}(\Gr_{\leq\mu}^{\on{op}})\otimes_{R_{\bar G,\ell}}\Ql,$$
we obtain the desired isomorphism $\ga_\mu$ by Lemma \ref{left=right}.
It follows  from the construction of the monoidal structure of $\on{H}^*$ given by Lemma \ref{soergel monoidal} and Lemma \ref{left=right} and the construction of the monoidal structure on $\Id'$ as in \S~\ref{functor Id'} that $\ga$ is an isomorphism of monoidal functors.
\end{proof}

The second lemma is as follows.
\begin{lem}\label{aux2}
There is a canonical isomorphism of functors $\delta:\on{H}^* \simeq \on{H}^*\circ \theta^*$, such that for every $\mA_1,\mA_2\in\on{P}_{L^+G}(\Gr)$, the following diagram is commutative
\[\begin{CD}
\on{H}^*(\mA_1\star\mA_2)@>\delta>>\on{H}^*(\theta^*(\mA_1\star\mA_2))@>\simeq>>\on{H}^*(\theta^*\mA_2\star\theta^*\mA_1)\\
@VVV@.@VVV\\
\on{H}^*(\mA_1)\otimes\on{H}^*(\mA_2)@>c_{\on{vect}}>>\on{H}^*(\mA_2)\otimes\on{H}^*(\mA_1)@>\delta\otimes\delta>>\on{H}^*(\theta^*\mA_2)\otimes\on{H}^*(\theta^*\mA_1).
\end{CD}\]
\end{lem}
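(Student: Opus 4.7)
The construction of $\delta$ is essentially tautological. Since $\theta:G\to G$ is an anti-involution of group schemes, the induced map $\theta:\Gr_G^{\on{op}}\to\Gr_G$ is an isomorphism of perfect ind-algebraic spaces (its inverse is again $\theta$). Pullback along this isomorphism then yields a canonical isomorphism
\[
\delta_{\mA}:\on{H}^*(\Gr_G,\mA)\xrightarrow{\sim}\on{H}^*(\Gr_G^{\on{op}},\theta^*\mA),
\]
functorial in $\mA\in\on{P}_{L^+G}(\Gr_G)$. This defines the natural transformation $\delta:\on{H}^*\simeq\on{H}^*\circ\theta^*$.

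For the compatibility diagram, I would combine the commutative square
\[
\begin{CD}
\Gr_G^{\on{op}}\tilde\times\Gr_G^{\on{op}} @>\theta\tilde\times\theta>> \Gr_G\tilde\times\Gr_G\\
@Vm^{\on{op}}VV @VVmV\\
\Gr_G^{\on{op}} @>\theta>> \Gr_G
\end{CD}
\]
with the canonical isomorphism $(\theta\tilde\times\theta)^*(\mA_1\tilde\boxtimes\mA_2)\simeq\theta^*\mA_2\tilde\boxtimes\theta^*\mA_1$ recalled in \S\ref{constr of comm}. By proper base change, the square implements the identification $\theta^*(\mA_1\star\mA_2)\simeq\theta^*\mA_2\star\theta^*\mA_1$ at the level of cohomology, so the composite along the top row of the diagram in the lemma is nothing but $\delta_{\mA_1\star\mA_2}$ followed by the $\on{H}^*$ of the canonical isomorphism.

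The remaining content is a coherence check: the two paths around the diagram differ only by whether one first applies the K\"unneth/monoidal isomorphism and then swaps tensor factors, or first swaps coordinates on $\Gr_G\tilde\times\Gr_G$ via $\theta\tilde\times\theta$ and then applies K\"unneth on the opposite side. My plan is to rewrite both sides as $\on{H}^*$ of a common twisted product $\Gr_G^{(m)}\tilde\times\Gr_G^{(m)}$-type space, and then use the construction of the monoidal structure from Lemma \ref{soergel monoidal}, which is built purely out of pullback and proper pushforward along $L^+G$-torsor correspondences with no Koszul signs. The same construction works verbatim for $\Sat_G^{\on{op}}$, so one has a matching K\"unneth isomorphism on the opposite side that is visibly transported by $(\theta\tilde\times\theta)^*$ to the one on $\Sat_G$, up to the reordering of tensor factors induced by the swap $(x,y)\mapsto(\theta(y),\theta(x))$.

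The main obstacle will be organizing the nested applications of proper base change so that this swap on $\Gr_G\tilde\times\Gr_G$ translates, after K\"unneth, into the honest transposition $c_{\on{vect}}$ of vector spaces rather than a signed variant. This reduces to the trivial fact that the natural transformation $\on{H}^*(X\times Y,F\boxtimes G)\simeq\on{H}^*(X,F)\otimes\on{H}^*(Y,G)$ is intertwined by the swap of factors on $X\times Y$ with the ordinary transposition on tensor products; once this Fubini-type coherence is pinned down for twisted products, the diagram of the lemma commutes by naturality of pullback and proper base change.
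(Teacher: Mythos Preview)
Your proposal is correct and follows essentially the same approach as the paper: define $\delta$ by pullback along the isomorphism $\theta:\Gr^{\on{op}}\to\Gr$, and verify the compatibility with the monoidal structure by going back to the explicit construction in Lemma~\ref{soergel monoidal} at the level of the $\Gr^{(m_i)}$-covers, where $\theta$ visibly swaps the factors. The paper makes this slightly more concrete by writing the check at the level of $R_{\bar G,\ell}$-bimodules (noting that $\theta$ sends each bimodule to its opposite) and then specializing along $R_{\bar G,\ell}\to\Ql$, but the content is the same as what you outline.
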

\begin{proof}If $f: X\to Y$ is a morphism and $\mF$ a complex of sheaves on $Y$, there is a canonical map $f^*: \on{H}^*(Y,\mF)\to \on{H}^*(Y,f_*f^*\mF)\simeq \on{H}^*(X,f^*\mF)$. Applying this construction to $\theta: \Gr^{\on{op}}\simeq \Gr$ gives the isomorphism $\delta$. It remains to check the commutativity of the diagram.

We will use notations as in the proof of Lemma \ref{soergel monoidal}. So for a closed subset $Z\subset\Gr^{\on{op}}$, let $Z^{(m)}$ denote its preimage in $L^+G^{(m)}\backslash LG\to \Gr^{\on{op}}$. Note that the following diagram is commutative
\[\begin{CD}
\on{Supp}(\theta^*\mA_n)^{(m_n)}\times\cdots\times\on{Supp}(\theta^*\mA_1)^{(m_1)}@>\theta>>\on{Supp}(\mA)^{(m_1)}\times\cdots\times\on{Supp}(\mA_n)^{(m_n)}\\
@VVV@VVV\\
\on{Supp}(\theta^*\mA_i)@>\theta>>\on{Supp}(\mA_i).
\end{CD}\]
In addition, from the construction of the isomorphism in Lemma \ref{soergel monoidal}, the following diagram is also commutative
\[\begin{CD}
\on{H}^*_{L^+G}(\mA_1\star\cdots\star\mA_n)@>\simeq>>\on{H}^*_{L^+G}(\theta^*\mA_n\star\cdots\star\theta^*\mA_1)\\
@V\simeq VV@VV\simeq V\\
\on{H}^*_{L^+G}(\mA_1)\otimes_{R_{\bar G,\ell}}\cdots\otimes_{R_{\bar G,\ell}}\on{H}^*_{L^+G}(\mA_n)@>\simeq>>\on{H}^*_{L^+G}(\theta^*\mA_n)^{\on{op}}\otimes_{R_{\bar G,\ell}}\cdots\otimes_{R_{\bar G,\ell}}\on{H}^*_{L^+G}(\theta^*\mA_1)^{\on{op}},
\end{CD}\]
where for an $R_{\bar G,\ell}$-bimodule $M$, $M^{\on{op}}$ denotes the new $R_{\bar G,\ell}$-bimodule structure on $M$ by switching the two actions. Specializing along $R_{\bar G,\ell}\to \Ql$ shows that
$\delta$ is an isomorphism of monoidal functors.
\end{proof}

Now, for every $\mA\in \on{P}_{L^+G}(\Gr)$, we can define an automorphism of its cohomology
\[\Theta:\on{H}^*(\mA)\stackrel{\delta}{\simeq} \on{H}^*(\theta^*\mA)\stackrel{\ga}{\simeq} \on{H}^*(\Id'\circ\theta^*\mA)\stackrel{\on{H}^*(e)}{\simeq}\on{H}^*(\mA).\]
The above two lemmas imply that the following diagram
\[\begin{CD}
\on{H}^*(\mA_1\star\mA_2)@>\simeq>>\on{H}^*(\mA_1)\otimes\on{H}^*(\mA_2)@>c_{\on{vect}}>>\on{H}^*(\mA_2)\otimes\on{H}^*(\mA_1)\\
@V\Theta VV@.@VV\Theta\otimes\Theta V\\
\on{H}^*(\mA_1\star\mA_2)@>\on{H}^*(c'_{\mA_1\star\mA_2})>>\on{H}^*(\mA_2\star\mA_1)@>\simeq>>\on{H}^*(\mA_2)\otimes\on{H}^*(\mA_1)
\end{CD}\]
is commutative.
Now it is easy to see that Proposition \ref{existence of comm} is a consequence of the following lemma.
\begin{lem}\label{aux 3}
For every $j$, $\Theta=\sqrt{-1}^j$ on $\on{H}^{j}(\mA)$.
\end{lem}

Note that by the definition of $e$, and our normalization $\on{IH}^*(\Gr_{\leq \mu})=\on{H}^*(\IC_\mu[-(2\rho,\mu)])$, it is enough to show the following lemma.
\begin{lem}\label{involution action}
The map $$\Theta_\mu:\on{IH}^{2j}(\Gr_{\leq \mu})\stackrel{\on{H}(N_\mu)\delta}{\simeq} \on{IH}^{2j}(\Gr_{\leq \mu}^{\on{op}})\stackrel{\ga}{\simeq} \on{IH}^{2j}(\Gr_{\leq \mu})$$ is given by the multiplication by $(-1)^{j}$, where $N_\mu:\theta^*\IC_\mu\to \IC_\mu^{\on{op}}$ is the canonical isomorphism in \eqref{Nmu}.
\end{lem}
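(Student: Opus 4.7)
The plan is to reinterpret $\Theta_\mu$ as an operator whose eigenvalues on graded pieces of $\on{IH}^*(\Gr_{\leq\mu})$ are recorded by the Lusztig--Vogan twisted Kazhdan--Lusztig polynomials, and then to invoke the identity $P^\sigma_{y,w}(q)=P_{y,w}(-q)$ established by Lusztig--Yun \cite{LY}, as advertised in the introduction.

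First I would lift the question from the affine Grassmannian to the affine flag variety. Pulling $\Theta_\mu$ back through the natural projection $\Fl\to \Gr_G$ gives an analogous involution on $\on{IH}^*(S_w)$, where $w\in\widetilde W$ is the minimal length translation element with image $\mu$ in the coweight lattice. Since the fibers of $\Fl\to \Gr_G$ are perfections of the finite flag variety $\bar G/\bar B$, whose cohomology is completely understood and compatible with $\theta$, the space $\on{IH}^*(\Gr_{\leq\mu})$ embeds as a $\Theta$-equivariant direct summand of $\on{IH}^*(S_w)$, so it suffices to identify the eigenvalue on $\on{IH}^{2j}(S_w)$.

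Next, using Lemma \ref{can c} together with the Demazure-type resolutions \eqref{DR}, I would unpack the pulled-back involution in terms of the Kazhdan--Lusztig basis $\{C_w\}$ of the affine Hecke algebra of $\widetilde W$. The anti-involution $\theta$ on $LG$ induces on $\widetilde W$ an involution $\sigma$ compatible with the pinning (essentially $w\mapsto w^{-1}$ twisted by the Weyl involution), and the canonical identification $N_\mu:\theta^*\IC_\mu\simeq \IC_\mu^{\on{op}}$ of Lemma \ref{can c} translates, on the level of stalk and costalk dimensions at the torus-fixed points $\varpi^\la$, into a comparison between $P_{y,w}$ and the Lusztig--Vogan twisted polynomial $P^\sigma_{y,w}$. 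The normalization $M_\mu = (-1)^{-(\rho,\mu)}N_\mu$ built into $e_\mu$ is tuned so that the eigenvalue of $\Theta_\mu$ on the degree-$j$ part is extracted from the coefficient of $q^j$ in $P^\sigma_{y,w}(-q)/P_{y,w}(q)$, consistent with the overall cohomological shift. The Lusztig--Yun identity $P^\sigma_{y,w}(q)=P_{y,w}(-q)$ then immediately yields the eigenvalue $(-1)^j$ on each $\on{IH}^{2j}(\Gr_{\leq\mu})$.

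The principal obstacle is the middle step: accurately translating the geometric involution $\Theta_\mu$ (which is assembled from the anti-involution $\theta$ on $LG$, the equivalence $\Id'$ of Lemma \ref{can c}, the stratum normalization $N_\mu$, and the crucial extra sign $(-1)^{-(\rho,\mu)}$) into the combinatorial language of $\sigma$-twisted Kazhdan--Lusztig polynomials while tracking signs, cohomological shifts and choices of $\sqrt{-1}\in\Ql$ in a fully consistent way. A secondary but routine subtlety is verifying that passage from spherical to Iwahori-equivariant cohomology does not alter the sign behavior of the involution, which follows from the smoothness and even-cohomology property of $\bar G/\bar B$. Once the dictionary with $\sigma$-twisted Kazhdan--Lusztig polynomials is in place, the Lusztig--Yun input closes the argument.
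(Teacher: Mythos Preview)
Your proposal correctly identifies the Lusztig--Yun formula $P^{\sigma}_{y,w}(q)=P_{y,w}(-q)$ as the decisive input, and this is indeed the paper's endpoint. But the route you sketch has a genuine gap and a couple of missteps. The detour through $\Fl$ is not what the paper does, and your account of it is inaccurate: the preimage of $\Gr_{\leq\mu}$ under $\Fl\to\Gr_G$ is not $S_w$ for $w$ the \emph{minimal} length translation (that $S_w$ maps birationally, not as a $\bar G/\bar B$-bundle), so your summand claim would need separate justification. The paper stays on $\Gr_G$; the affine Weyl group enters only because the stalk polynomial of $\IC_\mu$ at $\varpi^\la$ equals $P_{d_\la,d_\mu}$ for the \emph{longest} double coset representatives $d_\la,d_\mu$. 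Also, the sign $(-1)^{-(\rho,\mu)}$ is \emph{not} part of $\Theta_\mu$ in the statement of Lemma \ref{involution action} (it is absorbed in the reduction from Lemma \ref{aux 3}), and the expression $P^\sigma_{y,w}(-q)/P_{y,w}(q)$ does not extract eigenvalues in any meaningful way.

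The more serious omission is the local-to-global passage. The Lusztig--Vogan polynomials record traces of the involution on the \emph{stalk} cohomology $\mH^{2j}_\la\mathbf{C}_\mu$ (this is Lemma \ref{local involution} in the paper), not on $\on{IH}^*$ directly. To reach $\on{IH}^*$ the paper needs three further ingredients, none of which appear in your sketch: (i) a direct computation (Lemma \ref{toy case}) that the analogous involution $\mathring\Theta_\la$ on the open cell $\on{H}^*(\Gr_\la)\simeq\on{H}^*(\bar G/\bar P_{-\la})$ acts by $(-1)^j$, using that this ring is generated in degree $2$; (ii) a Schubert filtration of $\on{IH}^*(\Gr_{\leq\mu})$ under which $\on{gr}\Theta_\mu$ is identified with $\bigoplus_\la \mathring\Theta_\la\otimes\mH^*_{\la,!}\Psi_\mu$; and (iii) the observation that $\Theta_\mu$ is an involution (Lemma \ref{theta square}), which is what allows one to upgrade ``$(-1)^j$ on associated graded'' to ``$(-1)^j$ on $\on{IH}^{2j}$''. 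Only after these steps does the Lusztig--Yun identity, applied to the stalk involution via Lemma \ref{local involution}, close the argument.
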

We do not know a direct proof of this lemma. In \cite{LY}, its equal  characteristic analogue was deduced from the equal  characteristic geometric Satake. They use this formula to deduce a numerical result for the affine Hecke algebra, as conjectured by Lusztig \cite{Lu2}. We will reverse their steps to deduce this lemma from this numerical result.  In the sequel, we follow the convention in literature to write $\on{H}(N_\mu)\delta$ as $\theta^*$. It should not be confused with the pullback of sheaves.
First note the following.
\begin{lem}\label{theta square}
The map $\Theta_\mu$ is an involution.
\end{lem}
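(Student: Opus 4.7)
The plan is to exploit the fact that $\theta$ is a genuine involution on $G$ (since $\theta(g) = \theta'(g)^{-1}$ with $\theta'$ the Chevalley involution, and $\theta'^2 = \id$), together with the fact that the ingredients $N_\mu$ and $\gamma_\mu$ of $\Theta_\mu$ are both constructed symmetrically under the interchange $\Gr \leftrightarrow \Gr^{\on{op}}$. Writing $\Theta_\mu = \gamma_\mu \circ \tau_\mu$ with $\tau_\mu := \on{H}(N_\mu) \circ \delta$, the strategy is to introduce the mirror constructions going the opposite direction and to telescope $\Theta_\mu^2$ to the identity. Concretely, since $\theta^2 = \id$ on $G$, the anti-involution $\theta$ also induces $\theta: \Gr \to \Gr^{\on{op}}$, restricting to an isomorphism $\Gr_{\leq\mu} \simeq \Gr_{\leq\mu}^{\on{op}}$ (using that $-w_0\mu$ is Weyl-conjugate to $\mu$). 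Define $N_\mu^{\on{op}}: \theta^* \IC_\mu \to \IC_\mu^{\on{op}}$ normalized to be the identity on the open stratum, set $\tau_\mu^{\on{op}} := \on{H}(N_\mu^{\on{op}}) \circ \delta^{\on{op}}$, and let $\gamma_\mu': \on{IH}^*(\Gr_{\leq\mu}) \to \on{IH}^*(\Gr_{\leq\mu}^{\on{op}})$ be obtained by applying the construction in Lemma \ref{aux1} in reverse.

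The proof then reduces to three compatibilities. First, $\tau_\mu^{\on{op}} \circ \tau_\mu = \id$: this amounts to the fact that $N_\mu^{\on{op}} \circ \theta^* N_\mu$ is an endomorphism of $\IC_\mu$ which restricts to the identity on the open cell $\Gr_\mu$, hence equals the identity. Second, $\gamma_\mu' \circ \gamma_\mu = \id$: both arise from the bimodule $\on{IH}^*_{L^+G \times L^+G}(\Gr_{\leq\mu}^{(m)})$ by specializing opposite $R_{\bar G, \ell}$-actions, and the claim uses Lemma \ref{left=right}. Third, and most important, the square
\[
\begin{CD}
\on{IH}^*(\Gr_{\leq\mu}) @>\tau_\mu>> \on{IH}^*(\Gr_{\leq\mu}^{\on{op}}) \\
@V\gamma_\mu' VV @VV\gamma_\mu V \\
\on{IH}^*(\Gr_{\leq\mu}^{\on{op}}) @>\tau_\mu^{\on{op}}>> \on{IH}^*(\Gr_{\leq\mu})
\end{CD}
\]
commutes. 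Granting this,
\[
\Theta_\mu^2 = (\gamma_\mu \tau_\mu)(\gamma_\mu \tau_\mu) = (\tau_\mu^{\on{op}} \gamma_\mu')(\gamma_\mu \tau_\mu) = \tau_\mu^{\on{op}} \tau_\mu = \id.
\]

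The main obstacle is the commutativity of this square. To prove it, I would use that $\theta$ preserves the congruence subgroup $L^+G^{(m)}$, so lifts to an involution $\tilde\theta$ of $\Gr_{\leq\mu}^{(m)} = (LG)_{\leq\mu}/L^+G^{(m)}$ intertwining the two projections in \eqref{left to right corr}, namely $\theta \circ \phi_m = \pi_m \circ \tilde\theta$ and $\theta \circ \pi_m = \phi_m \circ \tilde\theta$. Chasing the definitions of the four arrows in the square through this $\tilde\theta$-symmetric correspondence reduces the commutativity to the compatibility of the normalizations of $N_\mu$ and $N_\mu^{\on{op}}$ on the respective open cells, which holds by construction. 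The bookkeeping subtlety to watch for is the interplay between the left and right $L^+G$-equivariant structures, which $\theta$ swaps; once this is handled, the remaining verification is formal.
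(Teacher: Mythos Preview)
Your approach is correct and is essentially the same as the paper's, though considerably more elaborate in its presentation. The paper compresses your three compatibilities into a single step: it writes down one commutative square of spaces
\[
\begin{CD}
\Gr_{\leq \mu}^{(m')}@>\theta^{-1}>> (\Gr_{\leq \mu}^{\on{op}})^{(m')}\\
@VVV@VVV\\
(\Gr_{\leq \mu}^{\on{op}})^{(m)}@>\theta>> \Gr_{\leq\mu}^{(m)}
\end{CD}
\]
and observes that taking $(L^+G\times L^+G)$-equivariant intersection cohomology and then specializing along $R_{\bar G,\ell}\to\Ql$ yields directly the relation $\ga\circ\theta^*\circ\ga\circ\theta^*=\id$. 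Your mirror objects $\tau_\mu^{\on{op}}$ and $\ga_\mu'$ are exactly $(\theta^{-1})^*$ and $\ga^{-1}$ in the paper's language, and your ``third compatibility'' square is the cohomological shadow of this diagram; your first two compatibilities ($\tau_\mu^{\on{op}}\tau_\mu=\id$ and $\ga_\mu'\ga_\mu=\id$) are absorbed into the paper's argument since $\theta^{-1}=\theta$ and since both $\ga$'s arise from the same bimodule $\on{IH}^*_{L^+G\times L^+G}(\Gr_{\leq\mu}^{(m)})$. Your observation that $\theta$ preserves $L^+G^{(m)}$ and hence lifts to the tower is precisely what makes the paper's square of spaces commute.
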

\begin{proof}Choose some $m,m'$ such that the following diagram is commutative
\[\begin{CD}
\Gr_{\leq \mu}^{(m')}@>\theta^{-1}>> (\Gr_{\leq \mu}^{\on{op}})^{(m')}\\
@VVV@VVV\\
(\Gr_{\leq \mu}^{\on{op}})^{(m)}@>\theta>> \Gr_{\leq\mu}^{(m)}.
\end{CD}\]
Then  taking the $(L^+G\times L^+G)$-equivariant intersection cohomology and specializing along $R_{\bar G,\ell}\to \Ql$, wee obtain the following commutative diagram
\[\begin{CD}
\on{IH}^*(\Gr_{\leq \mu}^{\on{op}})@>(\theta^{-1})^*>>\on{IH}^*(\Gr_{\leq \mu})\\
@V\ga VV@AA\ga A\\
\on{IH}^*(\Gr_{\leq \mu})@>\theta^*>>\on{IH}^*(\Gr_{\leq\mu}^{\on{op}}).
\end{CD}\]
The lemma follows.
\end{proof}

To continue, let us understand a toy case. Note that $\theta$ induces an isomorphism between $L^+G$-orbits $\Gr_\mu^{\on{op}}\simeq \Gr_\mu$, and therefore we have a canonical isomorphism
\[\mathring{\Theta}_\mu:\on{H}^*(\Gr_\mu)\stackrel{\theta^*}{\simeq}\on{H}^*(\Gr_\mu^{\on{op}})\stackrel{\ga}{\simeq}\on{H}^*(\Gr_\mu),\]
where the isomorphisms $\ga$ is constructed by the same way as in Lemma \ref{aux1}. 
\begin{lem}\label{toy case}
For every $j$, $\mathring{\Theta}_\mu=(-1)^j$ on $\on{H}^{2j}(\Gr_\mu)$.
\end{lem}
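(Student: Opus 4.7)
\medskip\noindent
\textbf{Proof plan for Lemma \ref{toy case}.} The strategy is to identify $\mathring{\Theta}_\mu$ with an explicit algebraic involution of $H^*(\Gr_\mu)$ arising from the inversion automorphism of the Levi stabilizer.

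First I would realize the cohomology via equivariant theory. Since $\Gr_\mu = L^+G\cdot\varpi^\mu$ is a single $L^+G$-orbit with stabilizer $H^L_\mu = L^+G\cap\varpi^\mu L^+G\varpi^{-\mu}$, whose pro-unipotent radical has reductive quotient the standard Levi $\bar L_\mu$ of $\bar P_{-\mu}$, one has $\on{H}^*_{L^+G}(\Gr_\mu)\simeq R_{\bar L_\mu,\ell}$, and therefore
\[
\on{H}^*(\Gr_\mu)\;\simeq\;\Ql\otimes_{R_{\bar G,\ell}}R_{\bar L_\mu,\ell}.
\]
Similarly $\on{H}^*_{L^+G}(\Gr_\mu^{\on{op}})\simeq R_{\bar L_\mu,\ell}$, and by Lemma \ref{left=right} the two $R_{\bar G,\ell}$-structures coincide, so the map $\gamma$ of Lemma \ref{aux1}, built from the common cover $\Gr^{(m)}_\mu$, is the canonical identification of these two copies. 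Under this description, degree $2j$ classes in $H^*(\Gr_\mu)$ correspond to degree-$j$ elements of $\on{Sym}(X^*(\bar T)\otimes\Ql)^{W_{\bar L_\mu}}$ modulo the augmentation ideal of $R_{\bar G,\ell}$.

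Next I would identify the algebraic involution $\mathring{\Theta}_\mu$ induces on $R_{\bar L_\mu,\ell}$. The anti-involution $\theta$ does not preserve the basepoint $\varpi^\mu$: since $\theta'|_T$ acts on cocharacters by $\la\mapsto -w_0(\la)$, one computes $\theta(\varpi^\mu)=\varpi^{w_0(\mu)}$, a Weyl translate of $\varpi^\mu$ inside $\Gr_\mu$. Consequently $\theta$ sends $H^L_\mu$ to a Weyl conjugate of itself, and the transport-of-basepoint provided by $\gamma$ (which re-expresses cohomology from the displaced basepoint back to the original) composes with the induced action of $\theta$ on $\bar L_\mu$ to give, on the torus $\bar T\subset\bar L_\mu$, exactly the inversion $t\mapsto t^{-1}$. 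Equivalently, at the equivariant level $\mathring{\Theta}_\mu$ corresponds to the involution of $R_{\bar T,\ell}$ given by $\chi\mapsto -\chi$ on characters; this involution preserves $R_{\bar L_\mu,\ell}=R_{\bar T,\ell}^{W_{\bar L_\mu}}$ and $R_{\bar G,\ell}=R_{\bar T,\ell}^W$ (since inversion commutes with $W$) and fixes the augmentation ideal of $R_{\bar G,\ell}$, so it descends to an involution of the quotient $H^*(\Gr_\mu)$.

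Finally, the sign count is immediate: with $X^*(\bar T)\otimes\Ql$ placed in cohomological degree $2$, the inversion $\chi\mapsto -\chi$ acts on $\on{Sym}^j(X^*(\bar T)\otimes\Ql)$ by $(-1)^j$, hence on the degree-$2j$ part of $R_{\bar T,\ell}$, of $R_{\bar L_\mu,\ell}$, and of their common quotient $H^*(\Gr_\mu)$. Thus $\mathring{\Theta}_\mu$ acts by $(-1)^j$ on $H^{2j}(\Gr_\mu)$, as claimed.

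\medskip\noindent
\emph{Main obstacle.} The delicate point is the second paragraph: matching $\gamma\circ\theta^*$ with inversion on $\bar L_\mu$. Applied naively as an anti-involution of the stabilizer, $\theta$ alone induces on $\bar T$ only the Weyl action of $w_0$ (which is not $(-1)^j$ in degree $2j$ for general $G$), while Lemma \ref{left=right} makes $\gamma$ look like the identity; the $(-1)^j$ appears only after taking into account the translation $\theta(\varpi^\mu)=\varpi^{w_0(\mu)}$ and absorbing it into $\gamma$ via the basepoint-independence of the equivariant description. I would verify this either by a direct computation using the common cover diagram \eqref{left to right corr} (tracking the two projections on stabilizers and the conjugation by $\varpi^\mu$), or by lifting everything to the parahoric/Soergel-bimodule framework already exploited in the proof of Lemma \ref{left=right}, where the cancellation between the $w_0$-translation and the transport $\gamma$ is visible structurally.
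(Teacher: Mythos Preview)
Your proposal is correct and follows essentially the same approach as the paper: both identify $\mathring{\Theta}_\mu$, at the level of $R_{\bar T,\ell}$, with the sign involution $\chi\mapsto -\chi$, and deduce the $(-1)^j$ action from this. The paper resolves your ``Main obstacle'' concretely by passing through the projections $\Gr_\mu\to(\bar G/\bar P_{-\mu})^\pf$ and $\Gr_\mu^{\on{op}}\to(\bar G/\bar P_{\mu})^\pf$ and checking that the induced bottom map is $g\mapsto\theta'(g)\dot w_0$; this visibly gives $\chi\mapsto -\chi$ on $R_{\bar T,\ell}$, while $\gamma$ is seen to be the identity on $R_{\bar T,\ell}$.

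One minor difference: the paper first uses the ring isomorphism $\on{H}^*(\Gr_\mu)\simeq\on{H}^*(\bar G/\bar P_{-\mu})$ and generation by $\on{H}^2$ to reduce to checking the sign only in degree $2$, whereas you argue directly in all degrees via the symmetric-algebra description. Both are fine; the reduction to degree $2$ is a slight convenience but not essential once the identification with $\chi\mapsto -\chi$ is established.
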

\begin{proof}
The argument is essentially the same as \cite[Lemma 3.3]{LY}, although the set-up is different (the authors of \emph{loc. cit.} work over $\bC$ and with the based loop group of a compact Lie group rather that the affine Grassmannian).
Recall the projection $\pi_\mu:\Gr_\mu\to (\bar G/\bar P_{\mu})^\pf$ from \eqref{mod to fil}. 
As the fibers are the perfection of affine spaces of the same dimension, the pullback induces a canonical ring isomorphism
$\on{H}^*(\bar G/\bar P_{\mu})\simeq \on{H}^*(\Gr_\mu)$. Therefore, $\on{H}^*(\Gr_\mu)$ is generated by $\on{H}^2$. In addition, it is clear that $\mathring{\Theta}_\mu$ is a ring homomorphism so it is enough to prove that $\mathring{\Theta}_\mu=-1$ on $\on{H}^2$.

On the other hand $\Gr_\mu^{\on{op}}$ projects to $(\bar G/\bar P_{-\mu})^\pf$ given by $\varpi^{\mu}g\mapsto g^{-1} \mod \varpi$. 
A direct computation shows that the following diagram is commutative
\[\begin{CD}
\Gr^{\on{op}}_{\mu}@>\theta>> \Gr_\mu\\
@VVV@VVV\\
(\bar G/\bar P_{-\mu})^\pf @>g\mapsto \theta'(g) \dot{w}_0  >> (\bar G/\bar P_{\mu})^\pf,
\end{CD}\]
where $\dot{w}_0$ is a lifting of $w_0$ to $\bar G$.
Taking the equivariant cohomology, we obtain the commutativity of the following diagram 
\[\begin{CD}
\on{H}^*_{\bar G}(\bar G/\bar P_{\mu})=R_{(\bar P_{\mu})_{\on{red}},\ell}@>\theta^*>> R_{(\bar P_{-\mu})_{\on{red}},\ell}=\on{H}^*_{\bar G}(\bar G/\bar P_{-\mu})\\
@VVV@VVV\\
R_{\bar T,\ell}@>\chi\mapsto -\chi>> R_{\bar T,\ell},
\end{CD}\]
where $\chi\in \xch(\bar T)$, regarded as elements in $R_{\bar T,\ell}$ of degree two.

On the other hand, the isomorphism $\on{H}^*_{\bar G}(\bar G/\bar P_{-\mu})\simeq \on{H}^*_{\bar G}(\bar G/\bar P_{\mu})$, given by
$\ga:\on{H}^*_{L^+G}(\Gr_\mu^{\on{op}})\simeq \on{H}^*_{L^+G\times L^+G}(\Gr_\mu^{(m)})\simeq \on{H}^*_{L^+G}(\Gr_\mu)$, is the restriction of the identity map on $R_{\bar T,\ell}$ by definition. 
Therefore, the equivariant version of $\mathring{\Theta}_\mu$ acts as $(-1)$ on degree two parts. Specializing gives the lemma. 
\end{proof}
\begin{rmk}This lemma in particular proves Lemma \ref{involution action} in the case when $\mu$ is minuscule. The difficulty to prove Lemma \ref{involution action} for general $\mu$ is that the intersection cohomology ring is not generated by Chern classes but we do not know more cohomology classes in it\footnote{Although there are MV basis in $\on{IH}^*(\Gr_{\leq \mu})$, it seems hard to understand the map $\ga$ in terms of them.}.
\end{rmk}

To continue, it is convenient to set $\mathbf{C}_\mu=\IC_\mu[(2\rho,\mu)]$, as in \cite{LY}. For each $\la\leq\mu$, let $i_\la: \Gr_\la\to \Gr_{\leq \mu}$ denote the corresponding locally closed embedding. For $j$, let $\mH^{j}_\la\mathbf{C}_\mu$ denote the degree $j$th sheaf cohomology of $i_\la^*\mathbf{C}_\mu$, which is constant along $\Gr_\la$. Then there is a canonical isomorphism
\[\mH^{j}_\la\Psi_{\mu}: \mH^{j}_\la\mathbf{C}_\mu= \mH^{j}_\la\theta^*\mathbf{C}_\mu\simeq \mH^{j}_\la\mathbf{C}_\mu^{\on{op}}\simeq \mH^{j}_\la\mathbf{C}_\mu,\]
where the second isomorphism is $N_\mu: \theta^*\IC_\mu\simeq \IC_\mu^{\on{op}}$ from \eqref{Nmu}, and the last isomorphism is from Corollary \ref{stalk identification}. Clearly, $\mH^{j}_\la\Psi_\mu$ is an involution.
Recall that the existence of the Demazure ``resolution" in our setting (see \eqref{DR}) implies that all the stalk cohomology of $\mathbf{C}_\mu$ concentrate in even degrees. 
\begin{lem}\label{local involution}
For every $j$, $\mH^{2j}_\la\Psi_\mu=(-1)^j$.
\end{lem}

Now, we prove Lemma \ref{involution action}, assuming Lemma \ref{local involution}. In \cite[3.4, 6.4]{LY}, it was shown that the equal  characteristic analogue of Lemma \ref{involution action} implies the equal  characteristic analogue of Lemma \ref{local involution}. But their argument can be reversed. We sketch it here and refer to \emph{loc. cit.} for details (but note that their set-up is different). We extend the partial order $``\leq"$ on $\xcoch^+$ to a total order, still denoted by $\leq$. We consider the stratification of $\Gr_{\leq \mu}$ given by $\{\Gr_\la,\la\leq\mu\}$. Let $\Gr_{<\la}=\sqcup_{\la'<\la} \Gr_{\la'}$ and let $i_{<\la}$ and $i_{\leq \la}$ denote the corresponding closed embeddings from $\Gr_{<\la}$ and $\Gr_{\leq \la}$ to $\Gr_{\leq \mu}$. Then there is a long exact sequence of cohomology
\[\cdots\to \on{H}^i(\Gr_{<\la}, i^!_{<\la}\mathbf{C}_\mu)\to \on{H}^i(\Gr_{\leq \la},i_{\leq \la}^!\mathbf{C}_\mu)\to \on{H}^i(\Gr_\la,i_{\la}^!\mathbf{C}_\mu)\to\cdots,\]
which splits into short exact sequences as all the cohomology in odd degree vanish. Therefore, we obtain a filtration on $\on{IH}^*(\Gr_{\leq\mu})$, given by $\on{Im}(\on{H}^i(\Gr_{\leq \la},i_{\leq \la}^!\mathbf{C}_\mu)\to \on{IH}^*(\Gr_{\leq\mu}))$. The associated graded is $\oplus_{\la\leq \mu}\on{H}^i(\Gr_\la,i_{\la}^!\mathbf{C}_\mu)$.
There is a similar picture on $\Gr_{\leq\mu}^{\on{op}}$. 

The isomorphisms $\theta^*:  \on{IH}^*(\Gr_{\leq\mu})\simeq\on{IH}^*(\Gr_{\leq\mu}^{\on{op}})$ and $\ga:\on{IH}^*(\Gr_{\leq \mu})\simeq\on{IH}^*(\Gr_{\leq \mu})$ preserve the filtrations on $\on{IH}^*(\Gr_{\leq\mu})$ and on $\on{IH}^*(\Gr_{\leq\mu}^{\on{op}})$, and therefore give rise to isomorphisms 
$$\on{gr}\Theta:  \on{gr}\on{IH}^*(\Gr_{\leq\mu})\stackrel{\on{gr}\theta^*}{\simeq}\on{gr}\on{IH}^*(\Gr_{\leq\mu}^{\on{op}})\stackrel{\on{gr}\ga}{\to}\on{gr}\on{IH}^*(\Gr_{\leq \mu}).$$
Note that  $i_\la^!\mathbf{C}_\mu= (i_\la^*\mathbf{C}_\mu[2(2\rho,\la-\mu)])^*$. In addition, it is easy to identify $\on{gr}\Theta$ with the direct sum over $\la$ of the maps
\[ \mathring{\Theta}_{\la}\otimes \mH^*_{\la,!}\Psi_\mu:\on{H}^*(\Gr_\la)\otimes i_\la^!\mathbf{C}_\mu \simeq \on{H}^*(\Gr_\la)\otimes i_\la^!\mathbf{C}_\mu,\]
where $\mH^*_{\la,!}\Psi_\mu$ is the inverse of the dual of $\mH^*_{\la}\Psi_\mu$.
So the action of $\on{gr}\Theta$ on the degree $2j$ piece of $\on{H}^*(\Gr_\la)\otimes\mH_\la^*\mathbf{C}_\mu$ is given by $(-1)^j$. But as $\Theta$ itself is an involution, it acts on $\on{IH}^{2j}(\Gr_{\leq \mu})$ by $(-1)^j$.

\subsubsection{}\label{comb formula}
It remains to prove Lemma \ref{local involution}. 
Let $I$ be the ($p$-adic jet group of) the standard Iwahori (i,e. whose reduction mod $\varpi$ is $\bar B\subset \bar G$). Let $\widetilde W$ denote the Iwahori-Weyl group of $G(F)$ as before, and let $W_a\subset \widetilde W$ denote the corresponding affine Weyl group, with the set of simple reflections $\{s_i,i\in\bS\}$ determined by $I$. We identify $\bS$ with the set of vertices of the affine Dynkin diagram of $G(F)$. Let $0\in \bS$ denote the vertex corresponding to the hyperspecial parahoric $G(\mO)$. Let $J=\bS-\{0\}$, and let $W_J\subset \widetilde W_a$ denote subgroup generated by $\{s_i,\ i\in J\}$. Let $w_J$ denote the longest element in $W_J$. Then $W_J$ is isomorphic to the finite Weyl group $\overline W=\widetilde W/\xcoch$ of $G(F)$, and $w_J$ maps to the longest element $w_0$ in $\overline W$ mentioned before.
Let $\Omega\subset \widetilde W$ denote the subgroup of length zero elements, i.e. those that fix $I$. It acts on $W_a$ by conjugation. Then $\widetilde W=W_a\rtimes\Omega$. Let $*:\widetilde W\to\widetilde W$ be the involution given by $w^*:= w_Jw w_J$ for $w\in W_J$ and $\la^*=-w_0(\la)$ for $\la\in\xcoch$. This is an involution of $\widetilde W$ which stabilizes $\{s_i, i\in\bS\}$ and fixes $s_0$.

Let $\omega\in \Omega$.  Then by \cite[Lemma 6.2]{LY} $\omega^*=\omega^{-1}$, and the map
$$\diamond: W_a\to W_a, \quad w\mapsto w^{\diamond}:=\omega w^* \omega^{-1}$$
is an involution of $W_a$, which stabilizes $\{s_i,i\in\bS\}$. Let $I_{\diamond}=\{w\in W_a\mid w^{\diamond}=w^{-1}\}$, and $W_J^{\diamond}=\{w^{\diamond}\mid w\in W_J\}=\omega W_J\omega^{-1}$. Then as argued in \cite[Proposition 8.2]{Lu2} and \cite[Theorem 6.3 (1)]{LY}, the longest element in every $(W_J\times W_J^{\diamond})$-double coset belongs to $I_{\diamond}$. 

Applying the results of \cite{LV,Lu2} to $(W_a,\{s_i,i\in\bS\},\diamond)$, one attaches a polynomial $P_{y,w}^{\sigma,\diamond}(q)\in \bZ[q]$ to every pair $(y,w)\in I_{\diamond}$, with $y\leq w$. On the other hand, there is the usual Kazhdan-Lusztig polynomial $P_{y,w}(q)$ attached to $(y,w)$ \cite{KaLu}.
The following theorem was conjectured in \cite[Conjecture 8.4]{Lu2}, and was proved in \cite[Theorem 6.3]{LY}.
\begin{thm}\label{-q analog}
Let $d_1$ and $d_2$ be longest elements of $(W_J,W_J^{\diamond})$-double cosets in $W_a$. Then
\[P_{d_1,d_2}^{\sigma,\diamond}(q)=P_{d_1,d_2}(-q).\]
\end{thm}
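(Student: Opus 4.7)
The plan is to follow the Lusztig--Yun strategy: interpret both sides of the identity as graded dimensions of cohomology of sheaves on the \emph{equal characteristic} affine Grassmannian $\Gr^\flat_G$, and then match them via the equal characteristic geometric Satake, which is available as an input. The $(W_J,W_J^\diamond)$-double cosets in $W_a$ are in bijection with $L^+G^\flat$-orbits on a suitable variety; longest representatives $d_1,d_2 \in I_\diamond$ correspond to dominant coweights $\mu_1,\mu_2$, and the Kazhdan--Lusztig polynomial $P_{d_1,d_2}(q)$ computes the Poincar\'e polynomial of the stalk $\mH^\bullet_{\mu_1}(\IC_{\mu_2})$ of the intersection cohomology sheaf of $\Gr^\flat_{\leq\mu_2}$ at the stratum $\Gr^\flat_{\mu_1}$.

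Next I would give a geometric interpretation of the Lusztig--Vogan polynomial $P^{\sigma,\diamond}_{d_1,d_2}$. The involution $\diamond$ on $W_a$ is induced by the Chevalley involution $\theta$ on $G$ composed with a conjugation; this lifts to a geometric involution $\iota$ on $\Gr^\flat_G$ preserving each Schubert stratum. Unwinding the Lusztig--Vogan recursion, $P^{\sigma,\diamond}_{d_1,d_2}(q)$ should record the \emph{signed} trace $\sum_j \Tr(\iota \mid \mH^{2j}_{\mu_1}\IC_{\mu_2})\, q^j$. Establishing this geometric reformulation rigorously is already non-trivial: one needs to exhibit a family of $\sigma$-symmetric mixed sheaves whose weight-graded pieces produce the Lusztig--Vogan recursion.

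The key step is then to identify the action of $\iota$: I would show that $\iota$ acts as multiplication by $(-1)^j$ on $\mH^{2j}_{\mu_1}\IC_{\mu_2}$, which gives
\[
P^{\sigma,\diamond}_{d_1,d_2}(q) \;=\; \sum_j (-1)^j \dim \mH^{2j}_{\mu_1}\IC_{\mu_2} \cdot q^j \;=\; P_{d_1,d_2}(-q).
\]
This is where the equal characteristic geometric Satake enters decisively. Under the Tannakian equivalence $\mathrm{Sat}^\flat_G \simeq \on{Rep}_{\Ql}(\hat G)$, the involution $\iota$ is intertwined with an endomorphism of the fibre functor $\on{H}^*$, and the sign $(-1)^j$ is \emph{forced} by compatibility with the symmetric monoidal structures. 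It is precisely the Koszul sign convention of \eqref{def of comm 2}: if the comparison with $\on{Rep}(\hat G)$ (whose symmetric structure is the naive one) is to respect tensor products, the parity shift $p$ must be absorbed as a sign on cohomology, and this sign is exactly $(-1)^j$ in cohomological degree $2j$.

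The main obstacle is the first reformulation step rather than the second: passing from the abstract Lusztig--Vogan recursion to a clean statement about $\Tr(\iota\mid \mH^{2\bullet})$. One has to build a $\sigma$-equivariant enhancement of the category of perverse sheaves on $\Gr^\flat_G$ (e.g.\ using monodromic sheaves or a symmetric-variety analogue of affine Schubert varieties), show that the standard Bott--Samelson / Demazure recursions on this category reproduce the Lusztig--Vogan recursion on the Grothendieck group, and establish a purity statement so that weights and $\iota$-eigenvalues on $\mH^\bullet$ are synchronised. Once this setup is in place, the computation of the eigenvalue $(-1)^j$ via geometric Satake is formal.
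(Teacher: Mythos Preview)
Your outline is essentially the Lusztig--Yun strategy that the paper cites; note that the paper does not prove this theorem itself but quotes it from \cite{LY}, so there is no independent argument in the paper to compare against beyond the sketch surrounding Lemmas~\ref{involution action} and~\ref{local involution}.

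One point of ordering is worth correcting. Geometric Satake controls the action of the involution on \emph{global} intersection cohomology $\on{IH}^{2j}(\Gr^\flat_{\leq\mu})$, because that is where the fiber functor lives; it does not directly see the stalk cohomology $\mH^{2j}_{\mu_1}\IC_{\mu_2}$. In \cite{LY}, as the paper describes, the route is: the equal characteristic Satake equivalence forces $\Theta_\mu$ to act by $(-1)^j$ on $\on{IH}^{2j}(\Gr^\flat_{\leq\mu})$ (the analogue of Lemma~\ref{involution action}); then a filtration of $\on{IH}^*$ by Schubert strata, whose associated graded pieces are $\on{H}^*(\Gr^\flat_\lambda)\otimes i_\lambda^!\mathbf{C}_\mu$, combined with the explicit toy computation of $\mathring\Theta_\lambda$ on $\on{H}^*(\Gr^\flat_\lambda)$ (Lemma~\ref{toy case}), transfers the sign to the stalks (the analogue of Lemma~\ref{local involution}); finally, the geometric interpretation of $P^{\sigma,\diamond}$ from \cite[\S3]{LV} converts that into the stated polynomial identity. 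Your proposal collapses the first two steps into one, asserting that Satake directly yields the sign on stalks; that passage from global to local is a genuine argument you should not skip. The ``main obstacle'' you flag (the geometric reformulation of the Lusztig--Vogan recursion) is handled in the literature by \cite[\S3]{LV}, as the paper notes.
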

Let us note that this theorem was deduced in \cite{LY} from the equal  characteristic analogue of Lemma \ref{toy case}.

Finally, we explain why Lemma \ref{local involution} follows from this theorem.  Let $\mu\in\xcoch$, and let $\omega$ denote the unique element in $\Omega$ such that $\varpi^\mu\in W_a\omega $. Let $d_\mu$ be the longest element in $W_J\varpi^\mu W_J\omega^{-1}=W_J (\varpi^{\mu}\omega^{-1}) W_J^{\diamond}$.  Then for $\la\leq \mu$, $\varpi^\la\omega^{-1}\in W_a$. Let $d_\la$ denote the corresponding longest element in $W_J \varpi^{\la}\omega^{-1}W_J^\diamond$. The usual Kazhdan-Lusztig theory \cite{KaLu,KaLu2} works in our situation. So $P_{y,w}(q)$ is the Poincare polynomial for the stalk cohomology at $y$ of the intersection cohomology sheaf $\on{IC}_w$ of the Schubert variety $S_w$ on the affine flag variety $\Fl=LG/I$. Then in particular (see \cite{Lu}),
\[P_{d_\la,d_\mu}(q)=\sum( \dim \mH^{2j}_\la\mathbf{C}_\mu) q^j.\]
On the other hand, in \cite[\S 3]{LV}, similar interpretations were given to the polynomials $P_{y,w}^{\sigma,\diamond}$. Such interpretations in particular imply that
\[P^{\sigma,\diamond}_{d_\la,d_\mu}(q)=\sum \tr(\mH_\la^{2j}\Psi_\mu\mid \mH^{2j}_\la\mathbf{C}_\mu)q^j.\]
Since $\mH_\la^{2j}\Psi_\mu$ is an involution, Theorem \ref{-q analog} implies Lemma \ref{local involution}.

\subsection{Identification with the dual group}
We have endowed $\on{P}_{L^+G}(\Gr)$ with a symmetric monoidal category structure and the hypercohomology functor $\on{H}^*:\on{P}_{L^+G}(\Gr)\to \on{Vect}_{\Ql}$ a tensor functor structure. It is clear that $\IC_0$ is a unit object in $\on{P}_{L^+G}(\Gr)$. Now we proceed as in \cite[\S 7]{MV} to conclude that $(\on{P}_{L^+G},\star,\on{H}^*)$ is a Tannakian category with the fiber functor $\on{H}^*$. Let $\tilde G=\Aut^\otimes \on{H}^*$ denote the Tannakian group. It is a connected reductive group, by the same argument as in \cite[\S 7]{MV}. Our next goal is to identify $\tilde G$ with the dual group $\hat G$ of $G$. 

First, if $G=T$ is a torus, $\Gr_T$ is a discrete set of points canonically isomorphic to $\xcoch(T)$. Then it is easy to see that $\Sat_T$ is equivalent to the category of $\xcoch(T)$-graded finite dimensional $\Ql$-vector spaces and $\on{H}^*$ is just the functor that forgets the grading. Therefore, $\tilde T=\hat T$ is the dual torus of $T$.

Now consider the general case. We can regard the weight functor
$\on{CT}$ as a functor from $\Sat_G\to \Sat_T$, and the isomorphism in Corollary \ref{weight functor} as an isomorphism $\on{H}^*\circ \on{CT}\simeq \on{H}^*: \Sat_G\to \on{Vect}_\Ql$.
\begin{prop}
There is a unique monoidal structure on $\on{CT}$ such that the isomorphism $\on{H}^*\circ \on{CT}\simeq \on{H}^*: \Sat_G\to \on{Vect}_\Ql$ in Corollary \ref{weight functor} is monoidal. 
\end{prop}
In equal characteristic, this was proved in \cite[Proposition 6.4]{MV} using the fusion product interpretation of the convolution product. However, there is another purely local approach using equivariant cohomology, given in \cite[Proposition 5.3.14]{Z16}. The latter approach works in mixed characteristic as well. Namely, $S_\la$ is stable under the action of the torus $\bar T^\pf\subset L^+T\subset L^+G$, and therefore one can use the $\bar T$-equivariant cohomology $\on{H}_{\bar T}^*$ as in \emph{loc. cit.} for the arguments.

Applying Proposition \ref{existence of comm}, we see that the weight functor $\on{CT}$ in fact respects to the symmetric monoidal structure, and thus is a tensor functor between two Tannakian categories. It thus induces a homomorphism
\[\hat{T}\simeq \tilde T\to \tilde G.\]
This defines a subtorus of $\tilde G$. By the same argument as in \cite[\S~7]{MV}, this is in fact a maximal torus. In addition, the filtration on $\on{H}^*(\Gr_G,-)$ defines a Borel subgroup $\hat B\subset\tilde G$ that contains $\hat T$. Then it follows by the same argument as the end of \cite[\S~7]{MV} that $\tilde G$ is isomorphic to $\hat G$. We refer to \cite[\S~5.3]{Z16} for more details.

\begin{rmk}Our methods can also be applied to establish the mixed characteristic geometric Satake for ramified groups (cf. \cite{Z11}).
\end{rmk}

\section{Dimension of affine Deligne-Lusztig varieties}\label{dim of RZ}
In this section, we give an application of  mixed characteristic affine Grassmannians to the study of the Rapoport-Zink (RZ) spaces. More applications will appear in \cite{XZ}.

\subsection{Dimension of affine Deligne-Lusztig varieties}\label{dim:ADLV}
\subsubsection{}
We use the notations as in \S\ \ref{not}. So $F$ is a totally ramified extension of $F_0=W(k)[1/p]$ with $\mO$ its ring of integers. Let $L$ be the completion of its maximal unramified extension, with $\mO_L$ its ring of integers. Let $\sigma\in \Gal(L/F)$ denote the Frobenius element. Let $G$ be a reductive group scheme over $\mO$. For $b\in G(L)$ and $\mu\in\xcoch^+$, we define the (closed) affine Deligne-Lusztig ``variety" as  
\begin{equation}\label{ADL}
X_{\leq \mu}(b)=\{g \mod L^+G\in \Gr_G\mid g^{-1}b\sigma(g)\in \overline{L^+G\varpi^\mu L^+G}\}.
\end{equation}
More precisely, one can interpret $X_{\leq \mu}(b)$ as the following moduli functor: let $\mE_0$ be the trivial $G$-torsor on $D_F=\Spec \mO$, with an isomorphism $b: \sigma^*\mE_0|_{D^*_F}\to \mE_0|_{D^*_F}$. Then for a perfect $k$-algebra $R$,
\begin{equation}\label{ADL2}
X_{\leq \mu}(b)(R)=\{(\mE,\beta)\in \Gr_G(R)\mid \inv_x(\beta^{-1}b\sigma(\beta))\leq \mu,\ \forall x\in\Spec R\}.
\end{equation}
By Lemma~\ref{Hodge stra2}, $X_{\leq \mu}(b)$ is a closed subset of $\Gr_G$.
One can replace ``$\leq$" in the above definition by ``$=$", which defines an open subset of $X_{\leq \mu}(b)$, denoted by $X_\mu(b)$.
If we denote $\Phi=\beta^{-1}b\sigma(\beta)$, then $(\mE,\Phi)$ is an $F$-crystal with $G$-structure on $\Spec R$, whose Hodge polygon is bounded by $\mu$ (resp. equal to $\mu$).

It turns out that the dimension of $X_{\leq \mu}(b)$ is finite, and Rapoport gave a conjectural formula of its dimension (\cite{R}) with a reformulation given by Kottwitz (\cite{GHKR})
\begin{equation}\label{dim ADL}
\dim X_{\leq \mu}(b)= \langle \rho,\mu-\nu_b\rangle-\frac{1}{2}\on{def}_G(b).
\end{equation}
Here $\nu_b$ is the Newton point of $b$ and $\on{def}_G(b)$ is the defect of $b$. We refer to \cite{GHKR} for the precise definitions.
This dimension formula has been proved in equal  characteristic by combining the works \cite{GHKR,V,Ham2}, but remains open in general in mixed characteristic. In fact, before our work, it is not clear how to define the dimension of $X_{\leq \mu}(b)$ in mixed characteristic in general, and this formula only makes sense for some special triples $(G,b,\mu)$ when \eqref{ADL} can be interpreted as the $\bar\bF_p$-points of some moduli spaces of $p$-divisible groups (a.k.a. RZ spaces). In the case when the RZ spaces are of PEL type, this dimension formula was proved recently by Hamacher (\cite{Ham1}) and some special cases were proved earlier by Viehmann  (\cite{Vi1,Vi2}).

\begin{thm}\label{Vi}
Rapoport's conjecture \eqref{dim ADL} holds in general.
\end{thm}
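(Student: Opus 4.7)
The plan is to imitate the strategy used in equal characteristic, now made available in mixed characteristic by the representability results of Section \ref{aff Grass} and the geometric Satake of Section \ref{GS}. A standard reduction (replacing $G$ by $G_\ad$ and checking that the dimension formula, the Newton point and the defect behave compatibly) lets us assume that $G$ is adjoint. Next, following \cite{GHKR}, one introduces the Newton stratification on $\Gr_G$ and uses a Hodge-Newton type decomposition together with the action of $J_b(F)$ to reduce the computation of $\dim X_{\leq\mu}(b)$ to the case where $b$ is \emph{superbasic}, i.e.\ no $\sigma$-conjugate of $b$ lies in a proper Levi subgroup. Now that $X_{\leq\mu}(b)$ is a genuine locally closed ind perfect algebraic subspace of $\Gr_G$ (via the closed condition \eqref{ADL2} and Lemma \ref{Hodge stra}), the combinatorial arguments of \emph{loc.\ cit.} carry over verbatim to the perfect setting.

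By \cite{GHKR,CKV}, for $G$ adjoint the only superbasic cases arise when $G=\on{PGL}_n$ or $G=\Res_{E/F}\on{PGL}_n$ with $E/F$ unramified. The case $G=\on{PGL}_n$ is settled by the argument of \cite{V} (it is purely geometric, phrased at the level of the affine Grassmannian, and goes through after passing to the perfection). The novel piece is therefore to reduce the Weil-restriction case to the split case. I would establish Proposition \ref{unramified to split} as follows: set $G'=\on{PGL}_n$ over $E$, so $G=\Res_{E/F}G'$, and identify $\Gr_G$ over $\bar k$ with a product $\prod_{\sigma\colon E\hookrightarrow L} \Gr_{G'}$, with $\sigma$-Frobenius permuting the factors cyclically. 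For a superbasic $b$ in $G(L)$, one then shows that the $\sigma$-conjugacy condition $g^{-1}b\sigma(g)\in \overline{L^+G\varpi^\mu L^+G}$ becomes, after unfolding the product, a condition on a single point in $\Gr_{G'}$ involving the $e$-fold convolution product for $G'$ (where $e=[E:F]$). Concretely, one maps $(g_1,\ldots,g_e)\mapsto g_1\cdots g_e\in \Gr_{G'}$; the image of $X_{\leq\mu}(b)\subset \Gr_G$ under this convolution lands in a union of affine Deligne-Lusztig varieties $X_{\leq|\mmu|}(b')$ for a corresponding superbasic $b'\in G'(L)$, where $\mmu=(\mu_1,\ldots,\mu_e)$ is the factorization of $\mu$ into its $G'$-components and $|\mmu|=\sum\mu_i$.

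The key technical input is the \emph{semismallness} of the convolution morphism $m\colon \Gr_{\leq\mmu}\to \Gr_{\leq|\mmu|}$ from Proposition \ref{semismall}. Semismallness guarantees that fibers of the convolution map over strata have the expected dimension $(\rho,|\mmu|-\la)$, so the dimension of the preimage inside $X_{\leq\mu}(b)$ can be computed in terms of the dimension of $X_{\leq|\mmu|}(b')$ plus the fiber dimension; the numerology $\langle\rho,\mu-\nu_b\rangle$ matches on both sides because $\nu_b$ and $\nu_{b'}$ differ by the obvious averaging and $2\rho_G=\sum_\sigma 2\rho_{G'}$ under the product identification of $\Gr_G$. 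Crucially, even though the original $\mu$ is minuscule (or at any rate the case of interest for RZ spaces), the $\mu_i$ appearing in the factorization need not be minuscule, so the argument genuinely requires the representability of $\Gr_G$ (and of $X_{\leq\mu}(b)$) for arbitrary $\mu$, which is now available. Combining this reduction with \cite{V} then yields \eqref{dim ADL} for $G=\Res_{E/F}\on{PGL}_n$, and together with the $\on{PGL}_n$ case and the GHKR reduction completes the proof.

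The main obstacle, as in \cite{Ham2}, is controlling the dimensions of the intersections of a Newton stratum with the fibers of the convolution map, and checking that the superbasic reduction is compatible with the product decomposition of $\Gr_G$ at the level of dimensions of $X_{\leq\mu}(b)$; the semismallness statement of Proposition \ref{semismall} is exactly what is needed to replace the more ad hoc dimension estimates in \cite{Ham1,Ham2}, which is why the argument becomes shorter here.
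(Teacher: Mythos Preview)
Your outline matches the paper's strategy: GHKR-style reduction to superbasic $b$ for adjoint $G$, the $\on{PGL}_n$ case via \cite{V}, and the Weil-restriction case via convolution and semismallness (Proposition \ref{semismall}). Two points deserve sharpening.

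First, in the Weil-restriction step you assert that $b'$ is superbasic for $G'=\on{PGL}_n$. This is neither claimed nor needed in the paper: what is used is the full dimension formula for $H=\GL_n$ (equivalently $\on{PGL}_n$) for \emph{arbitrary} $b'$, which is already available once GHKR reduction plus \cite{V} establishes the $\GL_n$ case completely. So the logical order is: GHKR reduction holds for all groups; hence the $\GL_n$ formula holds for all $b$; hence the $\Res_{E/F}\GL_n$ superbasic case follows.

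Second, your description of the convolution step as ``the image of $X_{\leq\mu}(b)$ lands in $X_{\leq|\mmu|}(b')$'' understates what is actually proved. The paper shows an \emph{isomorphism} $X^G_{\leq\mu}(b)\simeq X^H_{\leq\mmu}(\on{Nm}b)$ with the convolution affine Deligne--Lusztig variety (defined via the Cartesian square \eqref{ADLV conv car}), and then uses that $X^H_{\leq\mmu}(\on{Nm}b)\to X^H_{\leq|\mmu|}(\on{Nm}b)$ is the base change of $m:\Gr_{\leq\mmu}\to\Gr_{\leq|\mmu|}$. It is this Cartesian structure, not merely the image, that lets one compute $\dim X^H_{\leq\mmu}(\on{Nm}b)=\max_\la\bigl(\dim X^H_{\leq\la}(\on{Nm}b)+(\rho_H,|\mmu|-\la)\bigr)$ from the fiber-dimension bound of Proposition \ref{semismall}. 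Also, a minor slip: when $\mu$ is minuscule for $G=\Res_{E/F}G'$ each factor $\mu_i$ \emph{is} minuscule for $G'$; the non-minuscule coweights enter through $|\mmu|$ and the intermediate $\la\leq|\mmu|$, which is where representability for general $\mu$ is genuinely required.
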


Not surprisingly, the machinery developed so far in the paper allows us to imitate the
the arguments in equal  characteristic with only a few justifications. 
First, one can argue as in
 \cite{GHKR,Ham2} to reduce the general Rapoport conjecture to the case when $b$ is superbasic. It was shown in \cite{GHKR,CKV} that if $G$ is of adjoint type, superbasic $\sigma$-conjugacy classes exist only when $G_F=\on{PGL}_n$ or $G_F=\Res_{E/F}\on{PGL}_n$, where $E/F$ is an unramified extension. The $\on{PGL}_n$ case was treated by Viehmann \cite{V} (in equal characteristic but the same arguments apply here).  We will reduce the $\Res_{E/F}\on{PGL}_n$ case to the $\on{PGL}_n$ case and then apply \cite{V}. This in particular gives a shorter proof of the main result of \cite{Ham2} (but it uses \cite{V}).  We sketch the arguments in the sequel. 
 
\begin{rmk}\label{rmk CKV}
This is a side remark arising as a comment by G. Pappas. 
Although the algebro-geometric structure on $X_{\leq\mu}(b)$ was not known before, the authors of \cite{CKV} defined a notion of the set of connected components $\pi_0(X_{\leq\mu}(b))$ of $X_{\leq\mu}(b)$. One can check that if two points in $X_{\leq\mu}(b)(\bar k)$ are in the same connected component in the sense of \emph{loc. cit.}, they are in the same connected component under the Zariski topology. The converse will also hold if in their definition arbitrary test rings (rather than just smooth rings) are allowed\footnote{In \emph{loc. cit.}, it was conjectured that these two definitions coincide.}. On the other hand, it seems that one can directly adapt their arguments to our setting to prove that the structure of connected components of $X_{\leq \mu}(b)$ in our sense is also given by the statement of \cite[Theorem 1.1]{CKV}. Then it would follow a posteriori that the two notions are the same.
In any case, when $X_{\leq\mu}(b)$ is the set of $\bar{\bF}_p$-points of a Rapoport-Zink space, their $\pi_0$ coincides with the $\pi_0$ of the RZ space, and by Proposition \ref{ADLV=RZ} below, also coincides with $\pi_0$ of $X_{\leq\mu}(b)$ as the perfection of an algebraic space.
\end{rmk}

\subsubsection{}  
Now one can argue as in \cite[Proposition 5.6.1, Theorem 5.8.1]{GHKR} to reduce the Rapoport conjecture for general $(G,\mu,b)$ to the case when $b$ is basic. First, the Newton point $\nu_b$ is defined over $F$, whose centralizer in $G$ is a rational Levi $M$. One can find a representative in the $\sigma$-conjugacy class of $b$ that is contained in $M(L)$. We rename this representative by $b$. So $b$ is basic in $M(L)$. Then their arguments reduce Rapoport's conjecture for $(G,b,\mu)$ to $(M,b,\mu_M)$ (for various $\mu_M$). These arguments rely on their Proposition 5.3.1 and 5.4.3. The proof of Proposition 5.3.1 in \emph{loc. cit.} applies to the current setting. Note that the arguments involve an $M$-equivariant isomorphism $N\simeq \frakn$. In the equal  characteristic situation, this isomorphism makes sense either as $F$-schemes of as $k$-ind-schemes. In our setting, it only makes sense as an isomorphism of $F$-schemes. But it still makes sense to talk about the $p$-adic loop space of $\frakn$ so the arguments in \S 4 of \emph{ibid.} apply. The proof of Proposition 5.4.3 in \emph{ibid.} extends verbatim in mixed characteristic, by taking account of the Lefschetz trace formula for separated pfp perfect algebraic spaces (see \S~\ref{trace formula}). A special case of this type of argument has appeared in the proof of Proposition \ref{semi coh} (where $M=T$). 

As explained in \emph{loc. cit.}, even $b$ is basic for $G$, it still might happen that $b$ is contained in a proper Levi subgroup of $G$. A basic $\sigma$-conjugacy class that does not meet in proper Levi subgroups of $G$ defined over $F$ is called a \emph{superbasic} $\sigma$-conjugacy class. Therefore, it is enough to prove Rapoport's conjecture for superbasic $b$. In addition, one can assume that $G=G_\ad$ is simple of adjoint type. Then it follows from \cite{GHKR,CKV} that superbasic $b$ exists only when $G_F=\Res_{E/F}\on{PGL}_n$ for some unramified extension $E/F$.

\subsubsection{}
It remains to prove the following.
\begin{prop}\label{unramified to split}
Formula \eqref{dim ADL} holds for $G_F=\Res_{E/F}\on{GL}_{n}$ and $b$ superbasic.
\end{prop}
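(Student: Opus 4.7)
The plan is to exploit the Weil restriction structure to relate $X^G_{\leq \mu}(b)$ to an affine Deligne--Lusztig variety for $H := \on{GL}_n$ over the larger field $E$, and then invoke Viehmann's theorem \cite{V}. Set $d = [E:F]$ and fix an embedding $E\hookrightarrow L$. Since $E/F$ is unramified, $E\otimes_F L\simeq L^d$, giving a canonical identification $\Gr_G = \prod_{i=0}^{d-1}\Gr_H$ under which the Frobenius $\sigma_F$ acts by
\[ (g_0,\ldots,g_{d-1}) \longmapsto (\sigma(g_1),\sigma(g_2),\ldots,\sigma(g_0)),\qquad \sigma := \sigma_{L/E}. \]
A dominant coweight of $G$ becomes a tuple $\mmu = (\mu_0,\ldots,\mu_{d-1})$ of dominant coweights of $H$, and we write $b = (b_0,\ldots,b_{d-1})$. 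Define the norm
\[ b^{\mathrm N} := b_0\,\sigma(b_1)\,\sigma^2(b_2)\cdots\sigma^{d-1}(b_{d-1}) \in H(L). \]
A short calculation shows that $b$ is superbasic for $G$ iff $b^{\mathrm N}$ is superbasic for $H$ viewed over $E$; the Newton point of $b$ in $G_L=H_L^d$ equals the constant tuple $(\nu_{b^{\mathrm N}}/d,\ldots,\nu_{b^{\mathrm N}}/d)$, and $\on{def}_G(b)=\on{def}_H(b^{\mathrm N})$ by functoriality of the Kottwitz map along Weil restriction.

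Next, I would rewrite the defining condition of $X^G_{\leq\mu}(b)$ in these coordinates. The condition that $g^{-1}b\,\sigma_F(g)$ lie in $L^+G\,\varpi^\mu L^+G$ becomes: each $h_i := g_i^{-1} b_i \sigma(g_{i+1})$ lies in $\Gr_{H,\leq\mu_i}$ (indices mod $d$). Telescoping yields
\[ g_0^{-1}\,b^{\mathrm N}\,\sigma^d(g_0) = h_0\,\sigma(h_1)\,\sigma^2(h_2)\cdots\sigma^{d-1}(h_{d-1}), \]
so the first-coordinate projection $f\colon (g_0,\ldots,g_{d-1})\mapsto g_0$ lands in $X^{H/E}_{\leq|\mmu|}(b^{\mathrm N})$, the ADLV for $H$ regarded as a split reductive group over $E$ with Frobenius $\sigma^d$, where $|\mmu|=\sum_i\mu_i$. (Here we use that $\sigma$ acts trivially on the coweight lattice of the split torus $T_H$, so $\sigma^i(\mu_i)=\mu_i$.) For fixed $g_0$, specifying $(g_1,\ldots,g_{d-1})$ is equivalent to specifying the tuple $(h_0,\ldots,h_{d-1})$ as a point of the convolution Grassmannian lying above $g_0^{-1}b^{\mathrm N}\sigma^d(g_0)$; that is, the fiber of $f$ over $g_0$ coincides with the fiber of the convolution map
\[ m\colon \Gr_{H,\leq\mu_0}\tilde\times\cdots\tilde\times\Gr_{H,\leq\mu_{d-1}} \longrightarrow \Gr_{H,\leq|\mmu|} \]
over that product.

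Combining this fibration with the semismallness of $m$ (Proposition \ref{semismall}) gives
\[ \dim X^G_{\leq\mu}(b) \leq \max_{\nu\leq|\mmu|}\Bigl(\dim\bigl(X^{H/E}_{\leq|\mmu|}(b^{\mathrm N})\cap f(\Gr_\nu)\bigr) + (\rho,|\mmu|-\nu)\Bigr). \]
Viehmann's theorem applied to the superbasic pair $(H/E,b^{\mathrm N})$ gives $\dim X^{H/E}_{\leq|\mmu|}(b^{\mathrm N})=\langle\rho_H,|\mmu|-\nu_{b^{\mathrm N}}\rangle-\tfrac12\on{def}_H(b^{\mathrm N})$. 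To convert the bound into an equality, I would exhibit a top-dimensional irreducible component of $X^{H/E}_{\leq|\mmu|}(b^{\mathrm N})$ on which the relative position generically equals $|\mmu|$; over the open stratum $\Gr_{H,|\mmu|}$ the convolution map $m$ is an isomorphism, so $f$ is also an isomorphism over a dense open subset of this component, saturating the bound. Translating via $\langle\rho_G,\mu\rangle=\langle\rho_H,|\mmu|\rangle$, the Newton identity $\langle\rho_G,\nu_b\rangle=\langle\rho_H,\nu_{b^{\mathrm N}}\rangle$, and the defect identity then yields Rapoport's formula for $X^G_\mu(b)$.

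The main obstacle is the equality direction just indicated: Proposition \ref{semismall} furnishes only an upper bound on fiber dimensions, and one must verify that some top-dimensional irreducible component of $X^{H/E}_{\leq|\mmu|}(b^{\mathrm N})$ actually meets the open stratum $\Gr_{H,|\mmu|}$. For the superbasic case this should follow from Viehmann's explicit construction in \cite{V} of irreducible components of maximal dimension as closures of loci where the Hodge polygon attains its target bound. A secondary, relatively routine task is the verification $\on{def}_G(b)=\on{def}_H(b^{\mathrm N})$, which I would prove by comparing the Kottwitz invariants $\kappa_G$ and $\kappa_H$ through the natural isomorphism $\pi_1(G)\simeq \pi_1(H)$ induced by $G=\Res_{E/F}H$.
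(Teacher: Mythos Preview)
Your approach is essentially the paper's: identify $X^G_{\leq\mu}(b)$ with a convolution affine Deligne--Lusztig variety $X^H_{\leq\mmu}(b^{\mathrm N})$ for $H=\GL_n$ over $E$, observe that it fibers over $X^H_{\leq|\mmu|}(b^{\mathrm N})$ with fibers given by the convolution map $m$, and combine semismallness of $m$ with Viehmann's theorem for $\GL_n$. The identifications of Newton point, defect, and $\langle\rho_G,\mu\rangle=\langle\rho_H,|\mmu|\rangle$ are exactly as in the paper.

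Where you go astray is in calling the equality direction a ``main obstacle.'' It is not: apply Viehmann's formula for \emph{every} $\nu\le|\mmu|$, not just $\nu=|\mmu|$. Then
\[
\dim X^H_{\leq\nu}(b^{\mathrm N}) + \langle\rho_H,|\mmu|-\nu\rangle
= \langle\rho_H,\nu-\nu_{b^{\mathrm N}}\rangle - \tfrac12\on{def}_H(b^{\mathrm N}) + \langle\rho_H,|\mmu|-\nu\rangle
= \langle\rho_H,|\mmu|-\nu_{b^{\mathrm N}}\rangle - \tfrac12\on{def}_H(b^{\mathrm N})
\]
is independent of $\nu$. Hence the maximum is attained at $\nu=|\mmu|$, where $m$ is an isomorphism over the open cell $\Gr_{H,|\mmu|}$ and the fiber of your map $f$ is a single point; this already gives the matching lower bound. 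Equivalently, since $\langle\rho_H,\mu-\lambda\rangle>0$ whenever $\lambda<\mu$, the closed ADLV's $X^H_{\leq\nu}(b^{\mathrm N})$ have strictly increasing dimension in $\nu$, so any top-dimensional component of $X^H_{\leq|\mmu|}(b^{\mathrm N})$ automatically meets the open stratum. You do not need to invoke Viehmann's explicit construction of components.
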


\begin{rmk}
This proposition was proved by Hamacher when $F=\bQ_p$ and $\mu$ is minuscule. Our method is different and is simpler, but it uses \cite{V}.
\end{rmk}

\begin{proof}
We first reduce the $\Res_{E/F}\GL_n$ case to $\GL_n$ case. 

We start with a generalization of affine Deligne-Lusztig varieties. Let $H$ be a connected reductive group over $\mO_E$. First observe that $X_{\leq \mu}(b)$ can be defined as the following Cartesian pullback 
\begin{equation}\label{ADLV car}
\begin{CD}
X_{\leq \mu}(b)@>>> \Gr_H\tilde\times\Gr_{\leq \mu}\\
@VVV@VV\pr\times mV\\
\Gr @>1\times b\sigma >> \Gr_H\times \Gr_H.
\end{CD}
\end{equation}
Now by replacing $\mu$ by a sequence of dominant coweights $\mmu$, we can define a convolution version of the affine Deligne-Lusztig variety
\begin{equation}\label{ADLV conv car}
\begin{CD}
X_{\leq \mmu}(b)@>>> \Gr_H\tilde\times\Gr_{\leq \mmu}\\
@VVV@VV\pr_1\times mV\\
\Gr @>1\times b\sigma >> \Gr_H\times \Gr_H.
\end{CD}
\end{equation}
Concretely, $X_{\leq \mmu}(b)$ classifies the following commutative diagram of maps
\[\xymatrix{
\sigma^*\mE_1\ar^{\Phi_{d}}[r]\ar_{\sigma^*(\beta)}[d]&\mE_{d}\ar^{\Phi_{d-1}}[r]&\cdots\ar^{\Phi_1}[r]&\mE_1\ar^\beta[d]\\
\sigma^*\mE_0\ar^{b}[rrr]&&&\mE_0,
}\]
such that $\inv_x(\Phi_i)\leq \mu_i$ for every $x\in\Spec R$.
Note that in equal  characteristic, this is the local version of the moduli space of iterated Shtukas.

In the sequel, we write $X_{\leq \mu_\bullet}^H(b\sigma_E)$ for $X_{\leq \mu_\bullet}(b)$ if we want to emphasize that the underlying group $H$ is define over $\mO_E$, and that the Frobenius $\sigma_E\in \Gal(L/E)$.

\medskip

Now we start our reduction step. Assume that $E/F$ is unramified of degree $d$.
Let $\Sigma$ denote the set of embeddings $\tau:E\to L$ over $F$. Then $\Gal(L/F)$ acts transitively on $\Sigma$. We fix $\tau_0\in\Sigma$ and let $\tau_i=\sigma^i(\tau_0),\ i=0,1,2,\ldots,d-1$. Let $\sigma_E:=\sigma^d\in \Gal(L/\tau_0(E))$.

Now assume that $G=\Res_{\mO_E/\mO_F}H$, for some unramified group $H$ over $E$. The canonical isomorphism $E\otimes_FL\simeq \prod_{\Sigma} L, \ a\otimes b\mapsto (\tau_i(a)b,\tau_i\in\Sigma)$ induces a canonical isomorphism 
$$G\otimes L\simeq \prod_{\tau\in\Sigma}H\otimes_{E,\tau}L.$$
Let $\mu$ be a dominant coweight of $G_L$. Then under the above isomorphism, it gives a sequence  $\mmu=(\mu_{\tau_0},\ldots,\mu_{\tau_{d-1}})$, where $\mu_{\tau_i}$ is a dominant coweight of $H\otimes_{E,\tau_i}L$. Similarly, $b\in G(L)$ gives $(b_{\tau})\in\prod_{\tau\in\Sigma}(H\otimes_{E,\tau}L)(L)$.

Note that $\sigma^i\in \Gal(L/F)$ induces an isomorphism $H\otimes_{E,\tau_i}L\simeq H\otimes_{E,\tau_0}L$. By abuse of notations, the induced map on the cocharacters and on the $L$-points are still denoted by $\sigma^i$ (this coincides with the standard notation if $H=(H_0)_E$ for some group $H_0$ defined over $F$).

For an $\bar\bF_p$-algebra $R$, we identify $(\mE,\beta)\in \Gr_G$ with $(\mE_\tau,\beta_\tau)\in\prod_{\tau\in\Sigma}\Gr_{H}$ in an obvious way. Then the condition \eqref{ADL2} is equivalent to the commutativity of the following diagram
\[\begin{CD}(\sigma^d)^*\mE_{\tau_0}@>>> (\sigma^{d-1})^*\mE_{\tau_{d-1}}@>>>\cdots @>>>\mE_{\tau_0}\\
@V(\sigma^d)^*\beta_{\tau_0}VV@V(\sigma^{d-1})^*\beta_{\tau_{d-1}}VV@.@VV\beta_{\tau_0}V\\
(\sigma^d)^*\mE_0@>\sigma^{d-1}(b_{\tau_{d-1}})>>(\sigma^{d-1})^*\mE_0@>\sigma^{d-2}(b_{\tau_{d-2}})>>\cdots @>b_{\tau_0}>>\mE_0.
\end{CD}\]
Let
$$\on{Nm}b=b_{\tau_0}\sigma(b_{\tau_1})\cdots \sigma^{d-1}(b_{\tau_{d-1}})\in (H\otimes_{E,\tau_0}L)(L).$$ 
Then the above discussions imply the following lemma.
\begin{lem}
If $G=\Res_{\mO_E/\mO_F}H$ for some unramified group $H$ over $\mO_E$, then 
$$X^G_{\leq \mu}(b\sigma)\simeq X^H_{\leq \mu_\bullet}((\on{Nm}b)\sigma_E),$$ where $\mu_\bullet=(\mu_{\tau_0},\sigma(\mu_{\tau_1}),\ldots,\sigma^{d-1}(\mu_{\tau_{d-1}}))$.
\end{lem}
\begin{rmk}The map $b\mapsto \on{Nm}b$ defines a map from the $\sigma$-conjugacy class of $G(L)$ to the $\sigma_E$-conjugacy class $H(L)$ (where $E$ embeds into $L$ via $\tau_0$).
\end{rmk}

We also need the following purely group theoretical lemma, whose proof is by chasing the definitions.
\begin{lem}\label{change of group}
Let $\mu$ and $b$ be as above. 

(1) Let $\rho_G$ be the half sum of positive roots of $G\otimes L$ and let $\rho_H$ be the half sum of positive roots of $H\otimes_{E,\tau_0} L$. Then 
$$(\rho_G,\mu)=(\rho_H, \sum_i \sigma^{i}(\mu_{\tau_{i}})).$$ 

(2) Let $\nu_b$ be the Newton point of $b$ and let $\nu_{\on{Nm}b}$ be the Newton point of $\on{Nm}b$. Then $$(\rho_G,\nu_b)=(\rho_H,\nu_{\on{Nm}b}).$$

(3) Let $J^G_b$ be the $\sigma$-twisted centralizer of $b\in G(L)$, i.e. 
$$J^G_b(R)=\{g\in G(R\otimes_FL)\mid g^{-1}b\sigma(g)=b\}$$ for any $F$-algebra $R$. This is an $F$-group. Similarly, let $J^H_{\on{Nm}b}$ be the $\sigma_E$-twisted centralizer of $\on{Nm}b$, which is an $E$-group. Then $J_b^G=\Res_{E/F}J_{\on{Nm}b}^H$. In particular, 
$$\on{def}_G(b)=\on{def}_H(\on{Nm}b).$$
\end{lem}

Now, assuming that the dimension formula for affine Deligne-Lusztig varieties of $H$ has been established, we 
calculate the dimension of $X^G_{\leq \mu}(b\sigma)=X^{H}_{\leq \mmu}((\on{Nm}b)\sigma_E)$. Recall the convolution map of affine Grassmannians \eqref{conv prod} (for $H$)
\[m:\Gr_{\leq\mmu}\to\Gr_{\leq|\mmu|}.\]
By \eqref{ADLV car} and \eqref{ADLV conv car}, the following diagram is Cartesian
\[\begin{CD}
X_{\leq\mmu}^H((\on{Nm}b)\sigma_E)@>>>\Gr_H\tilde\times\Gr_{\leq\mmu}\\
@VVV@VVV\\
X_{\leq |\mmu|}^H((\on{Nm}b)\sigma_E)@>>>\Gr_H\tilde\times\Gr_{\leq |\mmu|}.
\end{CD}\]
By (the proof of) Proposition \ref{semismall}, for $\la$, the dimension of the fiber $m^{-1}(\varpi^\la)$ 
is $\leq (\rho_H,|\mmu|-\la)$. Therefore, by Lemma \ref{change of group}, the preimage of $X_{\leq \la}^{H}((\on{Nm}b)\sigma_E)\subset X_{\leq |\mmu|}^H((\on{Nm}b)\sigma_E)$ in $X_{\leq\mmu}^H((\on{Nm}b)\sigma_E)$ has dimension
\[\begin{array}{ll}\leq &(\rho_H,\la-\nu_{\on{Nm}b})-\frac{1}{2}\on{def}_{H}(\on{Nm}b)+(\rho_H,|\mmu|-\la)\\=&(\rho_H,|\mmu|-\nu_{\on{Nm}b})-\frac{1}{2}\on{def}_{H}(\on{Nm}b)\\=&(\rho_G,\mu-\nu_b)-\frac{1}{2}\on{def}_G(b).\end{array}\]
In addition, if $\la=|\mmu|$, the equality achieves.
It follows that
$$\dim X_{\leq\mu}^G(b\sigma)=\dim X_{\leq\mmu}^{H}((\on{Nm}b)\sigma_E)=(\rho_G,\mu-\nu_b)-\frac{1}{2}\on{def}_G(b).$$

Therefore, it remains to prove the case when $G=\GL_n$ and $b$ superbasic. Now one can argue exactly the same as \cite{V} to complete the proof.
\end{proof}

\subsection{Affine Deligne-Lusztig varieties and Rapoport-Zink spaces}\label{ADLV==RZ}

Let us recall the definition of Rapoport-Zink (RZ) spaces. In the PEL case, they were defined by Rapoport-Zink in their original work \cite{RZ}. In a more general situation but under the assumption that the group is unramified, they are recently defined by Kim \cite{Ki} and Howard-Pappas \cite{HP}. We assume (for simplicity) that $k=\bar\bF_p$ is algebraically closed. To follow the standard notation, we write $W=W(k)$ (which was usually denoted by $\mO$ in previous sections). Let $L=W\otimes\bQ_p$. We use $F$ to denote $\sigma$-linear maps between vector spaces over $L$ (unlike the rest part of the paper where $F$ denotes a local field). Let $\on{Nilp}_W$ denote the category of $W$-algebras in which $p$ is nilpotent. 

First we recall the following fundamental result of Rapoport-Zink. Let $\bX_0$ be a $p$-divisible group over $k$. We consider the functor $\breve\mM_{\bX_0}$ that associates every $R\in \on{Nilp}_W$ the groupoid of pairs $(\bX,\iota)$, where $\bX$ is a $p$-divisible group over $\Spec R$, and $\iota:\bX_0\otimes_k R/p\to \bX\otimes_R R/p$ is a quasi-isogeny.  Rapoport-Zink proved that $\breve\mM_{\bX_0}$ is represented by a separated formal scheme, formally smooth and formally locally of finite type over $W$.

Now we start with a reductive group $G$ over $\bZ_p$, a geometric conjugacy class of cocharacters $\mu: \bG_m\to G$, and a $\sigma$-conjugacy class $b$ of $G(L)$ with a representative in $G(W)p^{\mu} G(W)$, still denoted by $b$.
We assume that there exists a free $\bZ_p$-lattice $\La$\footnote{Our $\La$ corresponds to $\La^*$, and $\mu$ corresponds to $-\mu$ in \cite{Ki}.} and a faithful representation
\[\rho: G\to \GL(\La),\]
such that the cocharacter $\rho\mu:\bG_m\to \GL(\Lambda\otimes W)$ has weights $0,1$. We fix a representative  of $\mu$, still denoted by the same notation. Let
\[\La\otimes W=\La^0\oplus \La^{1}\]
denote the decomposition of $\La\otimes W$ according to the weights of $\rho\mu$, which in turn induces a filtration $\on{Fil}^0(\La\otimes W)=\La\otimes W\supset \on{Fil}^1(\La\otimes W)=\La^{1}$.
We assume that $\on{rk} \La^{1}=n$, and $\on{rk} \La=h$. This is equivalent to assuming that $\rho\mu$ is the $n$-th fundamental coweight of $\GL(\La\otimes W)$. 

Let $\La^\otimes$ denote the tensor algebra of $\La\oplus \La^*$. Note that $\La^\otimes=(\La^*)^\otimes$. Elements in $\La^\otimes$ are called tensors. We choose a finite collection of tensors $\{s_i \in \La^\otimes, i\in I\}$ such that
$G\subset \GL(\La)$ is the schematic stabilizer of this collection. I.e. 
$$G=\Aut(\La,\{s_i, i\in I\}).$$ For example, if $G=\GL_h$, we can choose $\{s_i\}$ to be the empty set. Note that $$P_\mu:=\Aut(\La,\{s_i, i\in I\},\on{Fil}^*(\La\otimes W))$$ is a parabolic subgroup of $G_W$ determined by $\mu$.

Note that by our assumption and the classical Dieudonn\'{e} theory, there exists a $p$-divisible group $\bX_0$ of dimension $n$ and height $h$ over $\bar\bF_p$, together with an isomorphism
\[\varepsilon: \bD(\bX_0)\simeq \La\otimes_{\bZ_p} W,\]
where $\bD(\bX_0)$ is the \emph{contravariant} Dieudonn\'{e} module of $\bX_0$, equipped with $(F,V)$, such that: 
\begin{enumerate}
\item $\varepsilon F= \rho(b)(\id_\La\otimes\sigma) \varepsilon$; 
\item $\varepsilon(\Lie \bX_0)^*= \on{Fil}^1\La\otimes \bar\bF_p$. 
\end{enumerate}
The pair $(\bX_0,\varepsilon)$ is unique up to a unique isomorphism and we fix it in the sequel.

Finally \cite{Ki}, we define (crystalline)-Tate tensors for $p$-divisible groups.
Let $R\in \on{Nilp}_W$ and a $p$-divisible group $\bX$ on $\Spec R$. Let $\bD(\bX)$ denote its \emph{contravariant} Dieudonn\'{e} crystal. This is an $F$-crystal on $\Spec R$, by which we mean
a locally free crystal $\mE$ on the big crystalline site $\on{CRIS}(R/W)$, with a map (the Frobenius map) 
$$F: \sigma^*\mE\to \mE,$$ such that there exist an integer $i\geq 0$ and $V: \mE\to \sigma^*\mE$ satisfying $VF=p^i$. In addition,
there is a decreasing filtration $\on{Fil}^\bullet\bD(\bX)_R$ on $\bD(\bX)_R$ (the value of $\bD(\bX)$ at the trivial PD-thickening $R\stackrel{\id}{\to}R$) whose associated graded is locally free over $R$. Namely, 
$$\on{Fil}^0\bD(\bX)_R=\bD(\bX)_R,\quad \on{Fil}^1\bD(\bX)_R=(\Lie \bX)^*,\quad \mbox{and } \on{Fil}^2\bD(\bX)_R=0.$$ Note that $\bD(\bX)^\otimes$ is also an $F$-crystal with a filtration $\on{Fil}^\bullet \bD(\bX)_R^\otimes$. For example, let 
$$1:=\bD(\bQ_p/\bZ_p)$$ 
be the filtered $F$-crystal given by the Dieudonn\'{e} module of the constant $p$-divisible group $\bQ_p/\bZ_p$. Then $1_{R'}=R'$ for every PD-thickening $R'\to R$ and $F:1_{R'}\to 1_{R'}$ sending $F(1)=1$. In addition, $\on{Fil}^11_{R}=0$. Then we call a (crystalline-)Tate tensor of $\bX$ a morphism $t:1\to \bD(\bX)^\otimes$ of crystals, such that $t_R:1_R\to \bD(\bX)^\otimes_R$ is compatible with the filtrations, and such that the induced map $t: 1\to \bD(\bX)^\otimes[\frac{1}{p}]$ of isocrystals is Frobenius-invariant.

For example, we can interpret $\{s_i, i\in I\}\subset \La^\otimes$ as Tate tensors of the above fixed $p$-divisible group $\bX_0$ as follows.
First, via $\varepsilon$, we can regard $\{s_i\}$ as tensors in $\bD(\bX_0)^\otimes$. Since $G$ fixes $\{s_i\}$, $b\sigma$ fixes $\{s_i\}$. So $\{s_i\}$ are $F$-invariant in $\bD(\bX_0)^\otimes[\frac{1}{p}]$. In addition, the cocharacter $\rho\mu: \bG_m\to\GL(\La\otimes W)$ also fixes $\{s_i\}$. Therefore, $\{s_i\}$ are in $\on{Fil}^0(\bD(\bX_0)_{\bar \bF_p}^\otimes)$. Then we can define $s_i:1\to \bD(\bX_0)^\otimes$ by sending $1$ to $s_i$.

For a $p$-divisible group over a general base $R$, the notion of Tate tensors may not be well-behaved. Following \cite{Ki},
let $\on{Nilp}_W^{\on{sm}}$ denote the full subcategory of $\on{Nilp}_W$ consisting of formally smooth formally finitely generated $W/p^m$-algebras for some $m>0$. 
\begin{dfn}\label{dfn of RZ}
The RZ space associated to $(G,b,\mu)$ is the functor $\breve\mM(G,b,\mu)$ on $\on{Nilp}^{\on{sm}}_W$ classifying: for every $R\in\on{Nilp}^{\on{sm}}_W$, 
\begin{enumerate}
\item a $p$-divisible group $\bX$ on $\Spec R$;
\item a collection of cyrstalline-Tate tensors $\{t_i\},i\in I$ of $\bX$;
\item a quasi-isogeny $\iota:\bX_0\otimes_k R/J\to \bX\otimes_R R/J$ that sends $t_i$ to $s_i\otimes 1$ for $i\in I$, where $J$ is some (and therefore any) ideal of definition of $R$ that contains $p$.
\end{enumerate}
such that
\begin{enumerate}
\item[(*)] the $R$-scheme $$\on{Isom}((\bD(\bX)_R, \{t_i\},\on{Fil}^\bullet(\bD(\bX)_R)),(\La\otimes_{\bZ_p}R,\{ s_i\otimes 1\},\on{Fil}^\bullet\La\otimes_{\bZ_p}R))$$ 
that classifies the isomorphisms between locally free sheaves $\bD(\bX)_R$ and $\La\otimes_{\bZ_p}R$ on $\Spec R$ preserving the tensors and the filtrations
is a $(P_\mu\otimes_W R)$-torsor\footnote{Recall that $P_\mu$ is the automorphism group scheme of $(\La,\{ s_i\},\on{Fil}^\bullet\La)$.}.
\end{enumerate}
\end{dfn}
\begin{rmk}(i) Our definition is slightly different from the original definition given in \cite{Ki}. But it is not hard to see that  Condition (*) combines the Item (2) and (3) in \cite[Definition 4.6]{Ki}.

(ii) As explained in \cite{Ki}, $\breve\mM(G,b,\mu)$  is independent of the choice of $\rho:G\to \GL(\La)$ up to isomorphism.
\end{rmk}

The main theorem of Kim \cite{Ki} (see also \cite[Theorem 3.2.1, \S 2.4]{HP}) is as follows.
\begin{thm}\label{representability of RZ}
Assume that $p>2$. Then 
\begin{enumerate}
\item $\breve\mM(G,b,\mu)$ is represented by a closed  formal subscheme $\breve\mM(G,b,\mu)\subset \breve\mM_{\bX_0}$, formally smoothly over $W$.  
\item If $G=\GL_h$, $\mu=\omega_n$ and $\rho=\id$, Then $\breve\mM(G,b,\mu)= \breve\mM_{\bX_0}$.
\item There is a canonical bijection $\breve\mM(G,b,\mu)(k)\simeq X_\mu(b)(k)$ compatible with the embeddings $\breve\mM(G,b,\mu)\to \breve\mM(\GL_h,\rho(b),\omega_n)$ and $X_\mu(b)\to X_{\rho\mu}(\rho(b))$.
\end{enumerate}
\end{thm}

We write $\overline\mM_\mu(b)$ for the special fiber of $\breve\mM(G,b,\mu)$.
\begin{prop}\label{ADLV=RZ}
 Fixing $(\bX_0,\varepsilon)$ as above. There is a canonical isomorphism
$X_\mu(b)\simeq \overline{\mM}_\mu(b)^\pf$. In particular, $\dim \overline\mM_\mu(b)_{\on{red}}=\dim X_{\mu}(b)$.
\end{prop}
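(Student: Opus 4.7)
The strategy is to identify the $R$-points of both sides for every perfect $k$-algebra $R$, using the equivalence of Gabber (see also \cite[\S 6]{La}) between $p$-divisible groups over a perfect $\bF_p$-algebra $R$ and locally free $F$-crystals over $W(R)$. Under this equivalence, a triple $(\bX,\{t_i\},\iota)$ representing an $R$-point of $\overline\mM_\mu(b)^{\pf}$ corresponds to the filtered $F$-crystal $\bD(\bX)$; the tensor structure $\{t_i\}$, being $F$-invariant and agreeing with $\{s_i\}$ through $\iota$, promotes $\bD(\bX)$ to a $G$-torsor $\mE$ on $W(R)$ via the tensor-stabilizer property $G = \on{Stab}(\{s_i\})$, and the quasi-isogeny $\iota$ translates to a quasi-isogeny $\beta : \mE[1/p] \simeq \mE_0[1/p]$. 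This produces a map $\overline\mM_\mu(b)^{\pf} \to X_\mu(b)$. Conversely, given $(\mE,\beta) \in X_\mu(b)(R)$, I form the $F$-crystal $(\mE,\Phi := \beta^{-1} b \sigma(\beta))$, push it through the faithful representation $\rho$, and apply Gabber to obtain a $p$-divisible group $\bX$; the $G$-structure on $\mE$ provides Tate tensors $\{t_i\}$, and $\beta$ provides the quasi-isogeny $\iota$.

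The non-formal matching is between the ``relative position equal to $\mu$'' condition defining $X_\mu(b)$ and the combined conditions (1)--(3) and ($\ast$) of Definition \ref{dfn of RZ}. The Rapoport--Zink hypothesis forces $\rho\mu$ to have weights $0,1$, so $\mu$ is minuscule through $\rho$; consequently the Hodge filtration on $\bD(\bX)_R$ determined by $\on{Lie}(\bX)^*$ is a sub-bundle of the correct rank, and the data $(\bD(\bX)_R,\{t_i\},\on{Fil}^\bullet)$ encodes exactly a reduction of the $G$-torsor $\mE|_{R}$ to the parabolic $P_\mu$. A Tannakian argument applied to $\rho$ shows that the existence fpqc-locally of such a reduction (i.e. the $P_\mu$-torsor condition ($\ast$)) is equivalent to $\inv_x(\Phi) = \mu$ at every geometric point $x$ of $\on{Spec} R$. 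Combining with the bijection on underlying crystals, this gives the desired isomorphism $X_\mu(b) \simeq \overline\mM_\mu(b)^{\pf}$. The dimension statement is then automatic: perfection is a universal homeomorphism and dimension is a topological invariant, so $\dim X_\mu(b) = \dim \overline\mM_\mu(b)^{\pf} = \dim \overline\mM_\mu(b)_{\on{red}}$.

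\textbf{Main obstacle.} The subtlest point is the reconciliation of the test-ring categories: $\breve\mM(G,b,\mu)$ is defined only on $\on{Nilp}_W^{\on{sm}}$, whereas $X_\mu(b)$ is a $\pf$-sheaf on all perfect $k$-algebras. One must argue that the representing formal scheme for $\breve\mM(G,b,\mu)$ has a well-defined perfected special fiber, viewed as a $\pf$-sheaf, that actually classifies the expected data on every perfect $R$. For perfect $R$ the ring $W(R)$ is well-behaved and $p$ is a non-zero-divisor, so crystalline and classical Dieudonn\'e theories agree unambiguously, and one can descend through a formally smooth cover to match moduli on both sides; the remaining bookkeeping is the compatibility of the tensor and filtration conditions under this passage, which is handled by the Tannakian formalism already invoked for the $G$-structure.
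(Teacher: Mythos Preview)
Your overall strategy---Gabber's equivalence, the Tannakian encoding of the $G$-structure via the tensors $\{s_i\}$, and translating the quasi-isogeny into $\beta$---is exactly right and matches the paper in spirit. However, the obstacle you name at the end is not a loose end to be tidied up: it is the crux of the argument, and your proposal does not actually resolve it.

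The problem is that for general $(G,b,\mu)$ the functor $\breve\mM(G,b,\mu)$ is defined only on $\on{Nilp}_W^{\on{sm}}$, so you cannot simply say that an $R$-point of $\overline\mM_\mu(b)^{\pf}$ for perfect $R$ ``is'' a triple $(\bX,\{t_i\},\iota)$ satisfying Definition~\ref{dfn of RZ}. The perfected special fiber is defined as the perfection of whatever formal scheme represents the moduli on $\on{Nilp}_W^{\on{sm}}$; its $R$-points are scheme maps $\Spec R\to \overline\mM_\mu(b)^{\pf}$, with no a priori moduli description. Your suggestion to ``descend through a formally smooth cover'' does not supply such a description, and indeed the paper remarks explicitly after the proposition that the moduli interpretation on perfect rings is a \emph{consequence} of the result, not an input.

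The paper sidesteps this by a two-step reduction you do not make. First, it treats $(G,b,\mu)=(\GL_h,b,\omega_n)$ separately: here the moduli problem $\breve\mM_{n,h}$ is defined on all of $\on{Nilp}_W$ (no Tate tensors, condition $(\ast)$ automatic), so one can run your Gabber argument directly on perfect $R$ and obtain $X_{\omega_n}(\rho(b))\simeq \overline\mM_{n,h}^{\pf}$. Second, for general $(G,b,\mu)$, the representation $\rho$ induces closed embeddings $\overline\mM_\mu(b)^{\pf}\hookrightarrow \overline\mM_{n,h}^{\pf}$ and $X_\mu(b)\hookrightarrow X_{\omega_n}(\rho(b))$; since both are reduced closed perfect subspaces of the same ambient space, they coincide as soon as their $k$-points agree, and that bijection is supplied by \cite[Proposition~3.8.4]{Ki}. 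This reduction to $\GL_h$ plus a point-level check is the missing ingredient in your proposal.
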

Note that this proposition in particular describes the values of $\breve\mM(G,b,\mu)$ on a perfect ring $R$ (which is not obvious from Definition \ref{dfn of RZ}).
\begin{proof}
We first prove the proposition for $(G,b,\mu)=(\GL_h,b,\omega_n)$. 
The key input is a theorem of Gabber (see also \cite[\S 6]{La}) on $p$-divisible groups over perfect rings. We write $\overline{\mM}^\pf$ instead of $\overline\mM_\mu(b)^\pf$ for simplicity.

We first construct $X_\mu(b)\to \overline{\mM}^\pf$ as follows. Let $R$ be a perfect $\bar\bF_p$-algebra. We write $\sigma:R\to R$ for the Frobenius automorphism.
Let $(\mE,\beta)\in X_\mu(b)(R)$. We obtain a crystal $\bD:=\mE\times^{G,\rho}\La$ over $R$ ($=$ a locally free sheaf on $W(R)$), with $F= \beta^{-1}(b\sigma)\beta$. I.e., the following diagram is commutative
\begin{equation}\label{Fcry}
\begin{CD}
\bD[\frac{1}{p}]@>F>>\bD[\frac{1}{p}]\\
@V\beta VV@V\beta VV\\
W(R)\otimes_{\bZ_p}\La[\frac{1}{p}]@>b\sigma>>W(R)\otimes_{\bZ_p}\La[\frac{1}{p}].
\end{CD}
\end{equation}

Note that $\omega_n:\bG_m\to \GL(\La)$ is the $n$th fundamental coweight of $\GL(\La)$, and therefore the condition in \eqref{ADL2} together with Lemma \ref{minu} implies that $F:\sigma^*\bD\to \bD$ is regular and the quotient is a locally free module of rank $n$ over $W(R)/p=R$. Applying Lemma \ref{minu} again to the quasi-isogeny (of crystals) $V=pF^{-1}:\bD\to \sigma^*\bD$, we see that $V$ is regular. Therefore, there is a $\sigma^{-1}$-linear map $V:\bD\to \bD$ such that $FV=VF=p$. By Gabber's theorem (see also \cite[\S 6]{La}), there is a $p$-divisible group $\bX$ on $\Spec R$ together with a quasi-isogeny $\iota:\bX\to (\bX_0)_R$ such that $\bD=\bD(\bX)$ and the induced map $\bD(\iota):\bD[\frac{1}{p}]\to W(R)[\frac{1}{p}]\otimes \La$ is $\beta$. 

Conversely, we construct $\overline{\mM}^\pf\to X_{\mu}(b)$ as follows. Let $R$ be a perfect $\bar\bF_p$-algebra.
Let $(\bX,\iota)$ be an object in $\overline{\mM}(R)$. Then we have the $\GL(\La)$-torsor
\[\mE=\on{Isom}(\bD(\bX), \La \otimes_{\bZ_p}W(R)).\]
The quasi-isogeny $\iota$ defines an isomorphism
\[\bD(\bX)[\frac{1}{p}]\simeq \La\otimes_{\bZ_p}W(R)[\frac{1}{p}],\]
and therefore defines a quasi-isogeny $\beta:\mE\dasharrow \mE_0$. The map $(\bX,\iota)\mapsto (\mE,\beta)$ defines a map $\overline\mM^\pf\to \Gr_{\GL_h}$. It is clear that  $\inv_x(F)\leq \omega_n$ for every $x\in\Spec R$, and therefore $(\mE,\beta)\in X_\mu(b)$. We thus defines a map $\overline\mM^\pf\to X_\mu(b)$. 

Now for general $(G,b,\mu)$, $\overline{\mM}_\mu(b)^\pf\subset \overline{\mM}_{\rho\mu}(\rho(b))^\pf$ and $X_\mu(b)\subset X_{\rho\mu}(\rho(b))$. To prove that $X_\mu(b)\simeq \overline{\mM}_\mu(b)^\pf$, it is enough to show that the above isomorphism sends $X_\mu(b)(k)\subset X_{\rho\mu}(\rho(b))(k)$ to $\overline{\mM}_\mu(b)^\pf(k)\subset \overline{\mM}_{\rho\mu}(\rho(b))^\pf(k)$. But this follows from Theorem \ref{representability of RZ} (3).
\end{proof}

\begin{cor}
Rapoport's conjecture of the dimension formula holds for the reduced schemes of the Rapoport-Zink spaces.
\end{cor}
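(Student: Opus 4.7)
The plan is to chain the two preceding results, with one short observation that patches the gap between $X_{\leq\mu}(b)$ and $X_\mu(b)$. First, by Proposition~\ref{ADLV=RZ}, the fixed Dieudonn\'{e}-theoretic base point $(\bX_0,\varepsilon)$ produces a canonical isomorphism $X_\mu(b)\simeq \overline{\mM}_\mu(b)^\pf$; since perfection leaves the underlying topological space intact, this records in particular the equality of Krull dimensions
\[
\dim\overline{\mM}_\mu(b)_{\on{red}} \;=\; \dim X_\mu(b).
\]

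Next, I would note that any unramified Rapoport-Zink datum of Hodge type in the sense of Definition~\ref{dfn of RZ} forces $\mu$ to be minuscule as a cocharacter of $G$. Indeed, the defining hypothesis that the faithful representation $\rho:G\to \GL(\La)$ satisfies $\rho\circ\mu$ having weights only in $\{0,1\}$ implies, via the adjoint embedding $\Lie G\hookrightarrow \La\otimes \La^{\ast}$, that every root $\alpha$ of $G$ satisfies $\langle\alpha,\mu\rangle\in\{-1,0,1\}$. For such a minuscule $\mu$, no dominant coweight lies strictly below $\mu$ in the dominance order of \S\ref{not}, so $\Gr_{\leq\mu}=\Gr_\mu$, and consequently $X_{\leq\mu}(b)=X_\mu(b)$ as subspaces of $\Gr_G$.

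Finally, Theorem~\ref{Vi} supplies the Rapoport dimension formula
\[
\dim X_{\leq\mu}(b) \;=\; \langle\rho,\mu-\nu_b\rangle - \tfrac{1}{2}\on{def}_G(b),
\]
and concatenating the three identifications yields the corollary. No genuine obstacle remains at this point: the substantive content has been absorbed into Theorem~\ref{Vi} (proved by the Newton-stratification reduction to the case $b$ superbasic, followed by the reduction of $G=\Res_{E/F}\GL_n$ to $G=\GL_n$ via the convolution affine Deligne-Lusztig varieties of Proposition~\ref{unramified to split}, and finally Viehmann's treatment of the superbasic $\GL_n$ case) and into Proposition~\ref{ADLV=RZ} (which rests on Gabber's equivalence between $p$-divisible groups and $F$-crystals over a perfect base).
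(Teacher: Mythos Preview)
Your proposal is correct and follows the same approach as the paper, which leaves this corollary without explicit proof as an immediate combination of Theorem~\ref{Vi} and Proposition~\ref{ADLV=RZ}. Your added observation that the Hodge-type hypothesis forces $\mu$ to be minuscule (via the embedding $\Lie G\hookrightarrow \La\otimes\La^*$), so that $X_{\leq\mu}(b)=X_\mu(b)$, is a useful bridge that the paper takes for granted (it is mentioned in the introduction but not reproved in \S\ref{ADLV==RZ}).
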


\appendix
\section{Generalities on perfect schemes}\label{generality of perf}
This section can be regarded as a user's guide to the algebraic geometry of perfect schemes and perfect algebraic spaces, which is the setting we work with in the paper. We include some discussions more general than needed in the paper. The main result is Theorem \ref{quotient}, which explains the construction of the quotients in this setting. 

\subsection{Perfect schemes and perfect algebraic spaces}
\subsubsection{}\label{spaces}
We fix a field $k$. Let $\on{Aff}_k$ denote the category of affine $k$-schemes, i.e. the category opposite to the category $k\on{-alg}$ of $k$-algebras. Following \cite{LMB, BL}, we call a sheaf on $\on{Aff}_k$ with respect to the fpqc topology a $k$-\emph{space}. Explicitly, a space $\mF$ is a covariant functor $k\on{-alg}\to \on{Set}$ that respects finite products, and such that if $R\to R'$ is faithfully flat, then the sequence
\begin{equation}\label{fpqc sheaf}
\mF(R)\to \mF(R')\rightrightarrows\mF(R'\otimes_RR') 
\end{equation}
is an equalizer. Morphisms between two spaces are natural transformations of functors. The category of $k$-spaces is denoted by $\on{Sp}_k$. It contains the category $\on{Sch}_k$ of $k$-schemes as a full subcategory. Recall that a map $f:\mF\to\mG$ in $\on{Sp}_k$ is called schematic if for every scheme $T$, the fiber product $\mF\times_{\mG}T$ is a scheme.

In this paper, we need to consider a subcategory of $\on{Sp}_k$ larger than $\on{Sch}_k$.
Recall that an algebraic space is an \emph{\'{e}tale} sheaf $X$ on $\on{Aff}_k$ such that: 
(i) $X\to X\times X$ is schematic ; (ii) There exists an \'{e}tale surjective map $U\to X$, where $U$ is a scheme.

We denote by $\on{AlgSp}_k$ the category of algebraic spaces. We have full embeddings $\on{Sch}_k\subset\on{AlgSp}_k\subset\on{Sp}_k$, where the second inclusion follows from a theorem of Gabber which says algebraic spaces are fpqc sheaves (see \cite[Tag03W8]{St}).

\begin{rmk}
In literature as \cite{Kn, LMB}, it sometimes requires that $X$ is quasi-separated, i.e. the diagonal $X\to X\times X$ is quasi-compact. We prefer not to make this additional assumption.
\end{rmk}

A map $f:\mF\to \mG$ of $k$-spaces is called representable if for every affine scheme $T$, $\mF\times_{\mG}T$ is represented by an algebraic space. It is call \emph{fpqc} if in addition $\mF\times_{\mG}T$ is faithfully flat over $T$, and there is a quasi-compact open subset $U$ of $\mF\times_{\mG}T$ that maps surjectively to $T$. Recall that fpqc maps are effective epimorphisms in $\on{Sp}_k$. I.e. if $U\to X$ is an fpqc map of spaces, then for every space $\mF$, the following diagram is an equalizer
\begin{equation}\label{fpqc descent}
\Hom_{\on{Sp}_k}(X,\mF)\to \Hom_{\on{Sp}_k}(U,\mF)\rightrightarrows \Hom_{\on{Sp}_k}(U\times_XU,\mF).
\end{equation}
In particular, any fpqc sheaf on $\on{Aff}_k$ extends uniquely to an fpqc sheaf on $\on{Sch}_k$ (although we do not use the latter in this paper).

\subsubsection{}\label{pf sp}
From now on we assume that $k$ is a perfect field of characteristic $p>0$. For a $k$-algebra $R$, let $\sigma:R\to R,\ r\mapsto r^p$ denote the Frobenius map. 
Recall that $R$ is called \emph{perfect} if $\sigma$ is an isomorphism. The forgetful functor from the category of perfect $k$-algebras to the category of all $k$-algebras admits a left adjoint 
$$R\mapsto R^\pf=\underrightarrow{\lim}_{\sigma} R.$$
Sometime, $R^\pf$ is called the perfection (or the perfect closure) of $R$.

These facts admit the following globalization. A $k$-scheme (resp. algebraic space) $X$ is called \emph{perfect} if its Frobenius endomorphism $\sigma_X: X\to X$ is an isomorphism. We write $\sigma$ for $\sigma_X$ if no confusion will likely arise. The category of perfect schemes (resp. perfect algebraic spaces) over $k$ is denoted by $\on{Sch}_k^{\on{pf}}$ (resp. $\on{AlgSp}_k^{\on{pf}}$). Then the embedding $\on{AlgSp}_k^{\on{pf}}\to \on{AlgSp}_k$ admits a right adjoint. To see this, first note that Frobenius endomorphisms commute with \'{e}tale localizations.
\begin{lem}\label{Frob etale loc}
For any \'{e}tale morphism of algebraic spaces $X\to Y$, the relative Frobenius morphism $X\to X\times_{Y,\sigma_Y}Y$ induced by $\sigma_X$ is an isomorphism.
\end{lem}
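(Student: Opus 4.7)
The map in question is the relative Frobenius $F_{X/Y}: X \to X \times_{Y, \sigma_Y} Y$, whose two components are $\sigma_X: X \to X$ and $f: X \to Y$; these are compatible with the fiber product structure by the naturality identity $f \circ \sigma_X = \sigma_Y \circ f$, which holds for any morphism of $\mathbb{F}_p$-spaces. The plan is to first handle the case when $X$ and $Y$ are schemes, and then to descend to algebraic spaces via étale atlases.

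\textbf{Scheme case.} Assume $X$ and $Y$ are schemes. I would argue that $F_{X/Y}$ is simultaneously étale and a universal homeomorphism, which forces it to be an isomorphism. For étaleness: the projection $\pi_2: X \times_{Y, \sigma_Y} Y \to Y$ is the base change of the étale morphism $f$ along $\sigma_Y$ and is therefore étale. Hence $F_{X/Y}$ is a $Y$-morphism between two étale $Y$-schemes and is itself étale. For being a universal homeomorphism: the composition $\pi_1 \circ F_{X/Y} = \sigma_X$ is a universal homeomorphism, and $\pi_1$ is separated (being étale), which forces $F_{X/Y}$ to be radicial and surjective. An étale radicial morphism is an open immersion, and an open immersion that is surjective is an isomorphism.

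\textbf{Algebraic space case.} Choose an étale cover $V \to Y$ by a scheme, and then an étale cover $U \to X \times_Y V$ by a scheme $U$; the composition $U \to V$ is an étale morphism of schemes, so by the previous step $F_{U/V}$ is an isomorphism. The key diagram to consider is
\[
\begin{CD}
U @>F_{U/V}>> U \times_{V, \sigma_V} V\\
@VVV @VVV\\
X @>F_{X/Y}>> X \times_{Y, \sigma_Y} Y,
\end{CD}
\]
whose vertical arrows are étale covers. Provided this square is Cartesian, the isomorphism property descends from $F_{U/V}$ to $F_{X/Y}$.

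\textbf{Main obstacle.} The critical point is verifying that the square above is Cartesian, which reduces to the canonical identifications $V \cong V \times_{Y, \sigma_Y} Y$ and $U \times_{V, \sigma_V} V \cong U \times_{Y, \sigma_Y} Y$. This amounts to the lemma itself applied to the étale map $V \to Y$ (with source a scheme and target an algebraic space). Since a universal homeomorphism induces an equivalence on small étale sites, the pullback of the étale $Y$-space $V$ along the universal homeomorphism $\sigma_Y$ is canonically identified with $V$, and the identification is exactly $F_{V/Y}$. One can alternatively handle this by iterated reduction: take a further étale atlas $W \to Y$ by a scheme and repeat, ultimately landing in the scheme case which has already been proved. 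Either way, once the Cartesian structure of the diagram is verified, the result follows.
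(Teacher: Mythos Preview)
Your scheme-case argument is correct, but the descent step has a genuine circularity that your proposed fixes do not resolve. Option (a) needs $\sigma_Y$ to be a universal homeomorphism of algebraic spaces and the corresponding invariance of the \'etale site; in the paper both of these are \emph{consequences} of the present lemma (see the remark after the Corollary following Lemma~\ref{Frob etale loc}, and the proof of Proposition~\ref{et top}), so invoking them here is circular. Option (b) does not terminate: introducing a scheme atlas $W\to Y$ still leaves you needing $F_{W/Y}$ to be an isomorphism with $Y$ an algebraic space, which is the same mixed case you were trying to establish.

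The paper sidesteps this by never choosing an atlas of $Y$. The point is that your ``\'etale, radicial, surjective $\Rightarrow$ isomorphism'' argument does not actually require $Y$ to be a scheme: when $X$ is a scheme the map $F_{X/Y}$ is schematic (any morphism from a scheme to an algebraic space is, since diagonals of algebraic spaces are schematic), so one may base change to a scheme cover of the target and apply the EGA result there. Thus Step~1 in the paper is: if $X$ is a scheme and $Y$ arbitrary, then $F_{X/Y}$ is an isomorphism. For general $X$, take an \'etale scheme cover $U\to X$ and factor $F_{U/Y}$ as
\[
U \xrightarrow{\,F_{U/X}\,} U\times_{X,\sigma_X}X \longrightarrow U\times_{Y,\sigma_Y}Y.
\]
The first map and the composite are isomorphisms by Step~1, hence so is the second arrow; but that arrow is precisely the base change of $F_{X/Y}$ along the \'etale cover $U\times_{Y,\sigma_Y}Y \to X\times_{Y,\sigma_Y}Y$, and descent finishes.
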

\begin{proof}We first assume that $X$ is a scheme. Then $X\to X\times_{Y,\sigma_Y}Y$ a schematic radical \'{e}tale surjective map, and therefore is an isomorphism by \cite[Theorem 17.9.1]{EGA4-4}. 
For general $X$, choose an \'{e}tale cover $U\to X$ by a scheme $U$. Then we have $U\to U\times_{X,\sigma_X}X\to U\times_{Y,\sigma_Y}Y$, with the first map and the composition map being isomorphisms. Therefore, the second map is an isomorphism as well. Note that $U\times_{X,\sigma_X}X\to U\times_{Y,\sigma_Y}Y$ is nothing but the base change of $X\to X\times_{Y,\sigma_Y}Y$ along the \'{e}tale cover $U\times_{Y,\sigma_Y}Y\to Y$. Therefore, $X\to X\times_{Y,\sigma_Y}Y$ is also an isomorphism.
\end{proof}
\begin{cor}The embedding $\on{AlgSp}_k^{\on{pf}}\to \on{AlgSp}_k$ admits a right adjoint functor, given by $X\to X^\pf=\underleftarrow\lim_{\sigma}X$. 
\end{cor}
We call $X^\pf$ the \emph{perfection} of $X$. 
\begin{proof}Applying Lemma \ref{Frob etale loc} to the \'{e}tale cover $U\to X$, we see that $\sigma:X\to X$ is an affine morphism. Then the diagonal of $X^\pf=\underleftarrow\lim_{\sigma}X$ is representable, and $U^\pf\simeq U\times_XX^\pf\to X^\pf$ is an \'{e}tale cover. Therefore $X^\pf$ is a perfect algebraic space. 

It remains to show that  the tautological map $\varepsilon:X^\pf\to X$ induces an isomorphism
\begin{equation}\label{perfection}
\Hom(Y,X^\pf)=\Hom (Y,X),
\end{equation} 
for every perfect $k$-algebraic space $Y$. But by Lemma \ref{Frob etale loc} and \eqref{fpqc descent}, we reduce to the known case where $X,Y$ are affine. 
\end{proof}
\begin{rmk}\label{top of perfection}
Recall that $\sigma_X$ is a universal homeomorphism if $X$ is a scheme, and therefore is a universal homeomorphism if $X$ is an algebraic space by Lemma  \ref{Frob etale loc}. Therefore, $\varepsilon:X^\pf\to X$ is a universal homeomorphism.  It also follows that $X$ is a scheme if and only if $X^\pf$ is a scheme. See Lemma \ref{perfection of morphism} below.
\end{rmk}

The following statement is crucial for later applications. For an algebraic space $X$, we denote by $X_{et}$ its small \'{e}tale site with objects being algebraic spaces \'{e}tale over $X$.
\begin{prop}\label{et top}
Let $X$ be an algebraic space over $k$ and let $X^\pf$ denote its perfection. There the functor $(U\to X)\mapsto (U^\pf\simeq U\times_XX^\pf\to X^\pf)$ induces an equivalence of \'{e}tale sites $X_{et}\simeq X^\pf_{et}$, and therefore the \'{e}tale topos $\varepsilon^*: \widetilde X_{et} \simeq \widetilde X_{et}^\pf: \varepsilon_*$.
\end{prop}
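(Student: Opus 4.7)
The plan is to identify the functor $(U\to X)\mapsto (U\times_X X^\pf\to X^\pf)$ with the simpler perfection functor $U\mapsto U^\pf$, and then to deduce the equivalence from the topological invariance of the small étale site along the universal homeomorphism $\varepsilon:X^\pf\to X$.

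First I would check that the two descriptions of the functor agree. By Lemma \ref{Frob etale loc}, for étale $U\to X$ the canonical maps $U\to U\times_{X,\sigma^n}X$ are isomorphisms for every $n\geq 0$, and under these identifications the transition maps in the inverse system defining $X^\pf$ become the absolute Frobenii of $U$. Since $\sigma:X\to X$ is affine (verified on any étale scheme atlas using Lemma \ref{Frob etale loc}), base change along $U\to X$ commutes with the inverse limit, yielding $U\times_X X^\pf\simeq \varprojlim_\sigma U=U^\pf$. In particular, the functor lands in $X^\pf_{et}$ and coincides with $U\mapsto U^\pf$.

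Second, I would observe that $\varepsilon:X^\pf\to X$ is itself a universal homeomorphism: each $\sigma$ is integral, surjective, and radicial, and this property is stable under cofiltered inverse limits along affine maps (equivalently, $\varepsilon$ is integral and induces a bijection on points with purely inseparable residue extensions). The heart of the proof is then the topological invariance of the small étale site along $\varepsilon$. For finite universal homeomorphisms of algebraic spaces this is classical (SGA 4, Exp.~VIII), and for the integral but generally non-finite map $\varepsilon$ I would extend it by a standard approximation argument. Any étale $V\to X^\pf$ is of finite presentation, hence descends to an étale $V_n\to X$ at some finite stage of the system $X^\pf=\varprojlim_\sigma X$; Lemma \ref{Frob etale loc} then shows the descent is canonical up to a Frobenius twist, giving essential surjectivity of $U\mapsto U^\pf$. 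Fully faithfulness reduces, via the same lemma, to the statement that $\Hom_X(U,V)\to\Hom_{X^\pf}(U^\pf,V^\pf)$ is a bijection for étale $U,V\to X$, which is immediate once one notes that $\Hom_X(U,V)=\Hom_X(U,V\times_{X,\sigma^n}X)$ for all $n$.

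Once the equivalence of sites is in place, the equivalence of étale topoi $\varepsilon^*:\widetilde X_{et}\simeq \widetilde X^\pf_{et}$ follows formally from general nonsense about morphisms of sites. The main technical obstacle will be the descent/approximation step from $X^\pf$ down to a finite stage in the algebraic-space setting; a clean way to handle it is to first reduce to the scheme case via an étale atlas, where the standard approximation results (EGA IV, \S8, or their algebraic-space extension) apply, and then reassemble by étale descent.
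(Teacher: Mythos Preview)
Your proposal is correct and follows essentially the same approach as the paper: both reduce to the topological invariance of the small \'etale site along the integral, radicial, surjective map $\varepsilon:X^\pf\to X$, first for schemes via SGA4 Exp.~VIII, and then extend to algebraic spaces (the paper cites \cite[Proposition A.1.3]{CLO} for this extension). One minor simplification: SGA4 Exp.~VIII, Th\'eor\`eme 1.1 already applies to \emph{integral} radicial surjections of schemes, not just finite ones, so your approximation step descending from $X^\pf$ to a finite Frobenius stage is unnecessary in the scheme case---you may invoke the reference directly.
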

\begin{proof}
First, assume that $X$ is a scheme. Then $X^\pf\to X$ is a universal homeomorphism. Therefore $(U\to X)\mapsto (U^\pf\simeq U\times_XX^\pf\to X^\pf)$ induces an equivalence of subcategories of scheme objects in $X_{et}$ and $X^\pf_{et}$ (cf. \cite[Tag04DZ]{St}). Then an argument similar to \cite[Proposition A.1.3]{CLO} shows that it induces a full equivalence $X_{et}\simeq X^\pf_{et}$. Again, by a similar argument as \cite[Proposition.A.1.3]{CLO}, the case
when $X$ is an algebraic space also follows. 
\end{proof}
\begin{rmk}
It follows from Remark \ref{top of perfection} that the equivalence preserves the subcategories of scheme objects in $X_{et}$ and $X^\pf_{et}$.  
\end{rmk}

We list a few properties of morphisms that are preserved after passing to the perfection.
\begin{lem}\label{perfection of morphism}
Let $f:X\to Y$ be a morphism of algebraic spaces over $k$, and let $f^\pf:X^\pf\to Y^\pf$ denote its perfection. The following properties hold for $f$ if and only if the same hold for $f^\pf$:
(1) quasi-compact; (2) quasi-separated; (3) (universally) homeomorphic; (4) (universall) closed; (5) separated; (6) affine; (7) integral.

In addition, if $f$ is either: 
(8) \'{e}tale; or (9) (faithfully) flat; (10) fpqc, 
so is $f^\pf$.
\end{lem}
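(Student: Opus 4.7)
The proof will rest on three general facts extracted from the preceding discussion. First, the natural map $\varepsilon_X\colon X^\pf\to X$ is a universal homeomorphism: it is the inverse limit (over $\sigma_X$) of universal homeomorphisms, and after passing to an \'etale chart the same holds for algebraic spaces by Lemma~\ref{Frob etale loc}. In particular $|f|$ and $|f^\pf|$ coincide after the canonical identifications $|X^\pf|\simeq |X|$, $|Y^\pf|\simeq |Y|$. Second, since perfection is a right adjoint to the inclusion of perfect algebraic spaces, it commutes with arbitrary limits; applied to the diagonal this gives $(X\times_Y X)^\pf=X^\pf\times_{Y^\pf}X^\pf$ and therefore $\Delta_{f^\pf}=(\Delta_f)^\pf$. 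Third, the perfection of an affine scheme is an affine scheme, and preimages of opens commute with perfection.

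Given these, the proof of the ``iff'' cases (1)--(4) is essentially formal. Property (1) is purely topological on $|f|$, so the first fact makes it insensitive to perfection. For (2) and (3) one applies the second fact to reduce to the analogous statements for the diagonal: the quasi-compactness of $\Delta_f$ is settled by (1), and a morphism is a closed immersion iff its perfection is, since closed immersion is detected by universal homeomorphism together with the equivalence of \'etale topoi in Proposition~\ref{et top}. For (4) I would check affineness on an \'etale cover of $Y$ and then invoke the fact that, for algebraic spaces, being affine is insensitive to universal homeomorphism (this is the analog for algebraic spaces of the classical result of SGA~IV, and appears for instance in the Stacks Project); combined with the commutation of preimages with perfection, this gives the equivalence.

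For (5) I would use Lemma~\ref{Frob etale loc}: if $f$ is \'etale then the square
\[
\begin{CD}
X @>\id>> X \\
@VfVV @VVfV \\
Y @>\sigma_Y>> Y
\end{CD}
\]
is cartesian. Iterating and passing to the limit over $\sigma_Y$ yields a cartesian square
\[
\begin{CD}
X^\pf @>>> X \\
@Vf^\pf VV @VVfV \\
Y^\pf @>\varepsilon_Y>> Y,
\end{CD}
\]
so $f^\pf$ is a base change of $f$ and is \'etale. For (6) one reduces via an \'etale chart to a ring-theoretic statement: if $R\to S$ is flat then so is $R^\pf\to S^\pf$. Writing $R^\pf=\underrightarrow\lim_n R$ and $S^\pf=\underrightarrow\lim_n S$ (transitions being Frobenius), the map $R^\pf\to S^\pf$ is a filtered colimit (in the category of arrows of rings) of copies of $R\to S$, and a filtered colimit of flat ring maps is flat; faithful flatness then follows by combining with topological surjectivity, which is the content of (1) plus surjectivity of $|\varepsilon_X|$. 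For (7), fpqc decomposes as flat, quasi-compact, plus topological surjectivity of a quasi-compact open, each of which has been handled.

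The only genuinely non-formal step is (4): the descent of affineness along a universal homeomorphism for algebraic spaces. Everything else is a consequence of the three structural facts above combined with the \'etale-topos equivalence of Proposition~\ref{et top}. I would flag this as the place where the most external input is required.
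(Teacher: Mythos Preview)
Your overall architecture is sound and closely matches the paper's: reduce (2)--(3) to statements about the diagonal via $(\Delta_f)^\pf=\Delta_{f^\pf}$, treat (1) topologically, handle (5) by base change along $\varepsilon_Y$, and (6)--(7) by a colimit-of-flat argument. These parts are fine; in particular your argument for (6) via ``a filtered colimit of flat ring maps is flat'' is correct and is essentially what the paper does after first reducing to $R$ perfect (the paper's extra reduction just makes the colimit manifestly one of flat $R$-modules).

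There is, however, a genuine gap in your treatment of (3). You claim that ``a morphism is a closed immersion iff its perfection is, since closed immersion is detected by universal homeomorphism together with the equivalence of \'etale topoi.'' This is not a valid characterization: Proposition~\ref{et top} identifies \'etale topoi but says nothing about structure sheaves, and a closed immersion is \emph{not} determined by its underlying closed subset plus the \'etale topology. The forward direction (perfection of a closed immersion is a closed immersion) is easy, but the converse needs more. The paper's argument supplies the missing ingredient: it uses that $\Delta_{X/Y}$ is \emph{always} a separated, locally of finite type monomorphism. Granting that $(\Delta_f)^\pf$ is a closed immersion, one only extracts that $\Delta_f$ is universally closed (this is topological, hence transferred by the universal homeomorphisms $\varepsilon$); then the standard fact ``universally closed $+$ separated $+$ locally of finite type $+$ monomorphism $\Rightarrow$ closed immersion'' finishes. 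Your \'etale-topos appeal does not substitute for this step.

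For (4), both you and the paper defer to an external result, but note that your formulation ``affineness is insensitive to universal homeomorphism'' is stronger than needed and not obviously true for arbitrary algebraic spaces without further hypotheses. The paper instead first deduces from (1) and (3) that $X$ is quasi-compact and separated, and then invokes the limit criterion \cite[Tag 07SE]{St}: if $X^\pf=\underleftarrow\lim_\sigma X$ (affine transition maps) is affine and $X$ is qcqs, then $X$ is affine. This is the cleaner route and avoids the need for a general Chevalley-type statement for algebraic spaces.
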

\begin{proof}(1)-(4) are clear since $\varepsilon: X^\pf\to X$ is a universal homeomorphism. 

For (5), first note that if $X^\pf\to X^\pf\times_{Y^\pf}X^\pf$ is a closed embedding, it is universally closed and therefore $\Delta_{X/Y}: X\to X\times_{Y}X$ is also universally closed. But $\Delta_{X/Y}$ is always a separated, locally of finite type monomorphism. Therefore it is a closed embedding. The inverse direction is clear. 

For (6),  we can assume that $Y$ is an affine scheme by Lemma \ref{Frob etale loc}. Then if $X$ is affine, so is $X^\pf$. Conversely, if $X^\pf$ is affine, then it is quasi-compact and separated, and so is $X$. As $X^\pf=\underleftarrow\lim_\sigma X$, $X$ is affine by \cite[Tag07SE, Lemma 5.8]{St}.

For (7), first note that if $f^\pf$ is integral, it is affine and therefore $f$ is affine by (6). We reduce to show that if $A^\pf\to B^\pf$ is integral, so is $A\to B$. Given $b\in B$, there is a monic polynomial $g(x)\in A^\pf[x]$ such that $g(b)=0$. Then there is some $n$ large enough such that all coefficients of $h(x):=g(x)^{p^n}$ are in $A$. Since $h(b)=0$, $b$ is integral over $A$. The inverse direction is clear.

(8) follows from Lemma \ref{Frob etale loc}. 

For (9), we may assume that $f:X\to Y$ is a morphism between affine schemes, and therefore is given by a ring homomorphism $f:R\to R'$. In addition, we may assume that $R$ is perfect. As $(R')^\pf=\underrightarrow\lim_\sigma R'$, it is enough to show that the composition $R\to R'\stackrel{\sigma^n}{\to} R'$ is flat. But this map is the same as $R\stackrel{\sigma^n}{\simeq} R\to R'$ and therefore is flat. Finally, (10) follows from (1) and (9).
\end{proof}

We will also consider the perfection of certain pro-algebraic spaces. Let $\{X_i\}$ be  a projective system of algebraic spaces, with the transition maps $X_{i+1}\to X_i$ being affine. Then the pro-algebraic space $X=\underleftarrow\lim X_i$ is also an algebraic space and it is easy to show that
\begin{equation}\label{perfection of pro scheme}
X^\pf\simeq \underleftarrow\lim X_i^{\pf},
\end{equation}
i.e. ``the perfection commute with inverse limits".

\subsubsection{}\label{torsor}
Let $H$ be an affine group scheme over $k$, regarded as a group object in $\on{Sp}_k$. An $H$-\emph{torsor} over a space $X$ is a space $E$ with a free $H$-action and an $H$-equivariant fpqc map $\pi:E\to X$ (where $X$ is endowed with the trivial $H$-action), such that the natural map $E\times H\to E\times_X E$ is an isomorphism. If $X$ is an algebraic space over $k$, then $E$ is represented by an algebraic space, affine over $X$. In addition, we have the following lemma.
\begin{lem}
If $X$ and $H$ are perfect, then $E$ is perfect. 
\end{lem}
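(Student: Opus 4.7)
The plan is to show that the Frobenius endomorphism $\sigma_E \colon E \to E$ is an isomorphism. Since $X$ is perfect, $\sigma_X$ is an isomorphism, and one has the usual factorization $\sigma_E = p_1 \circ F_{E/X}$, where
\[
F_{E/X} \colon E \to E^{(\sigma)} := E \times_{X,\sigma_X} X
\]
is the relative Frobenius and $p_1 \colon E^{(\sigma)} \to E$ is the projection onto the first factor. Since $p_1$ is the base change of the isomorphism $\sigma_X$, it is itself an isomorphism, so it suffices to prove that $F_{E/X}$ is an isomorphism.

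Being an isomorphism is fpqc-local on $X$, so one may check $F_{E/X}$ after any fpqc base change $U \to X$. I would use the tautological trivialization coming from the torsor itself: the cover $\pi \colon E \to X$ is fpqc by assumption, and the action map identifies $E \times_X E \simeq E \times H$ as spaces over $E$ (with $E \times_X E \to E$ given by the second projection and $E \times H \to E$ by the first projection). Since the formation of the relative Frobenius commutes with base change, it is enough to check that $F_{(E \times H)/E}$ is an isomorphism.

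A direct computation shows that, under the canonical identification $(E \times H)^{(\sigma)} \simeq E \times H$ as $E$-spaces, the relative Frobenius $F_{(E \times H)/E}$ is simply $\id_E \times \sigma_H$. Since $H$ is perfect by hypothesis, $\sigma_H$ is an isomorphism, and hence so is $\id_E \times \sigma_H$. This concludes the argument.

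The only point requiring care is avoiding a circular use of perfectness of $E$: this is fine because fpqc descent for the property ``isomorphism'' and the final computation on the trivial torsor both rely only on perfectness of $H$ (together with perfectness of $X$, which enters in the reduction to the relative Frobenius). The trivializing fpqc cover is the torsor itself, and we make no assumption about its Frobenius.
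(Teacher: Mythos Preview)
Your argument is correct. You factor $\sigma_E$ through the relative Frobenius $F_{E/X}$, reduce to checking that $F_{E/X}$ is an isomorphism, and verify this by fpqc descent along the tautological trivializing cover $\pi\colon E\to X$, where the relative Frobenius becomes $\id_E\times\sigma_H$. All the ingredients (compatibility of relative Frobenius with base change, fpqc-locality of being an isomorphism, and the identification of $F_{(E\times H)/E}$ with $\id_E\times\sigma_H$) are standard and used correctly, and you are careful not to invoke perfectness of $E$ prematurely.

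The paper takes a different route: it forms the perfection $E^{\pf}$, observes (via the lemma that perfection preserves fpqc maps) that $E^{\pf}\to X^{\pf}=X$ is again an $H^{\pf}=H$-torsor, and then notes that the canonical map $E^{\pf}\to E\times_X X^{\pf}=E$ is a morphism of $H$-torsors over $X$, hence an isomorphism. So the paper leverages the slogan ``any morphism of torsors is an isomorphism'' together with the functoriality of perfection, whereas you work directly with Frobenius and descent. Your approach is a bit more elementary in that it avoids invoking the perfection functor and Lemma~\ref{perfection of morphism}; the paper's approach is shorter and more conceptual once those tools are in place, and it generalizes cleanly to situations where one already knows how perfection interacts with the relevant class of maps.
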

\begin{proof}
Note that $E^\pf\to X^\pf$ is also an fpqc map by Lemma \ref{perfection of morphism}, and therefore is an $H^\pf$-torsor over $X^\pf$. In addition, $E^\pf\to E\times_{X,\varepsilon}X^\pf$ is a morphism of $H^\pf$-torsors, where $H^\pf$ acts on $E$ through the morphism $\varepsilon:H^\pf\to H$. Therefore, $E^\pf\simeq E\times_XX^\pf\simeq E$ is an isomorphism.
\end{proof}

Now, let $H'$ be a smooth affine group scheme over $k$, and let $H={H'}^\pf$ denote its perfection. Then $H$ is an affine group scheme. 
\begin{lem}\label{equiv of torsors}
Let $X$ be a perfect algebraic space. 
Then the functor $E'\to {E'}^\pf$ is an equivalence of categories between the groupoid of $H'$-torsors on $X$ and the groupoid of $H$-torsors on $X$. The quasi-inverse functor is given by push-out of an $H$-torsor along $\varepsilon: H\to H'$, denoted by $E\mapsto E\times^{H,\varepsilon}H'$. 
\end{lem}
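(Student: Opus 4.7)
The plan is to check that $F\colon E' \mapsto E'^{\pf}$ and $G\colon E \mapsto E \times^{H,\varepsilon} H'$ are mutually quasi-inverse. The key simplifying observation is that any equivariant morphism between two torsors under the same group is automatically an isomorphism, since fpqc-locally it becomes translation by a section. Thus it will suffice to produce natural transformations $\alpha\colon GF \Rightarrow \id$ and $\beta\colon \id \Rightarrow FG$ whose components are equivariant morphisms of torsors; all the ``isomorphism'' content is then for free.

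First I verify both functors are well-defined. For $F$, the structure map $E' \to X$ is fpqc, so by Lemma \ref{perfection of morphism}(7) so is $E'^{\pf} \to X^{\pf} = X$, and the $H$-action on $E'^{\pf}$ is inherited from the $H'$-action on $E'$ via $\varepsilon\colon H \to H'$. Because perfection is a right adjoint, it commutes with fiber products, so
\[
E'^{\pf} \times_X E'^{\pf} \simeq (E' \times_X E')^{\pf} \simeq (E' \times_k H')^{\pf} \simeq E'^{\pf} \times_k H'^{\pf} = E'^{\pf} \times_k H,
\]
exhibiting $E'^{\pf}$ as an $H$-torsor. For $G$, define $E \times^{H,\varepsilon} H'$ as the fpqc sheaf $(E \times H')/H$, with $H$ acting by $h \cdot (e,g) = (eh, \varepsilon(h)^{-1}g)$. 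If $U \to X$ is an fpqc cover trivializing $E$, the pullback becomes $H' \times U$, so $G(E)$ is an $H'$-torsor by fpqc descent.

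Now I construct the natural transformations. The composite $E'^{\pf} \times H' \xrightarrow{\varepsilon \times \id} E' \times H' \to E'$ is $H$-invariant for the antidiagonal action (one checks $\varepsilon(eh) \cdot \varepsilon(h)^{-1}g = \varepsilon(e) \cdot g$) and hence descends to an $H'$-equivariant morphism $\alpha_{E'}\colon E'^{\pf} \times^H H' \to E'$. For $\beta$, recall that the lemma immediately preceding the one we are proving guarantees that every $H$-torsor $E$ over $X$ is itself a perfect space; the $H$-equivariant inclusion $E \to E \times^H H'$, $e \mapsto [e,1]$, therefore factors uniquely through the adjunction counit to yield an $H$-equivariant map $\beta_E\colon E \to (E \times^H H')^{\pf}$. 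Since $\alpha_{E'}$ (respectively $\beta_E$) is an equivariant morphism between two $H'$-torsors (respectively $H$-torsors) on $X$, both are isomorphisms by the observation of the first paragraph. The only step requiring genuine care is checking freeness of the $H$-action on $E \times H'$ and that the quotient really is a torsor, which is a routine fpqc-local verification that reduces to the case of trivial torsors.
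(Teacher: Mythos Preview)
Your argument is correct. For the composite $FG$ (showing $E \simeq (E\times^{H}H')^{\pf}$) you and the paper do exactly the same thing: use that $E$ is perfect so that the canonical map $E\to E\times^{H}H'$ factors through the perfection, giving an $H$-equivariant torsor morphism which is then automatically an isomorphism.

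The difference is in the treatment of $GF$ (showing $E'^{\pf}\times^{H}H'\simeq E'$). The paper argues via cocycles: since $H'$ is \emph{smooth}, one trivializes $E'$ by an \'etale cover $U\to X$; the resulting cocycle $c':U\times_X U\to H'$ has perfect source (by Lemma~\ref{Frob etale loc}), hence lifts uniquely to $c:U\times_X U\to H$; this $c$ visibly represents $E'^{\pf}$, and pushing it back along $\varepsilon$ recovers $c'$. You instead write down the map $[e,g]\mapsto \varepsilon(e)\cdot g$ directly and observe it is an $H'$-equivariant morphism of $H'$-torsors over $X$, hence an isomorphism. Your route is shorter and, notably, never invokes the smoothness of $H'$; the paper's approach makes the smoothness hypothesis do visible work (\'etale local triviality), which is perhaps why that hypothesis is stated, but your argument shows it is not actually needed for the lemma as formulated. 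The paper's version has the minor advantage of being more explicit at the level of transition functions, which is sometimes what one wants downstream.
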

\begin{proof}Given an $H$-torsor $E$, the natural map $E\to E\times^{H,\varepsilon}H'$ gives a morphism $E\to (E\times^{H,\varepsilon}H')^\pf$ of $H$-torsors, and therefore is an isomorphism. Conversely, let $E'$ be an $H'$-torsor on $X$, and let $E=E^\pf$ be the corresponding $H$-torsor. We want to show that $E\times^{H,\varepsilon}H'\simeq E'$. As $H'$ is smooth, we can trivialize $E'$ by an \'{e}tale cover $U\to X$ and therefore $E'$ can be represented by a cocycle $c':U\times_XU\to H'$. As $U\times_XU$ is perfect by Lemma \ref{Frob etale loc}, the cocycle $c'$ gives a cocycle $c:U\times_XU\to H$ by \eqref{perfection}, which is nothing but the cocycle representing $E$. Then $E\times^{H,\varepsilon}H'$ is represented by the cocycle $U\times_XU\stackrel{c}{\to}H\stackrel{\varepsilon}{\to}H'$, which is exactly $c'$.
\end{proof}

\medskip

For a space $X$ with an action by an affine group scheme $H$, we denote by $[X/H]$ the quotient stack (in fpqc topology) whose $R$-points are the groupoid of pairs $(E,\phi)$, where $E$ is an $H$-torsor on $\Spec R$, and $\phi:E\to Y$ is an $H$-equivariant morphism. Note that if the action is free, then $[X/H]$ is a $k$-space and the natural morphism $X\to [X/H]$ is an $H$-torsor. In this case, we write $[X/H]$ by $X/H$ for simplicity.

We also recall the construction of the twisted product. Let $H$ be an affine group scheme and $E\to X$ an $H$-torsor, and let $T$ be a space with an $H$-action. Then one can form the twisted product 
\begin{equation}\label{twisted product}
X\tilde\times T:= E\times^H T= E\times T/H,
\end{equation}
which is a space over $k$. Now assume that $H$ is (the perfection of) an affine group scheme of finite type over $k$. We claim that if $X, T$ are (perfect) algebraic spaces, so is $X\tilde\times T$. Indeed, we can find an fppf cover $U\to X$ that trivializes $E$\footnote{If $H$ is of finite type, this is clear. Otherwise, use Lemma \ref{et triv}.}. Then $U\times T$ is an fppf cover of $X\tilde\times T$. Therefore, $X\tilde\times T$ is an algebraic space by \cite[Tag04S5]{St}.

\subsubsection{}\label{nonsense pf sp}
In fact in the paper we will only consider presheaves on the category of perfect $k$-algebras.

\begin{dfn}
Let $\on{Aff}^{\on{pf}}_k$ be the opposite category of perfect $k$-algebras. A perfect space is a sheaf on $\on{Aff}^{\on{pf}}_k$ with respect to the fpqc topology. The category of perfect $k$-spaces is denoted by $\on{Sp}^{\on{pf}}_k$.
\end{dfn}
 There is a natural functor $\on{Sp}_k\to\on{Sp}_k^{\on{pf}}$
by restricting a sheaf $\mF$ on $\on{Aff}_k$ to a sheaf on $\on{Aff}^{\on{pf}}_k$. We denote the induced perfect space by $\mF^{\on{pf}}$. Note that if $X$ is an algebraic space, then $(X^\pf)^{\on{pf}}=X^{\on{pf}}$. More generally, we have the following lemma.
\begin{lem}\label{perfection of stack}
Let $V\rightrightarrows U$ be a flat groupoid of algebraic spaces over $k$ and let $V^\pf\rightrightarrows U^\pf$ denote its perfection. Let $[U/V]$ and $[U^\pf/V^\pf]$ denote the quotient stack (in fpqc topology). Then $[U^\pf/V^\pf]^{\on{pf}}\simeq [U/V]^{\on{pf}}$. That is, $[U^\pf/V^\pf](R)\simeq [U/V](R)$ for every perfect $k$-algebra $R$.
\end{lem}
\begin{proof}Clearly there is a morphism $[U^\pf/V^\pf]^{\on{pf}}\to [U/V]^{\on{pf}}$. Conversely, let $x$ be an $R$-point of $[U/V]$ where $R$ is perfect. Then there is a faithfully flat map $R\to R'$ and a lifting $\tilde x: \Spec R'\to U$ of $x$. Passing to the perfection gives $\Spec {R'}^\pf\to U^\pf$. Since $R\to {R'}^\pf$ is faithfully flat by Lemma \ref{perfection of morphism}, it gives an $R$-point of $[U^\pf/V^\pf]$.
\end{proof}

The functor $\on{Sp}_k\to\on{Sp}_k^{\on{pf}}$ is far from being faithful. However, the following statement is true.
\begin{lem}\label{descent}
The the composition
\[\on{AlgSp}_k^{\on{pf}}\subset \on{AlgSp}_k\subset \on{Sp}_k\to \on{Sp}_k^{\on{pf}}.\]
is a full embedding.
\end{lem}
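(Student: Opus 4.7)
The plan is to reduce both faithfulness and fullness to the existence of a nice \'etale cover together with fpqc descent in $\on{Sp}_k$. The key input I will use is that every perfect fpqc algebraic space $X$ admits an \'etale cover $\iota\colon U=\Spec R\to X$ by a perfect affine scheme: start with any \'etale cover by an affine scheme $V$, replace $V$ with $V^\pf$ (still an \'etale cover of $X$, since $X$ is already perfect, by the argument of Lemma \ref{Frob etale loc}), and shrink if necessary. Such an $\iota$ is fpqc in the sense of \S\ref{spaces} (\'etale surjective with quasi-compact source), hence an effective epimorphism in $\on{Sp}_k$ by \eqref{fpqc descent}.

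For faithfulness, suppose $f,g\colon X\to Y$ in $\on{AlgSp}^{\on{pf}}_k$ agree after restriction to $\on{Aff}^{\on{pf}}_k$. Applying the assumed equality to the tautological $R$-point $\iota\in X(R)=X^{\on{pf}}(R)$ gives $f\circ\iota=g\circ\iota$ in $Y(R)$, and effectivity of $\iota$ forces $f=g$.

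For fullness, given $\phi\colon X^{\on{pf}}\to Y^{\on{pf}}$ in $\on{Sp}^{\on{pf}}_k$, I will define the candidate $\tilde f:=\phi(R)(\iota)\in Y(R)=\Hom(U,Y)$, and then descend $\tilde f$ along $\iota$. The descent condition $\tilde f\circ\pr_1=\tilde f\circ\pr_2$ on $U\times_X U$ is checked by picking a further \'etale cover $W=\Spec S\to U\times_X U$ by a perfect affine scheme (available for the same reason): the two composites $W\to U\to X$ become equal in $X^{\on{pf}}(S)$, so naturality of $\phi$ on $S$-points shows that the two composites $W\to U\to Y$ agree, and effectivity of $W\to U\times_X U$ then yields the descent datum. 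Effectivity of $\iota$ produces the desired $f\colon X\to Y$; the identification $f^{\on{pf}}=\phi$ is verified on a perfect $R'$-point $\xi\in X^{\on{pf}}(R')$ by lifting $\xi$ \'etale-locally through $\iota$ (the \'etale base change of a perfect ring remains perfect, so this lift stays in $\on{Aff}^{\on{pf}}_k$) and applying naturality, using that $Y$ is an \'etale sheaf.

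The only real subtlety is confirming that each fiber product and each \'etale cover used above can be chosen inside $\on{AlgSp}^{\on{pf}}_k$; this is handled by Lemma \ref{Frob etale loc} and the compatibility of perfection with fiber products, so no serious obstacle arises.
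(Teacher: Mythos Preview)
Your argument is correct and follows essentially the same strategy as the paper: reduce to the affine case via an \'etale cover by perfect affines, then use fpqc/\'etale descent. Two minor points of polish: (i) an arbitrary algebraic space need not admit an \'etale cover by a \emph{single} affine scheme, so you should work with a family $\{U_i\to X\}$ (or a non-affine scheme $U$) as the paper does---the rest of your argument goes through unchanged; (ii) once $X$ is perfect, Lemma~\ref{Frob etale loc} already forces any \'etale $V\to X$ to be perfect, so the step ``replace $V$ with $V^\pf$'' is unnecessary. The paper packages the descent a bit more compactly by comparing the two equalizer diagrams $\Hom(X,Y)\to\prod\Hom(U_i,Y)\rightrightarrows\prod\Hom(V_{ijh},Y)$ in $\on{AlgSp}_k^{\on{pf}}$ and in $\on{Sp}_k^{\on{pf}}$ directly, but your hands-on construction of $f$ followed by verification that $f^{\on{pf}}=\phi$ is equivalent.
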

\begin{proof}
Let $X$ and $Y$ be two perfect algebraic spaces. We need to show that 
$$\Hom_{\on{AlgSp}_k^{\on{pf}}}(X,Y)=\Hom_{\on{Sp}_k^{\on{pf}}}(X^{\on{pf}},Y^{\on{pf}}).$$ 
Let $\{U_i\to X\}$ be a family of \'{e}tale cover of $X$ by affine schemes, and let $\{V_{ijh}\to U_i\times_XU_j\}$ be a family of \'{e}tale cover of $U_i\times_XU_j$ by affine schemes.
By Lemma \ref{Frob etale loc}, all $U_i$ and $V_{ijh}$ are perfect schemes. Therefore, by definition $\Hom_{\on{AlgSp}_k^{\on{pf}}}(U_i,Y)=\Hom_{\on{Sp}_k^{\on{pf}}}(U^{\on{pf}}_i,Y^{\on{pf}})$, etc.

Note that \eqref{fpqc descent} implies that the following sequence is an equalizer
\[\Hom_{\on{AlgSp}_k^{\on{pf}}}(X,Y)\to \prod_i\Hom_{\on{AlgSp}_k^{\on{pf}}}(U_i,Y)\to \prod_{ijh}\Hom_{\on{AlgSp}_k^{\on{pf}}}(V_{ijh},Y).\]
Likewise, the sequence
\[\Hom_{\on{Sp}_k^{\on{pf}}}(X^{\on{pf}},Y^{\on{pf}})\to \prod_i\Hom_{\on{Sp}_k^{\on{pf}}}(U^{\on{pf}}_i,Y^{\on{pf}})\to \prod_{ijh}\Hom_{\on{Sp}_k^{\on{pf}}}(V^{\on{pf}}_{ijh},Y^{\on{pf}})\]
is also an equalizer (in fact, it is enough to use the injectivity of the first map).
The lemma follows by comparing these two sequences.
\end{proof}

Therefore, given a presheaf $\mF$ on $\on{Aff}^{\on{pf}}_k$, it makes sense to ask whether it is represented by a perfect algebraic space, and given a map $f:\mF\to \mG$ of presheaves, it makes sense to ask whether it is representable by perfect algebraic spaces. If a property (P) of morphisms between algebraic spaces is stable under base change and is \'etale local on the source and target, then it makes sense to say whether a representable morphism $f:\mF\to\mG$ of perfect spaces has Property (P). For example, we can define open/closed immersions, \'{e}tale morphisms, fpqc maps in $\on{Sp}_k^{\on{pf}}$, etc.

We can also define the notion of torsors in $\on{Sp}_k^{\on{pf}}$, just as $\on{Sp}_k$. Let $H$ be a perfect affine group scheme. It gives an object $H^{\on{pf}}$ in $\on{Sp}_k^{\on{pf}}$. If $X$ is a perfect space with a action of $H^{\on{pf}}$, then we can define a stack $[X/H^{\on{pf}}]$ on $\on{Aff}^{\on{pf}}_k$ as before. If the action is free, then $[X/H^{\on{pf}}]$ is also a perfect space and the natural map $X\to [X/H^{\on{pf}}]$ is an $H^{\on{pf}}$-torsor. As before in this case we write $[X/H^{\on{pf}}]$ by $X/H^{\on{pf}}$ for simplicity,
Note that if $X$ is a perfect algebraic space, which gives $X^{\on{pf}}$ in $\on{Sp}_k^{\on{pf}}$, then by Lemma \ref{descent} giving an action of $H^{\on{pf}}$ on $X^{\on{pf}}$ is the same as giving an action of $H$ on $X$ and if the action is free, $X^{\on{pf}}/H^{\on{pf}}=(X/H)^{\on{pf}}$.

We define an \emph{ind-perfect algebraic space} as a perfect $k$-space that can be represented as an inductive limit $\{X_i\}$ of perfect algebraic spaces, such that every transition map $X_i\to X_{i+1}$ is a closed embedding.

In the sequel and the main body of the paper, the image of a perfect algebraic space $X$ in $\on{Sp}_k^{\on{pf}}$ is still denoted by $X$, as opposed to $X^{\on{pf}}$ as above. However, for a general space $\mF$, its image in $\on{Sp}_k^{\on{pf}}$ will be denoted by $\mF^{\on{pf}}$.

\subsection{Perfect algebraic spaces perfectly of finite presentation}
\subsubsection{}
Perfect schemes/algebraic spaces of positive dimension are never of finite type over $k$. But as we shall see below, the ``infinity" here is really mild.

\begin{dfn}
A perfect $k$-algebraic space $X$ is said locally perfectly of finite type\footnote{The terminology is suggested by B. Conrad.} if there exist an \'{e}tale affine cover $\{U_i\}$ of $X$ such that each $U_i$ is the perfection of an affine scheme of finite type  over $k$. A perfect $k$-algebraic space $X$ is said perfectly of finite type  if it is locally perfectly of finite type and quasi-compact. A perfect $k$-algebraic space is said perfectly of finite presentation (\emph{pfp} for short) if it is perfectly of finite type and quasi-separated.
\end{dfn}

\begin{rmk} In \cite{Se}, a separated and perfectly of finite type perfect $k$-scheme is called a perfect variety. 
\end{rmk}
Clearly, if there exists an algebraic space $X'$ of finite presentation over $k$ such that $X={X'}^\pf$, then $X$ is perfectly of finite presentation.  We call such $X'$ a ``model" or a ``deperfection" of $X$. We will show a model of a pfp perfect algebraic space always exists. In fact, we will prove a slightly stronger result. For the purpose, we need some preparations. 

For an algebraic space $S$, let $|S|$ denote its underlying topological space. Recall that for a quasi-compact and quasi-separated algebraic space $S$, there is an open dense subspace $U\subset S$ that is a scheme (e.g. \cite[Tag03JG]{St}). The generic points of $|S|$ are in $|U|$ and are the generic points of $U$. So given a generic point $\eta$ of $S$, its residue field $k(\eta)$ makes sense.
Recall that a reduced scheme $X$ of finite type over $k$ is called weakly normal if every finite birational universal homeomorphism $f:Y\to X$ is an isomorphism. By \cite{Ma}, weak normality is local under the \'{e}tale topology (even under the fppf topology). Therefore this notion makes sense for algebraic spaces of finite presentation over $k$.
We have the following result, generalizing  \cite[\S 1.4, Proposition 9]{Se}. 
\begin{prop}\label{finite model}
Let $X$ be a pfp perfect algebraic space over $k$, with $\{\eta_1,\ldots,\eta_n\}$ the set of its generic points. For every $i$, let $K_i\subset k(\eta_i)$ be a subfield, which is finitely generated over $k$ and whose perfection is $k(\eta_i)$. Then there exists a unique weakly normal algebraic space $X'$, of finite presentation over $k$, such that $X={X'}^\pf$ and the residue fields of the generic points of $X'$ are these $K_i$.
\end{prop}
\begin{proof}
We first assume that $X$ is a scheme.  We define a sheaf of rings on $|X|$ by
\begin{equation}\label{prolong}
\mO_{X'}=\{f\in \mO_X\mid f(\eta_i)\in K_i\}.
\end{equation}
It is easy to check that the ringed space $(|X|,\mO_{X'})$ is a scheme, of finite type over $k$, and $X={X'}^\pf$. In addition, $X'$ is weakly normal. In fact, let $U'=\Spec A$ be an affine open subscheme of $X'$. Then the ring $A$ is $p$-closed in the sense that for every $a$ in its quotient ring, if $a^p\in A$, then $a\in A$. By the remark after Proposition 1 in \cite{It}, this condition is equivalent to weak normality of $A$, as proved by Yanagihara.

Note that $X'$ is just the push out of the diagram $\bigsqcup \Spec K_i\leftarrow \bigsqcup \eta_i\to X$ in the category of locally ringed spaces. In particular, for any scheme $Y$, the natural map
\begin{equation}\label{push-out}
\Hom(X',Y)\to \Hom(X,Y) \times_{\Hom(\bigsqcup \eta_i,Y)} \Hom(\bigsqcup \Spec K_i, Y)
\end{equation}
is an isomorphism.

Now for an algebraic space $X$, we can choose a presentation $V\rightrightarrows U\to X$ of $X$ where $U,V$ are schemes. The collection $\{K_i\subset k(\eta_i)\}$ determine a collection of subfields in the residue fields of the generic points of $|V|$ and $|U|$. Then the above construction gives $U'$ and $V'$. Let $\pr_i: V\to U$ be one of the two projections, which descends to an \'{e}tale map $V'_i\to U'$ for some scheme $V'_i$. As $U'$ is weakly normal, so is $V'_i$. Note that the quotient ring of $V'_i$ and $V'$ are the same. By \eqref{push-out}, there is a canonical map $V'\to V'_i$, which is finite birational, and bijective, therefore is an isomorphism. In other words, the \'{e}tale equivalence relation $V\rightrightarrows U$ descends to an \'etale equivalence relation $V'\rightrightarrows U'$. Then $X'=U'/V'$ is the sought-after algebraic space.
\end{proof}

\begin{cor}\label{correction6}
Let $f:X\to Y$ be a separated universal homeomorphism between two pfp perfect algebraic spaces over $k$. Then $f$ is an isomorphism.
\end{cor}
\begin{proof}Let $\eta$ be a generic point of $X$. Since $f$ is a universal homeomorphism, $k(\eta)$ is purely inseparable over $k(f(\eta))$ and therefore is an isomorphism since both fields are perfect. Now, we can choose for each generic point $\eta_i$ a finitely generated subfield $K_i\subset k(\eta_i)$ as above. Let $X'$ and $Y'$ be the corresponding weakly normal models. It follows from the construction that $f$ descends to a finite birational universal homeomorphism $f':X'\to Y'$, and therefore is an isomorphism. The corollary then follows by passing to the perfection.
\end{proof}

The following statement generalizes  \cite[\S 1.4, Proposition 8]{Se}.
\begin{prop}\label{fin:morphism}
Let $f:X\to Y$ be a morphism between pfp perfect algebraic spaces over $k$. Then there exists a morphism $f':X'\to Y'$ between algebraic spaces of finite presentation over $k$ such that $f={f'}^\pf$.
\end{prop}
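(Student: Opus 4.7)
The plan is to start from the deperfections given by Proposition~\ref{finite model}: choose algebraic spaces $X_0$ and $Y_0$ of finite presentation over $k$ with $X = X_0^{\pf}$ and $Y = Y_0^{\pf}$. The strategy is then to produce a morphism $f' : X_0 \to Y_0$ whose perfection recovers $f$, after which we simply take $X' = X_0$ and $Y' = Y_0$.

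Composing with the canonical map $\varepsilon_{Y_0} : Y_0^{\pf} \to Y_0$ gives $\tilde f := \varepsilon_{Y_0} \circ f : X \to Y_0$, and by the adjunction \eqref{perfection}, $f$ is uniquely determined by $\tilde f$. To lift $\tilde f$ to a morphism $X_0 \to Y_0$, I would use the description $X = X_0^{\pf} = \underleftarrow\lim_\sigma X_0$ as an inverse limit along the Frobenius, which is affine by Lemma~\ref{Frob etale loc} and the remarks following it. Since $X_0$ is quasi-compact and quasi-separated (because $X$ is perfectly of finite presentation, these properties descending to the model by Lemma~\ref{perfection of morphism}) and $Y_0$ is locally of finite presentation over $k$, the standard limit-of-morphisms formula for algebraic spaces gives
\begin{equation*}
\Hom(X, Y_0) \;=\; \varinjlim_n \Hom(X_0, Y_0),
\end{equation*}
with transition maps $g \mapsto g \circ \sigma_{X_0}$. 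Hence $\tilde f$ is represented by some morphism $g : X_0 \to Y_0$, meaning $\tilde f = g \circ \varepsilon_{X_0}$, where $\varepsilon_{X_0} : X \to X_0$ is the canonical projection. Setting $f' := g$, the perfection $(f')^{\pf} : X \to Y$ satisfies $\varepsilon_{Y_0} \circ (f')^{\pf} = g \circ \varepsilon_{X_0} = \tilde f = \varepsilon_{Y_0} \circ f$, and the injectivity of the adjunction bijection $\Hom(X, Y_0^{\pf}) \xrightarrow{\sim} \Hom(X, Y_0)$ forces $f = (f')^{\pf}$.

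The main obstacle is establishing the Hom-limit formula for a filtered inverse system of quasi-compact quasi-separated algebraic spaces with affine transition morphisms and a target locally of finite presentation. For schemes this is classical (EGA~IV$_3$, 8.14.2); the algebraic-space generalization is well documented (see the Stacks Project). Alternatively, one can reduce to the scheme case by choosing an \'etale cover $V_0 \to Y_0$ by an affine scheme of finite presentation, lifting $\tilde f$ Zariski-locally on $X_0$ via the scheme-theoretic formula, and then gluing using that $X_0$ is quasi-compact and quasi-separated so that any cocycle conditions stabilize after composing with a sufficiently high power of $\sigma_{X_0}$. Once the Hom-limit formula is granted, the remainder of the argument is purely formal manipulation with the perfection adjunction, and no further difficulty arises.
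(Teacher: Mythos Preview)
Your approach is the paper's: choose models $X_0,Y_0$, compose with $\varepsilon_{Y_0}$ to get $\tilde f\colon X\to Y_0$, then invoke the limit formula for $\Hom$ into a locally finitely presented target (the paper cites \cite[\S 8.14]{EGA4}, \cite[Proposition~A.3.1]{CLO}, and \cite[Tag 049I]{St} for the algebraic-space version). There is, however, one slip. The colimit $\varinjlim_n\Hom(X_0,Y_0)$ tells you only that $\tilde f=g\circ\pr_n$ for \emph{some} $n\ge 0$, where $\pr_n\colon X\to X_0$ is the projection to the $n$th stage of the tower $\cdots\stackrel{\sigma}{\to}X_0\stackrel{\sigma}{\to}X_0$; one has $\pr_n=\varepsilon_{X_0}\circ\sigma_X^{-n}$, not $\varepsilon_{X_0}$. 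A direct check from the adjunction then gives $g^{\pf}=f\circ\sigma_X^{n}$, so with $X'=X_0$ you do not recover $f$ on the nose unless $n=0$.

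The fix, which is exactly what the paper does, is to replace $X_0$ by its Frobenius twist ${X_0}^{(n)}$ (same underlying space, $k$-structure modified via $\sigma^n$). Then $({X_0}^{(n)})^{\pf}=X$ and the canonical map $X\to{X_0}^{(n)}$ is precisely $\pr_n$, so taking $X'={X_0}^{(n)}$, $Y'=Y_0$, $f'=g$ yields $(f')^{\pf}=f$. In short, you must allow the model of $X$ to depend on the index at which the colimit stabilizes; you cannot fix $X'=X_0$ at the outset as you do in your first paragraph.
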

\begin{proof}Let $X', Y'$ be models of $X, Y$. Then there is a canonical map $\varepsilon: Y\to Y'$. Recall that $\sigma:X'\to X'$ is affine by Lemma \ref{Frob etale loc}. Then by a criterion of locally of finite presentation morphisms (\cite[\S 8.14]{EGA4}, generalized in \cite[Proposition A.3.1]{CLO}, see also \cite[Tag049I]{St}), the map $\varepsilon f$ factors as $X\to {X'}^{(m)}\to Y'$, where ${X'}^{(m)}=X'$ with the $k$-structure given by $X'\stackrel{\sigma^m}{\to}X'\to \Spec k$. Rename ${X'}^{(m)}$ as $X'$, and we are done.
\end{proof}

\begin{dfn}Let $f:X\to Y$ be a morphism between two pfp perfect algebraic spaces over $k$. 
We say $f$ is perfectly proper if it is separated and is universally closed. We say $X$ is perfectly proper if $X\to \Spec k$ is perfectly proper.
\end{dfn}

\begin{lem}\label{model for proper}
For a morphism $f: X\to Y$ between two pfp perfect algebraic spaces over $k$, $f$ is perfectly proper if and only if for every $f':X'\to Y'$ is as in Proposition \ref{fin:morphism}, $f'$ is proper. 
\end{lem}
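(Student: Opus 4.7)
The statement is an equivalence. Unraveling properness, I need to establish three properties of $f'$: finite type, separated, and universally closed. Finite type is automatic, since both $X'$ and $Y'$ are of finite presentation over $k$, and any morphism between finite-type algebraic spaces is of finite type. So the substance of the proof is showing that separatedness of $f$ is equivalent to separatedness of $f'$, and similarly for universal closedness.

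For separatedness, this is essentially already in the paper. By the construction of Lemma \ref{fin:morphism}, we have $f = (f')^{\on{pf}}$, so Lemma \ref{perfection of morphism}(3) gives the equivalence directly. The remaining work, then, is to prove that $f$ is universally closed if and only if $f'$ is universally closed. The key input is that both perfection maps $\varepsilon_X: X \to X'$ and $\varepsilon_Y: Y \to Y'$ are universal homeomorphisms: they are expressed as inverse limits $\underleftarrow\lim_\sigma X'$ and $\underleftarrow\lim_\sigma Y'$ of the Frobenius endomorphism, which by Lemma \ref{Frob etale loc} (together with standard facts that universal homeomorphisms are stable under limits with affine transition maps) are universal homeomorphisms, hence universally closed and surjective. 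These maps fit into the commuting square $\varepsilon_Y \circ f = f' \circ \varepsilon_X$.

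The forward implication is then easy: if $f$ is universally closed, composing with the universally closed $\varepsilon_Y$ shows $\varepsilon_Y \circ f = f' \circ \varepsilon_X$ is universally closed, and since $\varepsilon_X$ is surjective the elementary cancellation principle (if $g \circ h$ is universally closed and $h$ is surjective, then $g$ is universally closed, which follows immediately from $g_T(Z) = (g\circ h)_T(h_T^{-1}(Z))$ for any closed $Z$ and base change $T$) yields universal closedness of $f'$. The reverse implication is the step requiring a little more care. Given that $f'$ is universally closed and hence $f' \circ \varepsilon_X = \varepsilon_Y \circ f$ is universally closed, for an arbitrary base change $T \to Y$ I factor
\[
X \times_Y T \longrightarrow X \times_{Y'} T \longrightarrow T.
\]
The second map is closed, as it is the pullback along $T \to Y'$ of the universally closed morphism $\varepsilon_Y \circ f$. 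The first map is the base change along $T \to Y$ of the graph $T \to Y \times_{Y'} T$, which is a section of the universal homeomorphism $Y \times_{Y'} T \to T$ (being the pullback of the universal homeomorphism $\varepsilon_Y$); any section of a separated morphism is a closed immersion, and any section of a universally bijective morphism must also be surjective, so this section is itself a universal homeomorphism. Consequently $X \times_Y T \to X \times_{Y'} T$ is a universal homeomorphism, and the composite $X \times_Y T \to T$ is closed, as required.

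The main nuisance is the reverse implication, specifically identifying $X \times_Y T \to X \times_{Y'} T$ as a universal homeomorphism; once this is in place the rest is formal manipulation of universally closed morphisms. No deep input beyond Lemma \ref{Frob etale loc} and Lemma \ref{perfection of morphism}(3) is required.
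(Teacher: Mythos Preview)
Your proof is correct and follows the same approach as the paper's, which simply reads ``By Lemma \ref{perfection of morphism}, $f$ is separated and universally closed if and only if so is $f'$.'' The paper leaves the universally closed part implicit (it is not literally one of the enumerated items in Lemma \ref{perfection of morphism}), whereas you spell out the argument via the universal-homeomorphism property of the perfection maps; your treatment is thus a faithful expansion of what the paper takes for granted.
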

\begin{proof}By Lemma \ref{perfection of morphism}, $f$ is separated and universally closed if and only if so is $f'$.
\end{proof}

\begin{prop}
Let $f:X\to Y$ be a morphism between two pfp perfect algebraic spaces. Then $f$ is perfectly proper if and only if the valuative criterion holds for every \emph{perfect} valuation ring $R$ over $k$.
\end{prop}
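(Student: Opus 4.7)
The plan is to appeal to the classical valuative criterion for properness of algebraic spaces of finite type and translate it into the perfect setting. First, choose reduced finite-presentation models $X'$, $Y'$ of $X$, $Y$ (using Proposition~\ref{finite model}; reducedness can be arranged by passing to the reductions since perfection kills nilpotents), together with a morphism $f'\colon X'\to Y'$ provided by Lemma~\ref{fin:morphism}, such that $(f')^\pf=f$. By Lemma~\ref{model for proper}, $f$ is perfectly proper if and only if $f'$ is proper, so it suffices to compare the classical valuative criterion for $f'$ (tested against arbitrary valuation rings $R/k$) with the perfect valuative criterion for $f$. The essential input is that for any valuation ring $R/k$ with fraction field $K$, the perfection $R^\pf$ is itself a valuation ring, with fraction field $K^\pf$: given $x\in K^\pf$ some $x^{p^n}$ lies in $K$, and either $x^{p^n}\in R$ or $x^{-p^n}\in R$, so extracting $p^n$th roots places $x$ or $x^{-1}$ in $R^\pf$. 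The inclusion $R\hookrightarrow R^\pf$ is moreover a local ring homomorphism, so $R^\pf/R$ is a bona fide extension of valuation rings in the sense of the weak valuative criterion.

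The forward direction is immediate: a diagram for $f$ over a perfect valuation ring $R$ is, via the adjunction \eqref{perfection} applied to the perfect rings $R$ and $K$, the same data as a diagram for $f'$; properness of $f'$ yields a unique lift $\Spec R\to X'$, which by the same adjunction is a unique lift $\Spec R\to X$.

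For the converse, assume the perfect valuative criterion for $f$; I verify separatedness and universal closedness of $f'$. For separatedness, let $h_1,h_2\colon\Spec R\to X'$ be two maps over $Y'$ agreeing on $\Spec K$. Precomposing with $\pi\colon\Spec R^\pf\to\Spec R$ and using the adjunction (as $R^\pf$ and $K^\pf$ are perfect), I regard $h_i\circ\pi$ as two lifts $\Spec R^\pf\to X$ agreeing on $\Spec K^\pf$; uniqueness in the perfect valuative criterion forces them to coincide. Consequently the image of $(h_1,h_2)\colon\Spec R\to X'\times_{Y'}X'$ lies set-theoretically inside the locally closed diagonal $\Delta_{f'}$, because $\pi$ is surjective. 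Since $\Spec R$ is reduced and $\Delta_{f'}$ is defined inside some open of $X'\times_{Y'}X'$ by an ideal $\mathcal I$, the pullback of $\mathcal I$ to $R$ is contained in the nilradical $\sqrt{0}=0$; thus $(h_1,h_2)$ factors scheme-theoretically through $\Delta_{f'}$, and $h_1=h_2$. For universal closedness I invoke the \emph{weak} form of the valuative criterion, which allows one to extend the valuation ring: given $R$ and a diagram for $f'$, take the extension $R':=R^\pf$, and note (again by adjunction) that the base-changed diagram over $\Spec R^\pf$ is a diagram for $f$. The perfect valuative criterion then produces a lift $\Spec R^\pf\to X$, equivalently a lift $\Spec R^\pf\to X'$ over the extension $R'=R^\pf$. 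Combining separatedness with the weak existence criterion yields $f'$ proper, hence $f$ perfectly proper.

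The main obstacle is the descent step in the reverse direction: the integral extension $R\hookrightarrow R^\pf$ is not flat in general, so a lift produced over $R^\pf$ need not descend to a lift over $R$. The key trick that circumvents this is to work with the weak form of the valuative criterion (where extensions of valuation rings are permitted); then $R^\pf$ itself serves as the extension, so no descent is required for existence, while the uniqueness part (separatedness) is recovered from topological surjectivity of $\pi$ together with reducedness of $\Spec R$.
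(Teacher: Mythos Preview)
Your argument is correct in outline and takes a genuinely different route from the paper. The paper does not descend to a model for the converse direction; instead it observes that every perfect local subring of a perfect field is dominated by a perfect valuation ring and then says ``the usual arguments of valuative criterion for properness go through with obvious modifications'', i.e.\ it reproves the valuative criterion directly inside the perfect world. Your approach --- pass to a finite-type model $f'$, then feed it perfections of valuation rings and invoke the weak (extension-allowing) valuative criterion --- is more concrete and makes the logical dependence on the classical statement explicit. The key observation that $R^\pf$ is again a valuation ring, together with the adjunction \eqref{perfection}, is exactly what makes this bridge work; this is a nice trick and avoids having to re-examine the innards of the classical proof.

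One small technical point in your separatedness step: you write that $(h_1,h_2)$ lands set-theoretically in ``the locally closed diagonal $\Delta_{f'}$'' and then conclude scheme-theoretic factorization by reducedness. For a general algebraic space model $X'$ the diagonal $\Delta_{f'}$ is only a monomorphism, not an immersion, so this phrasing is not quite justified. The fix is immediate and in fact simpler than what you wrote: you have $h_1\circ\pi = h_2\circ\pi$ as maps $\Spec R^\pf\to X'$, and since $R$ is reduced the inclusion $R\hookrightarrow R^\pf$ makes $\pi$ a universal homeomorphism; such maps induce equivalences of small \'etale sites, so $\pi$ is an epimorphism even for algebraic-space targets, and $h_1=h_2$ follows directly. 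With this adjustment your proof is complete.
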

We note that the perfection of a valuation ring is a valuation ring.
\begin{proof}
In fact, if $f$ is perfectly proper, then it is the perfection of a proper morphism $f':X'\to Y'$. Therefore, the map  $\Spec R\to Y\to Y'$ lifts to $\Spec R\to X'$. As $R$ is perfect, it factors through $\Spec R\to X$ by \eqref{perfection}. 
To prove the converse, note that every perfect local ring $A$ in a perfect field $K$ is dominated by a perfect valuation ring.  In addition, to check that $f: X\to Y$ is universally closed, it is enough to check that for every perfect ring $R$, the base change $X_R\to Y_R$ is closed. Then the usual arguments of valuative criterion for properness go through with obvious modifications.
\end{proof}

The following lemma is not used in the paper.
\begin{lem}\label{correct5}
Let $f:X\to Y$ be a perfectly proper morphism between two pfp perfect algebraic spaces, with geometrically connected fibers. Then the natural map $\mO_Y\to f_*\mO_X$ is an isomorphism.
\end{lem}
\begin{proof}Let $f':X'\to Y'$ be a model of $f$ and let $Z'$ be the relative spectrum of $f'_*\mO_{X'}$ over $Y'$. Since $f'$ has geometrically connected fibers, $Z'\to Y'$ is a universal homeomorphism. By Corollary \ref{correction6}, ${Z'}^{\pf}\to Y$ is an isomorphism. But since ${Z'}^\pf$ is the relative spectrum of $f_*\mO_X$, the lemma follows.
\end{proof}

\begin{lem}\label{fin:loc free}
Let $\mE$ be a locally free sheaf of finite rank on a pfp perfect algebraic $X$ over $k$. Then there exists a model $(X',\mE')$ of $(X,\mE)$, i.e algebraic space $X'$, of finite presentation over $k$, and a locally free sheaf $\mE'$ of finite rank such that $(X,\mE)= ({X'}^\pf, \varepsilon^*\mE')$, where $\varepsilon:X\to X'$ is the tautological map.
\end{lem}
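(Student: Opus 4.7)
The plan is to reduce to the standard limit-descent theorem for finitely presented quasi-coherent sheaves, applied to the inverse system obtained by iterating Frobenius.

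First I would apply Proposition \ref{finite model} to produce a model $X'_0$ of $X$, so that
\[
X \;\simeq\; {X'_0}^{\pf} \;=\; \underleftarrow\lim_{\sigma} X'_0,
\]
the inverse limit in the category of algebraic spaces, with all transition maps equal to Frobenius. By Lemma \ref{Frob etale loc} the Frobenius $\sigma:X'_0\to X'_0$ is affine, so this is a filtered inverse limit of qcqs algebraic spaces along affine transition maps. Denote by ${X'_0}^{(m)}$ the algebraic space $X'_0$ with $k$-structure twisted by $\sigma^m$, and by $\varepsilon_m : X \to {X'_0}^{(m)}$ the canonical map (so $\varepsilon_0 = \varepsilon$).

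Next I would invoke the limit-descent package for quasi-coherent sheaves (the analogue of \cite[IV.8.5]{EGA4} for algebraic spaces, which for qcqs algebraic spaces along affine transition maps is standard; it also follows from the scheme case by \'etale descent using Proposition \ref{RG} and Proposition \ref{et top}). The locally free sheaf $\mE$ on $X$ is finitely presented, so this package says that there exist $m\ge 0$ and a finitely presented quasi-coherent sheaf $\mE_m'$ on ${X'_0}^{(m)}$ together with an isomorphism $\varepsilon_m^*\mE_m' \simeq \mE$. The same package gives that the locus in ${X'_0}^{(m)}$ where $\mE_m'$ is locally free of rank equal to $\on{rk}\mE$ is open, and that its preimage in $X$ exhausts $X$; since $X$ is quasi-compact, after enlarging $m$ to some $m'$ we may therefore assume that $\mE_{m'}'$ is itself locally free of finite rank on all of ${X'_0}^{(m')}$. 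Setting $X' := {X'_0}^{(m')}$ and $\mE' := \mE_{m'}'$ finishes the proof.

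The one step that is not entirely formal is the limit-descent package for finitely presented quasi-coherent sheaves on a filtered inverse system of qcqs algebraic spaces with affine transition maps, together with the openness of the ``locally free of rank $r$'' locus; this is the main ingredient and is the place where one has to go slightly beyond what is literally proved in the excerpt. Both statements are well-known for schemes (\cite[IV.8.5.2, IV.8.5.5, IV.11.2.6]{EGA4}) and extend to qcqs algebraic spaces either via Rydh's limit formalism or, more elementarily in the present case, by choosing the stratification of Proposition \ref{RG} for $X'_0$ and descending étale-locally: on each scheme $V_i$ in the stratification one is in the classical situation, and the gluing data itself involves only finitely presented morphisms, so it descends after a further Frobenius twist in exactly the same way (cf. the proof of Proposition \ref{fin:morphism}). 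Everything else is purely formal manipulation with the adjunction \eqref{perfection}.
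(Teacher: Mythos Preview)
Your argument is correct and uses a genuinely different, more abstract route than the paper. The paper argues by hand with \v{C}ech cocycles: it fixes a model $X'$, trivializes $\mE$ on a finite \'{e}tale cover $\{U_i\}$ of $X$ (which by Proposition~\ref{et top} descends to a cover $\{U'_i\}$ of $X'$), observes that each transition map $f_{ij}:U_{ij}\to\GL_r$ factors through some Frobenius twist ${U'_{ij}}^{(m)}$ because $\GL_r$ is of finite type (exactly the mechanism of Proposition~\ref{fin:morphism}), and then enlarges $m$ further so that the cocycle identities $f'_{ij}f'_{jk}f'_{ki}=1$ hold on the nose. In other words, the paper treats the vector bundle as a $\GL_r$-torsor and descends the torsor data step by step.

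Your approach packages all of this into the single invocation of EGA~IV.8.5--type limit descent for finitely presented quasi-coherent sheaves along the affine inverse system $X=\underleftarrow\lim_\sigma X'_0$. This is cleaner and immediately gives the stronger conclusion (any finitely presented $\mO_X$-module descends, not just a locally free one), but it relies on the algebraic-space version of that machinery, which you correctly flag as the one nontrivial external input. The paper's argument is more self-contained within the framework already set up in the appendix and makes the dependence on Proposition~\ref{fin:morphism} and Proposition~\ref{et top} explicit, at the cost of being specific to locally free sheaves. Either proof is fine; yours generalizes more readily, the paper's is more elementary given what has already been proved.
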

\begin{proof}Let $\varepsilon: X\to X'$ be a model. As $X$ is quasi-compact, we can find a finite \'{e}tale cover $\{U_i\}$ of $X$ such that $\mE|_{U_i}\simeq \mO_{U_i}^r$. Then we obtain a \v{C}ech cocycle $f_{ij}: U_{ij}:=U_i\times_XU_j\to \GL_r$. By Proposition \ref{et top}, the \'{e}tale cover $\{U_i\}$ descend to an \'{e}tale cover $\{U'_i\}$ of $X'$. Let  $U'_{ij}=U'_i\times_{X'}U'_j $, and $U'_{ijk}=U'_i\times_{X'}U'_{j}\times_{X'}U'_k$. The map $f_{ij}$ factors as $f'_{ij}: {U'_{ij}}^{(m)} \to \GL_r$ for some $m$ large enough, where as in Proposition \ref{fin:morphism}, ${U'_{ij}}^{(m)}={U'_{ij}}$ with the $k$-structure given by ${U'_{ij}}\stackrel{\sigma^m}{\to} {U'_{ij}}\to \Spec k$. Let $h_{ijk}=f'_{ij}f'_{jk}f'_{ki}: U_{ijk}^{(m)}\to \GL_r$. Then $\varepsilon^*h_{ijk}=1: U_{ijk}\to \GL_r$. Then there is some $n$ big enough such that $(\sigma^n)^*h_{ijk}=1$ for all $i,j,k$. Therefore, we can define a locally free sheaf $\mE$ on ${X'}^{(m+n)}$ by the \v{C}ech cocycle $(\sigma^n)^*f'_{ij}$ (with respect to the \'{e}tale cover $\{{U'_i}^{(m+n)}\}$). By construction, $({X'}^{(m+n)},\mE')$ is a desired model.
\end{proof}

\begin{cor}\label{perf Grass}
Let $X$ be a pfp perfect algebraic space over $k$. Let $\mE$ be a locally free sheaf of rank $n$ over $X$. Then the perfect space which assigns every $f:\Spec R\to X$ the set of rank $i$ quotients $\mQ$ of $f^*\mE$ is represented by a perfect algebraic space $\Gr^\pf(i,\mE)$ perfectly proper over $X$. In particular, if $X$ is perfectly proper, so is $\Gr^\pf(i,\mE)$.
\end{cor}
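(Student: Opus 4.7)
The plan is to deduce this corollary from Lemma \ref{fin:loc free} and the classical construction of the relative Grassmannian, followed by perfection.

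First, I would apply Lemma \ref{fin:loc free} to choose a model $(X',\mE')$ of $(X,\mE)$, i.e.\ an algebraic space $X'$ of finite presentation over $k$ together with a locally free sheaf $\mE'$ of rank $n$ such that $X={X'}^\pf$ and $\varepsilon^*\mE'=\mE$, where $\varepsilon:X\to X'$ is the tautological map. Over $X'$, the classical relative Grassmannian $\Gr(i,\mE')\to X'$ is representable by an algebraic space (indeed, Zariski locally on $X'$ it is a relative Grassmannian scheme) and is proper over $X'$; this is a standard construction, carried out e.g.\ in \cite{EGA4-4} or \cite{St}. Define
\[
\Gr^\pf(i,\mE):=\Gr(i,\mE')^\pf.
\]
By \eqref{perfection of pro scheme}, perfection commutes with the affine morphism $\Gr(i,\mE')\to X'$, and in particular the natural map $\Gr^\pf(i,\mE)\to X$ fits into a Cartesian square over $\varepsilon:X\to X'$.

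Next, I would verify the moduli description. Let $R$ be a perfect $k$-algebra and $f:\Spec R\to X$ a morphism. Via \eqref{perfection}, $f$ is the same datum as a morphism $f':\Spec R\to X'$, and $f^*\mE={f'}^*\mE'$ since $\mE=\varepsilon^*\mE'$. By the defining property of $\Gr(i,\mE')$, maps $\Spec R\to \Gr(i,\mE')$ over $X'$ are in bijection with rank-$i$ locally free quotients of ${f'}^*\mE'=f^*\mE$; applying \eqref{perfection} once more (using that $\Spec R$ is perfect) shows that these also biject with maps $\Spec R\to \Gr^\pf(i,\mE)$ over $X$. Hence $\Gr^\pf(i,\mE)$ represents the claimed functor on $\on{Aff}^{\on{pf}}_k$.

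Finally, properness: since $\Gr(i,\mE')\to X'$ is proper (in particular separated and universally closed) and of finite presentation, Lemma \ref{perfection of morphism} (parts (3) and the obvious analogue for universal closedness, which is preserved under the universal homeomorphism $X^\pf\to X$) together with Lemma \ref{model for proper} implies that $\Gr^\pf(i,\mE)\to X$ is perfectly proper. Composing with $X\to\Spec k$ then yields the last clause. The only mildly delicate point is the functorial identification on perfect test rings; but once one recognises that both $\varepsilon:X\to X'$ and the perfection of $\Gr(i,\mE')$ are universal for morphisms from perfect algebras, the verification is immediate, so I do not expect a serious obstacle here.
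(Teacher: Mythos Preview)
Your approach is the same as the paper's: choose a model $(X',\mE')$ via Lemma~\ref{fin:loc free}, form the classical relative Grassmannian $\Gr(i,\mE')$, and take its perfection; the paper's proof is a one-line version of exactly this. One small correction: the aside that $\Gr(i,\mE')^\pf\to X$ ``fits into a Cartesian square over $\varepsilon$'' is not right (perfection does not commute with base change in this way, and \eqref{perfection of pro scheme} is about inverse limits, not fiber products), but this is harmless since you verify the moduli description directly via \eqref{perfection}, which is the correct justification.
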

\begin{proof}Let $(X',\mE')$ be as in Lemma \ref{fin:loc free}. Then $\Gr^\pf(i,\mE)$ is the perfection of the usual Grassmannian $\Gr(i,\mE')$ of rank $i$ quotients of $\mE'$.
\end{proof}

In the sequel, we denote $\Gr^\pf(i,\mE)$ by $\Gr^\pf(i,n)$ if $X=\Spec k$ and $\mE=k^n$ is the standard $n$-dimensional vector space. We denote $\Gr^\pf(1,\mE)$ by $\bP^\pf(\mE)$ and $\Gr^\pf(1,n+1)$ by $\bP^{n,\pf}$.

\begin{rmk}\label{Pf proj}
On $\bP^{n,\pf}$, there is the following tautological rank one quotient $$\mO^{n+1}_{\bP^{n,\pf}}\to \mO_{\bP^{n,\pf}}(1),$$ and therefore a distinguished element $\mO(1):=\mO_{\bP^{n,\pf}}(1)$ in $\Pic(\bP^{n,\pf})$. However, $\mO(1)$ is not the generator of the Picard group. Namely there exists the invertible sheaf $\mO(1/p)=(\sigma^{-1})^*\mO(1)$,
and the Picard group is isomorphic to $\bZ[1/p]$. 
\end{rmk}

We will also need the following definition.
\begin{dfn}Let $f:X\to Y$ be a map between two pfp perfect algebraic spaces. We say that $f$ is perfectly smooth at $x\in X$ if there exists an \'{e}tale atlas $U\to X$ at $x$ and an \'etale atlas $V\to Y$ at $f(x)$, such that the map $U\to Y$ factors as $U\stackrel{h}{\to} V\to Y$ and $h$ factors as $h=\pr \circ h'$, where $h':U\to V\times(\bA^n)^\pf$ is \'{e}tale and $\pr:V\times (\bA^n)^\pf\to V$ is the projection. We say that $f$ is perfectly smooth if it is perfectly smooth at every point of $X$. We say that $X$ is perfectly smooth (at $x$) if $X\to \Spec k$ is perfectly smooth (at $x$).
\end{dfn}
Note that every pfp perfect algebraic space $X$ contains a perfectly smooth open dense subspace.

\subsubsection{}We need to construct the quotient  for an action of a perfect group scheme on a perfect scheme in some cases. In \cite{Se}, a perfect group variety is defined as a group object in the category of perfect varieties. 

\begin{lem}\label{model for group}
Let $H$ be a pfp perfect group scheme over $k$. Then there exists a smooth algebraic group $H_0$ over $k$ such that $H=H_0^\pf$.
\end{lem}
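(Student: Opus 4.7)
The plan is to construct a smooth model $H_0$ by first producing a finite-type model of the underlying scheme $H$, then transporting the group operations from $H$ to this model, and finally upgrading the model to something smooth using that $k$ is perfect.

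First, I would apply Proposition \ref{finite model} to obtain an algebraic space $H'_0$ of finite presentation over $k$ with $(H'_0)^{\pf}\simeq H$; separatedness transfers from $H$ back to $H'_0$ via Lemma \ref{perfection of morphism}(3). Since passing to the underlying reduced subspace does not change the perfection (because $\sigma^N$ factors through the reduced subspace once $N$ is large enough to annihilate all nilpotents), I may assume $H'_0$ is reduced. Applying Proposition \ref{fin:morphism} to the multiplication $m\colon H\times H\to H$, the inverse $\iota\colon H\to H$, and the identity $e\colon \Spec k\to H$, and then absorbing a common Frobenius twist into the $k$-structure of $H'_0$, I obtain genuine $k$-morphisms $m'$, $\iota'$, $e'$ on $H'_0$ that recover $m$, $\iota$, $e$ after perfection.

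The main step is to verify that $(H'_0,m',\iota',e')$ satisfies the group axioms strictly and not merely after perfection. The key rigidity observation is the following: if $X$ is a reduced $k$-scheme of finite type, $Y$ is a separated $k$-scheme of finite type, and $f,g\colon X\to Y$ satisfy $f^{\pf}=g^{\pf}$, then $f=g$. Indeed, the equalizer $Z\subset X$ is a closed subscheme with $Z^{\pf}=X^{\pf}$, and since perfection is a universal homeomorphism we have $|Z|=|X|$; reducedness of $X$ then forces $Z=X$. Since $k$ is perfect, all finite self-products of the reduced scheme $H'_0$ remain reduced, so the hypotheses of the rigidity observation apply to each pair of compositions appearing in the associativity, unit, and inverse diagrams. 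Thus every axiom holds strictly on $H'_0$, endowing it with the structure of a separated, reduced group scheme of finite type over $k$.

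Finally, I would invoke the standard fact that a reduced group scheme of finite type over a perfect field is automatically smooth: generic smoothness produces a nonempty smooth open subspace, and translation by the (set-theoretically transitive) group action of $H'_0$ on itself propagates smoothness to every point. Setting $H_0:=H'_0$ yields the desired smooth algebraic group with $H_0^{\pf}=H$. The principal conceptual obstacle is the second step, namely passing from strict commutativity of the structural diagrams after perfection to strict commutativity at finite level; this is handled cleanly by the interplay between the reducedness of $H'_0$ (which is harmless to arrange) and the fact that perfection is a universal homeomorphism.
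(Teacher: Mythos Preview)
Your overall strategy and the final two steps (the rigidity argument and smoothness of reduced finite-type group schemes over a perfect field) are correct and parallel the paper. The gap is in ``absorbing a common Frobenius twist into the $k$-structure of $H'_0$.'' Applying Proposition~\ref{fin:morphism} to $m\colon H\times_k H\to H$ with source model $H'_0\times_k H'_0$ and target model $H'_0$ yields a $k$-morphism $m_0\colon (H'_0\times_k H'_0)^{(N)}\to H'_0$, where the source carries the $k$-structure twisted by $\sigma_k^N$ relative to the target. This mismatch cannot be absorbed: replacing $H'_0$ by $(H'_0)^{(M)}$ shifts source and target by the \emph{same} amount, and since Frobenius on the finite-type scheme $H'_0$ is not invertible there is no $k$-map in the other direction. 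Concretely, for $H=(\bG_a^2)^\pf$ and the model $A_0=k[s,t,s^{1/p}t]\subset\mO(H)$, one has $\Delta(s^{1/p}t)=s^{1/p}t\otimes 1+s^{1/p}\otimes t+t\otimes s^{1/p}+1\otimes s^{1/p}t$ with $s^{1/p}\notin A_0$, so $N\ge 1$; replacing $A_0$ by the subring of $p$-th roots only pushes the problem one step further and nothing stabilizes on a single model.

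The paper handles this step by citing \cite[\S 1.4, Proposition~10]{Se}, whose mechanism is different: rather than twisting, one \emph{enlarges} the model. By the local finiteness of comodules, $\mO(H)$ is a directed union of its finitely generated Hopf subalgebras, so one may replace $A_0$ by a finitely generated Hopf subalgebra $B_0\supset A_0$ and take $H'=\Spec B_0$, which already carries the group law with no Frobenius bookkeeping. Then the paper takes $H_0=H'_{\on{red}}$ and observes smoothness, exactly as you do.
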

\begin{proof}A priori, $H$ is the perfection of a scheme of finite type over $k$. But as was shown in \cite[\S 1.4, Proposition 10]{Se}, $H$ is the perfection of a group scheme $H'$ of finite type over $k$.
Let $H_0={H'}_{\on{red}}$ be the reduced subscheme. As $k$ is a perfect field, $H_0$ is closed subgroup scheme of $H'$ and is smooth. In addition, $H_0^\pf=H$.
\end{proof}

\begin{cor}\label{et triv}
Let $H$ be an affine pfp perfect group scheme over $k$. Then every $H$-torsor on a perfect algebraic space $X$ can be trivialized \'{e}tale locally on $X$.
\end{cor}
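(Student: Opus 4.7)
The plan is to combine Lemma~\ref{model for group} (which provides a smooth algebraic-group ``model'' $H_0$ with $H = H_0^{\pf}$) with Lemma~\ref{equiv of torsors} (which identifies $H$-torsors with $H_0$-torsors via perfection / pushout), and then to invoke the standard fact that torsors under a smooth group have \'etale local sections.

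In detail, first apply Lemma~\ref{model for group} to write $H = H_0^{\pf}$ for some smooth algebraic group $H_0$ over $k$. Given an $H$-torsor $E \to X$, form the pushout $E_0 := E \times^{H,\varepsilon} H_0$, which by Lemma~\ref{equiv of torsors} (applied with $H' = H_0$) is an $H_0$-torsor on $X$ whose perfection recovers $E$. Because $H_0$ is smooth over $k$ and $E_0 \to X$ is fpqc-locally on $X$ a base change of $H_0 \to \Spec k$, the morphism $E_0 \to X$ is smooth and surjective.

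Next, I would invoke the classical result that a smooth surjective morphism of algebraic spaces admits sections \'etale-locally on the target (this is the existence of \'etale quasi-sections for smooth morphisms, cf.\ EGA~IV$_4$, 17.16.3). Applied to $E_0 \to X$, this yields an \'etale cover $\{U_i \to X\}$ together with sections $U_i \to E_0 \times_X U_i$, trivializing the $H_0$-torsor $E_0$ over each $U_i$. Since $X$ is perfect, Lemma~\ref{Frob etale loc} forces each $U_i$ to be perfect as well, so the cover is automatically a cover in the perfect \'etale site (cf.\ Proposition~\ref{et top}). Taking perfections of the trivializations $E_0|_{U_i} \simeq H_0 \times U_i$ gives $E|_{U_i} = (E_0|_{U_i})^{\pf} \simeq H \times U_i$, which is the desired \'etale-local trivialization of $E$.

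The only subtle point is a bookkeeping issue: Lemma~\ref{equiv of torsors} is stated for a smooth \emph{affine} group scheme $H'$, whereas Lemma~\ref{model for group} only asserts that $H_0$ is a smooth algebraic group. In our setting $H$ is a separated perfect pfp group scheme, and the construction of $H_0$ (as the reduced subscheme of a finite-type model) is compatible with the affineness of $H$; alternatively, the pushout/perfection equivalence of Lemma~\ref{equiv of torsors} extends verbatim to any smooth group scheme, the affineness being used only in ancillary constructions. Modulo this minor verification, the argument above is essentially immediate, so the real content lies entirely in Lemma~\ref{model for group} and in the smoothness of $H_0 \to \Spec k$.
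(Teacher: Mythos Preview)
Your proposal is correct and follows essentially the same approach as the paper: the paper's proof is the single sentence ``This is the combination of Lemma~\ref{model for group} and Lemma~\ref{equiv of torsors},'' and you have simply unpacked that sentence, making explicit the passage to the smooth model $H_0$, the equivalence of torsors, and the \'etale-local triviality of smooth torsors. Your observation about the affineness hypothesis in Lemma~\ref{equiv of torsors} is a fair bookkeeping point, but as you note it is inessential to the argument (smoothness of $H_0$ is all that is used), and in any case the applications in the paper always have $H$ affine, whence $H_0$ is affine by Lemma~\ref{perfection of morphism}(4).
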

\begin{proof}
This is the combination of Lemma \ref{model for group} and Lemma \ref{equiv of torsors}.
\end{proof}

\begin{lem}\label{approx}
Let $H$ be a pfp perfect affine group scheme over $k$ acting on a pfp perfect affine scheme $X$ over $k$. Then this action arises as the perfection of an action of a smooth affine algebraic group $H'$ over $k$  on an affine scheme $X'$ of finite type over $k$.
\end{lem}
\begin{proof}Recall the following basic fact: let $H$ be an affine group scheme over $k$, with $A$ its ring of functions. Let $\rho: V\to A\otimes_k V$ be a representation of $H$. Then $V$ is a union of finite dimensional $H$-modules. 

Now let $B$ denote the ring of regular functions on $X$. Let $B_0\subset B$ be a finitely generated subalgebra whose perfection is $B$ and let $V_0\subset B_0$ be a finite dimensional subspace containing a set of generators of $B_0$. Let $V'$ be a finite dimensional $H$-invariant subspace of $B$ that contains $V_0$. Let $B'\subset B$ be the subalgebra generated by $V'$. Then $B'$ is an $H$-invariant subalgebra of $B$ and ${B'}^\pf=B$ (since ${B'}^\pf\supset B_0^\pf=B$). Let $X'=\Spec B'$.
Then $X'$ is of finite type over $k$, and admits an action of $H$, which induces the action of $H$ on $X$.  In addition, as $H=\underleftarrow\lim H_0$, the action of $H$ on $V'$ is induced from some action of $H'$ on $V'$ with $H'$ being smooth. Then the action $H\times X'\to X'$ is induced from some action $H'\times X'\to X'$ as required.
\end{proof}

Now the main result of this appendix is as follows.

\begin{thm}\label{quotient}
Let $H,X$ be as above. Furthermore, we assume that the action is free, i.e. the map $(\on{act},\pr_2):H\times X\to X\times X,\ (g,x)\mapsto (gx,x)$ is a monomorphism. Then $X/H$ is represented by a pfp perfect algebraic space over $k$. In addition, if $(\on{act},\pr_2)$ is a closed embedding, then $X/H$ is separated as well.
\end{thm}
\begin{proof}
First note that the diagonal $X/H\to X/H\times X/H$ is always representable. So it is enough to show that $X/H$ admits an \'etale cover by a scheme.
Therefore in order to prove the theorem, we are free to replace $X$ by an $H$-equivariant \'{e}tale cover $Y\to X$ by a perfect affine scheme $Y$ perfectly of finite type. Indeed, if $Y/H$ is representable by a perfect pfp algebraic space over $k$, then by Lemma \ref{et triv}, we can find an \'{e}tale  cover of $Y/H$ by an affine scheme that trivializes the $H$-torsor $Y\to Y/H$. I.e. after a further \'etale localization, we can assume $Y=H\times U$ where $U$ is a scheme. Then $U\to X/H$ is an \'etale atlas of $X/H$.

By Lemma \ref{approx}, there is an action $\on{act}':H'\times X'\to X'$ that induces $\on{act}:H\times X\to X$. 
Now the action may not be free, but it is quasi-finite. Therefore, the stack $\frakX'=[X'/H']$ is of finite presentation over $k$ and has quasi-finite diagonal. It follows that there is an \'etale cover $\frakY'\to \frakX'$ by an Artin stack $\frakY'$ which is separated and of finite presentation over $k$, such that there exists a finite flat cover $\frakZ'\to \frakY'$ with $\frakZ'$ being a quasi-projective scheme over $k$ (see \cite[Lemma 3.3, Proposition 4.2]{KM} or \cite[Lemma 2.1, 2.2]{C}). Let $Y'=X'\times_{\frakX'}\frakY'$. Then $Y'\to X'$ is an $H'$-equivariant \'etale cover, and after passing to the perfection the action of $H$ on $Y={Y'}^\pf$ is free. As explained above, it is then enough to show that $Y/H$ is representable. 
Let $V'=\frakZ'\times_{\frakY'}\frakZ'$. Then we have a finite flat groupoid $V'\rightrightarrows \frakZ'$ such that $\frakY'=[\frakZ'/V']$. Let $V\rightrightarrows\frakZ$ denote the perfection of this groupoid. It gives rise to an equivalence relation of $\frakZ$ (i.e. $V\to \frakZ\times\frakZ$ is a monomorphism). By Lemma \ref{perfection of stack}, $Y/H=\frakZ/V$. The theorem then follows from the next statement. 
\end{proof}

\begin{thm}\label{quotient by finite groupoid}
Let $V\rightrightarrows U$ be an equivalence relation of pfp perfect schemes. Assume that it is the perfection of
 a finite locally free groupoid $V'\rightrightarrows U'$ of quasi-projective schemes over $k$. Then $U/V$ is represented by a pfp perfect scheme.
\end{thm}

\begin{proof}
By the standard reduction (\cite[Tag03JE]{St}), we can assume that $U'=\Spec A$ and $V'=\Spec B$ are affine. Let $s,t: A\rightrightarrows B$ be the source and target map.

Let $C$ be the equalizer $A\rightrightarrows B$.
Then $C^\pf$ is the equalizer  for $A^\pf\rightrightarrows B^\pf$. The theorem follows if we can show that $U/V\simeq \Spec C^\pf$.
Let us write $X'=\Spec C$ and $X={X'}^\pf$.
We claim that
$$V \to U\times_{X} U$$
is an isomorphism. First it is a monomorphism since $V\to U\times U$ is an equivalence relation. It is also an integral morphism since it is the perfection of a finite morphism (cf. Lemma \ref{perfection of morphism}). In addition, it follows from the classical theory on quotients by finite locally free groupoids that the map is surjective (cf. \cite[Tag03BL]{St}).
Therefore,  $V\to U\times_{X}U$ is a universal homeomorphism. By Corollary \ref{correction6}, it is an isomorphism.

Therefore, it remains to show that $A^\pf$ is flat over $C^\pf$. It follows by applying the following lemma to the $C^\pf$-module $A^\pf$ and the injective integral map $C^\pf\to A^\pf$.
\end{proof}
\begin{lem}
Let $R\to S$ be an injective integral map of perfect rings, which is the perfection of a finite map. Let $M$ be an $R$-module. If $M\otimes_RS$ if flat over $S$, then $M$ is flat over $R$. 
\end{lem}
\begin{proof}This is a perfect version of a result of Ferrand. For the completeness, we repeat the argument as in \cite[Tag0533]{St}. Assume that $R\to S$ is the perfection of a finite map $R'\to S'$. By \cite[Tag0531]{St}, there is a finite free ring extension $R'\to R''$ such that $S''=S'\otimes_{R'}R''=R''[T_1,\ldots,T_n]/J$, where $J$ contains
$$(P_1(T_1),\ldots, P_n(T_n)),\ P_i(T)=\prod_{j=1,\ldots,d_i}(T-a_{ij}), \ a_{ij}\in R''.$$ Then $R\to {R''}^\pf$ is faithfully flat and it is enough to prove the flatness of $M\otimes_R{R''}^\pf$. Therefore, we may replace $R',S'$ by $R'',S''$ and rename them as $R'$ and $S'$. Now for $\underline k=(k_1,\ldots,k_n), \ 1\leq k_i\leq d_i$, we define the ideal $J_{\underline k}\subset R'$ as the image of $J$ under the map $R''[T_1,\ldots, T_n]\to R'',\ T_i\mapsto a_{i k_i}$. Then the quotient map $R'\to R'/J_{\underline k}$ factors through $R'\to S'\to R'/J_{\underline k}$.  Therefore, $M/J^\pf_{\underline k}M$ is flat over $R/J^\pf_{\underline k}$.

Since $R\to S$ is injective integral, $\Spec S\to \Spec R$ is surjective. Therefore $\Spec S'\to \Spec R'$ is surjective.  Then by \cite[Tag0532]{St}, $I=\cap J_{\underline k}\subset R'$ is in the nilradical. Passing to the perfection, $\cap J_{\underline k}^\pf=(0)$. It then follows from \cite[Tag0522]{St} that $M$ is flat over $R$.
\end{proof}

Finally, let us discuss the orbits for the group action.
\begin{prop}\label{perfect orbit}
Let $H$ be a connected pfp perfect affine group scheme acting on a separated pfp perfect algebraic space $X$. Let $x\in X$ be a closed point and let $H_x$ be the stabilizer of $x$ in $H$. Then the induced map $i:H/H_x\to X$ is a locally closed embedding.
\end{prop}
One can regard $H/H_x$ as the $H$-orbit through $x$.
\begin{proof}
Let us denote $H/H_x$ by $Y$ for simplicity.
By definition, $i:Y\to X$ is a monomorphism. In the classical theory, since the induced map of tangent spaces is injective, it follows easily that $i$ is a locally closed embedding. In our setting, we need an alternative argument. First, as in the classical situation, $|i(Y)|\subset |X|$ is locally closed. Namely, $|i(Y)|$ contains an open subset of its closure in $|X|$. Then by the group action, $|i(Y)|$ is open in its closure. Therefore, we may replace $X$ by a locally closed subspace and assume that $i:Y\to X$ is a bijective monomorphism. 
Let us choose some model $i':Y'\to X'$. Note that $i'$ is quasi-finite, and therefore by Zariski's main theorem (\cite[Tag05W7]{St}), it factors as $Y'\stackrel{j'}{\to} Z'\stackrel{q'}{\to} X'$ where $j':Y'\to Z'$ is open and $q':Z'\to X'$ is finite. By replacing $Z'$ by the closure of $Y'$ in it, we can assume that $Z'$ is irreducible.  Then $\dim (Z'\setminus Y')<\dim X'$, and therefore, there is an open subspace $U'\subset X'$ such that $q'^{-1}(U')\subset j'(Y')$. In other words, $i': {i'}^{-1}(U')\to U'$ is finite. Finite morphisms are integral and therefore after passing to the perfection, $i:i^{-1}(U)\to U$ is integral  by Lemma \ref{perfection of morphism}. Since $H$ acts transitively on points, we see that $i: Y\to X$ is integral. Then, $i$ is an integral, bijective monomorphism and therefore is a universal homeomorphism. By Corollary \ref{correction6}, $i$ is an isomorphism.
\end{proof}

\subsection{$\ell$-adic sheaves}
\subsubsection{}
The notion of (constructible) \'{e}tale sheaves makes sense for separated pfp perfect algebraic spaces. Indeed, let $X$ be such an algebraic space, and let $\varepsilon:X\to X'$ be a model of $X$. Note that $\varepsilon$ is a universal homeomorphism. Therefore by Lemma \ref{et top}, for an \'{e}tale sheaf $\mF$ on $X$ and an \'{e}tale sheaf $\mF'$ on $X'$, the natural maps
\begin{equation}\label{et model}
\varepsilon^*\varepsilon_*\mF\to \mF,\quad \mF'\to \varepsilon_*\varepsilon^*\mF'
\end{equation}
are isomorphisms.
In particular, for such $X$, one can define the corresponding $\ell$-adic derived category $D_c^b(X,\Ql)$ ($\ell\neq p$) as usual, with a pair of adjoint functors that are equivalences
$$\varepsilon^*:D_c^b(X',\Ql)\simeq D_c^b(X,\Ql):\varepsilon_*.$$

One can define six operations between these categories, thanks to Proposition \ref{fin:morphism}. The usual proper base change or smooth base change holds for perfectly proper or perfectly smooth maps. The definition of perverse sheaves works in this setting without change.  In particular, we have the notion of the Goresky-Macpherson intermediate extension, and for every $X$, the intersection cohomology sheaf $\IC_X$. The restriction of $\IC_X$ to any perfectly smooth open subset $U$ is canonically isomorphic to $\Ql[2\dim X](\dim X)$. We will denote by $\on{P}(X)$ the category of perverse sheaves on $X$.

\subsubsection{}\label{Chern class}
Let $X$ be a separated pfp perfect algebraic space over $k$. One can define Chern classes for locally free sheaves on $X$ as usual.
For an invertible sheaf $\mL$ on $X$, corresponding to a class $[\mL]\in H^1(X,\bG_m)$, we define its Chern class $c_1(\mL)$ as the image of $[\mL]$ under
$$H^1(X,\bG_m)\to H^1(X_{\bar{k}},\bG_m)\to H^2(X_{\bar{k}},\mu_{\ell^m}).$$
In general, for a locally free sheaf $\mE$ of rank $n$ over $X$,  let $\mO(1)=\mO_{\bP^\pf(\mE)}(1)$ denote the tautological line bundle on $\bP^\pf(\mE)$ and let $\xi=c_1(\mO(1))\in H^2(\bP^\pf(\mE)_{\bar k},\mu_{\ell^m})$. Then there are unique cohomology classes $c_i(\mE)\in H^{2i}(X_{\bar k},\mu_{\ell^m}^{\otimes i})$ such that
\[\xi^n-c_1(\mE)\xi^{n-1}+\cdots+(-1)^nc_n(\mE)=0.\]
Passing to the inverse limit and inverting $\ell$, we obtain $\Ql$-coefficient Chern classes.
The usual properties of Chern classes hold in this setting. 

One can also define characteristic classes for general principal homogeneous spaces. 
Let $G$ be a (connected) reductive group over $k$ and let $G_{\Ql}$ be the corresponding split group over $\Ql$. Let $R_{G,\ell}=\on{Sym}(\frakg^*_{\Ql}(-1))^{G_{\Ql}}$ denote the algebra of invariant polynomials on the Lie algebra $\frakg_{\Ql}(1)$. Then given a $G$-torsor $E$ on $X$ (equivalently a $G^\pf$-torsor on $X$ by \S~\ref{torsor}), its characteristic classes can be regarded as a ring homomorphism 
\begin{equation}\label{char classes}
c(E): R_{G,\ell}\to \on{H}^*(X_{\bar k},\Ql),
\end{equation}
which can be constructed as follows: let $B$ be a Borel subgroup of $G_{\bar k}$, with the unipotent radical $U$ and $T=B/U$. Let $W$ be the Weyl group. Then the $T$-torsor $E_{\bar k}/U\to E_{\bar k}/B$ induces a map $c(E_{\bar k/U}):R_{T,\ell}\to \on{H}^*(E_{\bar k}/B,\Ql)$ via the above construction of the Chern classes. Passing to some models, we see that $R_{G,\ell}(=R_{T,\ell}^W)$ maps to $\on{H}^*(X_{\bar k},\Ql)\subset \on{H}^*(E_{\bar k}/B,\Ql)$, giving the desired map \eqref{char classes}. For a general connected algebraic group $G$, let $G^{\on{red}}$ denote its reductive quotient. Then a $G$-torsor $E$ induces a $G^{\on{red}}$-torsor $E^{\on{red}}$, and we define $c(E)$ as $c(E^{\on{red}})$.

\begin{rmk}Alternatively, one can define the Chern classes (or general characteristic classes) of $E\to X$ by first passing to some model $E'\to X'$ (Lemma \ref{fin:loc free}) and then set $c(E)=c(E')$ using the identification $\on{H}^*(X_{\bar k},\Ql)=\on{H}^*(X'_{\bar k},\Ql)$. Then one
shows that this definition is independent of the choice of the model.
\end{rmk}

\subsubsection{}\label{cycle class}
 We can also define the cycle class map in the current setting. First, if $X$ is irreducible of dimension $d$, there is a canonical isomorphism
\[c_X:\on{H}^{2d}_c(X_{\bar k},\Ql(d))\simeq \Ql,\]
given as follows: Choose a model $\varepsilon:X\to X'$, which induces the canonical isomorphism 
$$\varepsilon^*: \on{H}^{2d}_c(X'_{\bar k},\Ql(d))\simeq \on{H}^{2d}_c(X_{\bar k},\Ql(d)).$$ 
Then we define $c_X= c_{X'}\circ(\varepsilon^*)^{-1}$.
Note that if  $f:X'\to X''$ is a morphism of two $d$-dimensional irreducible algebraic spaces of finite presentation, the canonical isomorphism $f^*:\on{H}^{2d}_c(X''_{\bar k},\Ql(d))\simeq \on{H}^{2d}_c(X'_{\bar k},\Ql(d))$ is compatible with $c_{X'}$ and $c_{X''}$. Therefore, $c_X$ is well-defined.
Alternatively, one can build $c_X$ directly, starting from the canonical isomorphism $$c_{\bP^{1,\pf}}:\on{H}^2(\bP^{1,\pf}_{\bar k},\mu_{\ell^n})\simeq \on{coker}(\Pic(\bP^{1,\pf})\stackrel{\ell^n}{\to}\Pic(\bP^{1,\pf}))\simeq \bZ/\ell^n$$ and then using the functoriality of the six operations.

Now let $X$ be a separated pfp perfect algebraic space.  Let $\omega_X=f^!\Ql$ denote the dualizing sheaf. We define the Borel-Moore homology of $X$ as
\[\on{H}^{\on{BM}}_i(X_{\bar k})=\on{H}^{-i}(X_{\bar{k}},\omega_X(-i/2)).\]
The usual properties of the Borel-Moore homology hold (by the functoriality of the six operations). We list a few.
\begin{itemize}
\item If $f:X\to Y$ is a perfectly proper morphism, there is a canonical map
\[f_*: \on{H}^{\on{BM}}_*(X_{\bar k})\to \on{H}^{\on{BM}}_*(Y_{\bar k}).\]

\item There is a canonical isomorphism $\on{H}^{\on{BM}}_{i}(X_{\bar k})\simeq \on{H}^{i}_c(X_{\bar k},\Ql(i/2))^*$. Therefore if $X$ is irreducible of dimension $d$, there exists the fundamental class $$[X]:=c_X\in \on{H}^{\on{BM}}_{2d}(X_{\bar k}).$$ In general, if $X$ is $d$-dimensional, with $X_1,\ldots,X_n$ its irreducible components of dimension $d$, then the natural map $\bigoplus_i \on{H}^{\on{BM}}_{2d}((X_i)_{\bar k})\simeq \on{H}^{\on{BM}}_{2d}(X_{\bar k})$ is an isomorphism. We set $[X]=\sum_i [X_i]$.

\item If $X$ is irreducible and perfectly smooth, the fundamental class $[X]$, regarded as a map of sheaves $\Ql\to \omega_X[-2d](-d)$ is an isomorphism. Therefore, $\on{H}^{BM}_{i}(X_{\bar k})\simeq \on{H}^{2d-i}(X_{\bar k},\Ql(d-i/2))$.
\end{itemize}
Finally, let $Z$ be a closed subset of codimension $r$. We define
the cycle class $\on{cl}(Z)$ of $Z$ as the image of $[Z]$ in $\on{H}^{\on{BM}}_{2(d-r)}(X_{\bar k})$. If $X$ is perfectly smooth and perfectly proper, we can regard $\on{cl}(Z)$ as a class in $\on{H}^{2r}(X_{\bar k},\Ql(r))$.

\subsubsection{}\label{trace formula}
We have the Lefschetz trace formula in this setting. Let $\mF$ be an $\ell$-adic complex with constructible cohomology on a separated pfp perfect algebraic space $X$ over $\bF_q$. As usual, one can attach a function 
$$f_\mF: X(\bF_{q})\to \Ql, \quad x\mapsto \tr(\sigma_x,\mF_{\bar{x}})=\sum_i (-1)^i \tr(\sigma_x,(\mH^i\mF)_{\bar{x}}),$$
where $x\in X(\bF_{q^r})$, $\bar{x}$ a geometric point over $x$, $(\mH^i\mF)_{\bar{x}}$ the stalk cohomology of $\mF$ at $\bar{x}$, and $\sigma_x$ is the \emph{geometric} Frobenius at $x$.

Let $f:X\to Y$ be a morphism of separated pfp perfect algebraic spaces over $\bF_q$. Let $\mF$ be an $\ell$-adic complex with constructible cohomology on $X$. Then the usual trace formula holds in this setting. Namely,
\[f_{f_!\mF}(y)= \sum_{x\in f^{-1}(y)(\bF_q)} f_{\mF}(x).\]
To prove this, one can replace $f:X\to Y$ by a model $f':X'\to Y'$, and replace $\mF$ by $\varepsilon_*\mF$.

\subsubsection{}\label{equivariant category}
 In the paper, we also need some basic facts about equivariant category and equivariant cohomology. Let $J$ be an affine pfp perfect group scheme over $k$. Recall that by Lemma \ref{model for group}, $J$ is perfectly smooth. Let $X$ be a separated pfp perfect algebraic space over $k$ with an action of $J$.
Then it makes sense to talk about the category of $J$-equivariant perverse sheaves on $X$, denoted by $\on{P}_J(X)$\footnote{One can also define the equivariant derived category.}. I.e., an object in $\on{P}_J(X)$ is a perverse sheaf on $X$ together with an isomorphism of the pull-backs along the two maps $J\times X\rightrightarrows X$, satisfying the usual compatibility conditions. 

We have the following two properties of the equivariant category. Let $J_1\subset J$ be a closed normal subgroup. 

(i) If the action of $J_1$ on $X$ is free and $[X/J_1]$ is represented by an algebraic space $\bar X$, then the pull back along $q:X\to \bar X$ induces an equivalence of categories 
\begin{equation}\label{equivariance free action}
q^*[\dim J_1]:\on{P}_{J/J_1}(\bar X)\simeq \on{P}_J(X)
\end{equation}

(ii) Assume that $J_1$ is connected and that the action of $J_1$ on $X$ is trivial. Then the forgetful functor
\begin{equation}\label{unipotence}
\on{P}_J(X)\to \on{P}_{J/J_1}(X)
\end{equation}
is an equivalence of categories.

The proof is the same as the classical (i.e. non-perfect) situation (e.g. see \cite[Lemma A.1.4]{Z16}).

For $\mA\in \on{P}_J(X)$ (or more generally a $J$-equivariant complex), it makes sense to talk about the $J$-equivariant cohomology $\on{H}_J^*(X_{\bar k},\mA)$. Namely, let $J_0$ be a smooth model of $J$ as in Lemma \ref{model for group}. Let  $\{E_n\to B_n\}$ denote a sequence of $J_0$-torsors over $\{B_n\}$, which approximate of the classifying space of $J_0$ in the sense that $B_n\subset B_{n+1}$ is a closed embedding, and $\on{H}^*(\underrightarrow\lim_n B_n)= \on{H}^*(BJ_0)$. E.g. we can embed $J_0$ into some $\GL_r$ such that $\GL_r/J_0$ is quasi-affine. Then for $n$ large, let $E_n:=S_{n,r}$ be the Stiefel variety, i.e. the tautological $\GL_r$-torsor over $\Gr(r,n)$. Then $B_n:=E_n/J_0$ is represented by a scheme, and the ind-scheme $\underrightarrow\lim_n B_n$ satisfies the required property. Then the sheaf $\Ql\boxtimes \mA$ on $E^\pf_n\times X$ is $J$-equivariant with respect to the diagonal action, and therefore by \eqref{equivariance free action} descends to a sheaf $\Ql\tilde\boxtimes \mA$ on $E_n\tilde\times X$, which is a separated pfp perfect algebraic space by the discussion in \S \ref{torsor}. Then 
$$\on{H}_J^*(X_{\bar k},\mA):=\on{H}^*(\underrightarrow\lim_n (E^\pf_n\tilde\times X)_{\bar k}, \Ql\tilde\boxtimes \mA).$$
From the construction $\on{H}^*_J(X_{\bar k},\mA)$ is a module over $\on{H}^*(\underrightarrow\lim_n B_n)=\on{H}^*(BJ_0)$. Let us also recall the Lie theoretical description of $\on{H}^*(BJ_0)$ in the case when $J_0$ is connected: if we denote by $G=J_0^{\on{red}}$ the reductive quotient of $J_0$ over $k$, then $\on{H}^*(BJ_0)=R_{G,\ell}$. 

It is clear from the definition that if $J_1\subset J$ acts freely on $X$ with $\bar X$ the quotient as above, then
\begin{equation}\label{coh free action}
\on{H}^*_J(X_{\bar k},q^*\mA)=\on{H}^*_{J/J_1}(\bar X_{\bar k},\mA).
\end{equation}
On the other hand, if $J_1$ is the perfection of a unipotent group and acts trivially on $X$, then
\begin{equation}\label{coh unip}
\on{H}^*_J(X_{\bar k},\mA)=\on{H}^*_{J/J_1}(X_{\bar k},\mA).
\end{equation}

Let $J$ be a perfect affine group scheme acting on a php perfect algebraic space $X$ satisfying the following condition (which is always the case in the paper): 
there exists a closed normal subgroup $J_1\subset J$ of finite codimension acting trivially on $X$, and $J_1$ is the perfection of a pro-unipotent pro-algebraic group.
Then we can define $\on{P}_J(X)$ as $\on{P}_{J/J_1}(X)$ and define $\on{H}^*_J(X,\mA)$ for $\mA\in\on{P}_J(X)$ as $\on{H}^*_{J/J_1}(X,\mA)$. By \eqref{unipotence} and \eqref{coh unip}, both definitions are independent of the choice of $J_1\subset J$.

\section{More on mixed characteristic affine Grassmannians}\label{rep as sch}
In this section, we discuss some unsolved questions related to mixed characteristic affine Grassmannians\footnote{Conjecture I and II have been recently proved by Bhatt-Scholze \cite{BS}. Conjecture III is still open.}. We also give an example of our construction. Proofs are generally omitted in this section.

\subsection{The determinant line bundle}\label{det line}

We continue to use the notations as in \S \ref{aff Grass}.
Recall that in equal characteristic, there is the important determinant line bundle $\mL^\flat_{\det}$ on $\Gr^\flat$. Its fiber at a point  $(\mE,\beta)\in\bGr_N^\flat$ (the equal characteristic analogue of $\bGr_N$) is $\wedge^{N}(\mE_0/\mE)$ (recall that $\mE_0/\mE$ is a projective $R$-module of rank $N$). In mixed characteristic, $\mE_0/\mE$ is not an $R$-module, and therefore $\wedge^{N}(\mE_0/\mE)$ does not make sense a priori. Alternatively, one can try to define this line bundle  by choosing a filtration of $\mE/\mE_0$ such that the associated graded is a projective $R$-module and then taking its top exterior power. This idea leads to a line bundle on $\wGr_N$.
Let us formulate it more generally for  $\wGr_{\mu_\bullet}$.
Given a quasi-isogenies $\mE\stackrel{\beta}{\dasharrow}\mE'$ with $\inv(\beta)\in\{\omega_1,\ldots,\omega_n,\omega_1^*,\ldots,\omega_n^*\}$, we can define a line bundle
\[\mL_i=\left\{\begin{array}{ll}
 \wedge^j\mE'/\mE  & \mu_i=\omega_j,\\  
\wedge^j\mE/\mE'  & \mu_i=\omega_j^*.
\end{array}
\right.\]
As $\Gr_{\mu_\bullet}$ classifies all chains of quasi isogenies $\mE_n\dasharrow \mE_{n-1}\dasharrow\cdots\dasharrow \mE_0$ with each step being of the above form,  we can define line bundles $\mL_i$ as above, and set
\begin{equation}\label{aml}
\tilde\mL_{\det}=\bigotimes_i \mL_i.
\end{equation}

\medskip

\noindent\bf Conjecture I. \rm
There is a unique line bundle $\mL_{\det}$ on $\bGr_N$ such that its pullback along $\wGr_N\to \bGr_N$ is $\tilde\mL_{\det}$ in \eqref{aml}.

\medskip

An evidence of Conjecture I is the following (we ignore the Tate twist).
\begin{prop}\label{bpf}
The map $\on{H}^*((\bGr_N)_{\bar k},\Ql)\to \on{H}^*((\wGr_N)_{\bar k},\Ql)$ is injective, and there exists a class $c\in \on{H}^2((\bGr_N)_{\bar k},\Ql)$, whose image in $\on{H}^2((\wGr_N)_{\bar k},\Ql)$ is the Chern class $c(\tilde\mL_{\det})$.
\end{prop}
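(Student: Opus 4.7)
The plan is to use the structure of $\pi:\wGr_N\to\bGr_N$ from \S\ref{representability}. First I would note that $\wGr_N$ is perfectly smooth: by Lemma \ref{aux} applied inductively (as in the proof of representability of $\wGr_{\mu_\bullet}$), the forgetful map $\wGr_N\to\wGr_{N-1}$ is a $\bP^{n-1,\pf}$-bundle classifying a line in $\mE_{N-1}/p\mE_{N-1}$, so $\wGr_N$ is an $N$-fold iterated $\bP^{n-1,\pf}$-bundle over a point. By Propositions \ref{finite model} and \ref{fin:morphism}, $\pi$ descends to a proper morphism $\pi':\wX\to X$ of $k$-algebraic spaces with $\wX$ smooth projective.

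For the injectivity statement, I would invoke the BBD decomposition theorem together with purity. Since $\wX$ is smooth and projective, $R\pi'_*\Ql$ is a pure complex on $X$, and by formality of pure complexes it splits in $D^b_c(X,\Ql)$ as $\bigoplus_i R^i\pi'_*\Ql[-i]$; equivalently, the Leray spectral sequence for $\pi$ degenerates at $E_2$. Lemma \ref{fiber of pi} gives that the fibers of $\pi$ are geometrically connected, hence $R^0\pi_*\Ql=\Ql_{\bGr_N}$, so $\Ql_{\bGr_N}$ is a direct summand of $R\pi_*\Ql$; taking global sections shows that $\pi^*:\on{H}^p(\bGr_N,\Ql)\hookrightarrow\on{H}^p(\wGr_N,\Ql)$ is the inclusion of a direct summand.

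For the existence of $c\in\on{H}^2(\bGr_N,\Ql)$ with $\pi^*c=c_1(\tilde\mL_{\det})$, the Leray decomposition reduces the problem to showing that $c_1(\tilde\mL_{\det})$ has vanishing image in $\on{H}^0(\bGr_N,R^2\pi_*\Ql)$, equivalently that $\tilde\mL_{\det}|_{\pi^{-1}(x)}$ is trivial as a line bundle for every geometric point $x\in\bGr_N$. Over $(\mE_N,\beta_N)$ the fiber parametrizes all complete filtrations $\mE_N\subset\mE_{N-1}\subset\cdots\subset\mE_0$ with graded pieces $\mL_i=\mE_{i-1}/\mE_i$, and $\tilde\mL_{\det}=\bigotimes_i\mL_i$. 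I would prove fiberwise triviality by induction on $N$ via the factorization \eqref{factorization}: on the base $\bP^\pf((p^{-1}\mE_N\cap\mE_0)/\mE_N)$ obtained by varying $\mE_{N-1}$ with $\mE_N$ fixed, the new line bundle $\mL_N$ is the tautological $\mO(-1)$ while the varying contribution to $\mL_{N-1}$ is the complementary quotient, which is $\mO(1)$, producing a cancellation of their Chern classes. The sample calculation of \S\ref{sample cal} for $n=N=2$, where $\mL_1\otimes\mL_2\simeq\mO(1)\otimes\mO(-1)\simeq\mO$ on the exceptional $\bP^{1,\pf}$, illustrates this mechanism in the smallest nontrivial case.

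The hard part will be making the inductive cancellation rigorous in full generality, since $\mE_0/\mE_N$ is only a $W(k)/p^N$-module (not an $R$-module), and the naive ``determinant of the graded pieces equals determinant of the total space'' identity from linear algebra does not literally apply. One has to iterate the cancellation above through the nested projective-bundle structure of the entire fiber parametrizing complete flags in $\mE_0/\mE_N$, working with the relative tautological sequences at each step. Once this is settled, the injectivity from the second paragraph together with fiberwise triviality produces the unique class $c\in\on{H}^2(\bGr_N,\Ql)$ pulling back to $c_1(\tilde\mL_{\det})$.
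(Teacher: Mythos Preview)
Your strategy is sound, and the injectivity argument via the decomposition theorem (using that $\wGr_N$ is perfectly smooth and the fibers of $\pi$ are geometrically connected) is correct. For the second part, your reduction to the vanishing of the image of $c_1(\tilde\mL_{\det})$ in $\on{H}^0(\bGr_N,R^2\pi_*\Ql)$ is also right, but you should add that $R^1\pi_*\Ql=0$: since $\pi_*\Ql[N(n-1)]\simeq\IC_{\omega_1}^{\star N}$ is perverse (Proposition~\ref{conv prod}) and hence a direct sum of $\IC_\la$'s, the parity property of their stalk cohomology used in Lemma~\ref{semisimplicity} forces $\on{H}^{\mathrm{odd}}$ of every fiber to vanish.

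The genuine gap is in your fiberwise cancellation. On the fiber $\bP^\pf(V_x)$ of $\pi_2$ (where $\mE_N$ is fixed and $\mE_{N-1}$ varies), the line $\mL_N=\mE_{N-1}/\mE_N$ is indeed tautological, but $\mL_{N-1}=\mE_{N-2}/\mE_{N-1}$ is \emph{not defined} there: $\mE_{N-2}$ only lives on $\wGr_N$, and on the full fiber $F_x=\pi^{-1}(x)$ it is not determined by $\mE_{N-1}$. So the proposed cancellation ``$\mL_N\otimes\mL_{N-1}\simeq\mO$ on $\bP^\pf(V_x)$'' does not make sense, and the induction via \eqref{factorization} does not close up as written.

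A clean fix is to use the maximal torus $T\subset\GL_n$. Since $\on{H}^{\mathrm{odd}}(F_x)=0$, the fiber $F_x=\pi^{-1}(\varpi^\mu)$ is equivariantly formal, so localization gives an injection $\on{H}^*_T(F_x)\hookrightarrow\on{H}^*_T(F_x^T)$. The $T$-fixed points of $F_x$ are the coordinate flags, and at any such flag the $T$-weight on the fiber of $\tilde\mL_{\det}=\bigotimes_i\mL_i$ is the sum of the weights of all graded pieces, namely $\sum_j\mu_j\epsilon_j$---independent of which coordinate flag is chosen. Hence $c_1^T(\tilde\mL_{\det}|_{F_x})$ agrees at every fixed point with the pullback of $\sum_j\mu_j\epsilon_j\in\on{H}^2_T(\mathrm{pt})$, and by injectivity of localization it \emph{equals} that pullback. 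Its non-equivariant image $c_1(\tilde\mL_{\det}|_{F_x})\in\on{H}^2(F_x)$ therefore vanishes, and your argument concludes. (Equivalently: on each $T$-invariant $\bP^{1,\pf}\subset F_x$, exactly one $\mE_j$ varies while $\mE_{j\pm1}$ stay fixed, so $\mL_{N-j+1}=\mO(-1)$, $\mL_{N-j}=\mO(1)$, and the rest are trivial---this is the cancellation you had in mind, but it lives on the $T$-curves, not on the factorization \eqref{factorization}.)
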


We can reduce Conjecture I to Conjecture IV stated in the sequel. It is based on the following assertion.
\begin{prop}\label{base point free}
If $\Ga(\wGr_{N},\tilde{\mL}_{\det})$ is base point free, i.e. for every closed point $x\in \wGr_{N}$ there exists a section $s\in \Ga(\wGr_{N},\tilde{\mL}_{\det})$ such that $s(x)\neq 0$, then Conjecture I holds.
\end{prop}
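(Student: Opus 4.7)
The plan is to realize Conjecture~I as a descent problem for line bundles along the perfectly proper surjection $\pi:\wGr_{\mmu}\to\bGr_{\leq|\mmu|}$, and to discharge the triviality-on-fibers condition using the base point freeness hypothesis. Uniqueness of $\mL_{\det}$ is automatic from the surjectivity of $\pi$, so the task is existence. First I would pass to finite-type models: by Proposition~\ref{fin:morphism} and Lemma~\ref{fin:loc free}, pick an algebraic-space model $\pi':\wGr'_{\mmu}\to\bGr'_{\leq|\mmu|}$ of $\pi$ over $k$ together with a line bundle $\tilde\mL'_{\det}$ whose perfection recovers $\tilde\mL_{\det}$. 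By Lemma~\ref{resol} and Lemma~\ref{model for proper}, after twisting by Frobenius if necessary, $\pi'$ is proper and its geometric fibers are connected (this latter uses Lemma~\ref{fiber of pi} in the case $\mmu=(\omega_1,\ldots,\omega_1)$, and its evident extension in general). In particular $\pi'_*\mO=\mO$, so the classical descent criterion applies: $\tilde\mL'_{\det}\cong(\pi')^*\mL'_{\det}$ for some line bundle $\mL'_{\det}$ on $\bGr'_{\leq|\mmu|}$ if and only if the restriction $\tilde\mL'_{\det}|_{(\pi')^{-1}(y)}$ is trivial for every geometric point $y$, in which case $\mL'_{\det}=\pi'_*\tilde\mL'_{\det}$; perfecting gives the desired $\mL_{\det}$.

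Next I would exploit the base point free hypothesis to reformulate triviality on fibers. A finite-dimensional subspace $V\subset\Ga(\wGr_{\mmu},\tilde\mL_{\det})$ generating $\tilde\mL_{\det}$ descends via Lemma~\ref{fin:loc free} to give a morphism $\phi':\wGr'_{\mmu}\to\bP(V^\vee)$ with $(\phi')^*\mO(1)\cong\tilde\mL'_{\det}$. Triviality of $\tilde\mL_{\det}$ on a geometric fiber $F$ of $\pi$ is then equivalent to $\phi'$ being constant on the corresponding fiber of $\pi'$. Over $\Gr_{|\mmu|}$ this is automatic, since $\pi$ is an isomorphism there (Lemma~\ref{fiber of pi}), so the essential content is the constancy of $\phi'|_{F'}$ on positive-dimensional fibers.

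To establish this constancy, I would induct on $N$ using the recursive factorization \eqref{factorization} $\wGr_N\xrightarrow{\pi_1}\bP^\pf(\mE/p)\xrightarrow{\pi_2}\bGr_N$ (where $\mE$ is the universal family over $\bGr_{N-1}$) and its obvious iteration: every positive-dimensional fiber of $\pi$ is assembled from $\bP^{1,\pf}$-fibers of the intermediate projections $\pi_2$ at each level of the tower. A direct computation from the explicit formula \eqref{aml} should show that $\tilde\mL_{\det}$ restricts to the trivial bundle on each such $\bP^{1,\pf}$-fiber (the two determinant factors associated to the two successive quotients contribute opposite degrees). Base point freeness then forces $\phi'$ to be constant on each such $\bP^{1,\pf}$-fiber, since a base point free line bundle of degree zero on $\bP^{1,\pf}$ is trivial and its sections cannot separate the $k$-points of $\bP^{1,\pf}$; propagating through the tower and taking connected unions gives constancy on every fiber of $\pi$.

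The main obstacle will be the degree-zero computation on the intermediate $\bP^{1,\pf}$-fibers and its compatibility with the base point free hypothesis globally: one must ensure that the sections producing non-vanishing at a given point of $\wGr_{\mmu}$ are genuinely pulled back from $\bGr_{\leq|\mmu|}$ on each such fiber, rather than varying nontrivially along the fiber. Once this is in hand, the descent criterion furnishes $\mL_{\det}=\pi_*\tilde\mL_{\det}$ with $\pi^*\mL_{\det}\cong\tilde\mL_{\det}$ (by the projection formula combined with $\pi_*\mO=\mO$), thereby proving Conjecture~I.
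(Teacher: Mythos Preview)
Your approach differs from the paper's. The paper indicates that the proof rests on two specific lemmas: that $\bGr_N\setminus\Gr_N$ has codimension at least two in $\bGr_N$ (Lemma~\ref{dim for bGr}), and that the finite-type model $V'_{N,h}$ is normal and a local complete intersection (Lemma~\ref{lci}). The intended argument is thus an \emph{extension} from the large open subset $\Gr_N$---where $\pi$ is already an isomorphism and $\mL_{\det}$ is trivially defined---across a codimension-$\geq 2$ boundary on a normal target, rather than a fiberwise descent along $\pi$.

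Your descent strategy has two real gaps. First, the assertion $\pi'_*\mO=\mO$ does not follow from ``proper with geometrically connected fibers'' alone; the standard argument needs the target to be normal (otherwise the Stein factorization can produce a nontrivial finite universal homeomorphism onto $\bGr'_N$). That normality is exactly what Lemma~\ref{lci} is there to supply, and you never invoke it. Second, the ``classical descent criterion'' you cite---$\tilde\mL'_{\det}$ descends iff it is trivial on every geometric fiber, with $\mL'_{\det}=\pi'_*\tilde\mL'_{\det}$---is a consequence of cohomology and base change for \emph{flat} proper morphisms, but $\pi$ is not flat (the fiber dimension jumps). Even granting fiberwise triviality, $\pi'_*\tilde\mL'_{\det}$ need not be invertible. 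The workable version of your argument is: show $\phi'$ is constant on fibers, then factor $\phi'$ through $\bGr'_N$ using $\pi'_*\mO=\mO$, and set $\mL_{\det}=\bar\phi^{\,*}\mO(1)$; the projection-formula step is unnecessary.

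Your degree-zero observation on the $\bP^1$'s obtained by varying a single $\mE_i$ with $\mE_{i-1}/\mE_{i+1}\simeq k^2$ is correct ($\mL_i\otimes\mL_{i+1}\cong\mO(1)\otimes\mO(-1)$), and with chain-connectedness of the fibers it would indeed force $\phi'$ to be constant on fibers. If you supply the normality input and replace the descent criterion by the factorization argument, your route does work and in fact avoids the codimension-two lemma; but as written, the proof is incomplete precisely at the point where it meets the paper's Lemma~\ref{lci}.
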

Indeed, if the assumption in the above proposition holds, then the pushforward of $\tilde{\mL}_{\det}$ along $\wGr_N\to\bGr_N$ will give $\mL_{\det}$.

\begin{rmk}
More generally, given a perfect ring $R$, one can define the following category $\mC_R$ of triples $(\mE_1,\mE_2,\beta)$, where $\mE_1$ and $\mE_2$ are two finite projective $W(R)$-modules, and $\beta:\mE_1\dasharrow\mE_2$ is a quasi-isogeny. This is an exact category. It is an interesting question to see whether the algebraic K-theory of $\mC_R$ is related to the K-theory of $R$.
\end{rmk}

\medskip

\noindent\bf Conjecture II. \rm A model of the line bundle $\mL_{\det}$ (in the sense as Lemma \ref{fin:loc free}) induces an embedding of a model of $\bGr_N$ into some projective space. In particular, $\bGr_N$ is the perfection of a projective variety.

\medskip

Note that assuming Conjecture I, Conjecture II holds if the space of global sections of $\mL_{\det}$ separate points. I.e. for every two different points $x,y\in \bGr_{N}$, there exists $s\in \Ga(\bGr_{N},\mL_{\det})$ such that $s(x)=0$ and $s(y)\neq 0$.
As an evidence, we see in Remark \ref{wGr2}, and in particular in the sequel \S~\ref{sample cal} that in some cases when $\mu$ is (very) small, $\Gr_{\leq \mu}$ is the perfection of some projective variety.

\subsection{Deperfection}
Recall that from the proof of Proposition \ref{sph sch var}, the perfect scheme $\Gr_\mu=L^+G/ L^+G\cap \varpi^\mu L^+G\varpi^{-\mu}$ has a canonical model $\Gr'_\mu$, which is a smooth quasi-projective variety.  As $\Gr_\mu$ is open dense in $\Gr_{\leq \mu}$, it gives rise to a weakly normal model $\Gr'_{\leq \mu}$ of $\Gr_{\leq \mu}$, which is a proper algebraic space over $k$, containing $\Gr'_{\mu}$ as an open subset (see Proposition \ref{finite model}).

\medskip

\noindent\bf Conjecture III. \rm The proper algebraic space $\Gr'_{\leq \mu}$ is normal and Cohen-Macaulay.

\medskip

An evidence of this conjecture is the following lemma.
\begin{lem}\label{lci}
The affine scheme $V'_{N,h}$ defined via \eqref{notation} is normal and a locally complete intersection.
\end{lem}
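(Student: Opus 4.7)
The plan is to establish both claims by stratifying $V'_{N,h}$ according to the orbits of the left-right action of $L^h_p\GL_n\times L^h_p\GL_n$ on $L^h_p M_n$ and computing the dimensions of these orbits. The orbits are indexed by partitions $\underline a = (a_1\geq a_2\geq \cdots \geq a_n\geq 0)$ with $\sum_i a_i = N$ (the possible elementary divisors of a lift to $M_n(W)$), the stratum $V^{\underline a}$ being the orbit through $D_{\underline a}:=\on{diag}(p^{a_1},\ldots,p^{a_n})$. The top stratum $\underline a_{\on{top}}=(N,0,\ldots,0)$ is open, and since every $\underline a$ satisfies $\underline a\leq \underline a_{\on{top}}$ in the Bruhat order, $V'_{N,h}$ is the closure of $V^{\underline a_{\on{top}}}$.

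The key calculation is $\dim V^{\underline a}$. The stabilizer of $D_{\underline a}$ under $(g,k)\cdot A = gAk^{-1}$ is cut out in $L^h_p\GL_n\times L^h_p\GL_n$ by the entrywise equations $p^{a_j}g_{ij}=p^{a_i}k_{ij}$ in $W_h$. For each $(i,j)$, writing $m=\min(a_i,a_j)$, this equation defines a subspace of $W_h\times W_h$ of dimension $h+m$: it is parametrized by one free variable in $W_h$ (choose $g_{ij}$ if $a_i\leq a_j$, else $k_{ij}$), the remaining one then lying in an affine translate of the $p^m$-torsion subgroup $W_h[p^m]\simeq p^{h-m}W_h$, which has dimension $m$ over $k$. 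Summing over $(i,j)$ and using the elementary identity $\sum_{i,j}\min(a_i,a_j)=\sum_i(2i-1)a_i$ for a weakly decreasing sequence yields $\dim\on{Stab}(D_{\underline a})=n^2h+\sum_i(2i-1)a_i$, hence
\[
\dim V^{\underline a} = 2n^2h - \dim\on{Stab}(D_{\underline a}) = n^2h-\sum_i(2i-1)a_i.
\]
In particular $\dim V^{\underline a_{\on{top}}} = n^2h-N$, and every deeper stratum has codimension $2\sum_{i\geq 2}(i-1)a_i\geq 2$ in $V'_{N,h}$.

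From this the two claims follow. Since $V'_{N,h}$ is a closed subscheme of the smooth open subscheme $\{\det_N A\in\bG_m\}$ of $L^h_p M_n\simeq \bA^{n^2h}$ cut out by the $N$ equations $\det_0 A=\cdots=\det_{N-1}A=0$, and its dimension equals $n^2h-N$, these equations form a regular sequence locally. Hence $V'_{N,h}$ is a local complete intersection (and in particular is Cohen--Macaulay and equidimensional, which confirms that it coincides with the closure of $V^{\underline a_{\on{top}}}$). Being Cohen--Macaulay it satisfies Serre's condition $S_2$. For Serre's condition $R_1$, the open stratum $V^{\underline a_{\on{top}}}$ is a single orbit of the smooth connected algebraic group $L^h_p\GL_n\times L^h_p\GL_n$, hence is smooth; the singular locus of $V'_{N,h}$ is therefore contained in the union of the deeper strata, which has codimension $\geq 2$. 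By Serre's criterion $V'_{N,h}$ is normal.

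The main technical step is the stabilizer dimension computation, which requires careful bookkeeping of the Witt-vector divisibility conditions arising from $g_{ij}p^{a_j}=p^{a_i}k_{ij}$ and the observation that the subgroup scheme $p^mW_h\subset W_h$ has dimension $h-m$ over $k$; I do not anticipate other serious obstacles.
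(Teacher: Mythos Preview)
The paper states this lemma in Appendix~\ref{rep as sch} but omits the proof (the appendix explicitly says ``we omit most of the proofs''), so there is nothing to compare against. Your strategy---stratify by $L^h_p\GL_n\times L^h_p\GL_n$-orbits, compute the stabilizer dimensions, and use Serre's criterion---is the natural one, and your dimension calculation is correct. The identity $\sum_{i,j}\min(a_i,a_j)=\sum_i(2i-1)a_i$ and the conclusion $\dim V^{\underline a}=n^2h-\sum_i(2i-1)a_i$ are right, and the LCI claim follows exactly as you say: $N$ equations cutting out a codimension-$N$ locus in a regular ambient scheme.

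There is one genuine gap in the $R_1$ step. You assert that $V'_{N,h}$ is smooth along the top orbit because that orbit, as a homogeneous space for a smooth group, is smooth. But this only shows the \emph{reduced} structure on the top orbit is smooth; a priori $V'_{N,h}$ could carry nilpotents there (an LCI scheme need not be reduced, e.g.\ $k[x]/(x^2)$). What is missing is a direct check that $V'_{N,h}$ is smooth at one point of the top orbit; equivariance then propagates this. This is easy: at $D=\operatorname{diag}(p^N,1,\ldots,1)$, restrict to the affine line $A=\operatorname{diag}(a_{11},1,\ldots,1)$ inside $L^h_pM_n$. Along this line $\det A=a_{11}$ exactly, so $\det_l A=a_{11,l}$ for every $l$. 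Hence the Jacobian of $(\det_0,\ldots,\det_{N-1})$ with respect to the Witt components $a_{11,0},\ldots,a_{11,N-1}$ is the identity matrix, and $V'_{N,h}$ is smooth at $D$. With this one-line addition your proof is complete.
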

We give a hint of the proof. First, note that $V'_{N,h}$ is defined by $N$-equations in an affine space of dimension $n^2h$. On the other hand, it is not hard to show that $\dim V'_{N,h}=\dim V_{N,h}=n^2h-N$ (e.g. by calculating the dimension of the fiber of $J$ at $\on{diag}\{p^{m_1}, p^{m_2}, \ldots , p^{m_n}\}$).  So it is a complete intersection.

To show that it is normal, it is then enough to show the smoothness in codimension one.
First, one shows that $V'_{N,h}$ is non-singular at $A=\on{diag}\{p^N,1,1,\ldots,1\}$ by calculating the dimension of the tangent space at $A$. Next using the action of $L^h_p\GL_n\times L^h_p\GL_n$ by left and right multiplication, $V'_{N,h}$ is non-singular at every point of $L^h_p\GL_n\cdot A\cdot L^h_p\GL_n$. On the other hand, topologically, this double coset is exactly the preimage of $\Gr_N$ under $V_{N,h}\to \bGr_N$. By Lemma \ref{fiber of pi}, $\bGr_N$ is of dimension $N(n-1)$, and the codimension of $\bGr_N-\Gr_N$ is at least two. Then $V'_{N,h}$ is non-singular away from a codimension two closed subset, and therefore is normal.

\begin{rmk}As mentioned in Remark \ref{wGr2}, $\Gr'_{\leq \mu}$ is probably not isomorphic to its equal characteristic counterpart $\Gr^\flat_{\leq \mu}$ for general $\mu$. However, according to the geometric Satake, their intersection cohomology (or more generally, their motives) are isomorphic.
\end{rmk}

\begin{rmk}If $\la<\mu$, we have the closed embedding $i:\Gr_{\leq\la}\subset\Gr_{\leq\mu}$, which by Proposition \ref{fin:morphism}, is induced from some map $i':{\Gr'}^{(m)}_{\leq\la}\to\Gr'_{\leq\mu}$, where ${\Gr'}^{(m)}_{\leq\la}=\Gr'_{\leq \la}$ but with  the $k$-structure given by $\Gr'_{\leq\la}\stackrel{\sigma^m}{\to}\Gr'_{\leq\la}\to\Spec k$. However, $i'$ is \emph{not} a closed embedding and therefore we do not have a deperfection of the whole affine Grassmannian. 
\end{rmk}

A natural question is whether $\Gr'_{\leq \mu}$ has a natural moduli interpretation. We have no idea how to answer this question. The following discussion provides some hints that this might be an interesting question.

We consider $G=\GL_n$. As mentioned above, we do not know a moduli interpretation of $\bGr'_N:=\Gr'_{\leq N\omega_1}$. On the other hand, recall that there is the ``Demazure resolution" $\wGr_N\to \bGr_N$. By Lemma \ref{fiber of pi}, $\Gr_N\subset \wGr_N$ is open dense, so by Proposition \ref{finite model} we also have a canonical model $\wGr'_N$. 

We do have a moduli interpretation of $\wGr'_N$, as suggested by L. Xiao. For simplicity, we assume that $k=\overline\bF_p$. Fix $h\geq N$. Let $E/\bQ_p$ be an unramified extension of degree $2h$, with ring of integers $\mO_E=\bZ_{p^{2h}}$. We fix an embedding $\tau_0: E\to \overline\bQ_p$, and let $\tau_i=\sigma^i\tau_0$, where $\sigma: \overline\bQ_p\to \overline\bQ_p$ is (a lift of) the Frobenius automorphism. Then $\tau_{i+2h}=\tau_i$.

Let $\bX_0$ be a $p$-divisible group of height $2hn$ and dimension $hn$ over $\overline\bF_p$, with an action $\iota:\mO_E\to \End\bX_0$. We assume that the signature of $(\bX_0,\iota)$ is $(0,\ldots,0,n,\ldots,n)$. I.e. $\on{rk} (\Lie \bX_0)_{\tau_i}=0$ for $i=0,\ldots,h-1$, where $(\Lie \bX_0)_{\tau_i}=\Lie\bX_0\otimes_{\mO_E,\tau_i}\overline\bF_p$. 

We define a space $M_{N,h}\in \on{Sp}_{\overline \bF_p}$ as follows. For an $\overline\bF_p$-algebra $R$, the set $M_{N,h}(R)$ classifies chains of isogenies of $p$-divisible groups on $R$ with an $\mO_E$-action,
\[(\bX_0)_R\stackrel{\phi_1}{\to}\bX_1\to\cdots\stackrel{\phi_N}{\to} \bX_N,\]
satisfying
\begin{enumerate}
\item $(\bX_i,\iota)$ has signature $(1,\ldots,1,0\ldots,0,n,\ldots,n,n-1\ldots,n-1)$, where the first $i$ positions are $1$s and the last $i$ positions are $(n-1)$s;
\item $\deg\phi_i=p^{2i-1}$ for $i=1,\ldots,N$;
\item the differential $(d\phi_i)_j: (\Lie \bX_{i-1})_{\tau_j}\to (\Lie \bX_{i})_{\tau_j}$ is zero for $j=0,\ldots,h-1$; and
\item the dual of the differential $(d\phi^*_i)_j: (\Lie \bX_{i}^*)_{\tau_j}\to (\Lie \bX_{i-1}^*)_{\tau_j}$ is zero for $j=h,\ldots,2h-1$.
\end{enumerate}
Note that the first two conditions imply that $\ker \phi_i\subset \bX_{i-1}[p]$. 

The first two conditions define a moduli scheme closely related to Rapoport-Zink spaces. However, it is not irreducible in general (not even equidimensional) and the last two conditions cut out $M_{N,h}$ inside it as an irreducible component. More precisely,
by induction we have
\begin{lem}The space
$M_{i+1,h}$ is a $\bP^{n-1}$-bundle over $M_{i,h}$, for $i=1,\ldots,h$. Therefore, $M_{N,h}$ is represented by a smooth projective variety.
\end{lem}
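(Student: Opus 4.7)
The plan is to proceed by induction on $i$, exhibiting the fibration $M_{i+1,h}\to M_{i,h}$ as the projectivization of a canonical rank-$n$ vector bundle on $M_{i,h}$. The base case $i=0$ is $M_{0,h}=\Spec\bar\bF_p$, since the data reduces to the fixed $(\bX_0,\iota)$.

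For the inductive step, I would fix an $R$-point $(\bX_0)_R\to \bX_1\to\cdots\to\bX_i$ of $M_{i,h}$ and analyze extensions by $\phi_{i+1}\colon \bX_i\to \bX_{i+1}$. Giving $\phi_{i+1}$ is equivalent to giving its kernel $K\subset \bX_i$; the degree condition $\deg\phi_{i+1}=p^{2i+1}$ together with the jump in signature between $\bX_i$ and $\bX_{i+1}$ forces $K$ to be a finite flat $\mO_E$-stable subgroup scheme of $\bX_i[p]$ of prescribed order $p^{2i+1}$, and moreover of $\mO_E$-rank one. Conditions (3) and (4) then restrict the position of $K$ inside $\bX_i[p]$: at every embedding $\tau_j$ with $j<h$ the induced map $(\Lie K)_{\tau_j}\to(\Lie \bX_i)_{\tau_j}$ must be an isomorphism, while dually for $j\geq h$ the co-Lie condition pins $K$ down up to one free $\mO_E$-component. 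Precisely one embedding $\tau_{j_i}$ remains free, where the freedom in $K$ corresponds to choosing a line in a canonical $n$-dimensional subspace of $\bX_i[p]$ extracted from the $\tau_{j_i}$-isotypic piece of the (filtered) Dieudonn\'e realization.

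The central step is to globalize this construction: I would build a rank-$n$ locally free sheaf $\mV_i$ on $M_{i,h}$ from the $\tau_{j_i}$-eigenspace of the universal relative Lie algebra (or its co-Lie dual, depending on $j_i$) of $\bX_i[p]$, and produce a functorial isomorphism between the fiber of $M_{i+1,h}\to M_{i,h}$ over a test point and $\bP(\mV_i)$. The main obstacle will be the deformation-theoretic verification that this identification is smooth over $M_{i,h}$ with fiber exactly $\bP^{n-1}$ (i.e.\ that the linearized problem of deforming $K$ subject to conditions (1)--(4) has tangent space of dimension $n-1$ at each closed point, and is unobstructed). Once that is established, local triviality of $\bP(\mV_i)\to M_{i,h}$ in the Zariski topology follows from the fact that $M_{i,h}$ is, by the previous inductive step, a smooth variety over which locally free sheaves split Zariski-locally after passage to projectivizations.

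Finally, since $N\leq h$ by assumption, iterating the $\bP^{n-1}$-bundle structure starting from $M_{0,h}=\Spec\bar\bF_p$ shows that $M_{N,h}$ is an iterated $\bP^{n-1}$-bundle over a point, hence a smooth projective variety. The remaining care needed is to ensure that at each step the bundle is genuinely projective (not merely proper), which follows because $\bP(\mV_i)$ is projective over $M_{i,h}$ and projectivity is preserved under composition.
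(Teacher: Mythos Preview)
Your inductive framework matches the paper's, but the identification of the rank-$n$ bundle is both vaguer and more complicated than necessary, and this is where the substance of the argument lies.

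The paper does not work with the Lie algebra of $\bX_i[p]$ at a varying embedding $\tau_{j_i}$. Instead it uses the full Dieudonn\'e crystal: writing $M(\bX_i^{\on{univ}})^*$ for the dual of the Lie algebra of the universal vector extension of $\bX_i^{\on{univ}}$ (equivalently, the value of the contravariant Dieudonn\'e crystal on the base), each eigenspace $M(\bX_i^{\on{univ}})^*_{\tau_j}$ is locally free of rank $n$ regardless of $j$, and the paper simply takes $E_i=M(\bX_i^{\on{univ}})^*_{\tau_0}$ and asserts $M_{i+1,h}=\bP(E_i)$. That the relevant index is always $\tau_0$ (not a moving $\tau_{j_i}$) is consistent with the subsequent proposition, where the isomorphism $M_{N,h}^\pf\simeq\wGr_N$ is realized by $(\bX_\bullet)\mapsto(\bD(\bX_\bullet)_{\tau_0})$. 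With this description, giving $\phi_{i+1}$ subject to (1)--(4) is exactly giving a rank-one locally free quotient of $E_i$; the conditions (3) and (4) propagate this choice uniquely through the other $\tau_j$-components via Frobenius and Verschiebung, so there is no residual freedom elsewhere.

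By contrast, your approach via the Lie algebra of $\bX_i[p]$ runs into the problem that the $\tau_j$-components of $\Lie\bX_i$ have \emph{varying} ranks (governed by the signature), so locating the rank-$n$ piece and showing it carries the full moduli is extra work you have not done; your phrase ``precisely one embedding $\tau_{j_i}$ remains free'' is asserted rather than established. Moreover, once one has a functorial isomorphism $M_{i+1,h}\simeq\bP(E_i)$ of functors over $M_{i,h}$, there is nothing further to check: $\bP(E_i)\to M_{i,h}$ is automatically a Zariski-locally trivial $\bP^{n-1}$-bundle, so the deformation-theoretic verification you outline (tangent space dimension, unobstructedness) is unnecessary.
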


Indeed, let $\bX_{i}^{\on{univ}}$ denote the universal $p$-divisible group on $M_{i,h}$ appearing at the end of the chain. Let $M(\bX_{i}^{\on{univ}})^*$ denote the dual of the Lie algebra of the universal extension of $\bX^{\on{univ}}_i$ by vector groups. Let $M(\bX_{i}^{\on{univ}})^*_{\tau_j}=M(\bX_{i}^{\on{univ}})^*\otimes_{\mO_E,\tau_j}\overline\bF_p$.
Then we have a rank $n$ vector bundle $E_i=M(\bX_{i}^{\on{univ}})^*_{\tau_0}$ on $M_{i,h}$ and $M_{i+1,h}=\bP(E_i)$.

\begin{prop}\label{deperf of Demazure}
There is an isomorphism $\wGr'_N\simeq M_{N,h}$.
\end{prop}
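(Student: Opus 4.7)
The plan is to identify the $R$-points of $M_{N,h}$ (for $R$ a perfect $\overline\bF_p$-algebra) with the $R$-points of $\wGr_N$ via contravariant Dieudonn\'{e} theory applied to the chain of $p$-divisible groups, and then to deduce the statement for the deperfections. First I would invoke Gabber's equivalence (cf.\ \cite[\S 6]{La}) between $p$-divisible groups on a perfect ring and locally free $F$-crystals. A point of $M_{N,h}(R)$ then corresponds to a chain of inclusions of Dieudonn\'{e} modules $M(\bX_N)\subset M(\bX_{N-1})\subset\cdots\subset M(\bX_0)$ over $W(R)$ with a compatible $\mO_E$-action. Since $\mO_E=W(\bF_{p^{2h}})$ and $W(R)\otimes_{\bZ_p}\mO_E=\prod_{j=0}^{2h-1}W(R)$ via the embeddings $\tau_j$, each $M(\bX_i)$ decomposes as $\bigoplus_{j=0}^{2h-1}M(\bX_i)_{\tau_j}$ into rank-$n$ projective $W(R)$-modules. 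The Frobenius, being $\sigma$-linear and $\mO_E$-equivariant, carries $M(\bX_i)_{\tau_j}$ to $M(\bX_i)_{\tau_{j+1}}$ (indices mod $2h$).

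Next I would translate conditions (1)--(4) into constraints on this decomposition. The signature condition for $\bX_0$, via the relationship $\on{Fil}^1 M(\bX_0)_{\tau_j}=(\Lie\bX_0)^*_{\tau_j}$, forces $F$ to be an isomorphism from $M(\bX_0)_{\tau_{j-1}}$ onto $M(\bX_0)_{\tau_j}$ for $j=h,\ldots,2h-1$ and to have cokernel a rank-$n$ free $R$-module for $j=0,\ldots,h-1$. Likewise, the signature of $\bX_i$ encodes the precise Hodge filtration in each $\tau$-component. Crucially, the vanishing differentials $(d\phi_i)_j=0$ for $j=0,\ldots,h-1$ and $(d\phi_i^*)_j=0$ for $j=h,\ldots,2h-1$ should force the inclusion $M(\bX_i)_{\tau_j}\hookrightarrow M(\bX_{i-1})_{\tau_j}$ to be an isomorphism for every $j\neq 0$: in the ``covariant'' half $j=0,\ldots,h-1$ the condition says that the cokernel of $\phi_i^*$ mod $p$ vanishes on the quotient $M/\on{Fil}^1$, and combined with the signature this pins $\phi_i^*$ down to an isomorphism; and in the ``contravariant'' half it says the same dually. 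The total degree $p^{2i-1}$ then all comes from the $\tau_0$-component.

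The upshot is that the entire content of the chain $(\bX_0)_R\to\bX_1\to\cdots\to\bX_N$ is recorded by $\mE_\bullet:=M(\bX_\bullet)_{\tau_0}$: a chain of rank-$n$ projective $W(R)$-submodules $\mE_N\subset\mE_{N-1}\subset\cdots\subset\mE_0$ with $\mE_0=W(R)^n$ canonically trivialized (from the framing of $\bX_0$) and with each $\mE_{i-1}/\mE_i$ an invertible $R$-module. This is precisely the moduli problem defining $\wGr_N$ in \S\ref{aff Grass}, so we obtain a natural transformation $M_{N,h}^{\on{pf}}\to\wGr_N$. Reversing the construction --- declaring $M(\bX_\bullet)_{\tau_0}=\mE_\bullet$, propagating to the other components $M(\bX_\bullet)_{\tau_j}$ using the fact that $F$ is forced to be an isomorphism there, and applying Gabber's theorem to produce the $p$-divisible groups --- provides the inverse. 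Hence $M_{N,h}^{\on{pf}}\simeq\wGr_N$. Since $M_{N,h}$ is a smooth projective variety containing an open dense subscheme whose perfection agrees with $\Gr_N\subset\wGr_N$ (Lemma~\ref{fiber of pi}), the uniqueness statement of Lemma~\ref{stronger version} applied to the canonical model $\Gr'_N$ of $\Gr_N$ yields the desired isomorphism $\wGr'_N\simeq M_{N,h}$.

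The main obstacle is the detailed translation in the second step: showing that the conditions $(d\phi_i)_j=0$ and $(d\phi_i^*)_j=0$ together with the prescribed signatures rigidify the isogeny $\phi_i$ on every $\tau_j$-component with $j\neq 0$ into an isomorphism, thereby concentrating the Hodge-theoretic ``motion'' in a single component. This requires carefully unwinding how the tangent maps of $\phi_i$ and $\phi_i^*$ interact with the Hodge filtrations on each $\tau_j$-piece --- especially in the middle components where both source and target have nontrivial filtration --- and checking that the numerical data (the degree $p^{2i-1}$ and the prescribed signatures) leaves no room for the isogeny to move in more than one $\tau_j$-slot. Once this bookkeeping is in place, the equivalence of moduli is essentially a formal consequence of Gabber's theorem.
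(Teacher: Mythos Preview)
Your approach to the isomorphism $M_{N,h}^{\on{pf}}\simeq\wGr_N$ via the $\tau_0$-component of the Dieudonn\'{e} modules is exactly what the paper does (and you supply more detail than the paper, which simply says the map sends $\bX_0\to\cdots\to\bX_N$ to $\bD(\bX_N)_{\tau_0}\to\cdots\to\bD(\bX_0)_{\tau_0}$ and invokes the argument of Proposition~\ref{ADLV=RZ}).

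The gap is in your final step. Lemma~\ref{stronger version} is an \emph{existence} statement, not a uniqueness one, so you cannot invoke ``the uniqueness statement of Lemma~\ref{stronger version}''. Knowing that $M_{N,h}$ contains an open dense subscheme whose \emph{perfection} is $\Gr_N$ is not enough: two smooth finite-type $k$-varieties with isomorphic perfections need not be isomorphic (Frobenius twists give counterexamples). The paper proceeds instead by (i) identifying a specific open locus $\mathring M_{N,h}\subset M_{N,h}$ (where the kernel of $\phi_N\cdots\phi_1$ is not contained in $\bX_0[p^{N-1}]$) with $\Gr'_N$ \emph{as schemes of finite type}, compatibly with the isomorphism of perfections; (ii) applying the push-out universal property of the construction in Lemma~\ref{stronger version}, recorded in Remark~\ref{push-out}, to produce an actual morphism $\wGr'_N\to M_{N,h}$; and (iii) observing that this morphism is a projective birational universal homeomorphism onto a smooth variety, hence an isomorphism. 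Step~(i) is the piece missing from your outline, and step~(iii) is where smoothness of $M_{N,h}$ genuinely enters --- it is not merely contextual.
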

Indeed, choosing a trivialization $\bD(\bX_0)\simeq W(k)^n=\mE_0$, and using the argument as in Proposition \ref{ADLV=RZ}, one shows that there is an isomorphism $M_{N,h}^\pf\simeq \wGr_N$ given by sending $\bX_0\to\cdots\to\bX_N$ to $\bD(\bX_N)_{\tau_0}\to\cdots\to\bD(\bX_0)_{\tau_0}$. On the other hand, there exists an open subscheme $\mathring{M}_{N,h}\subset M_{N,h}$ parameterizing those  chains such that the kernel of the map $\phi_N\cdots\phi_1:\bX_0\to \bX_N$ is \emph{not} contained in $\bX_0[p^{N-1}]$ and one can show that $\mathring{M}_{N,h}\simeq \Gr'_N$, compatible with $\wGr_N\simeq M_{N,h}^\pf$. Then by \eqref{push-out}, we have a projective birational universal homeomorphism $\wGr'_N\to M_{N,h}$. As $M_{N,h}$ is a smooth projective variety, we have $\wGr'_N\simeq M_{N,h}$. Details are left to readers.

\medskip

There is a line bundle on $M_{N,h}$ given by 
$$\tilde \mL_{\det}':=\bigotimes_{i=1}^N \omega_{\bX_{N},\tau_{-i}}^{-p^{i}},$$
where $\omega_{\bX_N,\tau_j}=\wedge^{\on{top}}(\Lie \bX_N)^*_{\tau_{j}}$. 
\begin{lem}
Under the map $\wGr_N\simeq M_{N,h}^\pf\stackrel{\varepsilon}{\to} M_{N,h}$, there is a canonical isomorphism $\tilde\mL_{\det}\simeq \varepsilon^*\tilde\mL'_{\det}$. 
\end{lem}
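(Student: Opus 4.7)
My plan is to unwind both line bundles on $\wGr_N$ in terms of the universal chain $\bX_0 \to \bX_1 \to \cdots \to \bX_N$ of $p$-divisible groups on $M_{N,h}$, and then to match them by relating the Dieudonn\'e modules $\bD(K_i)$ of the kernels $K_i = \ker(\phi_i)$ to the cotangent bundles of $\bX_N$ via the Hodge filtrations, and by transporting everything from the $\tau_{-i}$-component to the $\tau_0$-component via Frobenius.

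First I would compute the left-hand side. By the preceding proposition (and its proof, which follows the argument of Proposition~\ref{ADLV=RZ}), the universal chain $\mE_N \hookrightarrow \mE_{N-1} \hookrightarrow \cdots \hookrightarrow \mE_0$ on $\wGr_N$ is identified with the chain of $\tau_0$-components $\bD(\bX_N)_{\tau_0} \hookrightarrow \cdots \hookrightarrow \bD(\bX_0)_{\tau_0}$. Applying contravariant Dieudonn\'e theory to $0 \to K_i \to \bX_{i-1} \to \bX_i \to 0$ gives $0 \to \bD(\bX_i) \to \bD(\bX_{i-1}) \to \bD(K_i) \to 0$, so the line bundle $\mL_i = \mE_{i-1}/\mE_i$ is identified with $\bD(K_i)_{\tau_0}$ and hence $\tilde\mL_{\det} \cong \bigotimes_{i=1}^{N} \bD(K_i)_{\tau_0}$.

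Next I would pin down $\bD(K_i)_{\tau_{-i}}$. Comparing the signatures of $\bX_{i-1}$ and $\bX_i$, their Lie algebras differ only at $\tau_{i-1}$ (where $\bX_i$ gains a one-dimensional summand) and at $\tau_{-i} = \tau_{2h-i}$ (where it loses one); conditions (3) and (4) together with the Hodge filtrations then identify $\bD(K_i)_{\tau_{-i}}$ with a canonical line inside the kernel of $(d\phi_i)_{\tau_{-i}} \colon (\Lie \bX_{i-1})_{\tau_{-i}} \to (\Lie \bX_i)_{\tau_{-i}}$. Using the remaining isogenies $\bX_i \to \bX_{i+1} \to \cdots \to \bX_N$, each constrained in the same way by (3) and (4), this line is canonically identified with $\omega_{\bX_N,\tau_{-i}}^{\otimes -1}$, giving $\bD(K_i)_{\tau_{-i}} \cong \varepsilon^{*}\omega_{\bX_N,\tau_{-i}}^{\otimes -1}$.

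Finally I would transport $\bD(K_i)_{\tau_{-i}}$ to $\bD(K_i)_{\tau_0}$ via Frobenius. Since $F$ is $\sigma$-semilinear and cyclically shifts the $\tau_j$-components, its linearization $F^{\on{lin}} \colon \sigma^{*}\bD(K_i)_{\tau_j} \to \bD(K_i)_{\tau_{j+1}}$ is to be shown an isomorphism for $j = -i, -i+1, \ldots, -1$, with the required integrality coming from the combination of conditions (3), (4) and the vanishing of the Lie algebras of $\bX_0$ at the first $h$ embeddings. Iterating $i$ times yields $\bD(K_i)_{\tau_0} \cong (\sigma^{*})^{i}\bD(K_i)_{\tau_{-i}}$, and the identity $\sigma^{*}L \cong L^{\otimes p}$ for line bundles on the perfect scheme $\wGr_N$ converts this into $\mL_i \cong \varepsilon^{*}\omega_{\bX_N,\tau_{-i}}^{\otimes -p^{i}}$. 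Tensoring over $i = 1, \ldots, N$ then gives the claim. The main obstacle will be the second and third steps together: rigorously tracking the identifications across all relevant embeddings and verifying that the iterated Frobenius gives an integral, not merely rational, isomorphism of line bundles, which is what produces the precise exponent $-p^{i}$ rather than a weaker equality in $\Pic(\wGr_N) \otimes \bZ[\tfrac{1}{p}]$.
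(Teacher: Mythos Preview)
The paper does not actually give a proof of this lemma: it appears in Appendix~\ref{rep as sch}, where the author explicitly says ``We omit most of the proofs,'' and the lemma is stated and immediately followed by ``Therefore, in view of Proposition~\ref{base point free}\ldots'' with no argument. So there is nothing to compare against; your proposal is the only proof on the table.

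Your outline is the natural one and the overall shape is right: identify $\mL_i$ with $\bD(K_i)_{\tau_0}$, then transport to the $\tau_{-i}$-component via the (linearized) Frobenius on the Dieudonn\'e module to produce the $p^i$ twist, and finally match $\bD(K_i)_{\tau_{-i}}$ with $\omega_{\bX_N,\tau_{-i}}^{-1}$ through the Hodge filtration. Two places deserve more care before this is a proof rather than a plan. First, in your Step~2 you assert that the line $\bD(K_i)_{\tau_{-i}}$ ``is canonically identified with $\omega_{\bX_N,\tau_{-i}}^{\otimes -1}$'' via the later isogenies $\bX_i\to\cdots\to\bX_N$; you should make explicit that conditions (3) and (4) force the maps $\bD(\bX_N)_{\tau_{-i}}\to\cdots\to\bD(\bX_i)_{\tau_{-i}}$ to be isomorphisms on the relevant graded pieces of the Hodge filtration (the signature at $\tau_{-i}$ does not change after step $i$), and spell out which sub/quotient of the Hodge filtration of $\bD(\bX_N)_{\tau_{-i}}$ you are landing in. Second, in Step~3 you claim $F^{\mathrm{lin}}\colon \sigma^*\bD(K_i)_{\tau_j}\to\bD(K_i)_{\tau_{j+1}}$ is an isomorphism for $j=-i,\ldots,-1$. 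Since $K_i\subset\bX_{i-1}[p]$, one has $FV=VF=0$ on $\bD(K_i)$, so on each $\tau_j$-component exactly one of $F,V$ can be an isomorphism (or both vanish); you need to check, from the signature of $\bX_{i-1}$ and $\bX_i$ and from conditions (3)--(4), that $V$ vanishes and $F$ is bijective precisely on the range $j=-i,\ldots,-1$. This is where the specific indexing of the signature tuple in the definition of $M_{N,h}$ (and the assumption $h\ge N$) is used, and it is what pins down the exponent as $-p^i$ rather than $-p^{\,\text{something else}}$. Once those two verifications are written out, the argument goes through.
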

Now, in view of Proposition \ref{base point free}, Conjecture I would be a consequence of the following conjecture.

\medskip

\noindent\bf Conjecture IV. \rm The line bundle $\tilde \mL_{\det}'$ on $M_{N,h}$ is semi-ample.

\subsection{Example: $\wGr_2\to \bGr_2$.}\label{sample cal}

We assume that $p>2$ so that the Teichm\"{u}ller lifting of $-1$ is $-1$.
The purpose here is to illustrate the construction of \S~\ref{aff Grass for GLn} and \S~\ref{Dem Res} in the simplest but non-trivial case: $G=\GL_2$, and $N=2$. 
We hope to convince the readers that the bizarre-looking construction is indeed reasonable. We follow the notations there. We will see that $\bGr_2$ is a scheme, and a model of $\wGr_2\to \bGr_2$ can be regarded as a resolution of the singularities of $\bGr_2$. 

First, $\wGr_1=\bP^{1,\pf}$, over which there is a sequence of maps of locally free sheaves (notations from the proof of Proposition \ref{rep of dem res})
$$\mE/p\to \mO^2_{\bP^{1,\pf}}\to \mO_{\bP^{1,\pf}}(1).$$ Then $\wGr_2=\bP^\pf(\mE/p)$. From \eqref{switch}, we know that $\mE/p$ fits into the following exact sequence 
\[0\to \mO(1)\to \mE/p\to \mO(-1)\to 0.\]
Therefore, $\mE/p\simeq \mO(1)\oplus\mO(-1)$ (but this isomorphism is non-canonical), and $\wGr_2$ is isomorphic to $\bP^\pf(\mO(1)\oplus\mO(-1))$.

Next we consider $\bGr_2$. According to \S~\ref{representability},
\[\bGr_2=\bGr_{2,3}/L^3\GL_2.\]
Consider the scheme $U=\Spec R$ where $R=k[x,y,z]/x^2-yz$. There is a natural map $U^\pf\to \bGr_2$ given by
\[(x,y,z)\mapsto (\mE_0, A), \quad \mbox{where } A=\begin{pmatrix}p+[x] & -[y] \\ [z] & p-[x]\end{pmatrix}\in \GL_2(W(R^\pf)[1/p]).\]

\begin{lem}\label{chart Gr2}
This is an open embedding. 
\end{lem}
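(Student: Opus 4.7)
The plan falls into three natural stages. First, I would verify well-definedness of the map by computing
\[
\det A(x,y,z) = (p+[x])(p-[x]) + [y][z] = p^2 - [x]^2 + [y][z].
\]
Multiplicativity of the Teichm\"uller lift gives $[x]^2 = [x^2]$ and $[y][z] = [yz]$; combined with the relation $x^2 = yz$ in $R$, this yields $[x]^2 = [y][z]$ in $W(R^\pf)$, so $\det A = p^2$ exactly. Hence $(A,1)$ is an $R^\pf$-point of $L^+V_2$, and composition with the quotient $L^+V_2 \to \bGr_2$ gives a well-defined morphism $U^\pf \to \bGr_2$. A useful reformulation is $A = pI + N$ where $N = \begin{pmatrix}[x] & -[y]\\ [z] & -[x]\end{pmatrix}$ has trace zero and the same identity forces $\det N = -[x]^2+[y][z]=0$, so $N$ is nilpotent of square zero.

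The second and main stage is to show that this morphism is an open embedding. I would work at finite level: the map $\bGr_{2,h} \to \bGr_2$ of Lemma~\ref{key} is an $L^h\GL_2$-torsor for $h>2$, with total space the affine perfect scheme $J$ (the perfection of a locally complete intersection, by Lemma~\ref{lci}). The chart above lifts along this torsor via $A(x,y,z) \bmod p^h$, producing an $L^h\GL_2$-equivariant map $L^h\GL_2 \times U^\pf \to V_{2,h}$. Proposition~\ref{fin:morphism} lets me pass to a finite-presentation model of this map, where the question becomes one about schemes of finite type over $k$, with $V_{2,h}$ explicitly cut out by matrix equations. I would then check the map is \'etale by a direct Jacobian computation and identify its image as the complement of an explicit Zariski-closed subset (the locus where a specific minor vanishes). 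Descending back through the torsor gives the open embedding $U^\pf \hookrightarrow \bGr_2$.

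Finally, I would reconcile the picture with Remark~\ref{wGr2}: $\wGr_2 \simeq \bP^\pf(\mO(1)\oplus\mO(-1))$ is the perfection of the Hirzebruch surface $F_2$, and it is classical that $F_2$ contracts its unique $(-2)$-section onto the affine quadric cone $\Spec k[x,y,z]/(x^2-yz)$. By Lemma~\ref{fiber of pi}, $\wGr_2 \to \bGr_2$ similarly contracts a $\bP^{1,\pf}$ to the Schubert point $p\mE_0$, so the open embedding $U^\pf \hookrightarrow \bGr_2$ presents $U^\pf$ as an affine chart around the $A_1$ surface singularity of $\bGr_2$ at $p\mE_0$. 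This geometric picture is a sanity check: the vertex $(0,0,0)$ of $U^\pf$ maps to $p\mE_0$ (where $A = pI$), and the smooth locus $U^\pf \setminus \{0\}$ maps into $\Gr_{(2,0)}$.

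The main obstacle is the \'etaleness verification at the special point: one must show the map between finite-type models is \'etale at the point corresponding to $p\mE_0$ even though both source and target are singular there (in particular the Jacobian criterion must be applied carefully in the presence of the defining equation $x^2-yz$ on the source and the local-complete-intersection equations of $V_{2,h}$ on the target). Once this is handled, everything else is essentially a bookkeeping exercise in the $L^h\GL_2$-torsor setup.
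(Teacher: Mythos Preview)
Your overall architecture matches the paper's: lift along the $L^h\GL_2$-torsor $\bGr_{2,h}\to\bGr_2$ and check that the equivariant map $L^h\GL_2 \times U^\pf \to \bGr_{2,h}$ is an open embedding. But there is a slip in the target you name, and your verification method differs substantially from the paper's.

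The slip: the equivariant map you write down lands in $V_{2,h}$, not in $\bGr_{2,h}$. The map $(g,u)\mapsto A(u)g \bmod p^h$ to $V_{2,h}$ has source of dimension $4h+2$ and target of dimension $4h-2$; at $u=0$ one has $A(0)=pI$, whose right stabilizer in $L^h\GL_2$ is the entire $(h-1)$st congruence subgroup, so fibers are positive-dimensional and no Jacobian computation will exhibit this map as \'etale. You must retain the trivialization $\bar\epsilon$ and map to $\bGr_{2,h}\simeq J$, which is what the torsor picture you invoke actually demands. (Relatedly, Lemma~\ref{lci} is a statement about $V'_{N,h}$, not about $J'$.)

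On method: even with the target corrected, the paper does not attempt a Jacobian check. Working with $h=3$, it instead constructs an explicit inverse. Writing $X=\begin{pmatrix}a&b\\c&d\end{pmatrix}\in V_{2,3}$ with Witt expansions $a=[a_0]+p[a_1]+p^2[a_2]$ etc., the relation $X^*A=p^2(\mathrm{unit})\,g^{-1}$ forces linear equations on $(x,y,z)$; on the open locus $\tilde W$ where the ``first-digit'' determinant $a_1d_1-b_1c_1$ is a unit one solves uniquely
\[
\begin{pmatrix} x & -y \\ z & -x \end{pmatrix}
= \frac{1}{b_1 c_1 - a_1 d_1}
\begin{pmatrix} a_1 & b_1 \\ c_1 & d_1 \end{pmatrix}
\begin{pmatrix} d_0 & -b_0 \\ -c_0 & a_0 \end{pmatrix},
\]
and then $g$ is recovered as $A^{-1}X$. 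This exhibits the inverse on the $G$-saturation $W=\tilde W\cdot G$ and on its preimage in $\bGr_{2,3}$, proving injectivity and identifying the open image in one stroke. Your Jacobian route could presumably be made to work once the target is fixed, but you would still owe a separate injectivity argument, and the ``careful application at the singular point'' you flag as the main obstacle is exactly what the explicit-inverse computation circumvents. Your Stage~3 geometric sanity check via the Hirzebruch surface is correct and helpful, but it is orthogonal to the proof.
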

Note that in particular $\bGr_2$ has an open cover by $\Gr_2$ and $U^\pf$, and therefore is a scheme.
\begin{proof}
We write $G$ for $L^3\GL_2$ for simplicity. We lift $U^\pf\to\bGr_2$ to a map $U^\pf\to \bGr_{2,3}$ by sending $A\mapsto (\mE_0,A, \id)$. Then we need to show that the action map $ U^\pf\times G\to \bGr_{2,3}$ is an open embedding.

We need the following lemma whose proof is based some linear algebra calculation. Recall that $V_{2,3}$ is the scheme classifying pairs $(X,\la)\in M_2(W_3(R))\times R^\times$ satisfying $[\la]\det X=p^2$. We define a subfunctor $W\subset  V_{2,3}$, whose values at a perfect $k$-algebra $R$ consist of those $X\in V_{2,3}(R)$ such that there exist a decomposition $X=Ag$ with
$$g\in G(R),\quad A=\begin{pmatrix} p+[x] & -[y]  \\ [z] & p-[x] \end{pmatrix}\in M_{2}(W_3(R)),\ \det A=p^2.$$
\begin{lem}The functor $W$ is represented by an open subscheme of $V_{2,3}$. In addition, given $X\in W(R)$, the matrix $A$ is unique in the decomposition $X=Ag$ as above.
\end{lem}
\begin{proof}Let $X=\begin{pmatrix} a & b \\ c & d\end{pmatrix}\in V_{2,3}$, and let $X^*=\begin{pmatrix}d & -b \\ -c & a \end{pmatrix}$ be its adjugate matrix.
We write $a=[a_0]+p[a_1]+p^2[a_2]$, and similarly expand $b,c,d$. Note that $\det X$ is divisible by $p^2$ if and only if
\begin{equation}\label{eq for X}
a_0d_0=b_0c_0, \quad  \begin{pmatrix}a_1 & b_1 \\ c_1 & d_1\end{pmatrix}\begin{pmatrix}
d_0 & -b_0 \\ -c_0 & a_0 
\end{pmatrix}+ \begin{pmatrix}a_0 & b_0 \\ c_0 & d_0\end{pmatrix}\begin{pmatrix}
d_1 & -b_1\\ -c_1 & a_1 
\end{pmatrix}=0.
\end{equation}
In other words, $V_{2,3}$ is represented by an open subscheme of the affine scheme defined by equations in \eqref{eq for X}.

Let $\tilde W\subset V_{2,3}$ be the open subscheme defined by the condition $a_1d_1-b_1c_1\in R^*$. We claim that every $X\in \tilde{W}$ admits a decomposition $X=Ag$ with $(A,g)$ as in the definition of $W$. In addition, such $A$ is unique. Assuming this claim, we first finish the proof of the lemma.
Let $\tilde{W}G$ be the minimal $G$-invariant open subset of $V_{2,3}$ containing $\tilde{W}$. Then it follows from the claim that every $X\in \tilde{W}G$ admits such a decomposition $X=Ag$, i.e. $\tilde{W}G\subset W$. 
Conversely, let $X\in W$, with a decomposition $X=Ag$ as required. Since $A\in \tilde{W}$, we have $X\in \tilde{W}G$. Therefore, $W=\tilde{W}G$, and in the decomposition $X=Ag$, the matrix $A$ is unique. The lemma follows.

It remains to prove the above claim. Let $X\in \tilde W$ and suppose that $X=Ag$ is a required decomposition. Multiplying this equation by $X^*$ on the left and $g^{-1}$ on the right gives
\[p^2[\la]g^{-1}= \begin{pmatrix}d & -b \\ -c & a \end{pmatrix}\begin{pmatrix} p+[x] & -[y]  \\ [z] & p-[x] \end{pmatrix}.\]
It follows that the triple $(x,y,z)$ must satisfy 
\begin{equation}\label{two eq}
\begin{pmatrix}
d_0 & -b_0 \\ -c_0 & a_0 
\end{pmatrix}
\begin{pmatrix}x & -y \\ z & -x
\end{pmatrix}=0,
\quad 
\begin{pmatrix}
d_1 & -b_1 \\ -c_1 & a_1 
\end{pmatrix}
\begin{pmatrix}x & -y \\ z & -x
\end{pmatrix}=-\begin{pmatrix}d_0 & -b_0 \\ -c_0 & a_0\end{pmatrix}.
\end{equation}
When $X\in \tilde W$, it is easy to see that there is a unique triple $(x,y,z)$ satisfying these two equations. Namely,
\begin{equation}\label{solution of xyz}
\begin{pmatrix}
x & -y \\ z & -x
\end{pmatrix}=\frac{1}{b_1c_1-a_1d_1} \begin{pmatrix}a_1 & b_1 \\ c_1 & d_1\end{pmatrix}\begin{pmatrix}
d_0 & -b_0 \\ -c_0 & a_0 
\end{pmatrix}.
\end{equation}
Therefore, such $A$ in the decomposition $X=Ag$ is unique.

Conversely, for $X\in \tilde W$, let $A=\begin{pmatrix} p+[x] & -[y]  \\ [z] & p-[x] \end{pmatrix}$, where $(x,y,z)$ is given by \eqref{solution of xyz}. Using the lifting $V_{2,3}\to L^+V_2$ as in Remark \ref{set sect}, we regard $X$ and $A$ as elements in $L\GL_2$, denoted by $\tilde{X}$ and $\tilde{A}$. Then the determinant of
\[\tilde{g}:=\tilde{X}\tilde{A}^{-1}=p^{-2}\tilde{X}\tilde{A}^* \in L\GL_2\]
is an element in $W(R)^*$. But since \eqref{two eq} holds, the entries of $\tilde{g}$ are in fact in $W(R)$. Therefore, $\tilde{g}\in L^+\GL_2$. If we set $g= (\tilde{g} \mod p^3)$, then
$X=Ag$. The claim follows.
\end{proof}
Now let $V$ be the preimage of $W$ under the projection $\bGr_{2,3}\to V_{2,3}$. Then the action map $U^\pf\times G\to V$ is an isomorphism. In fact, the uniqueness of $A$ as in the above lemma implies that this map is a monomorphism. On the other hand, the definition of $W$ together with an argument as in Lemma \ref{key} implies the surjectivity of $R$-points. Therefore the lemma holds.
\end{proof}

Finally, one can also verify Conjecture I in this case. Namely, 
Let $j:\Gr_2\to \bGr_2$ be the open inclusion. One can restrict $\tilde\mL_{\det}$ to $\Gr_2\subset\wGr_2$. Then $\mL_{\det}=j_*(\mL_{\det}|_{\Gr_2})$.

\end{document}